\documentclass[11pt]{amsart}

\usepackage[margin=0.9in]{geometry}

\usepackage{amsmath, amssymb, amsbsy, amscd, mathtools}
\usepackage{amsfonts}
\usepackage{mathrsfs, euscript}

\usepackage{graphicx}
\usepackage{tikz, tikz-cd}
\usepackage{xy}
\usepackage{epsf, overpic, psfrag}

\usepackage{times}
\usepackage{array}
\usepackage{caption, subcaption}

\usepackage{enumitem, bbm, stmaryrd}
\usepackage[dvipsnames]{xcolor}

\usepackage{hyperref}
\usepackage{thmtools}
\usepackage{todonotes}

\usepackage{mathrsfs}

\newtheorem{theorem}{Theorem}[section]
\newtheorem{lemma}[theorem]{Lemma}
\newtheorem{corollary}[theorem]{Corollary}
\newtheorem{proposition}[theorem]{Proposition}

\newtheorem{assumption}[theorem]{Assumption}
\newtheorem{definition}[theorem]{Definition}

\newtheorem{remark}[theorem]{Remark}
\newtheorem{notation}[theorem]{Notation}

\makeatletter
\@ifundefined{st}{\newcommand{\st}{\check}}{\renewcommand{\st}{\check}}
\@ifundefined{notation}{}{}
\@ifundefined{convention}{\newtheorem{convention}[theorem]{Convention}}{}
\makeatother

\makeatletter
\renewcommand*{\theHtheorem}{\thesection.\arabic{theorem}}
\@for\thmname:=lemma,corollary,proposition,conjecture,question,assumption,definition,example,remark,notation,convention\do{%
  \expandafter\def\csname theH\thmname\endcsname{\theHtheorem}%
}
\makeatother

\hypersetup{hidelinks}
\newcommand{\dsph}{d_{\mathrm{sph}}}

\newcommand{\D}[2]{\partial_{#1}^{#2}}

\newcommand{\Z}{\mathbb{Z}}
\newcommand{\R}{\mathbb{R}}
\newcommand{\C}{\mathbb{C}}

\newcommand{\F}{\mathbb{F}}

\newcommand{\frm}{\mathfrak{m}}

\newcommand{\frgl}{\mathfrak{gl}}

\newcommand{\cY}{\mathcal{Y}}
\newcommand{\M}{\mathcal{M}}

\newcommand{\Conf}{\operatorname{Conf}}
\newcommand{\Kh}{\operatorname{Kh}_{\mathrm{symp}}}
\newcommand{\rank}{\operatorname{rank}}
\newcommand{\Sym}{\operatorname{Sym}}

\newcommand{\ul}{\underline}

\newcommand{\crit}{\operatorname{Crit}}
\newcommand{\cP}{\mathsf}

\newcommand{\ind}{\operatorname{ind}}     % Morse index
\newcommand{\grad}{\operatorname{grad}}   % Gradient

\newcommand{\descending}{\mathscr{D}}
\newcommand{\ascending}{\mathscr{A}}

\newcommand{\Ev}{\mathrm{Ev}}
\newcommand{\Dinv}{\D{s}{s,\mathrm{inv}}}   % Invariant component
\newcommand{\Dnon}{\D{s}{s,\mathrm{non}}}   % Non-invariant component

\makeatletter
\renewcommand{\paragraph}{\@startsection{paragraph}{4}%
  \z@\z@{-\fontdimen2\font}%
  {\normalfont\itshape}}
\makeatother

\hypersetup{hidelinks}

\title[Localization of Floer homology for $\Z_p$-actions]
{Localization of Lagrangian Floer homology for $\Z_p$-actions}

\author{Erkao Bao}
\address{School of Mathematics, University of Minnesota, Minneapolis, MN 55455}
\email{bao@umn.edu}
\urladdr{https://www-users.cse.umn.edu/~bao/}

\author{Tyler Lawson}
\address{School of Mathematics, University of Minnesota, Minneapolis, MN 55455}
\email{tlawson@math.umn.edu}
\urladdr{https://www-users.cse.umn.edu/~tlawson/}

\author{Cheuk Yu Mak}
\address{School of Mathematical and Physical Sciences, University of Sheffield, Western Bank, Sheffield, S10 2TN}
\email{c.mak@sheffield.ac.uk}
\urladdr{https://sites.google.com/view/cheuk-yu-mak/home}

\date{August 2026}

\begin{document}

\begin{abstract}
For an odd prime $p$, we prove a Smith inequality over $\F_p$ for exact
Lagrangian Floer homology under a symplectic $\Z_p$-action, assuming an
equivariant stable normal trivialization compatible with the Lagrangians
and the required relative Pin data.  As an application, we obtain a rank
inequality between the symplectic Khovanov homology of a $p$-periodic link
and that of its quotient.  These results extend the involutive localization
of Seidel and Smith.

The main construction is a stabilized chain complex.  At every invariant
intersection point we adjoin the Morse complex of a finite-dimensional
normal spectral sphere, whose generators lie in free $\Z_p$-orbits.
The differential counts holomorphic strips coupled to sphere
half-trajectories by a one-sided weighted asymptotic evaluation.  We prove
transversality, a uniform spectral cutoff, and the compactification and
gluing statements needed for the chain identity.  The stabilized complex
is quasi-isomorphic to the ordinary Floer complex and contains the complex
of the fixed data as a subcomplex with free quotient, so the algebraic
Smith inequality applies.
\end{abstract}

\maketitle

\setcounter{tocdepth}{2}
\tableofcontents

\section{Introduction}
%Throughout, $p$ is an odd prime and $G=\Z_p$.   All ranks below are total ranks;
%no absolute integer grading is assumed.

\subsection{Background and main results}
Throughout, $p$ is an odd prime and $G=\Z_p$.  Classical Smith theory compares the homology of a space with that of its
fixed locus under a prime-order action
\cite{smith1938transformations,floyd1952on}.  Its chain-level mechanism is
the existence of a trivial-action subcomplex with free quotient over
$\F_p[G]$.  Seidel and Smith \cite{seidel2010localization} established a
Lagrangian Floer analogue for symplectic involutions, under a stable normal
trivialization compatible with the Lagrangian pair.
The result of Seidel and Smith and its descendants have numerous applications: to symplectic
Khovanov homology \cite{seidel2010localization,hendricks2024symplectic},
to the Heegaard Floer homology of branched covers and of symmetric knots
\cite{hendricks2012rank,hendricks2015localization,hendricks2016flexible,
large2019equivariantfloertheorydouble,boyle2022rank,hendricks2022rank,
parikh2024localization,hendricks2025note}, and to the Floer cohomology of
the square of a symplectomorphism \cite{hendricks2017spectral, seidel2015equivariant}; see also
\cite{lipshitz2016noncommutative} for an algebraic counterpart.
Many of these applications are expected to have $\Z_p$-analogues for odd
primes $p$.
In \cite{SZ21}, Shelukhin and Zhou proved a $\Z_p$-localization result for the Floer cohomology of the $p^{th}$-fold iterate of a symplectomorphism using the equivariant pants product \cite{seidel2015equivariant}, \cite{Wilk23} and local coproduct, with striking applications in \cite{SheA, ShelB, cineli2026closedorbitsdynamicallyconvex}, etc. The quantitative aspect was later generalized by Bai and Xu \cite{bai2026integralhamiltonianfloertheory} for general symplectic manifolds, with many applications in \cite{bai2026quantumsteenrodpowershamiltonian}.
However, a $\Z_p$-version of the Seidel--Smith localization theorem for Lagrangian Floer homology is still missing because the previous methods cannot be directly applied.  This is what we prove in this paper.

\begin{theorem}[Floer localization; Theorem~\ref{thm:floer-smith}]
\label{thm:intro-floer}
Let $(M,\omega=d\theta)$ and $L_0\pitchfork L_1$ satisfy the exactness,
convexity and admissibility conditions of
Subsection~\ref{subsec:lagrangian-floer-setup}.  Suppose a symplectic
$\Z_p$-action preserves the pair, and that the equivariant stable normal
trivialization and relative Pin hypotheses of
Assumptions~\ref{ass:floer-normal-triviality} and~\ref{ass:floer-pin} hold.
Then
\begin{equation}\label{eq:intro-floer-smith}
 \dim_{\F_p}HF(L_0,L_1;\F_p)
 \ge \dim_{\F_p}HF(L_0^G,L_1^G;\F_p),
\end{equation}
where the right side is taken with the orientation system of the relative
Pin structures that the fixed pair inherits from $L_0,L_1$
(Lemma~\ref{lem:floer-fixed-strips}).
\end{theorem}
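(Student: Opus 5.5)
The plan follows the abstract: build a stabilized chain complex $\widetilde{CF}$ over $\F_p$ with three properties. It is quasi-isomorphic to $CF(L_0,L_1;\F_p)$. It carries a chain-level $G$-action. It contains $CF(L_0^G,L_1^G;\F_p)$ as a $G$-fixed subcomplex whose quotient is a bounded complex of free $\F_p[G]$-modules. Granting these, the algebraic Smith inequality finishes the argument. The Tate cohomology of a bounded free complex vanishes, so the long exact sequence in Tate cohomology identifies $\hat H^*(G;\widetilde{CF})$ with $\hat H^*(G;CF^G)=H(CF^G)\otimes\hat H^*(G;\F_p)$. The standard spectral sequence from $H(\widetilde{CF})\otimes\hat H^*(G;\F_p)$ then gives
\[
\dim H(\widetilde{CF})\ge \dim H(CF(L_0^G,L_1^G)).
\]
I would first state and prove this purely algebraic inequality and then reduce the geometric statement to it.

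The naive approach fails because the equivariant complex $CF(L_0,L_1)$ itself is not usable. At an invariant intersection point $x$, the normal directions carry a nontrivial $\Z_p$-representation, so equivariant transversality fails for strips that are not contained in the fixed locus. For $p$ odd, the Seidel--Smith trick of comparing index bundles by doubling does not apply. The stabilization is meant to fix this.

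First, I use Assumption~\ref{ass:floer-normal-triviality} to trivialize the normal bundle $N$ to $M^G$ stably and equivariantly, as a sum of copies of a fixed representation $V$ without trivial summands. At each invariant point $x$, I adjoin the Morse complex of a $G$-invariant Morse function on the unit sphere $S(V^{\oplus k})$. This function is chosen with all critical points in free orbits except a unique fixed "bottom" and "top" contribution, which pairs to recover $x$. The resulting enlarged generating set consists of fixed-locus generators together with free orbits.

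Second, the differential counts pairs made of a holomorphic strip $u$ and a gradient half-trajectory on the sphere. The two are coupled through a weighted asymptotic evaluation of $u$ at the invariant end, which reads off the leading normal eigenvector and projects it to the sphere. Using one-sided evaluation means only one end needs coupling, and this makes generic perturbation equivariant. Free orbits of the coupled moduli spaces then permit $G$-equivariant perturbations by standard arguments, while strips inside $M^G$ are handled by Lemma~\ref{lem:floer-fixed-strips}. Relative Pin data from Assumption~\ref{ass:floer-pin} orient everything compatibly.

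Third, I establish a uniform spectral cutoff, meaning that only finitely many normal eigenvalues matter below an energy bound, so that a finite stabilization level $k$ suffices. With that in hand, the compactness and gluing analysis gives $\widetilde{\partial}^2=0$. A filtration by "sphere level" shows that $\widetilde{CF}$ is quasi-isomorphic to $CF$, since each sphere Morse complex contributes homology $H_*(S^{N})$, which is cancelled in a mapping-cone fashion.

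The main obstacle is the analysis in the second and third steps. One must make transversality equivariant for strips that touch invariant points, and prove gluing and compactness for strips coupled to half-trajectories through the asymptotic evaluation map, uniformly in the cutoff. For this I would first try exponential weights slightly below the smallest relevant normal eigenvalue. Under that choice the evaluation map becomes a smooth submersion on the weighted moduli space, which reduces gluing to the standard Morse--Bott fibre-product gluing.
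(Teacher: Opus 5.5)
Your architecture (local sphere complexes at invariant points, a one-sided weighted evaluation, a uniform cutoff, a comparison map, algebraic Smith) matches the paper's, but several load-bearing steps fail as stated.

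First, the local model you describe cannot exist. Since $V^G=0$ and $p$ is prime, $G$ acts freely on $S(V^{\oplus k})$, so an invariant Morse function has no fixed critical points and there is no ``fixed bottom and top''. What is then missing is the device that makes the fixed complex a subcomplex. The paper lifts the Morse data from the lens space $S_N/G$, so that minima and maxima form free orbits. At each invariant $\cP{p}$ it adjoins a separate formal generator $\st{\cP{p}}$. This generator is hit from outside $C^i$ only by the augmentation of the minima, and its own differential counts only fixed strips. Strips arriving at an invariant point land in the top cycle $\Theta_{\cP{q}}=\sum_gM_g$. Strips leaving one are attributed to sphere cells through incidence of $\Ev^-_{\cP{p},N}$ with descending cells. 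No block leaves $C^i$.

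The obstruction being repaired is exactly the rigid strips from invariant to outside points. It is not a failure of equivariant transversality for non-fixed strips, which are regularized by averaged perturbations in the free region. The augmentation kills only the bottom class, and $[\Theta_{\cP{p}}]$ replaces $\cP{p}$. A sphere-level filtration does not prove this, because $\D{o}{s}$ and $\Dnon$ land in top cycles and so raise the sphere index; the paper filters by action instead.

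Second, $\Ev^-$ is not a submersion for any choice of weights. Rigid strips map to points, and a rigid incidence at $\cP{v}$ lies on a strip space of dimension $N-1-j(\cP{v})<\dim S_N$ unless $\cP{v}$ is a minimum. What is needed is transversality to descending cells, which the paper obtains by transporting the lifted Morse data by isotopies of $S_N/G$.

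Reading only the leading eigenvector would make the evaluation jump when a coefficient vanishes. The paper instead normalizes the whole vector $c_N(u)\in E_N^-$ by the weighted translation action. It then needs Theorem~\ref{ass:floer-uniform-leading-bound}, proved from compactness and the \emph{exact} linearity of the normal equation in an adapted tube. That linearity requires straightening the Lagrangians to the constant normal subspaces $\R^m,\sqrt{-1}\,\R^m$ using the Lagrangian-path half of Assumption~\ref{ass:floer-normal-triviality}, plus a split $J$; you use only the bundle trivialization.

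The same normal model does two further jobs. It makes fixed strips regular, since their normal operator is invertible; Lemma~\ref{lem:floer-fixed-strips} concerns orientations only. It also carries the evaluation through a break in which a fixed strip precedes a non-fixed one (type \textup{(S4b)}), where different modes decay at different rates and must be transported exactly. This is the central gluing issue, and your proposal does not address it.

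Finally, the Tate spectral sequence needs a bounded grading to converge, and its $E_2$-term is $\hat H^*(G;H)$, not $H\otimes\hat H^*(G;\F_p)$. The complexes here are ungraded differential modules. Lemma~\ref{lem:algebraic-smith} avoids this with two short exact sequences, one for $\sigma-1$ and one for the norm element.
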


The main novelty of our work is a framework for localization in Lagrangian
Floer theory, built around a new chain complex, the \emph{stabilized
Floer complex}.\footnote{It is not the Floer complex of a stabilization of
$(M,L_0,L_1)$.}  Its differential records asymptotic data of the moduli
spaces of strips of all dimensions, yet the complex remains
quasi-isomorphic to the ordinary Floer complex, so that the classical
Smith inequality applies to it.  One may think of it as a resolution of
the ordinary Floer complex; it is in the spirit of Floer homotopy theory,
but far less technical, and it retains just enough information for
equivariant localization.  It is described in
Subsection~\ref{subsec:intro-complex} below.
Our framework also works for the case $p = 2$, but the presentation is slightly different due to the representation theory of $\Z_p$.
For this reason, we restrict to $p$ odd.
In future work, we will apply our framework to more general setting where Assumption~\ref{ass:floer-normal-triviality} of the normal bundle is not satisfied.

%The normal hypothesis is componentwise: the nontrivial isotypic summands
%of the fixed-locus normal bundle are stably trivial as complex bundles,
%and the Lagrangian normal subbundles admit homotopies, in one ambient
%trivialization, to the zero section and the fiber of $T^*\R^m$.
%The hypotheses and their use in the geometric preparation are stated in
%Subsections~\ref{subsec:floer-suspension} and~\ref{subsec:lagrangian-preparation}.

We expect that the results in
\cite{hendricks2012rank,hendricks2015localization,hendricks2017spectral,
hendricks2024symplectic}  generalize directly to $\Z_p$ once
Assumptions~\ref{ass:floer-normal-triviality} and~\ref{ass:floer-pin} are
verified in each setting; we do not pursue it and leave it as a separate work.
As an example, we prove the following result about symplectic Khovanov homology
$\Kh$ \cite{seidel2006link,manolescu2006nilpotent}, which generalizes
\cite[Corollary~3]{seidel2010localization} from $p=2$ to odd primes;
for the combinatorial counterparts see
\cite{zhang2018rank,stoffregen2024localization,borodzik2021khovanov}.
\begin{theorem}[Periodic links; Theorem~\ref{thm:application-khovanov}]
\label{thm:intro-khovanov}
For a $p$-periodic link $K$ with quotient $\bar K$,
\[
 \rank_{\F_p}\Kh(K)\ge\rank_{\F_p}\Kh(\bar K).
\]
\end{theorem}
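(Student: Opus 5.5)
The plan is to deduce Theorem~\ref{thm:intro-khovanov} from Theorem~\ref{thm:intro-floer}, following the route of \cite[Corollary~3]{seidel2010localization}. Present $K$ as the closure of a $p$-periodic braid. Concretely, let $\beta\in\mathrm{Br}_{m}$ be a braid whose plat closure is $\bar K$. Put the axis of the periodic symmetry in the position of the origin of $\C$, so that the $p$-fold branched cover $z\mapsto z^p$ of the plane, branched at $0$, lifts $\bar K$ to $K$ as the plat closure of a lifted braid $\tilde\beta\in\mathrm{Br}_{pm}$. This lifted braid is invariant under the rotation $z\mapsto e^{2\pi i/p}z$. Strictly, one should use a slightly modified version of plat closures, adapted to a point on the axis, as in Seidel--Smith; I would adapt their description of periodic links as closures of "rotationally symmetric" crossingless matchings.

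\textbf{Step 1: the $\Z_p$-action and its fixed locus.} On the Seidel--Smith space $\cY_{pm}$ (a fiber of the adjoint quotient restricted to a nilpotent slice in $\frgl_{2pm}$), the rotation induces a $\Z_p$-action. Identify a component of the fixed locus $\cY_{pm}^G$, by the standard argument for fixed points of cyclic actions on slices, with $\cY_{m}$ built from the polynomial $z\mapsto z^p$ pulled back. The invariant Lagrangians are $L_0=L_{\wp}$, the matching-cycle Lagrangian of the symmetric crossingless matching, and $L_1=h_{\tilde\beta}(L_{\wp})$. Their fixed parts are exactly the corresponding Lagrangians for $\bar K$ in $\cY_m$. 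This uses equivariant choices of the monodromy $h_{\tilde\beta}$, which one can make by averaging the parallel-transport connection over $G$. Exactness, convexity and admissibility are known from \cite{seidel2006link,manolescu2006nilpotent}. The Floer groups then give $\Kh(K)$ and $\Kh(\bar K)$, up to a grading shift, which does not affect total ranks.

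\textbf{Step 2: the hypotheses.} Here I must verify Assumptions~\ref{ass:floer-normal-triviality} and~\ref{ass:floer-pin}.

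For the normal bundle of $\cY^G$, decompose it into isotypic pieces for the characters $\zeta^k$ with $k\neq 0$. Each piece is a complex bundle described by slice data. Following Seidel--Smith's proof of stable normal triviality (their Section 5 for $p=2$), I expect each piece to be stably isomorphic to a bundle pulled back from the configuration space. Such a bundle is trivial over the relevant contractible or simply-connected region after restricting near the Lagrangians. Since $L_0$ and $L_1$ are products of spheres, the required homotopies of the Lagrangian normal subbundles reduce to computations on spheres. The Pin hypotheses are easy because the Lagrangians are spin, and $G$ has odd order, so any equivariant lift exists.

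\textbf{Step 3: conclusion and main obstacle.} Apply Theorem~\ref{thm:intro-floer} over $\F_p$ to get the inequality. The main obstacle is the stable normal triviality in Step 2, where the representation theory for odd $p$ gives several complex isotypic summands rather than the single real one that appears for $p=2$. I would first try to construct an explicit equivariant stable trivialization by degenerating to the model $z^p$ configuration, where the normal bundle is computed directly as a sum of copies of $\zeta^k$. Then I would transport this trivialization along the contractible parameter space of symmetric configurations. A second possible gap is checking that the orientation system inherited on the fixed pair matches the standard one for $\Kh(\bar K)$. Over $\F_p$ with spin Lagrangians, this should follow from Lemma~\ref{lem:floer-fixed-strips}.
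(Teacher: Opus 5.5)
Your outline is the paper's: an equivariant plat presentation lifted through the $p$-fold branched cover, the block-weighted action on the slice, the fixed locus and fixed matching Lagrangians identified with the quotient data, then Theorem~\ref{thm:floer-smith}. The gap is Step 2, which is the actual content of the application. You leave it as an expectation, and the mechanism you sketch would not deliver Assumption~\ref{ass:floer-normal-triviality}.

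\textbf{The normal trivialization.}
\begin{itemize}
\item A trivialization of $\nu$ only ``near the Lagrangians'', obtained from a local model plus transport, is not enough. The adapted tube must lie over a region $V\supset Q^G$ containing every fixed strip. Exact linearity of the normal equation there is what gives the splitting in Proposition~\ref{thm:floer-model-regularity}, the uniform bound of Theorem~\ref{ass:floer-uniform-leading-bound}, and the compactness and gluing.
\item The space of $\Z_p$-invariant configurations is not contractible; via $z\mapsto z^p$ it is $\Conf^{2m}(\C\setminus 0)$.
\item The missing idea is short and global. The sequence $0\to T\cY_\tau\to TS_{pm}|_{\cY_\tau}\to T_\tau\Sym^{2pm}(\C)\to 0$ is $G$-equivariant, and its middle and right terms are constant representations over all of $\cY_\tau^G$. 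So each nontrivial isotypic piece of $\nu$ is stably trivial over the whole fixed locus.
\item In these constant representations each $\xi^k$ occurs with the same multiplicity ($4m$, respectively $2m$). Hence their isotypic pieces are cotangent lifts of real representations (Lemma~\ref{l:CtoR}), which is the form the assumption requires for odd $p$.
\end{itemize}

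\textbf{The Lagrangian paths.} Your ``products of spheres'' idea does work if made precise, and it is a genuinely different route from the paper's. The paper argues through the contractible Lagrangian $\Delta^{(p)}$ and its contractible fixed part. Your route would go as follows:
\begin{itemize}
\item The $G$-invariant Lagrangian Grassmannian of $T^*\R^m$ is $\prod_k U(m_k)$, namely the maximal isotropic subspaces of the signature-$(m_k,m_k)$ forms $h_W$.
\item $[(S^2)^m,U]=K^{-1}((S^2)^m)=0$, because $\Sigma(S^2)^m$ is a wedge of odd spheres.
\item So after one more cotangent stabilization each family over $L_i^G\cong(S^2)^m$ is null-homotopic. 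Lemma~\ref{l:eqLag} then connects the constant family to $\R^m$, respectively $\sqrt{-1}\,\R^m$.
\end{itemize}
You need to state this; ``reduce to computations on spheres'' is not yet an argument.

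\textbf{The orientation system.} Lemma~\ref{lem:floer-fixed-strips} does not settle it. It identifies $H(C^i)$ with the Floer homology of the fixed pair for the induced relative Pin structures $Q_i$, with background bundle $E'=\cY^G_\tau\times\R^m$ (here $E=0$). It does not say this is the standard system defining $\Kh(\bar K)$, and $\F_p$-ranks can depend on the orientation system. You need the further step:
\begin{itemize}
\item $E'$ is trivial, so $Q_i$ is $Q_i^0$ plus the spin structure of a trivialization.
\item $Q_i^0$ is a Pin structure on the simply connected $L^G_{\frm_i}\cong(S^2)^m$, hence the unique one.
\item Trivializing $E'$ over each fixed strip shows the two systems give the same signs.
\end{itemize}

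\textbf{The fixed locus.} Identify the whole of $\cY_\tau^G$ with $\cY_{\tau^p}$, not just a component. The map $(A_1,\dots,A_m)\mapsto(0,\dots,0,A_1,\dots,0,A_m)$ is onto $\cY_\tau^G$, and Theorem~\ref{thm:floer-smith} sees all of $L_i^G$.
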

The cyclic action is defined by weighting the block coordinates of the
nilpotent slice.  Its fixed fiber and fixed matching Lagrangians identify
with the data of the quotient link.  The equivariant exact sequences and
relative normal structures are checked in Section~\ref{sec:applications}.

\subsection{The stabilized complex}
\label{subsec:intro-complex}

{\it The algebraic target.}
Smith's inequality is, at bottom, a statement about chain complexes with a
group action, and the whole construction of this paper is designed to
produce a complex to which the following lemma applies.

\begin{lemma}[Algebraic Smith inequality; Lemma~\ref{lem:algebraic-smith}]
\label{lem:intro-algebraic-smith}
Let $k=\F_p$, $G=\Z_p$ and $R=k[G]$.  Let $C$ be a finitely generated
$R$-module with an $R$-linear endomorphism $\partial$ satisfying
$\partial^2=0$, and let $C'\subset C$ be a $\partial$-invariant submodule on
which $G$ acts trivially and such that $C/C'$ is a free $R$-module.  Then
\[
    \dim_kH(C')\;\le\;\dim_kH(C),\qquad H(C):=\ker\partial/\operatorname{im}\partial .
\]
\end{lemma}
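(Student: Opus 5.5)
Write $R=k[G]=k[t]/(t-1)^p$ and set $\tau=t-1$, so $R\cong k[\tau]/\tau^p$ is a local ring with nilpotent maximal ideal $(\tau)$. The plan is the classical Smith-theory argument through the Smith operators. Write $\sigma=\tau^{p-1}=1+t+\dots+t^{p-1}$ for the norm element. For $1\le j\le p-1$ put $\rho=\tau^j$ and $\bar\rho=\tau^{p-j}$; these are commuting $R$-linear endomorphisms of $C$ that commute with $\partial$. The key input is the exactness property of free modules: on a free $R$-module $F$ one has $\ker(\rho|_F)=\bar\rho F$.

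\textbf{Step 1: short exact sequences.} For each $j$ I form the sequence of chain complexes
\[
0\to \bar\rho C \oplus C' \to C \xrightarrow{\;\rho\;} \rho C \to 0 .
\]
The map $\rho$ is surjective onto $\rho C$ by definition. The main step is to identify its kernel with $\bar\rho C + C'$.
\begin{itemize}
\item Since $G$ acts trivially on $C'$, we have $\tau C'=0$, so $C'\subset\ker\rho$. Also $\rho\bar\rho=\tau^p=0$, so $\bar\rho C\subset\ker\rho$.
\item Conversely, suppose $\rho x=0$. Then the image $\bar x\in C/C'$ lies in $\ker(\rho)$ on a free module, hence $\bar x=\bar\rho\bar y$. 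So $x-\bar\rho y\in C'$.
\item For the sum to be direct, suppose $z\in\bar\rho C\cap C'$. Then $z$ maps into $\bar\rho(C/C')\cap 0$. This does not immediately give $z=0$, so directness fails in general.
\end{itemize}
To fix directness, I will instead work with the sequence $0\to \bar\rho C + C'\to C\to\rho C\to0$ and separately control the intersection. Concretely, I replace $\rho C$ by $\rho^{C}:=\rho C$ and use the standard Smith sequences
\[
0\to \bar\rho C\oplus C'\to C\to \rho C\to 0,
\]
where $\bar\rho C$ is understood as the Smith subcomplex $C^{\bar\rho}$ modulo the fixed part. For safety, I would prove a cleaner statement: choose an $R$-splitting $C\cong C'\oplus F$ as modules, with $F$ free, which exists since free $R$-modules are projective. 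Then $\rho C=\rho F$, $\bar\rho C=\bar\rho F$, and
\[
\ker\rho=C'\oplus\bar\rho F,
\]
a genuine direct sum as modules. Both $\rho C$ and $C'+\bar\rho C$ are $\partial$-invariant. Note also that $\bar\rho F\cong\rho' F$ with the complementary exponent.

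\textbf{Step 2: long exact sequences and counting.} Denote $h(X)=\dim_k H(X)$. The short exact sequence of finite-dimensional complexes gives a long exact (hexagon, since there is no grading) sequence, hence
\[
h(\ker\rho)\le h(C)+h(\rho C),
\]
and also $h(\rho C)\le h(C)+h(\ker\rho)$.

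Smith's trick is then to set $a_j=h(\tau^jC)$. Since $\bar\rho C+C'=\tau^{p-j}C+C'$, I would set $b_j=h(\tau^{p-j}C+C')$ and look for the inequality
\[
b_j+a_j\le h(C)+ \text{(connecting terms)}.
\]
Iterating over $j$ and telescoping should isolate $h(C')$.

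A cleaner route is to use the filtration $C'\subset C'+\tau^{p-1}C\subset\dots\subset C'+\tau C\subset C$. Its successive quotients are $(C'+\tau^{i}C)/(C'+\tau^{i+1}C)\cong \tau^iF/\tau^{i+1}F$. This is $\bar F:=F/\tau F$, carrying the induced differential, so all the quotients are isomorphic to the same complex $\bar F$. The identification is compatible with $\partial$ because $\partial$ commutes with $\tau$.

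\textbf{Step 3: the counting argument, which is the main obstacle.} Subadditivity along the filtration gives
\[
h(C)\le h(C')+p\,h(\bar F),
\]
which points the wrong direction. So I need the Smith pairing instead. The exact sequence
\[
0\to C'+\tau^{p-1}C\to C\xrightarrow{\tau}\tau C\to0,
\]
together with $\tau C\cong C/(C'+\tau^{p-1}C)$ and induction on the length, should yield
\[
h(C'+\tau^{p-1}C)+h(\tau C)\le 2h(C)\quad\text{and}\quad h(C')+h(\tau^{p-1}C)\le h(C'+\tau^{p-1}C)+\cdots.
\]
The telescoping that produces the clean inequality $h(C')\le h(C)$ is the delicate step. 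Classical Smith theory only gives $h(C')\le h(C)$ via the sum of the sequences for the pairs $(\rho,\bar\rho)=(\tau,\tau^{p-1})$ and $(\tau^{p-1},\tau)$. Adding those two inequalities, the terms $h(\tau C)$ and $h(\tau^{p-1}C)$ cancel, leaving $2h(C')\le 2h(C)$ after the splitting. I would verify this cancellation carefully, and that is where I expect the real work to be.
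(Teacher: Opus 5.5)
The argument you settle on in your last paragraph is the paper's proof. The paper uses $e=\sigma-1$ and the norm $\mathsf n=1+\sigma+\dots+\sigma^{p-1}$, which equals $e^{p-1}$ in characteristic $p$, so its two short exact sequences are exactly your pairs $(\tau,\tau^{p-1})$ and $(\tau^{p-1},\tau)$. As written, however, the proposal never assembles into a proof: it states the key per-sequence inequality in a confused form, it contains a false inequality, and it defers to later the one step it calls delicate. In fact the content is not in the cancellation at all.

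\textbf{Step 1: the sum is direct, as complexes.} Your claim that directness fails in general is wrong. Since $\bar\rho$ kills $C'$, any module splitting $C=C'\oplus F$ gives $\bar\rho C=\bar\rho F\subset F$, hence $\bar\rho C\cap C'=0$. More to the point, this is a direct sum of \emph{subcomplexes}. $C'$ is $\partial$-invariant by hypothesis, and $\bar\rho C$ is $\partial$-invariant because $\partial$ commutes with $\tau$, even though $F$ itself need not be. A direct sum ``as modules'' is not enough; what you need is
\[
h(\ker\rho)=h(C')+h(\bar\rho C).
\]

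\textbf{Step 2: the inequality for each sequence.} Combine the identity above with exactness of the triangle at $H(\ker\rho)$, which you already have in Step 2. For each $1\le j\le p-1$ this gives
\[
h(C')+h(\tau^{p-j}C)\le h(C)+h(\tau^{j}C).
\]

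\textbf{Step 3: the cancellation.} Adding the cases $j$ and $p-j$ cancels the Smith terms and gives $h(C')\le h(C)$ immediately. No telescoping is involved.

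\textbf{What to discard.} Your ``$b_j+a_j\le h(C)+\cdots$'' puts $a_j$ on the wrong side. The inequality $h(C'+\tau^{p-1}C)+h(\tau C)\le 2h(C)$ that you predict in Step 3 is false. Take $p=3$, $C=R$, $\partial=\tau^2$ and $C'=0$. Then $h(C)=1$, $h(\tau C)=2$ and $h(\tau^2C)=1$, so the left side is $3$ and the right side is $2$. The filtration detour gives only the reverse inequality, as you noticed, and should be dropped.
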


For the cellular chains of a finite $G$-CW complex $X$ with $C'=C(X^G)$ this
is Smith's theorem, and the proof, given in
Subsection~\ref{subsec:floer-smith}, uses nothing but the exact triangles of
two short exact sequences.  No grading is needed, which is why the lemma is
stated for differential modules; this is what allows us to work with
ungraded Floer complexes. 
So the Floer inequality
\eqref{eq:intro-floer-smith} follows once one has a differential
$\F_p[G]$-module $C$ with $H(C)\cong HF(L_0,L_1;\F_p)$ and a subcomplex $C'$
with trivial action, free quotient, and $H(C')\cong HF(L_0^G,L_1^G;\F_p)$.

{\it The obstruction.}
Call an intersection point \emph{invariant} if it lies in $M^G$ and
\emph{outside} otherwise; since $p$ is prime, an outside point has trivial
stabilizer and lies in a free orbit.  The ordinary Floer complex is a
differential $\F_p[G]$-module, and the invariant intersection points
$\cP{p}\in(L_0\cap L_1)^G$ span a submodule $C'$ on which $G$ acts
trivially with free quotient, so the lemma would apply to it directly if
$C'$ were $\partial$-invariant.  It is not:
a rigid strip
%\[
%    \cP{p}\in(L_0\cap L_1)^G\ \longrightarrow\ \cP{y}\notin(L_0\cap L_1)^G
%\]
may leave an invariant intersection point and end at an outside one, and
then $\partial\cP{p}$ has a component outside the fixed span.  Our goal is to
modify the complex near each invariant intersection point so that the
modified complex contains the fixed Floer complex as a subcomplex with free
quotient, without changing the homology.

{\it The idea.}
Near an invariant intersection point $\cP{p}$ a non-fixed strip is detected
by its leading normal asymptotic mode: the direction in which it leaves the
fixed locus.  There are infinitely many possible normal modes, so we first
truncate to finitely many.  The resulting finite-dimensional space of
leading coefficients is then normalized to a sphere, and this sphere
records the direction of departure.  Replacing the generator $\cP{p}$ by the
cells of that sphere, on which $G$ acts freely, turns the directions in
which a strip can escape the fixed locus into free orbits, while the top
homology class of the sphere continues to represent $\cP{p}$. Concretely,
at each invariant intersection $\cP{p}$ we attach
the Morse complex of a single finite-dimensional spectral sphere,
\[
 S_N=S(E_N^-)
\]
of the truncated negative normal eigenspace $E_N^-$ of dimension $N$, together with a copy of the invariant intersection $\cP{p}$, playing the role of filling the spectral sphere by a ball.
Every selected eigenspace is a nontrivial real rotation plane, and the
action on the sphere is free.  The sphere Morse function and metric are
lifted from its generalized lens-space quotient, so their critical points
occur in free $G$-orbits.

A non-fixed strip with invariant negative end determines a vector of
selected asymptotic coefficients.  Translation acts on the different
coefficients with different exponential weights.  Dividing by this
weighted action, rather than ordinary radial scaling, gives the evaluation
$\Ev^-_{\cP{p},N}$ into $S_N$.  An \emph{infinitesimal cascade}
is a strip coupled at this evaluation to a negative-gradient
half-trajectory on the spectral sphere.  Equivalently, its evaluation
belongs to the descending cell of a sphere critical point; this defines
the incidence space \eqref{eq:floer-incidence}.

The stabilized module has three summands,
\[
 C=C^o\oplus C^s\oplus C^i,
\]
where $C^o$ is generated by outside intersections, $C^s$ is the sum of
the local sphere complexes, and $C^i$ is generated by the
invariant intersections.  We deliberately evaluate only at the negative end.
A strip leaving $\cP{p}$ is constrained by the incidence of its evaluation
with a cell of the sphere; a strip arriving at an invariant intersection
carries no asymptotic constraint at its positive end and contributes to
the top cycle of that intersection's sphere, the sum of all top cells.
%This asymmetry is what allows the invariant generator to be represented by
%the top cycle, while the directions of departure become free orbits.  
The local augmentation maps $C^s$ to $C^i$,
and no differential leaves $C^i$ for its complement.  The restriction to
$C^i$ is the fixed-pair Floer differential, while the quotient is free
over $\Z[G]$.

The map from the ordinary Floer complex sends an outside generator to
itself and an invariant generator to the top cycle of its sphere.
Theorem~\ref{thm:floer-F-quasi-isomorphism} proves that this is a
quasi-isomorphism over $\Z$ and over $\F_p$.  Its proof uses the action
filtration and the reduced homology of the local sphere.  Lemma~\ref{lem:intro-algebraic-smith} then gives
\eqref{eq:intro-floer-smith}.

The whole paper is summarized by the following diagram, in which the top
row is the geometric preparation, the middle row the two properties of the
stabilized complex, and the bottom row the algebra.
\[
\begin{tikzcd}[column sep=2.6em, row sep=large, ampersand replacement=\&]
\text{stably trivial normal bundle}
  \arrow[r, "\text{prepare}"]
\& \text{constant normal model}
  \arrow[r, "\text{cut off}"]
\& S_N
  \arrow[d, "\text{Morse complex}"]
\\
CF(L_0,L_1)
\& CF_{\mathrm{st}}(\widehat L_0,\widehat L_1)
  \arrow[l, "\simeq"']
  \arrow[d, twoheadrightarrow]
\& CF(L_0^G,L_1^G)
  \arrow[l, hook']
\\
\& \text{free }\Z[G]\text{-module}
  \arrow[r, "\text{algebraic Smith}"]
\& \text{Smith inequality}
\end{tikzcd}
\]

Of the ingredients, the geometric preparation of the normal data follows
the strategy of Seidel and Smith, and once the finite-dimensional evaluation
is in hand the Morse theory of the sphere is standard.  The new features for
odd $p$ are the finite spectral stabilization, the weighted evaluation, and
the uniform cutoff of eigenvalues that makes the finite truncation available; the substantive analytic issue
is that the evaluation survives Floer breaking and is compatible with
gluing.

\subsection{The analytic ingredients}
\label{subsec:intro-model}\label{subsec:intro-finite}
%Two obstacles stand between the idea above and a chain complex.

The main analytic ingredient to carry out the idea is a uniform bound on
the leading normal mode of a non-fixed strip.

%{\it The normal geometry is only stably trivial.}
%The normal bundle of the fixed locus is trivial only after adding a
%representation, and the Lagrangians meet it in subbundles that vary along
%the fixed locus.  A suspension by a cotangent lift makes the normal bundle
%an honest constant representation without changing the Floer complex, and
%equivariant Hamiltonian isotopies then straighten each Lagrangian, on a
%adapted tube containing all fixed strips, to a product of its fixed part
%with a constant Lagrangian subspace.  A split almost complex structure with
%one constant normal factor completes the normal form.  Along a fixed strip
%the normal operator is then a constant-coefficient operator, invertible,
%so fixed-locus regularity reduces to the usual transversality for the fixed
%pair, and non-fixed strips are made regular by invariant perturbations
%supported in the free region.  We also prepare a constant almost complex
%structure and linear boundary conditions on a chart at every intersection
%point, so that the strip equation is exactly linear near each of them.

{\it There are infinitely many normal modes.}
After the preparation the normal asymptotic operator at every invariant
intersection point is the same, with spectrum $-\pi/2+\pi\Z$: a non-fixed
strip leaving the fixed locus does so along a superposition of modes
$e^{-\lambda s}e^{-\sqrt{-1}\lambda t}c$, $\lambda\in-\pi/2+\pi\Z$, $\lambda<0$,
and there are infinitely many of them.
Theorem~\ref{ass:floer-uniform-leading-bound} shows that \emph{all}
non-fixed strips with invariant negative end have a nonzero coefficient
among their first $r(J)$ modes, uniformly.  The bound $r(J)$ depends on the adapted
almost complex structure only, and it applies to the moduli spaces of
higher index difference from which the rigid and one-dimensional incidence
spaces are cut out.  Choosing $r\ge r(J)$ afterwards, every strip that can
contribute to the differential is seen by the first $r$ modes.

\subsection{Related work}
Localization for involutions was developed in
\cite{seidel2010localization,hendricks2015localization,hendricks2016flexible,
hendricks2020simplicial,large2019equivariantfloertheorydouble, bai2026proofarnoldgiventalconjecture}. 
Lidman and Manolescu \cite{lidman2018floer} proved a Smith inequality for
monopole Floer homology of regular $p$-fold coverings of rational homology
spheres using the Seiberg--Witten Floer stable homotopy type.
Stoffregen and Zhang \cite{stoffregen2024localization} and Borodzik,
Politarczyk and Silvero \cite{borodzik2021khovanov} obtained localization
consequences from equivariant Khovanov homotopy types, for all primes $p$;
see also \cite{zhang2018rank}.
In fixed point Floer cohomology, a Smith inequality for the square of a
symplectomorphism was also proved by Seidel \cite{seidel2015equivariant},
via an equivariant pair-of-pants product, which was later generalized by Shelukhin-Zhou \cite{SZ21}.  The stabilization
and cascade viewpoint is related to
\cite{bao2024morse,bao2024equivariantmorsehomologyreflection}.
Our analytic construction uses the separated-transition gluing method
of \cite{hutchings2009gluing}; the auxiliary sphere compactifications use
ordinary Morse trajectory spaces \cite{wehrheim2012smooth}.

\subsection{Organization}
Section~\ref{sec:lagrangian-floer} gives the Floer construction, including
normal preparation, ambient and incidence transversality, the spectral
cutoff, orientations, the differential and its square, the comparison
map, and the algebraic Smith argument.  Section~\ref{sec:applications}
applies the result to periodic links.  Appendix~\ref{app:floer-orientations}
identifies the orientations induced on the fixed pair with those of its
induced relative Pin structures.  Appendix~\ref{app:floer-compactification}
proves compactness and the exhaustive low-dimensional boundary list.
Appendix~\ref{app:floer-gluing} proves the gluing collars, coefficient
estimates, recovery, and the boundary signs of strip, sphere and
incidence spaces.   A notation guide is at the end.

\subsection{Acknowledgement}
E.B. is partially supported by the National Science Foundation under grant DMS-2404529. He thanks Kristen Hendricks, Ko Honda, and Ciprian Manolescu for helpful discussions.
T.L. is partially supported by the National Science Foundation under grant DMS-2208062. 
C.Y.M. is partially supported by the Royal Society University Research Fellowship. He thanks Kristen Hendricks, Noah Porcelli and Ivan Smith for helpful discussions.

\subsection{Declaration of AI use}
We used AI language models Claude Fable 5.1 and Chatgpt 5.6 in preparing this paper. 
Besides editorial help, it was used to draft the proofs of several routine auxiliary results.
Each of these was checked and edited by the authors, who take responsibility for their correctness.
The main ideas, constructions, and arguments of the paper are the authors' own.

\section{Lagrangian Floer theory}
\label{sec:lagrangian-floer}

\paragraph{Roadmap.}
The construction has four stages.
\begin{enumerate}[label=\textup{Stage \Roman*.},leftmargin=*]
    \item \emph{Normal preparation}
    (Subsections~\ref{subsec:floer-suspension}--\ref{subsec:lagrangian-preparation}).
    Following \cite{seidel2010localization}, the normal geometry near the fixed locus is made constant: a suspension
    trivializes the normal bundle, Hamiltonian isotopies straighten the
    Lagrangians to products near $M^G$, and a split almost complex structure
    has one constant normal factor. 
    (Subsection~\ref{subsec:floer-evaluation}).  A non-fixed strip leaving
    an invariant intersection point is recorded by its first finitely many
    normal modes, normalized to a point of a finite-dimensional sphere; a
    uniform bound shows that finitely many modes suffice for every strip
    that matters.
    \item \emph{Stabilization}
    (Subsections~\ref{subsec:floer-stabilized}--\ref{subsec:floer-dsquared}).
    The Morse complex of the sphere is attached to every invariant
    generator, incidence of the evaluation with its cells defines the new
    differential, and compactification and gluing give $\partial^2=0$.
    \item \emph{Comparison and algebra}
    (Subsections~\ref{subsec:floer-localization}--\ref{subsec:floer-smith}).
    The stabilized complex is quasi-isomorphic to the ordinary one and
    contains the fixed Floer complex as a subcomplex with free quotient,
    which is what the algebraic Smith inequality requires.
\end{enumerate}

\paragraph{Order of choices.}
The data are fixed in an order that avoids circularity, and the order is
the order of the text:
\[
    \text{normal preparation}\ \longrightarrow\ J\ \longrightarrow\
    \text{uniform leading-mode bound}\ \longrightarrow\ N\ \longrightarrow\
    \text{sphere data}.
\]
The adapted almost complex structure is fixed first, together with the
regularity of the ambient strip spaces.  The leading-mode bound is then
proved for it, uniformly in the cutoff, and only after the bound is
established is $N$ chosen.  The Morse data on the sphere are chosen last and
generically, without disturbing anything chosen before.  The proofs
concerning orientations are in Appendix~\ref{app:floer-orientations}, and
the compactification and gluing proofs are in
Appendices~\ref{app:floer-compactification} and~\ref{app:floer-gluing}.

\subsection{Floer setup}
\label{subsec:floer-setup}
\label{subsec:lagrangian-floer-setup}

Let $(M,\omega=d\theta)$ be exact and convex at infinity, and let the odd cyclic
group $G=\Z_p$ act by $\theta$-preserving symplectomorphisms (an action by exact
symplectomorphisms reduces to this case by averaging $\theta$ over $G$).  Let $L_0,L_1\subset M$ be exact
$G$-invariant Lagrangians, compact or conical at infinity, intersecting
transversally and only in a compact region, so that $L_0\cap L_1$ is a
finite $G$-set; in particular there is no intersection along the
cylindrical ends.  Throughout, an
intersection point is called \emph{invariant} if it lies in $M^G$, that
is, if its stabilizer is all of $G$, and \emph{outside} otherwise; since $p$
is prime, an outside point has trivial stabilizer and lies in a free
$G$-orbit.  A strip or a configuration is \emph{fixed} if it lies in $M^G$
and \emph{non-fixed} otherwise, and the \emph{fixed pair} is
$(L_0^G,L_1^G)$ in $M^G$.

Neither the fixed locus nor the fixed pair needs a hypothesis of its own:
$M^G$ is a smooth submanifold of $M$, with components of possibly
different dimensions, and it is symplectic, exact for $\theta|_{M^G}$,
because at a fixed point the invariant part of the tangent space is
$\omega$-orthogonal to the sum of the nontrivial isotypic components; and
$L_i^G=L_i\cap M^G$ is an exact Lagrangian submanifold of $M^G$, isotropic
since $\omega$ vanishes on $L_i$ and of half dimension since
$T_{\cP{p}}M\cong T_{\cP{p}}L_i\oplus(T_{\cP{p}}L_i)^*$ as representations,
with $L_0^G$ and $L_1^G$ transverse in $M^G$ at the invariant intersection
points.

To define the complex over $\Z$ we need orientation data, fixed once and
for all:

\begin{assumption}[Relative Pin structures]\label{ass:floer-pin}
There is a $G$-equivariant real vector bundle $E\to M$, possibly zero,
such that $TL_i\oplus E|_{L_i}$ admits a Pin structure for $i=0,1$.
\end{assumption}

Nothing equivariant is asked of the Pin structures: since $p$ is odd they
can always be chosen $G$-equivariant (Lemma~\ref{lem:floer-invariant-pin}).  Every orientation
convention is fixed in Subsection~\ref{subsec:floer-orientations}.  We take the
Hamiltonian to be zero throughout: the generators of the Floer complex are the
intersection points, a point being nondegenerate exactly because the
intersection is transverse.  The perturbation datum is a $t$-dependent family
$J=(J_t)_{t\in[0,1]}$ of $G$-invariant $\omega$-compatible almost complex
structures, equal to an admissible contact-type structure at infinity and
subject to the following local requirements imposed in
Subsection~\ref{subsec:lagrangian-preparation}: $J_t$ is independent of $t$
and constant on a neighborhood of every intersection point, equal there to the
standard structure in the charts of Subsection~\ref{subsec:lagrangian-preparation}; and $J_t$ is
generic in the region where the $G$-action is free, which is where
transversality of the moduli spaces is achieved.  A Floer strip is a
finite-energy solution
\begin{equation}\label{eq:lagrangian-floer-strip}
\begin{gathered}
    u\colon\R\times[0,1]\longrightarrow M,
    \qquad u(s,0)\in L_0,
    \quad u(s,1)\in L_1,\\
    \partial_su+J_t(u)\,\partial_tu=0,
    \qquad
    \lim_{s\to\pm\infty}u(s,t)=y_\pm\in L_0\cap L_1.
\end{gathered}
\end{equation}
Translation acts on nonconstant strips, and $\M(\cP{x},\cP{y})$ denotes
the quotient of the strips from $\cP{x}$ to $\cP{y}$ by this action; a strip
from $\cP{x}$ to $\cP{y}$ has $y_-=\cP{x}$ at its negative end $s\to-\infty$
and $y_+=\cP{y}$ at its positive end.  In the
exact convex setting there is no bubbling, and the relevant compactifications
consist of broken strips; compare \cite{seidel2010localization}.  Since we
work over $\Z$, the moduli spaces must be oriented; this requires
relative Pin structures on $L_0$ and $L_1$, Assumption~\ref{ass:floer-pin};
all orientation conventions of this section are fixed in
Subsection~\ref{subsec:floer-orientations}.

\subsubsection*{Indices}

The Floer complex $CF(L_0,L_1)$ is the free $\Z$-module on $L_0\cap L_1$
with the differential defined below; $HF(L_0,L_1)$ is its homology.  The
Fredholm index of a strip $u$ from $\cP{x}$ to $\cP{y}$ is its
Maslov index, which depends on the homotopy class of $u$ among
strips with the given boundary conditions and ends.  Accordingly the space
of strips from $\cP{x}$ to $\cP{y}$ is a disjoint union of components of
possibly different dimensions, and we write $\M^0(\cP{x},\cP{y})$ for the
union of its index-one components; ``rigid'' and ``one-dimensional'' refer
to components throughout.  All strips from $\cP{x}$ to $\cP{y}$ have the
same energy, the difference of the actions of their ends, so the whole space
is compact up to breaking and only finitely many indices occur.

The fixed data of the construction are the fixed parts
$L_0^G,L_1^G\subset M^G$, a transverse pair of exact Lagrangians in the
symplectic manifold $M^G$ with $L_0^G\cap L_1^G$ the set of invariant
intersection points; the stabilized complex will contain their Floer complex
as a subcomplex.  

\subsubsection{The Floer complex}
\label{subsec:floer-complex}

For regular data with the orientation conventions of
Subsection~\ref{subsec:floer-orientations}, the ordinary Floer differential is
\begin{equation}\label{eq:ordinary-floer-differential}
 d_{\mathrm F}\cP{x}
   =\sum_{\cP{y}\in L_0\cap L_1}\#\M^0(\cP{x},\cP{y})\,\cP{y}.
\end{equation}
The compactified index-two strip spaces give $d_{\mathrm F}^2=0$.
We suppress the auxiliary almost complex structure from $CF(L_0,L_1)$;
continuation identifies its homology for different admissible regular
choices \cite{seidel2008fukaya}.  The same convention applies
to the suspended and adapted pairs below.

We use \emph{differential module} to mean a module with a square-zero
endomorphism.  Its homology is $\ker d/\operatorname{im}d$; no absolute
integer grading is assumed.  The local sphere degree and the Fredholm
indices still specify which moduli spaces are counted.  Write
$\theta|_{L_i}=dK_i$ and set
\begin{equation}\label{eq:floer-action}
 \mathcal A(\cP{x})=K_1(\cP{x})-K_0(\cP{x}).
\end{equation}
With the strip orientation $ds\wedge dt$, Stokes' theorem gives
$E(u)=\mathcal A(\cP{x})-\mathcal A(\cP{y})$ for a strip from
$\cP{x}$ to $\cP{y}$.  Every nonconstant strip strictly lowers this action.

\subsection{\texorpdfstring{$G$}{G}-equivariant symplectic transformation and Lagrangian subspaces}

We first record the elementary representation theory
needed; see \cite{serre1977linear}.

\begin{lemma}[Real representations of the cyclic group]\label{lem:real-irreps-of-Zp}
Fix a generator $\sigma$ of $G=\Z_p$ and put $\zeta=e^{2\pi i/p}$.
The irreducible real representations are the trivial line and the rotation
planes $\rho_k=\C$, on which $\sigma z=\zeta^kz$, for
$1\le k\le(p-1)/2$.  If a real representation $U$ has $U^G=0$, then its
unit sphere has a free orientation-preserving $G$-action.
\end{lemma}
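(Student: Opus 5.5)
The plan is to pass to complex representations, use Schur's lemma, and then take real forms. Since $G$ is abelian, every irreducible complex representation is one-dimensional: a character $\sigma\mapsto\zeta^k$ with $0\le k\le p-1$. Let $V$ be a real irreducible representation and consider $V_\C=V\otimes_\R\C$. This complex representation splits into characters, and it is invariant under complex conjugation, which exchanges the $\zeta^k$- and $\zeta^{-k}$-eigenspaces of $\sigma$. Pick a nonzero eigenvector $v$ of $\sigma$ in $V_\C$, with eigenvalue $\zeta^k$.

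We then split into two cases according to $k$.

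If $k=0$, then $v+\bar v$ and $i(v-\bar v)$ are real vectors fixed by $G$, and at least one of them is nonzero. That vector spans a trivial real subrepresentation, so irreducibility forces $V$ to be the trivial line.

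If $k\neq0$, then $\zeta^k\ne\zeta^{-k}$ because $p$ is odd. Hence $v$ and $\bar v$ are linearly independent, and $\operatorname{Re}v$, $\operatorname{Im}v$ span a real invariant plane. On this plane $\sigma$ acts as rotation by $2\pi k/p$. Irreducibility forces $V$ to equal this plane, and sending $\operatorname{Re}v+\sqrt{-1}\operatorname{Im}v$ to $1$ identifies it with $\rho_k$. The generator $\sigma^{-1}$ has the conjugate eigenvalue, so replacing $v$ by $\bar v$ gives $\rho_k\cong\rho_{p-k}$, via $z\mapsto\bar z$. We may therefore take $1\le k\le(p-1)/2$. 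These $\rho_k$ are pairwise non-isomorphic, since the eigenvalues of $\sigma$ on $\rho_k\otimes\C$ are exactly $\{\zeta^{\pm k}\}$. Each $\rho_k$ is irreducible, because a real invariant line would carry a real eigenvalue of the rotation, and $\zeta^k\notin\R$. The classification is now complete.

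For the second claim, use complete reducibility (Maschke's theorem over $\R$, via an invariant inner product) to write $U$ as a direct sum of irreducibles. The hypothesis $U^G=0$ rules out trivial summands, so $U\cong\bigoplus_j\rho_{k_j}$ with every $k_j\not\equiv0 \pmod p$. The element $\sigma^m$, for $0<m<p$, acts on the $j$th summand by multiplication by $\zeta^{mk_j}$. Since $p$ is prime and $p\nmid mk_j$, this scalar is not $1$. Hence $\sigma^m$ fixes only $0$, so it has no fixed point on the unit sphere, and the action is free. (Choose the invariant inner product so the decomposition is orthogonal; then $G$ preserves the unit sphere.)

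For orientations, each $\sigma^m$ is a complex-linear map of $\C^n$, so its real determinant is $|\det_\C|^2>0$. It therefore preserves the orientation of $U$, and hence of the unit sphere, which is oriented as the boundary of the unit ball.

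The only mild subtlety is that oddness of $p$ is used to separate $\zeta^k$ from $\zeta^{-k}$; everything else is routine.
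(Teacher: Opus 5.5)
Your proof is correct. The paper gives no argument of its own for this lemma, only a reference to Serre, and your complexification and eigenvector argument is the standard one. Your orientation step ($\det_\R=|\det_\C|^2>0$ for a complex-linear map) matches the group-complex-structure viewpoint the paper adopts immediately afterwards.

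One cosmetic slip: in the basis $(\operatorname{Re}v,\operatorname{Im}v)$ the generator acts by rotation through $-2\pi k/p$. So the equivariant identification with $\rho_k$ is $x\operatorname{Re}v+y\operatorname{Im}v\mapsto x-\sqrt{-1}\,y$, which is projection onto the $\zeta^k$-eigenline, whereas $x+\sqrt{-1}\,y$ gives $\rho_{p-k}$. This is harmless, since you also show $\rho_k\cong\rho_{p-k}$.
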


\begin{definition}[Group complex structure]\label{def:group-complex-structure}
Let $U$ be a real $G$-representation with $U^G=0$, and let
$U=\bigoplus_{k=1}^{(p-1)/2}U_k$ be its isotypic decomposition, $U_k$ the
sum of the copies of $\rho_k$ in $U$.  Each $U_k$ carries a unique complex
structure $J$ for which $\sigma$ acts as multiplication by $\zeta^k$, that
is,
\[
  \sigma|_{U_k}=\cos(2\pi k/p)+\sin(2\pi k/p)\,J .
\]
On a copy of $\rho_k=\C$ it is the given complex structure, and on each
plane of $U_k$ that $\sigma$ rotates through $2\pi k/p$ it is the rotation
through $\pi/2$ in the same sense.  The direct sum of these is the
\emph{group complex structure} $J_G$ on $U$, and the orientation it
induces is the \emph{complex orientation}.  Both depend on the generator
$\sigma$ through the convention $1\le k\le(p-1)/2$; replacing $\sigma$ by
$\sigma^{-1}$ reverses $J_G$.
\end{definition}

By Schur's lemma, $\operatorname{End}_G(\rho_k)=\C$, acting on $\rho_k$ by
the complex scalars; the complex scalars of $U_k$ are the action of this
algebra, and a $G$-equivariant linear map between representations without
invariant vectors sends each isotypic component to the corresponding one
and commutes with it.  Hence:

\begin{lemma}[Equivariant maps are complex linear]\label{lem:group-complex-structure}
Let $U,U'$ be real $G$-representations without invariant vectors.  Every
$G$-invariant subspace of $U$ is $J_G$-complex, every $G$-equivariant
linear map $U\to U'$ is $J_G$-complex linear, and a $G$-equivariant
isomorphism preserves complex orientations.
\end{lemma}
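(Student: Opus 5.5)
The argument is Schur's lemma applied isotypic component by isotypic component. First decompose $U=\bigoplus_k U_k$ and $U'=\bigoplus_k U'_k$ into isotypic components as in Definition~\ref{def:group-complex-structure}. I will show that a $G$-equivariant linear map $f\colon U\to U'$ sends $U_k$ into $U'_k$. Write $f_{jk}\colon U_k\to U'_j$ for the block components; each is equivariant. For $j\ne k$ we have $\rho_j\not\cong\rho_k$ as real representations, because $\sigma$ acts on $\rho_k$ with eigenvalues $\zeta^{\pm k}$ and these sets differ for distinct $1\le j,k\le (p-1)/2$. Hence the kernel and image of $f_{jk}$ are subrepresentations, and complete reducibility together with Lemma~\ref{lem:real-irreps-of-Zp} forces $f_{jk}=0$.

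On a single component, $J$ is an explicit polynomial in $\sigma$:
\[
J=\frac{\sigma-\cos(2\pi k/p)}{\sin(2\pi k/p)},
\]
which makes sense since $\sin(2\pi k/p)\neq0$ for $1\le k\le(p-1)/2$. The same formula holds on $U'_k$. An equivariant map commutes with $\sigma$, so $f_{kk}$ commutes with this expression, i.e.\ $f_{kk}J=Jf_{kk}$. This proves complex linearity, and it also shows that the Schur-lemma description is not actually needed.

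For subspaces, a $G$-invariant subspace $W\subset U$ has $W^G=0$, so $W=\bigoplus_k(W\cap U_k)$. This follows from the previous step applied to the inclusion, or directly from the fact that the isotypic projectors are polynomials in $\sigma$. Each $W\cap U_k$ is $\sigma$-invariant, so it is invariant under the polynomial $J$ above and is therefore a $J_G$-complex subspace. Moreover the group complex structure of $W$ is the restriction of $J_G$.

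Finally, a $G$-equivariant isomorphism is by the above a complex linear isomorphism $(U,J_G)\to(U',J_G)$. Complex linear isomorphisms preserve complex orientations: $\mathrm{GL}_n(\C)$ is connected, or equivalently the real determinant is $|\det_\C|^2>0$. No step is a real obstacle. The only point needing care is that the inequivalence of distinct $\rho_k$ uses the restriction $1\le k\le (p-1)/2$, since $\rho_k\cong\rho_{-k}$; this convention is exactly what makes $J_G$ well defined and the polynomial formula valid.
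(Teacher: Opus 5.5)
Your proof is correct, and it handles the main step differently from the paper. The paper deduces the lemma from Schur's lemma. There $\operatorname{End}_G(\rho_k)=\C$ acts on $U_k$ as its complex scalars, and an equivariant map preserves isotypic components and commutes with this action.

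You keep the block-diagonality step, proved by the disjointness of the eigenvalue sets $\{\zeta^{\pm j}\}$ and $\{\zeta^{\pm k}\}$. You replace the endomorphism-algebra input by the observation that $J=(\sigma-\cos(2\pi k/p))/\sin(2\pi k/p)$ on $U_k$ is a real polynomial in $\sigma$. Commuting with $\sigma$ is therefore commuting with $J$, with no need to split $U_k$ into copies of $\rho_k$. The same formula gives $J_G$-invariance of invariant subspaces and the uniqueness of $J$ asserted in Definition~\ref{def:group-complex-structure}.

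Pushed one step further, your idea removes the block step too. The isotypic projectors are real polynomials in $\sigma$, by the Chinese remainder theorem for the coprime factors $x^2-2\cos(2\pi k/p)\,x+1$ of $1+x+\cdots+x^{p-1}$, and $\sigma$ satisfies that polynomial when there are no invariant vectors. Hence $J_G=P(\sigma)$ for a single real polynomial $P$, the same on every representation without invariant vectors, and all three claims follow at once.

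The paper's route is more conceptual: it identifies $J_G$ with the action of the commutative algebra $\operatorname{End}_G(\rho_k)$. Yours is more elementary and self-contained.
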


Let $U$ be a nontrivial irreducible representation of $G$, that is,
$U=\rho_k$ for some $k$, let $m$ be a positive integer, and equip
$W:=T^*(U^{\oplus m})=U^{\oplus m}\oplus(U^{\oplus m})^*$ with the induced
$G$-action and the canonical symplectic form $\omega_W$.  Using $J_G$ we
identify $U$ with $\C$, and $(U^{\oplus m})^*$ with $U^{\oplus m}$ by the
invariant inner product, so that
\begin{align}\label{eq:Wid}
W \simeq \C^m \oplus \C^m,
\end{align}
where the induced action is $\sigma\cdot(z,w)=(\zeta^kz,\zeta^kw)$ and the
symplectic form is
\[
\omega_W=\operatorname{Im}(h_W),\qquad
h_W\bigl((z,w),(z',w')\bigr):=\sqrt{-1}\,\bigl(\langle w',z\rangle-\langle z',w\rangle\bigr),
\qquad \langle a,b\rangle=\sum_ja_j\bar b_j .
\]
Here $h_W$ is a nondegenerate Hermitian form  with respect to $J_G$, antilinear in its first and
linear in its second argument, and $h_W(a,b)=\omega_W(a,J_Gb)+\sqrt{-1}\,\omega_W(a,b)$.

\begin{lemma}[Invariant Lagrangian subspaces]\label{l:eqLag}
Under the identification \eqref{eq:Wid}, every complex linear automorphism
of $\C^m\oplus\C^m$ preserving $h_W$ is a $G$-equivariant symplectic
automorphism of $W$, that is, $U(h_W) \cong U(m,m)\subseteq Sp(W)^G$, and every
$G$-invariant Lagrangian subspace of $W$ is an $m$-dimensional complex
subspace on which $h_W$ vanishes.  

The group $U(h_W)$ is connected and acts transitively on these subspaces, so
the space of $G$-invariant Lagrangian subspaces of $W$ is connected.  
%{\cym For a general orthogonal representation $\R^m$ without invariant vectors, the space of $G$-invariant Lagrangian subspaces of $W$ is a product of $U(m_i,m_i)/GL(m_i,\mathbb{C})$ with respect to the isotypic decomposition of $W$.}
The same holds for $W=T^*\R^m$ with $\R^m$ any orthogonal representation without invariant vectors.
\end{lemma}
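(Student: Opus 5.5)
The plan is to reduce everything to linear algebra over $\C$ via Lemma~\ref{lem:group-complex-structure}, and then invoke the standard structure of indefinite unitary groups.

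\emph{Step 1: equivariant symplectic maps and invariant Lagrangians.} A complex linear map $A$ preserving $h_W$ preserves $\omega_W=\operatorname{Im}h_W$, so it is symplectic. Since $\sigma$ acts on both factors of $\C^m\oplus\C^m$ by the scalar $\zeta^k$, and $A$ is complex linear, $A$ commutes with $\sigma$. Conversely, let $\Lambda\subset W$ be a $G$-invariant Lagrangian subspace. Since $W^G=0$, Lemma~\ref{lem:group-complex-structure} shows that $\Lambda$ is $J_G$-complex, and its real dimension $2m$ gives complex dimension $m$. For $a,b\in\Lambda$ we also have $J_Gb\in\Lambda$, so both $\omega_W(a,b)$ and $\omega_W(a,J_Gb)$ vanish, hence $h_W(a,b)=0$. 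In the other direction, a complex $m$-plane on which $h_W$ vanishes is $\omega_W$-isotropic of real dimension $2m$, hence Lagrangian, and it is $G$-invariant because $\sigma$ acts by a scalar.

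\emph{Step 2: identify the signature.} The form $h_W$ is Hermitian of signature $(m,m)$. To see this, change basis by $e_\pm=(z,\pm\sqrt{-1}z)/\sqrt2$; one checks that $h_W$ becomes $\operatorname{diag}(I_m,-I_m)$ up to an overall sign. Hence $U(h_W)\cong U(m,m)$.

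\emph{Step 3: connectedness and transitivity.} $U(m,m)$ is connected: its polar decomposition gives a homeomorphism onto $U(m)\times U(m)\times\R^{m^2}$, and $U(m)$ is connected. For transitivity, in the diagonal basis a maximal $h$-null subspace is the graph $\{(x,Tx)\}$ of a unitary $T\colon\C^m\to\C^m$. Such a graph must be a graph because it meets the negative and positive definite halves trivially. The element $\operatorname{diag}(I,T)\in U(m,m)$ carries the graph of $I$ to the graph of $T$. Therefore the space of invariant Lagrangians is the continuous image of a connected group, in fact diffeomorphic to $U(m)$, and is connected.

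\emph{Step 4: general orthogonal representations.} For a general orthogonal representation $\R^m$ with no invariant vectors, decompose $\R^m=\bigoplus_k U_k$ isotypically, with $U_k\cong\rho_k^{\oplus m_k}$. Then $T^*\R^m=\bigoplus_k T^*U_k$, and these summands are mutually $\omega$-orthogonal.

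A $G$-invariant Lagrangian $\Lambda$ splits as $\Lambda=\bigoplus_k(\Lambda\cap T^*U_k)$, because the isotypic projections are polynomials in $\sigma$. Each piece $\Lambda\cap T^*U_k$ is isotropic, and the dimension count forces each piece to be Lagrangian in $T^*U_k$. So the space of invariant Lagrangians is the product over $k$ of the spaces from the irreducible case, acted on transitively by the connected group $\prod_k U(m_k,m_k)$.

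The only mildly delicate step is the splitting in Step 4, together with keeping the sign conventions of $h_W$ straight in Step 2; neither is a real obstacle.
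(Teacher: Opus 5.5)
Your proof is correct and follows the paper's outline. Step 1 and Step 4 match the paper's argument; in Step 4 you give a little more detail on the splitting, via the isotypic projections being polynomials in $\sigma$. Your change of basis in Step 2 is an explicit version of the paper's observation that the matrix $\bigl(\begin{smallmatrix}0&\sqrt{-1}\\-\sqrt{-1}&0\end{smallmatrix}\bigr)$ squares to the identity and has trace zero.

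The one genuine difference is transitivity. The paper cites Witt's extension theorem for the Hermitian space $(W,h_W)$, which is short and basis-free but relies on an external result. You instead diagonalize $h_W$ and note that a totally null $m$-plane meets a definite half trivially. It is therefore the graph of a unitary $T$, and $\operatorname{diag}(I,T)$ carries the graph of the identity to it. This route is entirely elementary and gives an explicit model: the space of invariant Lagrangians is identified with $U(m)$. Connectedness of that space then follows directly, without using connectedness of $U(m,m)$ at all.

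One small slip: in the polar decomposition of $U(m,m)$ the Euclidean factor is $\R^{2m^2}$ (the off-diagonal block $B\in\C^{m\times m}$), not $\R^{m^2}$. This does not affect the connectedness conclusion.
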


\begin{proof}
Both summands of $W=U^{\oplus m}\oplus(U^{\oplus m})^*$ are of type
$\rho_k$, so $\sigma$ acts on $W$ as the $J_G$-scalar $\zeta^k$; hence a
complex linear map of $W$ commutes with $\sigma$, and a $G$-invariant
subspace is complex (Lemma~\ref{lem:group-complex-structure}).
Multiplying $z$ and $w$ by a common unit complex number preserves
$\operatorname{Re}\langle z,w'\rangle$, so $\omega_W$ is $J_G$-invariant
and $h_W=\omega_W(\cdot,J_G\cdot)+\sqrt{-1}\,\omega_W$ is Hermitian.
Therefore a complex linear map preserving $h_W$ preserves
$\omega_W=\operatorname{Im}h_W$, and a complex subspace on which
$\omega_W$ vanishes is $h_W$-isotropic, of complex dimension $m$ if it is
Lagrangian.  (The reverse inclusion $Sp(W)^G\subseteq U(h_W)$ also holds,
equivariant maps being complex linear, but is not needed.)  The matrix of $h_W$ in the coordinates $(z,w)$ is
$\bigl(\begin{smallmatrix}0&\sqrt{-1}\\-\sqrt{-1}&0\end{smallmatrix}\bigr)$,
which squares to the identity and has trace zero, so $h_W$ has signature
$(m,m)$ and $U(h_W)\cong U(m,m)$ is connected.

By Witt's theorem \cite[Chapitre~IX, \S4]{bourbaki1959algebre9}, every
isometry between subspaces of the nondegenerate Hermitian space $(W,h_W)$
extends to an element of $U(h_W)$; a linear isomorphism between two
$m$-dimensional $h_W$-isotropic subspaces is an isometry, so $U(h_W)$ acts
transitively on them.

Finally, if $\R^m=\bigoplus_kU_k^{\oplus m_k}$ is the isotypic
decomposition of an orthogonal representation without invariant vectors,
then $\prod_kSp(T^*(U_k^{\oplus m_k}))^G$ acts on
$T^*\R^m=\bigoplus_kT^*(U_k^{\oplus m_k})$ by equivariant symplectic
automorphisms, and an invariant Lagrangian subspace is the sum of its
intersections with the summands, each Lagrangian in its summand by a
dimension count; so transitivity and connectedness hold componentwise.
\end{proof}

\subsection{Suspension}
\label{subsec:floer-suspension}

The stable normal trivialization is not yet an honest trivialization of
the normal bundle.  We therefore add a cotangent factor whose fixed locus is
unchanged but whose normal representation cancels the stable defect; after
this suspension the normal bundle is a constant $G$-representation, and the
Floer complex is unchanged.  This is the first of three steps converting the
stable normal hypothesis into a constant normal model, the others being
equivariant Hamiltonian straightening and the choice of a split almost
complex structure (see \cite[Section 3e]{seidel2010localization} for the case that $p=2$).

Throughout, $\nu$ denotes the symplectic normal bundle of $M^G$ in $M$
a
symplectic $G$-vector bundle with $\nu^G=0$, and
$\nu_{L_i}\subset\nu|_{L_i^G}$ the normal bundle of $L_i^G$ in $L_i$, a
fiberwise $G$-invariant Lagrangian subbundle.

For a real orthogonal $G$-representation $\R^n$, the cotangent lift
$T^*\R^n=\C^n$ carries the standard symplectic form and complex
structure; the complexified action is unitary, and the zero section $\R^n$
and the fiber $\sqrt{-1}\,\R^n$ are transverse $G$-invariant Lagrangian
subspaces.  This is the stabilizing datum of
\cite{seidel2010localization}, where the involution acts on
$\C^k=T^*\R^k$ by the cotangent lift of $-\operatorname{id}$; for odd $p$
the only difference is which real representations exist, and since every
nontrivial real irreducible representation of $\Z_p$ is a rotation plane
(Lemma~\ref{lem:real-irreps-of-Zp}), $(\R^n)^G=0$ forces $n$ even.

The following assumption is the odd-prime analogue of the stable normal
trivialization of \cite[Definition 18]{seidel2010localization}.

\begin{assumption}[Equivariant stable normal triviality]
\label{ass:floer-normal-triviality}
There are real orthogonal $G$-representations $\R^d$ and $\R^m$, neither
with nonzero invariant vectors, together with a $G$-equivariant
isomorphism of symplectic vector bundles
\begin{align}\label{eq:stably-trivial}
    \nu\oplus\bigl(M^G\times T^*\R^d\bigr)
    \;\cong\;M^G\times T^*\R^m,
\end{align}
equivalently a trivialization of each isotypic component of the left side
as a complex vector bundle.  Moreover, for $i=0,1$ there is a path
$\Lambda_i(t)$, $t\in[0,1]$, of fiberwise $G$-invariant Lagrangian
subbundles of $M^G\times T^*\R^m$ over $L_i^G$ from
\[
    \Lambda_0(0)=\nu_{L_0}\oplus\R^d,\qquad
    \Lambda_1(0)=\nu_{L_1}\oplus\sqrt{-1}\,\R^d
\]
to the constant subspaces
\[
    \Lambda_0(1)=\R^m,\qquad
    \Lambda_1(1)=\sqrt{-1}\,\R^m.
\]
\end{assumption}

\begin{remark}
    The condition \eqref{eq:stably-trivial} means that $[\nu] \in \tilde{K}^0_G(M^G)$ is the zero class in the reduced $G$-equivariant $K$ group, and the condition on $\nu_{L_0}\oplus\R^d$ and $\R^m$ mean that they are in the same connected component in the space of stable $G$-equivariant Lagrangian subbundles.
\end{remark}

\begin{remark}\label{r:rotate}
    Note that by Lemma \ref{l:eqLag}, $\R^m$ is $G$-equivariantly isotopic to $\sqrt{-1}\,\R^m$ via Lagrangian subspaces.
    %To see this, we can apply irreducible representation decomposition and consider only the case $m=2$.
    %In this case, the $G$ action on $(p,q) \in T^*\R^2=\R^2 \times \R^2$ is given by $\xi \cdot (p,q)=(\xi \cdot p, \xi \cdot q)$.
    %It is easy to see that for any $a,b \in \mathbb{R}$,  $L_{a,b}:=\{(p,q)=(ap,bp): p \in \R^2\}$ is a $G$-equivariant Lagrangian subspace of $T^*\R^2$. Therefore, any path from $(0,1)$ to $(1,0)$ will induce a $G$-equivariant Lagrangian isotopy between $\R^2$ and $\sqrt{-1}\,\R^2$.
\end{remark}

The two representations play different roles: $\R^d$ is an auxiliary
stabilizer, absorbed by the suspension below and not seen again, while
$\R^m$ is the model normal representation in which all later normal data
live.  We write
\[
    W:=T^*\R^m=\C^m
\]
for its cotangent lift, with its standard complex structure $J_W$; then
$W^G=0$, and $\R^m$, $\sqrt{-1}\,\R^m$ are its standard Lagrangian
subspaces.  An additional cotangent suspension preserves the assumption,
so we may suppose $m>0$.

\begin{remark}
\label{rem:floer-paths-independent}A natural question is whether the two paths must be compatible.  The two paths are independent pieces of data, over the different bases
$L_0^G$ and $L_1^G$.  No compatibility between them is imposed: over a
common point $x\in L_0^G\cap L_1^G$ the subspaces $\Lambda_0(t)_x$ and
$\Lambda_1(t)_x$ are not required to be transverse, or related in any
other way, at intermediate times $t$.  This is possible because each
Lagrangian is straightened by its own Hamiltonian isotopy
(Lemma~\ref{lem:floer-straightening}); only the endpoints interact. 
%In \cite[Definition 18]{seidel2010localization} the
%two endpoints are the standard real and imaginary Lagrangians, so their
%transversality holds automatically; for odd $p$ the endpoints are arbitrary
%invariant Lagrangians and the condition must be imposed.
\end{remark}

\begin{lemma}[Suspension]\label{lem:floer-suspension}
Set
\[
    \widehat M:=M\times T^*\R^d,
    \qquad
    \widehat L_0:=L_0\times\R^d,
    \qquad
    \widehat L_1:=L_1\times\sqrt{-1}\,\R^d,
\]
with the product symplectic form, the diagonal $G$-action, and the product
almost complex structure $\widehat J_t:=J_t\oplus J_{\mathrm{std}}$, with
$J_{\mathrm{std}}$ the standard complex structure of $T^*\R^d=\C^d$.  Then:
\begin{enumerate}
    \item the data are exact, convex at infinity, and $G$-invariant, and
    $\widehat M^G=M^G$;
    \item $\widehat L_0\cap\widehat L_1=(L_0\cap L_1)\times\{0\}$,
    transversally;
    \item the $T^*\R^d$-component of any finite-energy strip for
    $\widehat J_t$ is holomorphic with boundary on $\R^d,\sqrt{-1}\,\R^d$ and
    asymptotic to $0$ at both ends, hence has zero energy and is constant;
    consequently
    \[
        CF(\widehat L_0,\widehat L_1)=CF(L_0,L_1)
    \]
    as differential modules, the generators and the rigid strips being the
    same, and the constant factor contributing a canonical orientation to
    every determinant line.
\end{enumerate}
\end{lemma}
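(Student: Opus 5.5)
The plan is to verify the three items in order; each reduces to a property of the product with the linear model $T^*\R^d=\C^d$.

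For (1), exactness comes from the primitive $\widehat\theta=\theta\oplus\theta_{\mathrm{std}}$, with $\theta_{\mathrm{std}}=\tfrac12\sum(x\,dy-y\,dx)$ in coordinates $z=x+\sqrt{-1}y$. This primitive is invariant under the unitary cotangent-lift action, and it vanishes on both $\R^d$ and $\sqrt{-1}\,\R^d$. So $\widehat L_0$ and $\widehat L_1$ are exact, with primitives $K_i\circ\mathrm{pr}_M$.

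Convexity at infinity uses the product of the convex end of $M$ with the standard convex structure of $\C^d$, rounded off as usual. The product structure $\widehat J_t$ is admissible for the product Liouville structure, and the maximum principle applies factorwise.

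$G$-invariance is immediate, since the action is diagonal and $\R^d$, $\sqrt{-1}\R^d$ are invariant. Finally $(T^*\R^d)^G=0$ because $(\R^d)^G=0$, which gives $\widehat M^G=M^G\times\{0\}=M^G$.

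For (2), note $\R^d\cap\sqrt{-1}\R^d=\{0\}$ transversally. Then $\widehat L_0\cap\widehat L_1=(L_0\cap L_1)\times\{0\}$. Tangent spaces split as products, so transversality of the product follows from transversality in each factor.

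For (3), write a $\widehat J_t$-strip as $\widehat u=(u,v)$. Because $\widehat J_t$ is a product, the equation splits into two pieces:

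1. $u$ is a $J_t$-strip with the original boundary conditions.
2. $v\colon\R\times[0,1]\to\C^d$ is holomorphic for the $t$-independent $J_{\mathrm{std}}$, with $v(s,0)\in\R^d$, $v(s,1)\in\sqrt{-1}\R^d$, and $v\to0$ at both ends.

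The energy of $v$ equals the difference of the $\theta_{\mathrm{std}}$-actions of its ends, by Stokes, using that $\theta_{\mathrm{std}}$ vanishes on both boundary Lagrangians. Both ends are at $0$, so $E(v)=0$ and $v$ is constant. Hence $\widehat u=(u,0)$, and the moduli spaces are canonically identified: $\M(\widehat{\cP x},\widehat{\cP y})=\M(\cP x,\cP y)$.

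\textbf{Main obstacle: regularity and orientations.} The linearized operator at $(u,0)$ splits as $D_u\oplus D_0$. Here $D_0=\bar\partial$ on $W^{1,p}(\R\times[0,1],\C^d)$ with constant boundary conditions $\R^d$ and $\sqrt{-1}\R^d$. I expect this to be the main point to check. I would handle it by the Fourier decomposition in $t$: the asymptotic operator $-J\partial_t$ with these boundary conditions has spectrum $\pi/2+\pi\Z$, which avoids $0$. Separation of variables then shows $D_0$ is an isomorphism, with kernel and cokernel both zero. Therefore regularity of $u$ is equivalent to regularity of $(u,0)$, and the index is unchanged.

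The determinant line of $D_0$ is canonically trivial, being the determinant of an isomorphism. With the relative Pin data on $\widehat L_i$ chosen as the product of the given ones with the canonical structures on the linear factors, the orientation of $\det(D_u\oplus D_0)$ agrees with that of $\det D_u$. Consequently the signed counts coincide and $CF(\widehat L_0,\widehat L_1)=CF(L_0,L_1)$ as differential modules.
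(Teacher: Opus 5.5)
Your proposal is correct and follows the paper's proof. The product equation splits, and Stokes' theorem with a primitive vanishing on $\R^d$ and $\sqrt{-1}\,\R^d$ gives the $T^*\R^d$-component zero energy, so it vanishes. The remaining constant summand $D_0$ is invertible, so it changes neither kernels, cokernels, nor (with product Pin data) orientations.

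You are more explicit than the paper on items (1) and (2) and on the spectral argument for $D_0$. The one loose phrase is calling $\widehat J_t$ admissible for the product end. Compactness actually comes from the factorwise maximum principle you also invoke, so nothing is lost.
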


\begin{corollary}\label{cor:floer-suspension-normal}
The symplectic normal bundle of $\widehat M^G=M^G$ in $\widehat M$ is
$\nu\oplus(M^G\times T^*\R^d)$, trivialized by
Assumption~\ref{ass:floer-normal-triviality}, and the Lagrangian normal
bundle $\nu_{\widehat L_i}$ of $\widehat L_i^G=L_i^G$ in $\widehat L_i$ is
$\nu_{L_0}\oplus\R^d$ for $i=0$ and $\nu_{L_1}\oplus\sqrt{-1}\,\R^d$ for
$i=1$.
\end{corollary}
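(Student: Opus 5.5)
The plan is to compute everything fiberwise over a point $x\in M^G$, using the product structure of $\widehat M=M\times T^*\R^d$ and the diagonal action.

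\emph{Fixed locus.} Since $G$ acts linearly on $T^*\R^d=\C^d$ with $(T^*\R^d)^G=0$, the fixed set of the diagonal action is
\[
\widehat M^G=M^G\times(T^*\R^d)^G=M^G\times\{0\},
\]
which we identify with $M^G$. The tangent space of $\widehat M$ at a point $(x,0)$ splits $G$-equivariantly and $\widehat\omega$-orthogonally as $T_xM\oplus T^*\R^d$. Inside the first summand, the $\omega$-complement of $T_xM^G$ is $\nu_x$. The second summand is $\widehat\omega$-orthogonal to $T_xM^G\subset T_xM$. Hence the symplectic normal bundle of $\widehat M^G$ is
\[
\nu\oplus(M^G\times T^*\R^d),
\]
and Assumption~\ref{ass:floer-normal-triviality} trivializes exactly this bundle.

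\emph{Lagrangian normal bundles.} By the same argument,
\[
\widehat L_0^G=(L_0\times\R^d)^G=L_0^G\times(\R^d)^G=L_0^G\times\{0\},
\]
using $(\R^d)^G=0$. At a point $(x,0)$ we have $T\widehat L_0=T_xL_0\oplus\R^d$. The normal bundle of $L_0^G$ in $L_0$ is $\nu_{L_0}$, and in the metric sense it is the complement $T_xL_0\cap\nu_x$. Therefore the normal bundle of $\widehat L_0^G$ in $\widehat L_0$ is $\nu_{L_0}\oplus\R^d$, sitting inside $\nu\oplus T^*\R^d$.

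The case $i=1$ is identical. Since $(\sqrt{-1}\,\R^d)^G=0$, we get $\widehat L_1^G=L_1^G\times\{0\}$, with normal bundle $\nu_{L_1}\oplus\sqrt{-1}\,\R^d$.

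To match the intrinsic definition of $\nu_{L_i}$ as a subbundle of $\nu|_{L_i^G}$, one checks two things. First, $T_xL_i$ splits $G$-equivariantly into its invariant part $T_xL_i^G$ and its nontrivial isotypic part. Second, that nontrivial part lies in the nontrivial isotypic part of $T_xM$, which is precisely $\nu_x$. This is the only point needing care: the identification of the normal bundle with the nontrivial isotypic summand. It is immediate from the fact that $\nu$ is the sum of the nontrivial isotypic components, as used in Subsection~\ref{subsec:floer-setup}.
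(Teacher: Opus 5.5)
Your proposal is correct and is essentially the paper's argument: the paper simply declares the corollary immediate from the definitions, using $(\R^d)^G=0$ to get $\widehat M^G=M^G\times\{0\}$ together with the product structure of $\widehat M$ and $\widehat L_i$. Your fiberwise computation of the symplectic complement and the isotypic identification $T_xL_i\cap\nu_x=(\nu_{L_i})_x$ just spell out those definitions in detail.
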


\begin{proof}[Proof of Lemma~\ref{lem:floer-suspension} and Corollary~\ref{cor:floer-suspension-normal}]
Items (1) and (2), and the corollary, are immediate from the definitions:
$(\R^d)^G=0$ gives $\widehat M^G=M^G\times\{0\}$, the zero section and
the fiber meet transversally in the origin, and transversality of a product
pair is transversality of the factors.  For (3), the equation splits, so the
$T^*\R^d$-component $v$ is $J_{\mathrm{std}}$-holomorphic with the stated
boundary and asymptotics; its energy is the difference of actions of its two
asymptotic intersection points, both the origin, so $E(v)=0$ and $v$ is
constant.  The
strips for $\widehat J_t$ therefore biject with the strips for $J_t$, with
identical linearizations up to the constant nondegenerate summand, which
contributes an index shift independent of the chord and no kernel or
cokernel.
\end{proof}

\subsection{Straightening the Lagrangians}
\label{subsec:floer-straightening-lagrangians}

Choose a $G$-invariant inner product on $W$ and write
$D(W)=\{w\in W:|w|<1\}$.  Fix a $G$-equivariant symplectic tubular neighborhood
\[
    \Xi\colon M^G\times D(W)\longrightarrow \widehat U\subset\widehat M
\]
realizing the trivialization of Corollary~\ref{cor:floer-suspension-normal}, obtained
from the equivariant symplectic neighborhood theorem by averaging.

\begin{lemma}[Equivariant straightening]\label{lem:floer-straightening}
For each $i=0,1$ there is a compactly supported $G$-equivariant Hamiltonian
isotopy $(\phi_i^s)_{s\in[0,1]}$ of $\widehat M$, fixing $\widehat M^G$
pointwise, such that, after shrinking the tube,
\[
    \phi_i^1(\widehat L_i)\cap \widehat U
    =\Xi\bigl(L_i^G\times(\Lambda_i(1)\cap D(W))\bigr).
\]
Moreover, $\phi_0^1(\widehat L_0)$ and $\phi_1^1(\widehat L_1)$ intersect
transversally at every invariant intersection point.
\end{lemma}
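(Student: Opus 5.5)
Work one Lagrangian at a time; the two straightenings are independent, as Remark~\ref{rem:floer-paths-independent} observes. Fix $i$. In the tube coordinates $\Xi$, equivariant Weinstein neighborhood considerations show that, near $L_i^G$, the Lagrangian $\widehat L_i$ is, to first order, the subbundle $L_i^G\times\nu_{\widehat L_i}$. Corollary~\ref{cor:floer-suspension-normal} identifies $\nu_{\widehat L_i}$ with $\Lambda_i(0)$. The plan has three steps.

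\emph{Step 1: straighten to the linear model.} Deform $\widehat L_i$ near $L_i^G$ to the exact model $\Xi(L_i^G\times(\Lambda_i(0)\cap D(W)))$. The model and $\widehat L_i$ are both $G$-invariant and tangent along $L_i^G$, so in an equivariant Weinstein chart the model is the graph of $dF$ for a $G$-invariant function $F$ vanishing to second order along $L_i^G$. Here $G$-invariance of $F$ comes from averaging. Cutting off $F$ in the normal radius gives a compactly supported equivariant Hamiltonian isotopy fixing $\widehat M^G$ pointwise; it fixes that locus because $dF=0$ on $L_i^G$.

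\emph{Step 2: rotate the normal subbundle along the path.} Realize the path $\Lambda_i(t)$ by an equivariant isotopy of the linear models $\Xi(L_i^G\times(\Lambda_i(t)\cap D(W)))$. Each of these is a Lagrangian: it is a product of the Lagrangian $L_i^G$ with a fiberwise linear Lagrangian, with respect to a symplectic tubular form. I would make this rigorous by choosing $\Xi$ so that its pullback form is the standard coupling form, with the connection term vanishing on fiberwise linear Lagrangian families. Alternatively, one can absorb the correction by the Moser trick.

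The isotopy is exact. Its flux is computed along $L_i^G$, where the family is stationary, and the fibers $\Lambda_i(t)\cap D(W)$ are contractible. Hence the isotopy is generated by a $G$-invariant Hamiltonian vanishing to first order on $M^G$; averaging over $G$ preserves this. Cutting off in the normal direction outside a smaller tube extends it to a compactly supported Hamiltonian isotopy. Composing Steps 1 and 2 gives $\phi_i^s$, and shrinking the tube gives the displayed identity.

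\emph{Step 3: transversality at invariant intersection points.} At an invariant point $\cP{p}$, the tangent spaces split $G$-equivariantly as
\[
T(L_i^G)\oplus\Lambda_i(1).
\]
The first summands are transverse in $TM^G$, since $L_0^G\pitchfork L_1^G$. The second summands are $\R^m$ and $\sqrt{-1}\,\R^m$, which are transverse in $W$. Hence the full tangent spaces are transverse.

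\emph{Expected main obstacle.} The hard point is Step 2's claim that the normal rotation is Hamiltonian with a cutoff still producing a Lagrangian. Naive cutoff of the normal path does not preserve the Lagrangian condition. I would therefore cut off the generating Hamiltonian rather than the submanifolds, and check exactness of the isotopy via the flux computation above. The vanishing of the Hamiltonian on $M^G$ ensures that points there are fixed, so that $\phi_i^s$ fixes $\widehat M^G$ pointwise.
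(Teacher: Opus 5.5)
Your Step~3 is fine, but Steps~1 and~2 rest on a false claim: the models $\Xi(L_i^G\times(\Lambda_i(t)\cap D(W)))$ are \emph{not} Lagrangian unless the subbundle $\Lambda_i(t)$ is constant along $L_i^G$.

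The tube pulls $\omega$ back to the product form $\omega_{M^G}\oplus\omega_W$, and the conclusion of the lemma is stated for this fixed $\Xi$. Near $x_0$, write $\Lambda_i(t)_x$ as the graph of a symmetric map $S_x\colon\Lambda\to\Lambda'\cong\Lambda^*$, for a fixed splitting $W=\Lambda\oplus\Lambda'$. At the point $(x,a+S_xa)$ of the total space, take the tangent vectors $(v,(D_vS)a)$ with $v\in T_xL_i^G$, and $(0,b+S_xb)$ with $b\in\Lambda$. They pair under $\omega$ to $\pm\langle(D_vS)a,b\rangle$, which is nonzero off the zero section as soon as $S$ varies. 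Already for $\R\subset\R^2$ and a line of slope $s(x)$ in $W=\R^2$ (ignoring the group), one gets $-s'(x)a$.

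Now $\Lambda_i(0)=\nu_{\widehat L_i}$ is in general not constant; that is exactly why the path $\Lambda_i$ is part of Assumption~\ref{ass:floer-normal-triviality}. Consequences:
\begin{itemize}
\item The target of Step~1 is not Lagrangian, so it cannot be the graph of $dF$ in a Weinstein chart.
\item The family in Step~2 is not a Lagrangian isotopy, so there is no flux to compute and no Hamiltonian generating it.
\item Your ``main obstacle'' paragraph misplaces the problem. The uncut models already fail before any cutoff is applied.
\item The only product model that is Lagrangian for the product form is the final one, $t=1$.
\item Your proposed remedies (re-choosing $\Xi$ with a coupling form, or the Moser trick) are not carried out. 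A coupling form kills the bad term only for a connection preserving $\Lambda_i(t)$, so it would have to depend on $t$. Changing $\Xi$ also changes the meaning of the conclusion. The required Moser correction is essentially the missing construction.
\end{itemize}

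The missing idea is to rotate the tangent data of $\widehat L_i$ \emph{itself} by an ambient Hamiltonian flow, and to straighten only afterwards. This is the paper's gauge-then-shear order (Lemmas~\ref{lem:floer-gauge} and~\ref{lem:floer-shear}).
\begin{enumerate}
\item By transitivity of $Sp(W)^G$ on invariant Lagrangian subspaces (Lemma~\ref{l:eqLag}), lift the path to $u(x,t)\in Sp(W)^G$ with $u(x,0)=\mathrm{id}$ and $u(x,t)\Lambda_i(0)_x=\Lambda_i(t)_x$.
\item Flow by the $G$-invariant, fiberwise quadratic Hamiltonian $K_t(x,w)=\tfrac12\omega_W(\widehat\xi(t)_xw,w)$, where $\xi=(\partial_tu)u^{-1}$, cut off in $w$.
\item Since $dK_t=0$ along $w=0$, the flow fixes $\widehat M^G$ pointwise. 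Its linearization along $L_i^G$ is $u$.
\item The image of $\widehat L_i$ is automatically Lagrangian, being a Hamiltonian image. Its normal tangent bundle along $L_i^G$ is now the constant $\Lambda_i(1)$.
\item Only now is $\Xi(L_i^G\times\Lambda_i(1))$ a genuine Lagrangian tangent to it along $L_i^G$. Your Step~1 shear, applied with $\Lambda_i(1)$ in place of $\Lambda_i(0)$, then finishes the proof.
\end{enumerate}
No intermediate product models are needed.
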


The proof assembles two constructions: a \emph{gauge},
which rotates the tangent data of a Lagrangian along its fixed part into a
prescribed constant position, and a \emph{shear}, which removes the
higher-order terms once the tangent data is constant.  Both use the
elementary observation that if a time-dependent Hamiltonian $H_t$ satisfies
$dH_t=0$ at every point of a subset $Z\subset\widehat M$ for every $t$,
then its Hamiltonian vector field vanishes along $Z$ and its isotopy fixes
$Z$ pointwise; and that an invariant Hamiltonian generates an equivariant
isotopy, since $\omega$ is invariant.  Throughout,
$\ell_i:=L_i^G=\widehat L_i\cap\widehat M^G$ denotes the fixed part, a
Lagrangian submanifold of $M^G$, compact or conical at infinity, and $\nu_{\widehat L_i}\subset\ell_i\times W$ the normal bundle of
Corollary~\ref{cor:floer-suspension-normal}, a fiberwise invariant
Lagrangian subbundle, so that
$T\widehat L_i|_{\ell_i}=T\ell_i\oplus\nu_{\widehat L_i}$ inside
$T\widehat M|_{\ell_i}=TM^G|_{\ell_i}\oplus W$.

\begin{lemma}[Equivariant gauge]\label{lem:floer-gauge}
Let $\ell$ be one of the $\ell_i$ and let $\Lambda(t)$, $t\in[0,1]$, be a
smooth path of fiberwise $G$-invariant Lagrangian subbundles of
$\ell\times W$.  Then there is a compactly supported $G$-invariant
Hamiltonian isotopy of $\widehat M$ that fixes $\widehat M^G$ pointwise and
whose time-$t$ linearization at a point $(x,0)$ of $\ell$ is the identity
on $T_xM^G$ and a symplectic map of $W$ carrying $\Lambda(0)_x$ to
$\Lambda(t)_x$.  Applied to $\widehat L_i$, it leaves $\ell_i$ pointwise
fixed and replaces the normal tangent bundle $\nu_{\widehat L_i}$ by $\Lambda(1)$.
\end{lemma}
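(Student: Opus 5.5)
The plan is to build the isotopy in three steps. First, lift the path of Lagrangian subbundles to a path of fiberwise equivariant symplectic automorphisms of $W$. Then realize that path by the linear Hamiltonian flow of a quadratic form on the normal fibers. Finally, cut off and average to make it compactly supported, $G$-invariant and fixing $\widehat M^G$.

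\emph{Lifting the path.} Over $\ell$, consider the bundle of groups $\ell\times Sp(W)^G$. By Lemma~\ref{l:eqLag}, $U(h_W)\subseteq Sp(W)^G$ acts transitively on $G$-invariant Lagrangian subspaces, and the same holds componentwise for $W=T^*\R^m$ with $\R^m$ a general representation without invariant vectors. The orbit map $g\mapsto g\Lambda(0)_x$ is a smooth fibration onto the homogeneous space of invariant Lagrangians. Its fiber is the stabilizer, a product of groups of the form $GL(m_k,\C)$. Hence the path $\Lambda(t)$ lifts, by the homotopy lifting property applied over the base $\ell\times[0,1]$ relative to $t=0$, to a smooth family $A_t(x)\in Sp(W)^G$ with $A_0=\mathrm{id}$ and $A_t(x)\Lambda(0)_x=\Lambda(t)_x$. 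Where $\ell$ is conical at infinity, I would first reduce to a compact part. The path is only used on a compact region, or is constant along the ends, and one can apply the statement only on a compact subset. The lift can then be made constant outside a compact set. I expect this to be the main obstacle: either an appropriate hypothesis on the ends is needed, or the isotopy is only needed to be compactly supported after the path is made stationary near infinity.

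\emph{Generating Hamiltonian.} Set $a_t(x)=\dot A_tA_t^{-1}\in\mathfrak{sp}(W)^G$. Define the fiberwise quadratic function $Q_t(x,w)=\tfrac12\omega_W(w,a_t(x)w)$ on $\ell\times W$, whose linear Hamiltonian flow on each fiber $W$ is $A_t(x)$. Extend $x\mapsto a_t(x)$ smoothly from $\ell$ to a neighborhood in $M^G$, for instance using a tubular retraction of $\ell$ in $M^G$, and multiply by a cutoff $\chi(x)\beta(|w|)$ equal to $1$ near $\ell\times\{0\}$. Transport the result by the tube $\Xi$ to obtain $H_t$ on $\widehat M$. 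Since $a_t$ is $G$-equivariant and the inner product and $\Xi$ are equivariant, $H_t$ is $G$-invariant; one may also average over $G$ anyway. Because $H_t$ vanishes to second order along $M^G\times\{0\}$, we have $dH_t=0$ on $\widehat M^G$. By the observation preceding the lemma, the isotopy fixes $\widehat M^G$ pointwise and is equivariant.

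\emph{Linearization.} At a point $(x,0)$ with $x\in\ell$, the linearized Hamiltonian vector field is the Hessian term. In the $M^G$ directions it vanishes, because $H_t$ is quadratic in $w$ with coefficients depending on $x$: the mixed terms vanish at $w=0$ and the pure $x$-terms vanish identically. In the $W$ directions it is $a_t(x)$. The linearized flow along the fixed point $(x,0)$ therefore solves $\dot B=\mathrm{diag}(0,a_t)B$, giving $B_t=\mathrm{id}\oplus A_t(x)$. Applying this to $\widehat L_i$: the set $\ell_i$ is fixed pointwise, so $T\ell_i$ is preserved, and $\nu_{\widehat L_i}=\Lambda(0)$ is carried to $\Lambda(1)$, as claimed.
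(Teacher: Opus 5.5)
Your proposal is correct and is essentially the paper's proof: lift the path to a family $A_t(x)\in Sp(W)^G$ using the transitivity of Lemma~\ref{l:eqLag}, flow by a cut-off, $G$-invariant fiberwise quadratic Hamiltonian built from $\dot A_tA_t^{-1}$ (which vanishes to second order along $\widehat M^G$), and read off the linearized flow $\mathrm{id}\oplus A_t(x)$ from the linear ODE along the fixed points. The difficulty at infinity that you flag is genuine, and the paper handles it the same way without saying so: it extends the logarithmic derivative to a compactly supported family, so the stated linearization holds only where the cutoff equals one, which suffices because the straightening is only used over the relatively compact $V$ (a minor correction: the stabilizer of an invariant Lagrangian is the parabolic subgroup $GL(m_k,\C)\ltimes\mathrm{Herm}(m_k)$ rather than $GL(m_k,\C)$, which does not affect your lifting argument).
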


\begin{proof}
By Lemma \ref{l:eqLag}, $Sp(W)^G$ acts transitively on invariant Lagrangian subspaces so the
path lifts to a family $u(x,t)\in Sp(W)^G$ with $u(x,0)=\operatorname{id}$
and $u(x,t)\Lambda(0)_x=\Lambda(t)_x$.  Its logarithmic derivative
$\xi(t)_x=(\partial_tu)u^{-1}$ lies in the invariant symplectic Lie algebra;
extend it to a compactly supported family $\widehat\xi$ over the base of
the tube.  Consider a Hamiltonian that is of quadratic form
$K_t(x,w)=\tfrac12\omega_W(\widehat\xi(t)_xw,w)$, cut off in $w$ inside
$D(W)$.  Being quadratic in $w$ means that $dK=0$ along $w=0$, so its flow fixes
$\widehat M^G$ pointwise; being $G$-invariant means that it generates an equivariant
isotopy; and its Hamiltonian vector field is to first order the linear
field $w\mapsto\widehat\xi(t)_xw$, so the linearization of the flow along
$\ell$ solves $\partial_t\widehat u=\widehat\xi\,\widehat u$ with
$\widehat u(0)=\operatorname{id}$, that is, $\widehat u=u$.
\end{proof}

\begin{lemma}[Equivariant shear]\label{lem:floer-shear}
Let $\ell$ be one of the $\ell_i$, let $\Lambda\subset W$ be a constant
$G$-invariant Lagrangian subspace, and let $\widehat L\subset\widehat M$ be
a $G$-invariant Lagrangian with $\widehat L\cap\widehat M^G=\ell$ whose
normal tangent bundle along $\ell$ is the constant bundle
$\ell\times\Lambda$.  Then for every relatively compact open $V\subset M^G$
there exist a $G$-invariant Hamiltonian isotopy $\psi^s$, $s\in[0,1]$, of
$\widehat M$ and a neighborhood $\mathcal U$ of $\ell\cap V$ in $\widehat M$
such that:
\begin{enumerate}
    \item $\psi^s$ has compact support, contained in a prescribed
    neighborhood of $\ell\cap\overline V$;
    \item $\psi^s$ fixes $\widehat M^G$ pointwise and $d\psi^s=\operatorname{id}$
    at every point of $\widehat M^G$, so it changes neither the fixed locus
    nor the tangent spaces along it of any Lagrangian;
    \item $\psi^1(\widehat L)\cap\mathcal U=\Xi(\ell\times\Lambda)\cap\mathcal U$.
\end{enumerate}
\end{lemma}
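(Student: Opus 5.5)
We work in the tube coordinates $\Xi$, writing points of $\widehat U$ as $(x,w)$ with $x\in M^G$ and $w\in D(W)$. The plan is to exhibit $\widehat L$ near $\ell$ as the graph of an exact one-form over $\ell\times\Lambda$ that vanishes to second order along $\ell$. We then remove this form by an invariant Hamiltonian that is itself quadratic in the normal directions.

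\emph{Step 1: local Weinstein model.} Choose a $G$-invariant Lagrangian complement $\Lambda'\subset W$ of $\Lambda$; one exists by Lemma~\ref{l:eqLag}, for instance $J_W\Lambda$. Choose also a Lagrangian complement of $T\ell$ in $TM^G$ along $\ell$, via a Weinstein neighborhood of $\ell$ in $M^G$. Together these give a $G$-equivariant symplectomorphism from a neighborhood of $\ell\times\{0\}$ in $T^*(\ell\times\Lambda)$ onto a neighborhood of $\ell$ in $\widehat M$. We build it by the equivariant Weinstein theorem, averaging over $G$, and arrange that the zero section maps to $\Xi(\ell\times\Lambda)$ and that its differential along $\ell$ is the identity in the splitting $TM^G\oplus W$. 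In this chart, $\widehat L$ is a $G$-invariant Lagrangian through $\ell\times\{0\}$ whose tangent space along $\ell\times\{0\}$ equals that of the zero section, by the hypothesis that its normal tangent bundle is $\ell\times\Lambda$. Over a neighborhood of $\ell\cap\overline V$ it is therefore the graph of a closed one-form $\alpha$ with $\alpha=0$ and $\nabla\alpha=0$ along $\ell\times\{0\}$. Because $\alpha$ vanishes on $\ell\times\{0\}$, which is a deformation retract of the domain, $\alpha$ is exact. Its primitive $f=\int_0^1\alpha_{(x,tv)}(v)\,dt$, integrated along the normal rays of $\ell\times\Lambda$, is $G$-invariant because $\alpha$ and the rays are, and it satisfies $f=O(|v|^3)$.

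\emph{Step 2: the Hamiltonian.} Let $\chi$ be a $G$-invariant cutoff equal to $1$ near $\ell\cap\overline V$ and supported in the prescribed neighborhood, and let $\beta(|v|)$ be a radial cutoff equal to $1$ near $v=0$. Put $F=\chi\beta f$ and $H_s(q,p)=F(q)$, which is independent of $s$. Its Hamiltonian flow in the cotangent chart is $(q,p)\mapsto(q,p-s\,dF)$, which moves the graph of $dF$ to the zero section at $s=1$. To make this flow act on $\widehat L$, we use instead the Hamiltonian $\widetilde H(q,p)=-F(q)$ composed with the cotangent projection, cut off in $p$ as well. This function is $G$-invariant and compactly supported, giving (1). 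On the region where $\chi=\beta=1$, the time-one map sends $\operatorname{graph}(\alpha)=\operatorname{graph}(df)$ to the zero section, which gives (3) with $\mathcal U$ that region.

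\emph{Step 3: fixed locus and linearization.} For (2), observe that $\widehat M^G$ corresponds in the chart to $\{v=0,\ p_\Lambda=0\}$. The only delicate point is that the $p$-cutoff does not spoil the vanishing. Since $F=O(|v|^3)$, we have $dF=O(|v|^2)$ and $\nabla dF=O(|v|)$, both of which vanish at $v=0$. Hence $d\widetilde H$ and the Hessian of $\widetilde H$ vanish along $\{v=0\}$, wherever the $p$-cutoff is taken. It follows that the Hamiltonian vector field vanishes to first order along $\widehat M^G$, so the flow fixes $\widehat M^G$ pointwise and has $d\psi^s=\mathrm{id}$ there.

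The step I expect to be the main obstacle is Step 1. The difficulty is producing an \emph{equivariant} Weinstein chart whose differential is exactly the identity along $\ell$ in the constant splitting, so that $\alpha$ genuinely vanishes to second order. It also requires checking that the relevant neighborhood may be taken uniform over the relatively compact set $V$ when $\ell$ is noncompact; this is why the statement is restricted to $V$.
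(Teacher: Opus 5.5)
Your proposal is correct and is essentially the paper's proof: cotangent coordinates built from a Weinstein chart of $\ell$ in $M^G$, an invariant Lagrangian complement $\Lambda'$ of $\Lambda$ and the tube, in which $\widehat L$ is the graph of $dS$ for a $G$-invariant $S$ vanishing to high order along $\ell$. This is followed by the flow of the cut-off $-S$, which is fiberwise translation where the cutoff equals one, and your $O(|v|^3)$ bound plus the product-rule check on the $p$-cutoff is what gives $d\psi^s=\operatorname{id}$ on $\widehat M^G$. The step you flag as the main obstacle is not one: $G$ acts trivially on $M^G$, so the Weinstein chart of $\ell$ needs no averaging; the only equivariant ingredient is the linear identification $W=\Lambda\oplus J_W\Lambda\cong T^*\Lambda$; and tangency along $\ell$ follows just from the zero section being $\Xi(\ell\times\Lambda)$ (in Step 2, also fix one sign convention for $X_H$, since switching from $F$ to $-F$ contradicts the flow formula you had just written).
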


\begin{proof}
Choose a Lagrangian complement $\Lambda'$ of $\Lambda$ in $W$ and a Weinstein
neighborhood of $\ell$ in $M^G$; together with the tube these give cotangent
coordinates near $\ell$ in which the product $\ell\times\Lambda$ is the zero
section and $\widehat L$, having the same tangent spaces along $\ell$, is
the graph of the differential of a $G$-invariant function $S$ vanishing to
second order along $\ell$.  Let $\psi^s$ be the Hamiltonian flow of
$-\upsilon S$, with $\upsilon$ an invariant cutoff equal to $1$ on a neighborhood
$\mathcal U$ of $\ell\cap V$ and supported in the prescribed neighborhood
of $\ell\cap\overline V$; this gives (1).  On $\mathcal U$ the flow is the
fiberwise translation by $-s\,dS$, which at time one carries the graph of
$dS$ onto the zero section, which is (3).  Since $S$ vanishes to second
order along $\ell$, and $d(\upsilon S)=0$ along $\widehat M^G$, the flow fixes
$\widehat M^G$ pointwise with identity linearization there, which is (2).
\end{proof}

\begin{proof}[Proof of Lemma~\ref{lem:floer-straightening}]
It follows directly from applying Lemma \ref{lem:floer-gauge} and then Lemma \ref{lem:floer-shear}. 
\end{proof}

The isotopies
$\phi_i^1$ of Lemma~\ref{lem:floer-straightening} are compactly supported and
Hamiltonian, so
\[
    CF(\phi_0^1\widehat L_0,\phi_1^1\widehat L_1)\;\cong\;CF(\widehat L_0,\widehat L_1)
    \;\cong\;CF(L_0,L_1),
\]
the first quasi-isomorphism by Hamiltonian invariance and the second by the
suspension isomorphism above.  From now on $\widehat L_0,\widehat L_1$ denote
the \emph{straightened} suspended Lagrangians $\phi_0^1\widehat L_0$,
$\phi_1^1\widehat L_1$.

\subsection{Model almost complex structures}
\label{subsec:lagrangian-preparation}

For the sake of later analysis, we want the almost complex structure to be a standard form near invariant intersection points.

The next lemma is only needed so that the gluing analysis in the Appendix~\ref{app:floer-gluing}
can be carried out in a standard local chart. One can skip it if one is not interested in the detailed gluing analysis.
It provides a local chart of $M^G$ at an invariant
intersection point $\cP{p}$; the group acts trivially on $M^G$, so no
equivariance is involved.  Write $2n=\dim M^G$, and let
$\omega_{\mathrm{std}}$ and $J_{\mathrm{std}}$ be the standard symplectic
form and complex structure of $\C^n=T^*\R^n$, for which $\R^n$ and
$\sqrt{-1}\,\R^n=J_{\mathrm{std}}\R^n$ are transverse Lagrangian subspaces.

\begin{lemma}[Darboux chart with linear Lagrangians]
\label{lem:lagrangian-preparation}
Let $\cP{p}$ be an invariant intersection point.  There are a ball
$B\subset\C^n$ about the origin and a symplectic embedding
$\Phi\colon(B,\omega_{\mathrm{std}})\to(M^G,\omega)$ with $\Phi(0)=\cP{p}$
in which both fixed Lagrangians are linear:
\[
 \Phi^{-1}(L^G_0)=\R^n\cap B,\qquad
 \Phi^{-1}(L^G_1)=\sqrt{-1}\,\R^n\cap B.
\]
\end{lemma}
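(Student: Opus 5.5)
We use the standard fact that two transverse Lagrangians can be simultaneously linearized near an intersection point. This is the Weinstein neighborhood theorem together with a choice of Lagrangian complement. The group plays no role, since everything takes place inside $M^G$, where $G$ acts trivially.

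\emph{Step 1: linear algebra at $\cP{p}$.} At $\cP{p}$ the subspaces $T_{\cP{p}}L^G_0$ and $T_{\cP{p}}L^G_1$ are transverse Lagrangian subspaces of $T_{\cP{p}}M^G$; transversality inside $M^G$ was noted in Subsection~\ref{subsec:lagrangian-floer-setup}. We choose a basis $e_1,\dots,e_n$ of $T_{\cP{p}}L^G_0$. Since $\omega$ restricted to $T_{\cP{p}}L^G_0\times T_{\cP{p}}L^G_1$ is a perfect pairing, we can take the dual basis $f_1,\dots,f_n$ of $T_{\cP{p}}L^G_1$ with $\omega(e_j,f_k)=\delta_{jk}$. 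This gives a linear symplectomorphism $A\colon\C^n\to T_{\cP{p}}M^G$ carrying $\R^n$ to $T_{\cP{p}}L^G_0$ and $\sqrt{-1}\,\R^n$ to $T_{\cP{p}}L^G_1$. The sign conventions are arranged so that $\omega_{\mathrm{std}}(x,\sqrt{-1}y)$ matches.

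\emph{Step 2: Weinstein chart for $L^G_0$.} By the Weinstein neighborhood theorem, a neighborhood of $\cP{p}$ in $M^G$ is symplectomorphic to a neighborhood of a point in $T^*L^G_0$, with $L^G_0$ going to the zero section. Near $\cP{p}$ we may choose this identification so that the fiber $T^*_{\cP{p}}L^G_0$ is tangent to $T_{\cP{p}}L^G_1$ at $\cP{p}$. This is possible because the Weinstein theorem allows one to prescribe the Lagrangian complement along the zero section. We then use local coordinates $q$ on $L^G_0$ near $\cP{p}$, which induce coordinates $(q,\eta)$ on $T^*\R^n=\C^n$ with $L^G_0=\{\eta=0\}$. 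By transversality, $L^G_1$ is a graph over the fiber directions near the origin, and it is tangent to $\{q=0\}$. Under the Legendre-type swap, $L^G_1$ becomes $\{q=dF(\eta)\}$ for a generating function $F(\eta)$ vanishing to second order at $0$. Exactness, or simply local simple connectivity, supplies $F$.

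\emph{Step 3: straighten $L^G_1$ while keeping $L^G_0$ fixed.} We apply the symplectomorphism $(q,\eta)\mapsto(q-dF(\eta),\eta)$. This is the time-one map of the Hamiltonian $F(\eta)$, with the sign chosen appropriately. It preserves $\{\eta=0\}$ setwise, because $dF(0)=0$ and the flow translates in $q$ only, and it carries $L^G_1$ onto $\{q=0\}$. Composing with the inverse of the Weinstein chart and restricting to a small ball $B$ gives $\Phi$ with $\Phi^{-1}(L^G_0)=\R^n\cap B$ and $\Phi^{-1}(L^G_1)=\sqrt{-1}\,\R^n\cap B$.

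The main care point is Step 3. The flow of $F(\eta)$ is defined only locally, and we must check that the images are exactly the linear pieces within $B$, not merely germs. To handle this we shrink $B$ so that both Lagrangians meet the chart in single connected sheets through the origin. Transversality at $\cP{p}$ guarantees such a ball, and this rules out extra components of $L^G_i$ entering $B$. No equivariance or $G$-averaging is required.
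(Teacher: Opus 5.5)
Your proof is correct and is essentially the paper's argument. The paper composes a Darboux chart with a linear symplectic map and then applies two generating-function shears: the first straightens $L^G_0$ to $\R^n$, and the second, $(x,y)\mapsto(x-dS_1(y),y)$, straightens $L^G_1$ while fixing $\R^n$ pointwise because $dS_1(0)=0$. Your Weinstein chart packages the first shear, and your Step~3 is exactly the second.

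Some of your steps are superfluous. Step~1 is not needed, and neither is the tangency prescription in Step~2: transversality alone makes $L^G_1$ a graph over the fibre directions, and only $dF(0)=0$ is used. Also, the single-sheet property in your last paragraph comes from $L^G_i$ being embedded, not from transversality.
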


\begin{proof}
Since $T_{\cP{p}}L^G_0$ and $T_{\cP{p}}L^G_1$ are transverse Lagrangian
subspaces of $T_{\cP{p}}M^G$, a linear symplectic isomorphism
$T_{\cP{p}}M^G\to\C^n$ carries them to $\R^n$ and $\sqrt{-1}\,\R^n$.
Composing a Darboux chart at $\cP{p}$ with it gives a symplectic embedding
$\Phi_0$ of a ball in $\C^n$ with $\Phi_0(0)=\cP{p}$ whose preimages of
$L^G_0$ and $L^G_1$ are tangent at the origin to $\R^n$ and to
$\sqrt{-1}\,\R^n$.  These are straightened by two symplectomorphisms of
$\C^n$ near the origin.  Write $(x,y)\in\R^n\oplus\sqrt{-1}\,\R^n$.  Near
the origin $\Phi_0^{-1}(L^G_0)$ is a graph $y=dS_0(x)$ with $dS_0(0)=0$
and $d^2S_0(0)=0$, because it is Lagrangian and tangent to $\R^n$ at $0$;
the shear $\psi_0(x,y)=(x,y-dS_0(x))$ is a symplectomorphism, since it
changes $\sum_idx_i\wedge dy_i$ by
$-\sum_{i,j}\partial_i\partial_jS_0\,dx_i\wedge dx_j=0$, it fixes the
origin with differential the identity, and it carries the graph to
$\R^n$.  After it the second Lagrangian is a graph $x=dS_1(y)$ over
$\sqrt{-1}\,\R^n$ with $dS_1(0)=0$, and the shear
$\psi_1(x,y)=(x-dS_1(y),y)$ is a symplectomorphism carrying it to
$\sqrt{-1}\,\R^n$ and fixing $\R^n$ pointwise, because $dS_1(0)=0$.  The
chart $\Phi:=\Phi_0\circ\psi_0^{-1}\circ\psi_1^{-1}$, restricted to a small
ball, has the required properties.
\end{proof}

\paragraph{The almost complex structure on \texorpdfstring{$M^G$}{M^G}.}
Fix a chart $\Phi_{\cP{p}}$ as in Lemma~\ref{lem:lagrangian-preparation}
at each invariant intersection point, with disjoint images, and choose the
$\omega$-compatible almost complex structure $J_0$ of $M^G$ to be
$(\Phi_{\cP{p}})_*J_{\mathrm{std}}$ on a smaller ball around each of them;
this is possible because the space of compatible almost complex structures
is the space of sections of a bundle with contractible fibers, so a
compatible structure given near a closed subset extends.  Then $J_0$ is
constant in these charts with $J_0T_{\cP{p}}L^G_0=T_{\cP{p}}L^G_1$, and it
is free, and later taken generic, elsewhere.

Let $V\subset M^G$ be the relatively compact open set of the proof of
Lemma~\ref{lem:floer-straightening}, containing $L_0^G\cap L_1^G$, which we
take to be a Liouville subdomain, and define $\widehat U_V:=\Xi(V \times D(W))$, which we call the adapted tube.
We choose $J$ split on the adapted
tube $\widehat U_V$, $J=J_0\oplus J_W$ with $J_0$ cylindrical on $M^G \setminus V$, $J_W$ the standard complex
structure of $W=T^*\R^m$, for which
$\sqrt{-1}\,\R^m=J_W\R^m$, cylindrical outside a compact
set containing the closure of $\widehat U_V$, and arbitrary in between; a
split structure is not cylindrical, which is the reason for defining
$\widehat U_V$.\label{lem:floer-confinement}\label{conv:adapted-region}
Floer solutions lie in a compact set for cylindrical almost complex
structures: since the Lagrangians are conical and $J$ is cylindrical
outside a compact set, the maximum principle for the radial coordinate
confines every finite-energy strip to a fixed compact region, and the
same holds for the fixed pair in $M^G$, whose finite-energy strips lie in
a compact $Q^G\subset M^G$; we take $V\supset Q^G$.  All strips of the straightened pair
then lie in a fixed compact set, and Gromov--Floer compactness applies to
them.

\begin{corollary}[Product chart at an invariant intersection point]\label{cor:lagrangian-preparation-hat}
Composing the tube $\Xi$ with the product of $\Phi_{\cP{p}}$ and the
identity of $W$ gives a $G$-equivariant symplectic embedding
$\Phi\colon B\subset\C^n\oplus W\to\widehat M$ of a ball about the origin,
$G$ acting on the factor $W$ alone, such that
\begin{enumerate}[label=\textup{(L\arabic*)}]
    \item\label{it:lagW-chart}
    $\Phi(0)=\cP{p}$ and $\Phi^*\omega=\omega_{\mathrm{std}}\oplus\omega_W$;
    \item\label{it:lagW-linear}
    both Lagrangians are linear in this chart:
    $\Phi^{-1}(\widehat L_0)=(\R^n\oplus\R^m)\cap B$ and
    $\Phi^{-1}(\widehat L_1)=(\sqrt{-1}\,\R^n\oplus\sqrt{-1}\,\R^m)\cap B$;
    \item\label{it:lagW-J}
    $\Phi^*J=J_{\mathrm{std}}\oplus J_W$ is the standard complex structure
    $J_{\cP{p}}$ of $\C^n\oplus W=\C^{n+m}$, so that
    $J_{\cP{p}}T_{\cP{p}}\widehat L_0=T_{\cP{p}}\widehat L_1$.
\end{enumerate}
\end{corollary}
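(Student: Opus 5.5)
The plan is to build $\Phi$ as the composite of three maps already in hand: the equivariant tube $\Xi\colon M^G\times D(W)\to\widehat U$ of Subsection~\ref{subsec:floer-straightening-lagrangians}, the Darboux chart $\Phi_{\cP{p}}$ of Lemma~\ref{lem:lagrangian-preparation}, and the identity of $W$. Set $\Phi(z,w):=\Xi(\Phi_{\cP{p}}(z),w)$ on a small ball $B\subset\C^n\oplus W$. Since $G$ acts trivially on $M^G$ and $\Xi$ is $G$-equivariant with $G$ acting on the $D(W)$ factor, $\Phi$ is equivariant for the action on $W$ alone. Property \ref{it:lagW-chart} follows at once: $\Phi(0)=\Xi(\cP{p},0)=\cP{p}$. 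Moreover $\Xi$ is a symplectic tubular neighborhood realizing the trivialization, so $\Xi^*\omega=\omega|_{M^G}\oplus\omega_W$. Pulling back further by $\Phi_{\cP{p}}\times\mathrm{id}$ gives $\omega_{\mathrm{std}}\oplus\omega_W$.

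For \ref{it:lagW-linear}, use Lemma~\ref{lem:floer-straightening} with the endpoints $\Lambda_0(1)=\R^m$ and $\Lambda_1(1)=\sqrt{-1}\,\R^m$ from Assumption~\ref{ass:floer-normal-triviality}. These give $\widehat L_i\cap\widehat U=\Xi(L_i^G\times(\Lambda_i(1)\cap D(W)))$ after shrinking the tube. The point $\cP{p}$ lies in $V$, because $V$ contains $L_0^G\cap L_1^G$. So for $B$ small the image of $B$ lies in the region where this product description holds. Pulling back by $\Phi_{\cP{p}}\times\mathrm{id}$ and using $\Phi_{\cP{p}}^{-1}(L_0^G)=\R^n\cap B$ and $\Phi_{\cP{p}}^{-1}(L_1^G)=\sqrt{-1}\,\R^n\cap B$ gives the stated linear preimages. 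One must shrink $B$ until it sits inside the product of the two domains. One must also check that $\widehat L_i$ has no other sheets entering $\Phi(B)$. This holds because the straightening lemma describes all of $\widehat L_i\cap\widehat U$, not just the component through $\ell_i$.

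For \ref{it:lagW-J}, $J$ was chosen split on the adapted tube $\widehat U_V$, namely $J=J_0\oplus J_W$ in the coordinates given by $\Xi$. Here $J_0=(\Phi_{\cP{p}})_*J_{\mathrm{std}}$ on a smaller ball around $\cP{p}$. Shrinking $B$ into that ball and into $\widehat U_V$, we get $\Phi^*J=J_{\mathrm{std}}\oplus J_W$. Under $W=T^*\R^m=\C^m$ this is the standard structure of $\C^{n+m}$. It maps $\R^n\oplus\R^m$ onto $\sqrt{-1}\,\R^n\oplus\sqrt{-1}\,\R^m$, which is $J_{\cP{p}}T_{\cP{p}}\widehat L_0=T_{\cP{p}}\widehat L_1$.

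The only real obstacle is a consistency point: the splitting $J=J_0\oplus J_W$ must be defined through the same identification $\Xi$ used in the straightening lemma, so that both are taken in one set of coordinates. The paper's text fixes $\Xi$ before both constructions, and I would state explicitly that the split structure is defined in $\Xi$-coordinates. Once that is said, everything else reduces to taking $B$ small enough to lie in the intersection of finitely many neighborhoods of the origin.
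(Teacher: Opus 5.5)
Your proposal is correct and follows the paper's route: compose $\Xi$ with $\Phi_{\cP{p}}\times\mathrm{id}_W$, then read off (L1)--(L3) from four inputs. These are the symplectic tube, the straightening Lemma~\ref{lem:floer-straightening} (with $\Lambda_0(1)=\R^m$ and $\Lambda_1(1)=\sqrt{-1}\,\R^m$), the chart Lemma~\ref{lem:lagrangian-preparation}, and the choice $J=J_0\oplus J_W$ on the adapted tube with $J_0=(\Phi_{\cP{p}})_*J_{\mathrm{std}}$ near $\cP{p}$. Your extra remarks on shrinking $B$, excluding stray sheets, and taking the splitting of $J$ in $\Xi$-coordinates spell out what the paper's one-line proof leaves implicit.
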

\begin{proof}
The tube identifies $\widehat L_i\cap\widehat U$ with
$L^G_i\times\Lambda_i(1)$, $\Lambda_0(1)=\R^m$ and
$\Lambda_1(1)=\sqrt{-1}\,\R^m$ (Lemma~\ref{lem:floer-straightening}), and
$J=J_0\oplus J_W$ on the adapted tube; the rest is
Lemma~\ref{lem:lagrangian-preparation} and the choice of $J_0$.
\end{proof}

\begin{convention}\label{conv:lagrangian-preparation}
We fix the charts $\Phi_{\cP{p}}$ of
Corollary~\ref{cor:lagrangian-preparation-hat} at every invariant intersection point, and
a Darboux chart at every outside intersection point in which both
Lagrangians are linear and $J$ is constant, obtained by the same
construction with trivial stabilizer and transported over the free orbit,
with the charts of an orbit disjoint from each other and from the adapted
tube.
\end{convention}

Let $\mathcal J$ denote the set of $t$-dependent, $G$-invariant,
$\omega$-compatible families $J=(J_t)$ that are split, $J_0(t)\oplus J_W$,
on the adapted tube $\widehat U_V$, with the fixed constant normal factor
$J_W$, such that $J_0(t)$ is cylindrical outside $V$ and $J_t$ is
cylindrical for all $t$.

\begin{proposition}[Regularity of fixed strips]
\label{thm:floer-model-regularity}
Let $J\in\mathcal J$ and let $u\subset\widehat M^G$ be a strip between invariant
intersection points.  Then the linearized operator splits,
$D_u=D^T\oplus D^\perp$, the normal operator
$D^\perp=\partial_s+J_W\partial_t$ with the constant transverse boundary pair
has vanishing kernel and cokernel on the exponential-decay spaces, and $u$ is
regular if and only if it is regular as a strip in
$(\widehat M^G,\widehat L_0^G,\widehat L_1^G,J_0)$.  Moreover, for a generic tangential factor $J_0$,
every nonconstant fixed strip is regular, in every Fredholm index.
\end{proposition}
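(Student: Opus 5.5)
The plan is to work in the adapted tube, where the geometry is a product, and read off the splitting of the linearization directly.

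\emph{Step 1: confinement.} A fixed strip $u$ between invariant points lies in the compact set $Q^G\subset V$. Hence its image lies in $\widehat M^G\cap\widehat U_V=\Xi(V\times\{0\})$. Along $u$ we then have
\[
u^*T\widehat M=u^*TM^G\oplus W,\qquad u^*T\widehat L_i=u^*TL_i^G\oplus\Lambda_i(1).
\]
The second splitting holds because, by Lemma~\ref{lem:floer-straightening}, the straightened Lagrangians are products $L_i^G\times\Lambda_i(1)$ in the tube.

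\emph{Step 2: the linearization splits.} Since $J=J_0(t)\oplus J_W$ with $J_W$ constant, and $\Xi$ is symplectic and split, the linearized operator $D_u\xi=\nabla_s\xi+J_t\nabla_t\xi+(\nabla_\xi J_t)\partial_tu$ preserves the two summands. The reason is that $\partial_tu$ is tangent to $M^G$, and the normal derivatives of a split $J$ whose normal factor is constant do not mix the summands. One should check this in the product coordinates given by $\Xi$, using a product connection; the choice of connection does not affect the operator at a zero of the equation. This gives $D_u=D^T\oplus D^\perp$, where $D^T$ is the linearization of $u$ as a strip in $(M^G,L_0^G,L_1^G,J_0)$. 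The normal part is the constant-coefficient operator $D^\perp=\partial_s+J_W\partial_t$ on maps $\R\times[0,1]\to W$ with boundary in $\R^m$ and $\sqrt{-1}\,\R^m$.

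\emph{Step 3: $D^\perp$ is invertible.} Decompose in the $t$-direction using the self-adjoint operator $-J_W\partial_t$ with these boundary conditions. Its eigenfunctions are $e^{\sqrt{-1}\lambda t}c$ with $c\in\R^m$ and $\lambda\in\pi/2+\pi\Z$, so $0$ is not in the spectrum. A kernel element is a sum of terms $e^{\lambda s}$ times eigenfunctions. The only candidates are therefore exponentials that grow at one end, and none lies in the exponential-decay space, provided the weight is smaller than $\pi/2$. The cokernel is the kernel of the formal adjoint, which has the same form and vanishes by the same argument. Alternatively, the index of $D^\perp$ is $0$ because the Maslov data are constant, and vanishing kernel then forces vanishing cokernel. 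Consequently $D_u$ is surjective if and only if $D^T$ is, and this is the claimed equivalence of regularity.

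\emph{Step 4: genericity.} The fixed pair is an exact transverse pair in the exact convex manifold $M^G$. The class of allowed $J_0$ is constant near the invariant intersection points and cylindrical outside $V$, but otherwise free. Standard transversality for $t$-dependent structures therefore applies: every nonconstant strip has an open set of injective points in the region where $J_0$ may be perturbed, since by unique continuation the strip cannot lie entirely in the balls where $J_0$ is fixed. This makes the universal moduli space smooth, and Sard--Smale yields a comeager set of $J_0$, regular in every index. Any such $J_0$ extends to an element of $\mathcal J$ by the contractibility argument used for $J_0$.

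\emph{Main obstacle.} The step needing the most care is Step 2: verifying that the zeroth-order term $(\nabla J)\partial_tu$ has no off-diagonal part. This relies on $J_W$ being constant in the tube coordinates and on $\Xi$ being an honest product. Without both, a term coupling normal variations to $\partial_tu$ through the normal derivative of $J_0$ could appear.
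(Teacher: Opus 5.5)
Your proposal is correct and follows the same route as the paper's sketch. In the adapted tube, the split $J$ and the product Lagrangians give $D_u=D^T\oplus D^\perp$; invertibility of the constant asymptotic operator (spectrum $\pi/2+\pi\Z$) kills the kernel and cokernel of $D^\perp$; and genericity of the tangential factor $J_0$, which you correctly note is constrained only near the intersection points, handles $D^T$.

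Your ``main obstacle'' is less delicate than you fear. Since $u$ is fixed and $J$ and $\widehat L_i$ are $G$-invariant, $D_u$ is $G$-equivariant. It therefore automatically preserves the splitting of sections into the invariant part $u^*TM^G$ and the nontrivial isotypic part $u^*W$, which is what the paper's ``split and invariant'' refers to. Constancy of $J_W$ is needed only to identify $D^\perp$ with $\partial_s+J_W\partial_t$.
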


\begin{proposition}[Regularity of non-fixed strips]
\label{thm:floer-free-regularity}
There is a comeagre subset of $\mathcal J$ such that for every $J$ in it,
every moduli space of non-fixed strips between intersection points, and
every codimension-one broken stratum thereof, is cut out transversally.
\end{proposition}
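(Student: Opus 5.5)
The plan is the standard Sard--Smale argument for a universal moduli space, run in the class $\mathcal J$ with perturbations supported where $G$ acts freely. Fix an integer $\ell\ge2$ large and work first with the Banach manifold $\mathcal J^\ell$ of $C^\ell$ families in $\mathcal J$. Tangent vectors are $G$-invariant, $t$-dependent sections $Y$ of $\operatorname{End}(T\widehat M)$ that are $J$-antilinear and $\omega$-compatible. They are required to vanish on the adapted tube $\widehat U_V$, so that the splitting $J_0(t)\oplus J_W$ is kept, and near the intersection charts of Convention~\ref{conv:lagrangian-preparation}. They are also required to vanish outside a compact set, so that cylindricity is kept. We relax the first condition only on the part of $\widehat U_V$ off $\widehat M^G$ where the tangential factor is free to vary. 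On the free region any equivariant perturbation is obtained by averaging a local perturbation supported in a slice $B$ with $gB\cap B=\emptyset$ for $g\ne1$. Hence invariance imposes no local constraint there.

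For each pair of intersection points $\cP{x},\cP{y}$ and homotopy class, form the universal space of pairs $(u,J)$ with $u$ a non-fixed strip for $J$. The key step is surjectivity of the universal linearization $(\xi,Y)\mapsto D_u\xi+\tfrac12 Y(u)\,\partial_tu$. Suppose $\eta$ in the cokernel annihilates the image. It satisfies $D_u^*\eta=0$, and it pairs to zero with $Y(u)\partial_tu$ for every admissible $Y$.

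We next produce a point $(s_0,t_0)$ at which the hypotheses needed to localize hold:
\begin{itemize}
\item $u(s_0,t_0)$ lies in the free region outside $\widehat U_V$ and outside the intersection charts;
\item $\partial_su(s_0,t_0)\ne0$;
\item $u(s_0,t_0)\notin u(\R\setminus\{s_0\},t_0)$, and also $u(s_0,t_0)\notin g\,u(\R,t_0)$ for every $g\ne1$.
\end{itemize}
Since $J$ may depend on $t$, ordinary injectivity of $u$ is not required; only injectivity on the slice $t=t_0$ is needed, as in Floer--Hofer--Salamon. The open set of such points is nonempty. A non-fixed strip cannot lie entirely in $\widehat U_V\cup\widehat M^G$: inside the tube $J$ is split, so the normal component is a $J_W$-holomorphic strip with boundary on $\R^m$, $\sqrt{-1}\R^m$ and zero ends. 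By the energy argument of Lemma~\ref{lem:floer-suspension}(3) that component is constant, which would make $u$ fixed. The hypothesis that the tangential factor is generic off $V$ is therefore essential, and this is exactly where I expect the main obstacle.

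I expect the delicate point to be the strips that leave $\widehat M^G$ but stay inside $\widehat U_V$ over $V$, if such strips exist. The remedy I would try first is to reinterpret $\widehat U_V$ as $V\times D(W)$ with $J$ split only near $\widehat M^G\cup$ (intersection charts). The paper's normal form needs only this, since the analysis uses the linear normal factor near fixed strips and near invariant intersection points. The argument above then shows that every non-fixed strip exits the region where $J$ is constrained.

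Given $\eta(s_0,t_0)\ne0$, choose $Y$ supported near $u(s_0,t_0)$ in a free slice, pair it positively against $\eta$, and average over $G$. The slice and orbit injectivity ensure that the averaged perturbation meets the strip only near $(s_0,t_0)$ in the slice $t=t_0$. Unique continuation for $D_u^*$ then gives $\eta\equiv0$. Consequently the universal space is a Banach manifold. The projection to $\mathcal J^\ell$ is Fredholm of index $\ind D_u$, and Sard--Smale gives a comeagre set of regular values for each of the countably many pairs and index classes.

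Taubes' argument, exhausting $\mathcal J$ by compact-support conditions, passes from $C^\ell$ to $C^\infty$. For the broken strata, a codimension-one stratum is a fibre product of two transversely cut out strip spaces over a point, so its transversality is automatic once every component is regular. Where a component is fixed, Proposition~\ref{thm:floer-model-regularity} supplies its regularity instead.
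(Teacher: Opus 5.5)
Your proposal follows the paper's route: perturbations of $J$ supported where $G$ acts freely, made invariant by averaging over the orbit, fed into the Floer--Hofer--Salamon somewhere-injectivity argument, with broken strata handled factor by factor. Your observation that a non-fixed strip must leave the split tube $\widehat U_V$ (its normal part would otherwise be a decaying solution of the linear problem, hence zero) makes explicit something the paper's sketch leaves implicit.

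There is one missing step, and it is the one fact the paper's sketch does record. You need points with $u(s_0,t_0)\notin g\,u(\R,t_0)$ for all $g\ne1$, and this requires that $gu$ is not an $s$-translate of $u$.
\begin{itemize}
\item Suppose $gu=u(\cdot+c,\cdot)$. Then $u=g^pu=u(\cdot+pc,\cdot)$.
\item A strip with limits at its ends is not $s$-periodic unless it is constant, so $c=0$.
\item Then $gu=u$ with $g$ a generator, since $p$ is prime, which forces $u\subset\widehat M^G$.
\end{itemize}
Only with this in hand does the Floer--Hofer--Salamon argument, applied to the pair $(u,gu)$, show that the bad set has empty interior. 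Then orbit-injective regular points are dense, and such points are automatically off $\widehat M^G$. Density lets them meet the open set of $(s,t)$ where $u$ lies outside $\overline{\widehat U_V}$ and the intersection charts. That this open set is nonempty also uses two facts: the outside charts are disjoint from the tube, and a strip contained in a single chart is constant.

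Your ``delicate point'' is vacuous. Strips that leave $\widehat M^G$ but stay in $\widehat U_V$ are exactly what your own exit argument excludes. So neither a reinterpretation of $\widehat U_V$ nor any perturbation inside the tube is needed. Changing the split condition in $\mathcal J$ would in any case change the statement being proved.

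To get a comeagre set in all of $\mathcal J$, rather than in a slice with the tangential factor frozen on the tube, take the universal space over all of $\mathcal J^\ell$. It is a Banach manifold because surjectivity is already achieved by the perturbations supported outside the tube.
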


Both theorems are standard.  Along a fixed strip the linearized operator
splits because $J$ is split and invariant; the normal operator is
$\partial_s+A^\perp$ with the constant boundary pair, which has no kernel or
cokernel on the exponentially weighted spaces since $A^\perp$ is invertible,
so regularity is that of the fixed-pair problem, for which the tangential
factor $J_0$ is an unconstrained family on $M^G$.  For non-fixed strips the
perturbations of $J$ are supported where $G$ acts freely, and an
invariant perturbation is obtained from a local one by averaging over the
orbit; since a non-fixed strip is not a translate of any of its other
orbit images, the averaged perturbation acts on it as the local one does,
and the transversality theory of \cite{floer1995transversality} applies.

Fix $J$ in the common residual set of
Propositions~\ref{thm:floer-model-regularity} and~\ref{thm:floer-free-regularity}.
All subsequent spectral and incidence data are chosen for this fixed $J$.

\subsection{Negative evaluation map}
\label{subsec:floer-evaluation}

In this subsection, we want to define an evaluation map which keeps track of the asymptotic behaviour of a strip $u$ in the normal direction.

\subsubsection{Expansion in the normal direction}
Fix the adapted regular almost complex structure of
Subsection~\ref{subsec:lagrangian-preparation}.
In the chart of Corollary~\ref{cor:lagrangian-preparation-hat} at an invariant
intersection point the normal strip equation is
$\partial_sw+J_W\partial_tw=0$ for
$w\colon\R\times[0,1]\to W=\R^m\otimes_\R\C$ with
$w(s,0)\in \R^m$ and $w(s,1)\in\sqrt{-1}\,\R^m$. A
decaying solution is a sum of solutions of the form 
\[w=c\,e^{-\lambda s}e^{-\sqrt{-1}\lambda t}
\]
with
$c\in \R^m$ and $\lambda\in-\pi/2+\pi\Z$, $\lambda<0$.

We define
\begin{equation}\label{eq:floer-ordered-negative-spectrum}
    \lambda_j=-\tfrac{\pi}{2}-(j-1)\pi,\qquad j=1,2,\ldots,
    \qquad
    E_j:= \R^m,
\end{equation} 
For $r\ge1$ put $N:=rm$ and
\begin{equation}\label{eq:floer-finite-spectral-space}
    E_N^-:=\bigoplus_{j=1}^{r}E_j\cong(\R^m)^{\oplus r},
    \qquad
    \dim_{\R}E_N^-=N,
    \qquad
    S_N:=S(E_N^-).
\end{equation}

For a strip $u$ with invariant negative end $\cP{p}$, there is an $s_1 \in \mathbb{R}$
such that $u(s, t)$ lies in the tube for $(s,t) \in (-\infty,s_1] \times [0,1]$.
Let $w_u: (-\infty,s_1] \times [0,1] \to W$ be its normal component in the adapted
tube. As a result, there are $c_j:=c_j(u) \in E_j$ for $j=1,2,\ldots$ such that
\[
w_u(s,t)= \sum_{j} c_j e^{-\lambda_j s}e^{-\sqrt{-1}\lambda_j t}.
\]
We summarise a few properties of $w_u$ in the following lemma:

\begin{lemma}[Exact normal expansion]
\label{rem:lagrangian-exact-expansion}\label{lem:floer-coefficient-readoff}\label{lem:floer-normal-transport}
Let $w_u$ be as above. Then
\begin{enumerate}
    \item For and $\sigma$ and $s':=s-\sigma$, 
    \[
w_u(s',t)=\sum_{j} \left(c_je^{-\lambda_j\sigma}\right) e^{-\lambda_j s'}e^{-\sqrt{-1}\lambda t}
    \]
    \item All $c_j$ vanish if and only if $u\subset M^G$.
    \item The $c_j(u)$ varies continuously with respect to a continuous family of solutions $u$ in a fixed parametrization  
\end{enumerate}
\end{lemma}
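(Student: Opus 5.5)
The plan is to work entirely in the product chart of Corollary~\ref{cor:lagrangian-preparation-hat} and the adapted tube, where $J=J_0\oplus J_W$ is split and the normal boundary conditions are the constant subspaces $\R^m$ and $\sqrt{-1}\,\R^m$. For $s\le s_1$ the normal component $w_u$ then solves the linear equation $\partial_s w+J_W\partial_t w=0$ with these linear boundary conditions. First I will justify the expansion itself. I will expand $w_u(s,\cdot)$ in the eigenbasis of the self-adjoint asymptotic operator $A=-J_W\partial_t$ on $L^2([0,1],W)$ with domain $\{w(0)\in\R^m,\ w(1)\in\sqrt{-1}\,\R^m\}$. The eigenfunctions are $e^{-\sqrt{-1}\lambda t}c$ with $c\in\R^m$ and $\lambda\in-\pi/2+\pi\Z$. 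Each Fourier coefficient satisfies $\partial_s a_\lambda=-\lambda a_\lambda$, so $a_\lambda(s)=c_\lambda e^{-\lambda s}$. Convergence $w_u\to0$ as $s\to-\infty$ kills the modes with $\lambda>0$, and this gives the stated series. It converges in $C^\infty$ on $(-\infty,s_1-\epsilon]\times[0,1]$, since $|c_j|e^{-\lambda_j s}$ is bounded by the $L^2$ norm at $s_1$ times $e^{-\lambda_j(s-s_1)}$, which decays geometrically in $j$.

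Item (1) is then a direct substitution $s=s'+\sigma$, giving $e^{-\lambda_j s}=e^{-\lambda_j\sigma}e^{-\lambda_j s'}$. For item (3), I will use that in a fixed parametrization a continuous family $u_\tau$ converges in $C^0_{\mathrm{loc}}$, and by elliptic regularity in $C^\infty_{\mathrm{loc}}$. The family has a uniform $s_1$ by uniform exponential convergence to $\cP{p}$, or one simply fixes a slightly smaller $s_1$. The coefficient $c_j(u)$ is the $L^2$ projection $c_j=e^{\lambda_j s_0}\langle w_u(s_0,\cdot),e_j\rangle$ at any fixed $s_0\le s_1$, and this projection depends continuously on $w_u(s_0,\cdot)$.

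Item (2) is the main point. If $u\subset M^G$, then $w_u\equiv0$ and all $c_j=0$. Conversely, suppose all $c_j$ vanish. Then $w_u\equiv0$ on $(-\infty,s_1]\times[0,1]$, so $u$ agrees with a map into $M^G$ on an open set. I need to propagate this to the whole strip. I will do this by unique continuation for $J$-holomorphic strips with Lagrangian boundary. Let $\sigma$ be the generator of $G$. The strip $\sigma\circ u$ is again a $J$-holomorphic strip, because $J$ is $G$-invariant and $L_0,L_1$ are $G$-invariant. It coincides with $u$ on an open set, because there $u\in M^G$. By unique continuation (Floer--Hofer--Salamon, applied after doubling across the boundary or using the Carleman similarity principle) the two strips agree everywhere. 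Hence $u$ is $G$-fixed pointwise, that is, $u\subset M^G$.

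I expect item (2) to be the only nonroutine step, and specifically the appeal to unique continuation on a domain with boundary. I will handle it with the standard local argument. The difference of $u$ and $\sigma u$ in a chart satisfies a linear Cauchy--Riemann type inequality with totally real boundary. The set where the two maps agree to infinite order is open and closed in the connected strip $\R\times[0,1]$, including at boundary points after Schwarz reflection in the linear charts.
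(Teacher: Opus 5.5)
Your proposal is correct and follows the paper's route: items (1) and (3) come directly from the mode expansion and the slice-projection formula for the coefficients, and (2) is unique continuation, which you make explicit by comparing $u$ with $\sigma\circ u$. The boundary reflection you worry about is not needed: $u$ and $\sigma\circ u$ already agree on an open subset of the connected interior $\R\times(0,1)$, so interior unique continuation gives $\sigma\circ u=u$ there, and continuity extends this to the boundary.
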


\begin{proof}
Properties (1) and (3) are obvious. Property (2) follows from the unique continuation lemma.
\end{proof}

For a non-fixed strip some normal coefficient is nonzero by
Lemma~\ref{rem:lagrangian-exact-expansion}(3).  Define its leading normal
eigenvalue by
\begin{equation}\label{eq:floer-leading-eigenvalue}
    \lambda_{\mathrm{lead}}(u)
    :=\max\{\lambda<0\mid c_\lambda(u)\ne0\}.
\end{equation}
For a chosen cutoff $N=rm$, let
\[
    c_N(u):=
    \bigl(c_{\lambda_1}(u),\ldots,c_{\lambda_r}(u)\bigr)
    \in E_N^-.
\]
Call $u$ \emph{$N$-visible} when $c_N(u)\ne0$.

\subsubsection{The uniform leading-mode bound and the choice of \(N\)}
\label{subsec:lagrangian-finiteness}

The normal Floer spectrum is unbounded below, so a finite cutoff must be
chosen uniformly over every strip that the construction will evaluate.  The
bound is formulated for all strips with invariant negative end,
with no restriction on the expected dimension, and it depends only on the
adapted almost complex structure, not on $N$.

The theorem says that a finite spectral cutoff sees every strip that can
contribute to the differential.

\begin{theorem}[Uniform leading-mode bound]
\label{ass:floer-uniform-leading-bound} 
Fix $J\in\mathcal J$ adapted as in Convention~\ref{conv:lagrangian-preparation}.
There is an integer $r(J)$ such that every non-fixed strip
$u\in\M(\cP{p},\cP{b})$ with invariant negative end $\cP{p}$ and arbitrary
positive end $\cP{b}\in\widehat L_0\cap\widehat L_1$, of any expected
dimension, has a nonzero coefficient among its first $r(J)$ modes:
\[
    \lambda_{\mathrm{lead}}(u)\ge\lambda_{r(J)} .
\]
\end{theorem}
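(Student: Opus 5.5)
We argue by contradiction and compactness. Suppose there is a sequence of non-fixed strips $u_k\in\M(\cP{p},\cP{b}_k)$ with invariant negative end $\cP{p}$ and $\lambda_{\mathrm{lead}}(u_k)\to-\infty$. There are finitely many intersection points, so after passing to a subsequence $\cP{p}$ and $\cP{b}_k=\cP{b}$ are fixed. The energy of every strip from $\cP{p}$ to $\cP{b}$ equals $\mathcal A(\cP{p})-\mathcal A(\cP{b})$, so the energies are uniformly bounded. All strips lie in a fixed compact set (Subsection~\ref{subsec:lagrangian-preparation}), and there is no bubbling in the exact setting. Gromov--Floer compactness then gives a subsequence converging to a broken strip $(v_1,\dots,v_\ell)$ from $\cP{p}$ to $\cP{b}$.

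We normalize the translation of $u_k$ so that its first component converges to $v_1$, which has negative end $\cP{p}$. The plan is to show that $v_1$ is fixed with trivial normal part near $-\infty$, and then to reach a contradiction from the structure of the normal expansion.

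The key mechanism is exactness of the normal equation. In the product chart of Corollary~\ref{cor:lagrangian-preparation-hat}, $J$ is standard and the Lagrangians are linear, so the normal component $w_{u_k}$ is an exact series $\sum_j c_j(u_k)e^{-\lambda_j s}e^{-\sqrt{-1}\lambda_j t}$ on $(-\infty,s_1^{(k)}]\times[0,1]$. One needs $s_1^{(k)}$ to be bounded below uniformly, or equivalently, after the translation normalization, the left end of each $u_k$ to stay in the chart on a fixed half-strip $(-\infty,s_*]$. This holds because $u_k\to v_1$ in $C^\infty_{\mathrm{loc}}$ and the convergence near the negative end is uniform on half-strips, which follows from the standard exponential decay estimates at a nondegenerate end.

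On the fixed half-strip the coefficients are then continuous in the limit by Lemma~\ref{rem:lagrangian-exact-expansion}(3), so $c_j(u_k)\to c_j(v_1)$ for each $j$. Since $c_j(u_k)=0$ for $j<r_k$ with $r_k\to\infty$, every $c_j(v_1)$ vanishes. By Lemma~\ref{rem:lagrangian-exact-expansion}(2), $v_1$ is fixed and ends at an invariant point $\cP{p}_1$.

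Now we iterate along the breaking. Let $\sigma_k$ be the translation parameter of the second level. We rescale $w_{u_k}$ by its sup-norm on the neck, or equivalently on the region near the end of $v_1$, and take a limit. Because $v_1\subset M^G$ and $J$ is split along the fixed locus with constant normal operator $D^\perp$ (Proposition~\ref{thm:floer-model-regularity}), the rescaled normal parts converge to a nonzero solution $\xi$ of the normal linearized equation $D^\perp\xi=0$ along $v_1$. This $\xi$ has an exact expansion at $-\infty$ whose leading mode is at least as deep as the leading modes of the $u_k$, hence it has infinite order of vanishing. Since $D^\perp=\partial_s+J_W\partial_t$ with constant boundary conditions, the expansion is global. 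So $\xi$ is a bounded solution of the constant-coefficient operator on the whole strip, decaying at $-\infty$ to infinite order, and therefore $\xi\equiv0$. This contradicts the normalization.

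If $u_k$ leaves the tube only at a later level, one runs the same argument at the first level where the limit is non-fixed, transporting the vanishing of coefficients through the fixed levels by the linearity of $D^\perp$ along fixed strips. The finite number of breaking levels makes this induction terminate.

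The main obstacle is the rescaling step. One must show that the normal parts of $u_k$ along a fixed limit level, normalized by their size, converge to a nontrivial linearized solution that inherits infinite-order vanishing at the end. The difficulty is that the leading coefficients of $u_k$ might vanish while the mass concentrates at the far end of the neck or in a later level.

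I would handle this with a three-annulus, or frequency-function, monotonicity for the normal component. In the split region the normal equation is linear with constant coefficients, so the ratios $\|w\|_{[s,s+1]}/\|w\|_{[s-1,s]}$ are controlled by the spectral gaps. Outside the tube the normal component is controlled by the tangential convergence. Concretely, the monotonicity says that a solution whose expansion begins at $\lambda_{r}$ grows at least like $e^{|\lambda_r|s}$. This growth rate is incompatible with the bounded energy $\mathcal A(\cP{p})-\mathcal A(\cP{b})$ once $|\lambda_r|$ exceeds a constant depending only on $J$ and the geometry of the tube, and this directly gives an explicit $r(J)$.
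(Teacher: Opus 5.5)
\textbf{Verdict: there is a genuine gap.} Your first-level argument is fine. If $v_1$ were non-fixed, reading the coefficients on a slice of its negative end gives $c_j(u_k)\to c_j(v_1)$; this is the paper's argument in the case where the first non-fixed level is $v_1$.

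\emph{The rescaling step fails.} On the region where $u_k$ lies in the tube, its normal part is $\sum_{j\ge r_k}c_j(u_k)e^{-\lambda_js}e^{-\sqrt{-1}\lambda_jt}$. These modes are $L^2$-orthogonal on each slice, so $\|w_{u_k}(s+1,\cdot)\|_{L^2}\ge e^{|\lambda_{r_k}|}\|w_{u_k}(s,\cdot)\|_{L^2}$ with $r_k\to\infty$. Whatever region you normalize on, the normalized functions concentrate at its right edge and converge to $0$ on compact sets. So the nonzero limit $\xi$ does not exist, precisely because of the hypothesis, and there is no contradiction with the normalization. Nothing contradictory can be extracted from the region of a fixed level alone: a fixed $v_1$ together with normal parts of $u_k$ living in ever higher modes is perfectly consistent there.

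\emph{The energy fix is incomplete.} Your computation is right: the normal energy on $(-\infty,S]$ is at least $\tfrac{|\lambda_r|}{2}\|w_u(S,\cdot)\|_{L^2}^2$. But to use it you need a uniform $\epsilon>0$ such that every non-fixed $u$ has a slice $S$ with $\|w_u(S,\cdot)\|_{L^2}\ge\epsilon$ and $u((-\infty,S]\times[0,1])$ inside the tube. You do not supply this.
\begin{itemize}
\item You do not rule out that $u$ first leaves the tube over $\partial V$ with tiny normal part.
\item Leaving through the normal boundary gives only a pointwise bound. The energy of the preceding linear region does not control it: a long sum of high modes can have supremum of fixed size and arbitrarily small energy, for every $r$.
\end{itemize}
Getting $\epsilon$ requires a compactness argument that already proves the theorem, and it yields no explicit $r(J)$.

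\emph{The missing idea.} Your penultimate paragraph gestures at it, but the point is not the linearized operator $D^\perp$ along the limit. On the whole adapted tube $\widehat U_V$, not just the chart at $\cP{p}$, $J=J_0\oplus J_W$ and the Lagrangians are the products $\ell_i\times\Lambda_i(1)$. Hence the normal part of $u_k$ itself satisfies $\partial_sw+J_W\partial_tw=0$ \emph{exactly} wherever $u_k$ lies in the tube. The argument then runs as follows.
\begin{itemize}
\item Let $v_i$ be the first non-fixed level and fix a slice $s_1$ in its negative end.
\item For large $k$, the entire part of $u_k$ before this slice (in the translation adapted to $v_i$) shadows fixed levels and necks at invariant points, so it lies in the tube.
\item Hence the exact expansion of $w_{u_k}$ holds on all of $(-\infty,s_1]$, and its coefficients are $e^{\lambda_js_1}$ times the eigenprojections of $w_{u_k}(s_1,\cdot)$.
\item $C^0$ convergence on this one slice gives $c_j(u_k)\to c_j(v_i)$. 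The reparametrization factors $e^{-\lambda_j\sigma}$ do not change which coefficients vanish.
\item Since $v_i$ is non-fixed, some $c_j(v_i)\ne0$, a contradiction.
\end{itemize}
No rescaling is needed, and mass concentration is irrelevant.

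You also need the case in which every level is fixed; ``the induction terminates'' does not cover it. In that case $u_k$ lies entirely in the tube for large $k$. Its normal part is then a solution on $\R\times[0,1]$ decaying at both ends, hence zero, contradicting that $u_k$ is non-fixed.
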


The proof has two ingredients: Gromov--Floer compactness, and the fact that
in the adapted tube the normal equation is exactly linear, so that normal
coefficients can be read on any slice and transported along fixed components
without error.  Exact transport through the preceding fixed components allows the leading
mode to be read on the first non-fixed component of the limiting building.

\begin{proof}
Since $\widehat L_0\cap \widehat L_1$ is finite, there are finitely many pairs
$(\cP{p},\cP{b})$, and for each pair the space $\M(\cP{p},\cP{b})$ of all
strips from $\cP{p}$ to $\cP{b}$, whatever their expected dimension, is
compact up to breaking by Gromov--Floer compactness in the exact convex
setting, the energy of every such strip being the action difference of its
ends.  It suffices to bound each pair and take the minimum;  and nothing below refers to their dimension or to
$N$.  Suppose, for contradiction, that some $\mathcal M(\cP{p},\cP{b})$
with invariant $\cP{p}$ contains non-fixed strips $u_n$ with
$\lambda_{\mathrm{lead}}(u_n)\to-\infty$.  By Gromov--Floer compactness a
subsequence converges to a broken building $(v_1,\ldots,v_k)$ from $\cP{p}$
to $\cP{b}$, with $v_1$ leaving $\cP{p}$: after translations adapted to each
component, $u_n$ converges to $v_i$ uniformly on compact subsets, the energy
on the necks tends to zero, and the images of the necks converge to the
break points.  Let $U:=\widehat U_V$ be the adapted tube
(Subsection~\ref{subsec:lagrangian-preparation}), in which $J$ is split with
constant normal factor $J_W$, the boundary conditions are the constant
Lagrangians $\mathbb{R}^m$ and $\sqrt{-1} \mathbb{R}^m$, and the normal component of any
$J$-holomorphic strip is an exact solution of the constant linear equation
$\partial_sw+J_W\partial_tw=0$.  Recall two consequences of
exact linearity.  First, if a strip lies in $U$ on a region
$(-\infty,s_1]\times[0,1]$ and converges to an intersection point in $M^G$
as $s\to-\infty$, then on that region its normal part is
$\sum_{j}c_j e^{-\lambda_j s}e^{-\sqrt{-1}\lambda_j t}$.
Reparametrizing the strip from $(s,t)$ to $(s',t)=(s-\sigma,t)$ multiplies each
$c_j$ by the nonzero factor $e^{-\lambda_j\sigma}$, so the set of
$\lambda_j$ with $c_j\ne0$, and in particular $\lambda_{\mathrm{lead}}$,
is independent of the parametrization of the strip.  Second, if a strip lies in $U$
on all of $\R\times[0,1]$ and converges to intersection points at both ends,
its normal part is a global solution of the constant linear equation decaying
at both ends; since the asymptotic operator is invertible, the mode expansion
forces it to vanish, and the strip is fixed.

\emph{Step 1: not every component is fixed.}  If all $v_i$ lie in $M^G$,
then  $u_n\subset U$ for large $n$. By the second consequence above, $u_n$ is fixed, a contradiction.

\emph{Step 2: the first non-fixed component.}  Let $v_i$ be the first
component not contained in $M^G$, with negative end $\cP{q}_i\in M^G$. 
By Lemma \ref{rem:lagrangian-exact-expansion} (2), some normal coefficient of $v_i$ is nonzero.
Thus $\lambda_*:=\lambda_{\mathrm{lead}}(v_i)$
is defined and finite.

\emph{Step 3: transport through the fixed components.}  Fix a slice
$\{s_1\}\times[0,1]$ in the negative end region of $v_i$, inside $U$, and
consider $u_n$ in the translation adapted to $v_i$.  On that slice $u_n\to v_i$ in $C^0$. Therefore by Lemma \ref{rem:lagrangian-exact-expansion} (3)
$c_{\lambda_*}(u_n)\to c_{\lambda_*}(v_i)\ne0$, so
$c_{\lambda_*}(u_n)\ne0$ and $\lambda_{\mathrm{lead}}(u_n)\ge\lambda_*$
for all large $n$, contradicting $\lambda_{\mathrm{lead}}(u_n)\to-\infty$.
\end{proof}

\begin{corollary}[Visibility of non-fixed strips]
\label{cor:lagrangian-dis-zero}
For $r\ge r(J)$ and $N=rm$, every non-fixed strip
with invariant negative end is $N$-visible. 
\end{corollary}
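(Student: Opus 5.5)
This follows directly from Theorem~\ref{ass:floer-uniform-leading-bound} together with the monotonicity of the ordered spectrum \eqref{eq:floer-ordered-negative-spectrum}. Let $u$ be a non-fixed strip with invariant negative end $\cP{p}$ and arbitrary positive end $\cP{b}$. The first step is to check that $u$ lies in the class covered by the theorem. The theorem imposes no restriction on the expected dimension or on the positive end, so the only point to verify is that $u$ belongs to some $\M(\cP{p},\cP{b})$ for the fixed $J\in\mathcal J$. Since $u$ is non-fixed, Lemma~\ref{rem:lagrangian-exact-expansion}(2) gives some nonzero coefficient $c_j(u)$. Hence $\lambda_{\mathrm{lead}}(u)$ in \eqref{eq:floer-leading-eigenvalue} is defined and equals $\lambda_{j_0}$, where $j_0$ is the smallest index with $c_{j_0}(u)\neq0$.

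Next, apply Theorem~\ref{ass:floer-uniform-leading-bound} to get $\lambda_{j_0}=\lambda_{\mathrm{lead}}(u)\ge\lambda_{r(J)}$. The sequence $\lambda_j=-\pi/2-(j-1)\pi$ is strictly decreasing in $j$, so this inequality is equivalent to $j_0\le r(J)$. Now take $r\ge r(J)$ and $N=rm$. Then $j_0\le r$, so the component $c_{\lambda_{j_0}}(u)$ is one of the entries of
\[
c_N(u)=\bigl(c_{\lambda_1}(u),\ldots,c_{\lambda_r}(u)\bigr)\in E_N^-=\bigoplus_{j=1}^rE_j .
\]
This entry is nonzero, so $c_N(u)\neq0$, and $u$ is $N$-visible.

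There is no real obstacle, since all the analytic content sits in Theorem~\ref{ass:floer-uniform-leading-bound}. The only points needing care are bookkeeping. First, $\lambda_{\mathrm{lead}}$ is a maximum over eigenvalues with nonzero coefficient, so it corresponds to the \emph{smallest} index $j_0$; one must track this correctly when translating the eigenvalue inequality into the index inequality. Second, $N$-visibility does not depend on the parametrization of $u$, nor on the slice $s_1$ at which the coefficients are read. By Lemma~\ref{rem:lagrangian-exact-expansion}(1), translating $u$ rescales each $c_j$ by the nonzero factor $e^{-\lambda_j\sigma}$, so the condition $c_N(u)\ne0$ is well defined on the quotient $\M(\cP{p},\cP{b})$ by translation.
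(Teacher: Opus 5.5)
Your proof is correct and is the argument the paper intends: the paper gives no separate proof, treating the corollary as immediate from Theorem~\ref{ass:floer-uniform-leading-bound}. The bound $\lambda_{\mathrm{lead}}(u)\ge\lambda_{r(J)}$, together with the strict monotonicity of the $\lambda_j$, forces a nonzero coefficient among $c_{\lambda_1}(u),\ldots,c_{\lambda_{r(J)}}(u)$, and all of these are entries of $c_N(u)$ once $r\ge r(J)$. Your citation of part (2) of Lemma~\ref{rem:lagrangian-exact-expansion} for the existence of a nonzero coefficient is the right one; the sentence just before the definition of $\lambda_{\mathrm{lead}}$ cites part (3) by a slip.
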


\begin{remark}[Order of the choices]\label{rem:cutoff-order}
This is where the order announced at the start of the section is
enforced: $J$ was fixed first, $r(J)$ depends only on $J$, $N$ is chosen
from $r(J)$, and the transport of the sphere pair in the next subsection
changes neither $J$ nor the strips, hence neither $r(J)$ nor $N$.
\end{remark}

\subsubsection{The negative evaluation}
\label{subsec:floer-negative-evaluation}

On $E_N^-\setminus\{0\}$ use the weighted action
\begin{equation}\label{eq:floer-weighted-scaling}
    a\star(c_1,\ldots,c_r)
    :=(e^{-\lambda_1a}c_1,\ldots,e^{-\lambda_ra}c_r),\qquad a\in\R,
\end{equation}
Every orbit meets $S_N$ exactly once.  Let
\[
    q_N\colon E_N^-\setminus\{0\}\longrightarrow S_N
\]
be this weighted normalization.

\begin{definition}[The negative evaluation]
\label{def:floer-finite-spectral-evaluation}
For an $N$-visible strip with invariant negative end $\cP{p}$, that is, one
with $c_N(u)\ne0$, define
\begin{equation}\label{eq:lagrangian-negative-normal-evaluation}
    \Ev^-_{\cP{p},N}(u)
    :=q_N\bigl(c_N(u)\bigr)
    \in S_N.
\end{equation}
\end{definition}

The restriction to $N$-visible strips is not a formality: for a fixed $N$
some non-fixed strips may have all their first $r$ coefficients zero.
Corollary~\ref{cor:lagrangian-dis-zero} says that once $r$ is chosen
large enough, depending on $J$, every non-fixed strip with invariant
negative end is $N$-visible, and that is how $N$ is fixed.  The
transformation law for the coefficients makes this evaluation
translation-invariant and $G$-equivariant.

\begin{lemma}[Weighted normalization]\label{lem:floer-weighted-quotient}
The normalization $q_N:E_N^-\setminus\{0\}\to S_N$
is a smooth $G$-equivariant submersion.  It identifies the weighted orbit
space with $S_N$.
\end{lemma}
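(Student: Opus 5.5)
The plan is to reduce everything to the function $\Phi(c):=\sum_{j=1}^r|c_j|^2$ on $E_N^-\setminus\{0\}$, using the invariant inner product on $\R^m$ fixed in Subsection~\ref{subsec:floer-straightening-lagrangians}. First, $G$ acts on $E_N^-=(\R^m)^{\oplus r}$ diagonally and orthogonally, and the weighted action \eqref{eq:floer-weighted-scaling} multiplies the summands by real scalars. The two actions therefore commute, and $\Phi$ is $G$-invariant, so $S_N=\Phi^{-1}(1)$ is $G$-stable. Next I fix $c\neq0$ and study $f_c(a):=\Phi(a\star c)=\sum_j e^{-2\lambda_j a}|c_j|^2$. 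Since every $\lambda_j<0$, each nonzero term is strictly increasing in $a$. It tends to $0$ as $a\to-\infty$ and to $+\infty$ as $a\to+\infty$. Hence $f_c$ is a strictly increasing bijection $\R\to(0,\infty)$, and there is a unique $a(c)$ with $a(c)\star c\in S_N$. This proves that every orbit meets $S_N$ exactly once, and it gives the formula $q_N(c)=a(c)\star c$.

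For smoothness I apply the implicit function theorem to $F(a,c)=\Phi(a\star c)-1$. Its derivative in $a$ is
\[
\partial_aF=\sum_j(-2\lambda_j)e^{-2\lambda_j a}|c_j|^2,
\]
which is strictly positive whenever $c\neq0$. So $a(c)$ is smooth, and hence $q_N$ is smooth. Equivariance holds because $\Phi(g c)=\Phi(c)$ forces $a(gc)=a(c)$, and $g$ commutes with $a\star$. This gives $q_N(gc)=g\,q_N(c)$.

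For the submersion property, observe that $q_N$ restricts to the identity on $S_N$, since $a(c)=0$ there. Thus $q_N$ is a retraction onto the submanifold $S_N$, and $dq_N$ restricted to $TS_N$ is the identity. Consequently $dq_N$ is surjective at points of $S_N$. At a general point $c=a_0\star s$, I write $q_N=q_N\circ(a_0\star)^{-1}$ near $c$, using that $q_N$ is constant on orbits. The map $(a_0\star)^{-1}$ is a linear isomorphism, so surjectivity transfers to $c$.

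Finally, the map $\R\times S_N\to E_N^-\setminus\{0\}$, $(a,s)\mapsto a\star s$, is a smooth bijection with smooth inverse $c\mapsto(-a(c),q_N(c))$, and it is $G$-equivariant. So it is an equivariant diffeomorphism, and the orbit space of the weighted action is identified with $S_N$ through $q_N$. I expect no real obstacle here. The only point needing care is the transversality $\partial_aF>0$, which is exactly the sign condition $\lambda_j<0$ built into \eqref{eq:floer-ordered-negative-spectrum}.
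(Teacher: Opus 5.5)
Your proof is correct and follows essentially the same route as the paper: the implicit function theorem applied to $\sum_j e^{-2\lambda_j a}|c_j|^2-1$, whose $a$-derivative is positive because every $\lambda_j<0$, together with the fact that the weighted action commutes with $G$. Your retraction argument for the surjectivity of $dq_N$ and your explicit diffeomorphism $\R\times S_N\cong E_N^-\setminus\{0\}$ spell out what the paper compresses into the remark that $S_N$ is a global slice transverse to every orbit.
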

\begin{proof}
For $c=(c_1,\ldots,c_r)\ne0$, the function
$F(\sigma,c)=\sum_a e^{-2\lambda_a\sigma}|c_a|^2-1$ increases from $-1$ to
infinity and has strictly positive $\sigma$-derivative.  The implicit function
theorem gives a unique smooth zero $\sigma(c)$, and
$q_N(c)=\sigma(c)\star c$.  The unit sphere is a global slice
transverse to every orbit.  The weighted action commutes with $G$, so the
normalization is equivariant.
\end{proof}

\subsubsection{Sphere data and incidence transversality}
\label{subsec:floer-transversality}
Fix an admissible cutoff $N=rm$ as in
Corollary~\ref{cor:lagrangian-dis-zero}.  All auxiliary Morse theory below
is on the spectral sphere; no Morse function on the ambient symplectic
manifold is used.

Orient $\R^m$, hence each $E_j\cong\R^m$ and $E_N^-=\bigoplus_jE_j$, by the
complex orientation of Definition~\ref{def:group-complex-structure}, the
order of the summands being immaterial since each is even-dimensional,
and orient $S_N=S(E_N^-)$ as the boundary of the oriented unit ball. 
Each $E_j$ is a copy of $\R^m$, a sum of rotation planes
by Lemma~\ref{lem:real-irreps-of-Zp}, so the action of $G$ on $S_N$ is free and orientation preserving.  Its
quotient $L_N=S_N/G$ is an oriented generalized
lens space.  Choose a Morse--Smale pair
$(\bar h_N,\bar k_N)$ on this quotient with a unique
maximum and minimum, in standard quadratic coordinates with constant
metric near its critical points, and let $(h_N,k_N)$
be its lift.
Write
\[
 C_N=CM_*(S_N,h_N,k_N),
 \qquad j(\cP{v})=\ind_{h_N}(\cP{v}).
\]
Each chain group is a free $\Z[G]$-module.  With the descending-cell
orientations lifted from the quotient, positive orientations on the
minima, and the top cells oriented as the sphere
Let $M_g$ are the lifts of the quotient maximum.

The Morse differential $d_N$, augmentation $\epsilon_N$, and top cycle $\Theta_N$ of $C_N$ are defined by

\begin{gather}
 d_N\cP{v}
 =\sum_{j(\cP{w})=j(\cP{v})-1}
   \#\M_{h_N}^0(\cP{v},\cP{w})\,\cP{w},
 \qquad
 \epsilon_N(\cP{v})=
 \begin{cases}1,&j(\cP{v})=0,\\0,&j(\cP{v})>0,\end{cases}
 \label{eq:floer-sphere-differential}\\
 \Theta_N=\sum_{g\in G}M_g, \label{eq:floer-top-cycle}
 \end{gather}

\begin{lemma}[The sphere and its Morse data]
\label{lem:floer-sphere-data}
The differential, augmentation, and top cycle of $C_N$ satisfy
\begin{align}
 (d_N)^2=0,\quad d_N\Theta_N=0,
 \quad \epsilon_Nd_N=0,
 \quad \epsilon_N(\Theta_N)=0.
 \label{eq:floer-local-identities}
\end{align}
  The cycle
$\Theta_N$ represents the fundamental class of $S_N$.
\end{lemma}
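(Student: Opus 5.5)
The plan is to reduce everything to standard Morse homology of the quotient $L_N=S_N/G$ and its lift, together with the equivariance built into the construction. Since $(h_N,k_N)$ is lifted from a Morse--Smale pair $(\bar h_N,\bar k_N)$ on the closed oriented manifold $L_N$, and the covering $S_N\to L_N$ is a local isometry, the lift is again Morse--Smale. Negative gradient trajectories on $S_N$ are exactly the lifts of those on $L_N$. The usual compactification of one-dimensional trajectory spaces by once-broken trajectories \cite{wehrheim2012smooth}, with the lifted coherent orientations, then gives $(d_N)^2=0$. The descending-cell orientations are pulled back by deck transformations, so $d_N$ is $\Z[G]$-linear, and $C_N$ is the cellular-type chain complex of a free $G$-CW structure on $S_N$. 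Its homology is $H_*(S_N;\Z)$, since Morse homology computes singular homology.

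For $\epsilon_Nd_N=0$, I only need $\epsilon_N d_N\cP{v}$ for $j(\cP{v})=1$. Each index-one critical point lies over an index-one point $\bar{\cP{v}}$ of $L_N$, whose descending manifold is an arc with two ends. Because $\bar h_N$ has a unique minimum, both ends limit to that minimum. The lifted arc is the descending manifold of $\cP{v}$, and its two ends are two minima $\cP{w}_1,\cP{w}_2$, possibly equal. With the descending orientation on the arc and positive orientations on the minima, the boundary orientation gives counts $+1$ and $-1$. Hence $d_N\cP{v}=\cP{w}_1-\cP{w}_2$ and $\epsilon_N d_N\cP{v}=0$. The identity $\epsilon_N(\Theta_N)=0$ is immediate, because $j(M_g)=N-1>0$ as $N\ge m\ge 2$.

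For $d_N\Theta_N=0$, I would note that $\Theta_N=\sum_g g\cdot M_e$ is the norm element applied to a top generator, and that $d_N$ commutes with $G$. The cleanest route is to argue that the top cells $M_g$ are the open descending $(N-1)$-cells whose closures cover $S_N$. Their sum, with each cell oriented as the sphere, is the fundamental chain. Near a codimension-one boundary trajectory ending at an index $N-2$ point $\cP{w}$, exactly two top cells meet along the corresponding hypersurface piece, possibly the same cell from two sides. Since all top cells carry the orientation of the single oriented sphere, they induce opposite boundary orientations, so the coefficient of $\cP{w}$ in $d_N\Theta_N$ cancels. Equivalently, $\bar M$ is a cycle in $CM_*(L_N)$ because $L_N$ is a closed oriented manifold with a unique maximum, hence $d\bar M=0$. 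The coefficient of $\cP{w}$ in $d_N\Theta_N$ equals the coefficient of $\bar{\cP{w}}$ in $d\bar M$, because summing over all lifts of the maximum counts each downstairs trajectory exactly once. This second argument is the one I would write.

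Finally, $\Theta_N$ represents the fundamental class. Under the identification of Morse and singular homology, the class of the maximum in $L_N$ is $[L_N]$, since $L_N$ is orientable and the maximum cell is oriented compatibly. Transfer for the $p$-fold cover $S_N\to L_N$ sends $[L_N]$ to $[S_N]$, and at chain level transfer sends $\bar M$ to the sum of its lifts, which is $\Theta_N$. The main obstacle is the orientation bookkeeping: checking that the lifted descending orientations of the top cells agree with the sphere orientation, and that the Morse--singular comparison respects the chosen conventions. Both reduce to the fact that deck transformations preserve orientation (Lemma~\ref{lem:real-irreps-of-Zp}).
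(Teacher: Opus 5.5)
Your proposal is correct. For $(d_N)^2=0$, $\epsilon_Nd_N=0$ and $\epsilon_N(\Theta_N)=0$ it is the paper's argument: the standard compactification, the two ends of a one-dimensional descending arc carrying opposite signs, and $j(M_g)=N-1>0$. For $d_N\Theta_N=0$ and the fundamental-class claim, however, the argument you choose to write differs from the paper's.

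The paper argues directly on $S_N$. The lifted top cells, all oriented as the sphere, make up its fundamental chain, so their sum is a cycle representing $[S_N]$. Concretely, the one-dimensional ascending manifold of an index-$(N-2)$ point $\cP{w}$ has two branches ending at maxima, and the two trajectories reach $\cP{w}$ from opposite sides. Since both maxima carry the ambient orientation, their contributions cancel. This is your first sketch. It works for any Morse--Smale function on a closed oriented manifold and makes no use of the quotient.

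Your preferred argument moves the computation down to $L_N$. There $d\bar M=0$ is forced by $HM_{N-1}(L_N)\cong H_{N-1}(L_N;\Z)\cong\Z$ ($L_N$ is closed, connected, and orientable because the action preserves orientation) together with uniqueness of the maximum. You then lift trajectory counts: each downstairs trajectory into $\bar{\cP{w}}$ lifts uniquely to one into a given lift $\cP{w}$, with the same sign because all orientations are lifted. This replaces the local sign check by the Morse--singular comparison on $L_N$, and it relies on specific features of the construction: the free orientation-preserving action and the unique maximum downstairs.

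For the fundamental class, your transfer step assumes, without proof, that the Morse--singular isomorphism is compatible with transfer at chain level. You can bypass this. Since $C_N$ vanishes above degree $N-1$, $H_{N-1}(C_N)=\ker(d_N|_{C_{N-1}})\cong\Z$ is a saturated sublattice of $C_{N-1}=\Z\langle M_g\rangle$. It contains the primitive vector $\Theta_N$, so $\Theta_N$ generates it. The sign is then fixed by the top cells carrying the orientation of $S_N$.

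Your aside that $C_N$ is the cellular complex of a free $G$-CW structure is not needed.
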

\begin{proof}
  The ordinary compactification
of sphere trajectories proves $(d_N)^2=0$.
The oriented boundary of a one-dimensional descending cell has total
coefficient zero, proving $\epsilon_Nd_N=0$.
The lifted top cells, all with the ambient orientation, constitute the
fundamental chain of the sphere, so their sum has zero boundary and
represents its fundamental class.  Finally $N\ge2$, and the augmentation
vanishes in the positive local degree $N-1$ of the top cycle.  The precise
orientation convention for sphere trajectories is recorded in
Lemma~\ref{lem:floer-sphere-boundary}.
\end{proof}

For a critical point $\cP{v}$ of $h_N$, let
$\descending_{h_N}(\cP{v})$ be its descending manifold: the
points reached at time zero by solutions of
$\dot\vartheta=-\grad_{k_N}h_N(\vartheta)$ on
$(-\infty,0]$ with limit $\cP{v}$.  Its dimension is $j(\cP{v})$.
Let $e_{\cP{v}}$ denote its inclusion in $S_N$, and let
$\M_{h_N}(\cP{v},\cP{w})$ denote the space of nonconstant
whole trajectories modulo translation.

For an invariant $\cP{p}$ and any target $\cP{b}$, set
$\M^\circ(\cP{p},\cP{b})=\M(\cP{p},\cP{b})\setminus
\M_{M^G}(\cP{p},\cP{b})$; when the target is outside this is the whole
space.  Define
\begin{equation}\label{eq:floer-incidence}
 \mathcal N_{\cP{p},N}(\cP{v};\cP{b})
 =\descending_{h_N}(\cP{v})
 \,{}_{e_{\cP{v}}}\!\times_{\Ev^-_{\cP{p},N}}
 \M^\circ(\cP{p},\cP{b}).
\end{equation}
Equivalently, an element is a strip $u$ whose evaluation belongs to the
specified descending cell, together with the uniquely determined
half-trajectory $\vartheta$ from $\cP{v}$ to that evaluation.  We call
$(\vartheta,u)$ an \emph{incidence configuration} and identify it with $u$
when no confusion arises.  
On a component with $\iota=\ind D_u$, transversality gives
\begin{equation}\label{eq:floer-incidence-dimension}
 \dim\mathcal N_{\cP{p},N}(\cP{v};\cP{b})
 =\iota-N+j(\cP{v}).
\end{equation}
We use superscripts on $\mathcal N$ to refer to its dimension.
For example, $\mathcal N^0$ means the $0$-dimensional moduli space of $\mathcal N$.
A rigid incidence configuration generally has an ambient strip in a
positive-dimensional moduli space, of dimension $N-1-j(\cP{v})$.

For a diffeomorphism $\varphi$ of $L_N$ isotopic to the identity, with
$\widetilde\varphi$ the $G$-equivariant lift of the isotopy to $S_N$, the
\emph{transport} of the sphere pair is
$(h_N\circ\widetilde\varphi^{-1},\widetilde\varphi_*k_N)$; it is again a
lifted Morse--Smale pair, and $\widetilde\varphi$ carries critical points,
descending cells, orientations, augmentation and top cycle to those of the
new pair.\label{def:floer-sphere-transport}

\begin{proposition}[Incidence transversality]
\label{prop:floer-transversality-outside}\label{prop:floer-generic-transversality}
For the fixed regular $J$, a transport of the reference sphere pair can be
chosen so that all spaces \eqref{eq:floer-incidence} are transverse, for
every invariant source, every target, and every sphere critical point.
The same transport makes all broken-stratum and iterated incidence
conditions used below transverse.
\end{proposition}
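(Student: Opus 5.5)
The plan is a parametric transversality argument on the quotient lens space $L_N=S_N/G$, where $G$ acts freely. This reduces equivariant genericity to ordinary genericity downstairs. Since $J$ is fixed and regular (Propositions~\ref{thm:floer-model-regularity} and~\ref{thm:floer-free-regularity}), each $\M^\circ(\cP{p},\cP{b})$ is a smooth manifold, a finite union of components of various dimensions. By Corollary~\ref{cor:lagrangian-dis-zero}, each of its strips is $N$-visible. Hence $\Ev^-_{\cP{p},N}$ is defined on all of $\M^\circ(\cP{p},\cP{b})$; it is smooth by Lemma~\ref{lem:floer-weighted-quotient} and the smooth dependence of $c_N(u)$ on $u$, translation invariant, and $G$-equivariant. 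Composing with the projection $S_N\to L_N$ gives a smooth map $\overline{\Ev}$ from $\M^\circ(\cP{p},\cP{b})$, or from its quotient by the free $G$-action on non-fixed strips, to $L_N$. Because the descending cells of the lifted pair are lifts of the descending cells of $(\bar h_N,\bar k_N)$, and $\widetilde\varphi$ lifts $\varphi$, the fiber product \eqref{eq:floer-incidence} is transverse upstairs if and only if the corresponding fiber product of $\overline{\Ev}$ with $\varphi(\descending_{\bar h_N}(\bar{\cP{v}}))$ is transverse downstairs. The projection is a local diffeomorphism, so transversality is local.

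First, I will treat a single space. I use the standard parametric transversality theorem for families of maps: the evaluation $\mathrm{Diff}_0(L_N)\times\descending_{\bar h_N}(\bar{\cP{v}})\to L_N$, $(\varphi,x)\mapsto\varphi(x)$, is a submersion. Therefore, for a residual set of $\varphi$ (in a Banach or $C^\infty$ Baire space of diffeomorphisms isotopic to the identity), the map $x\mapsto\varphi(x)$ is transverse to $\overline{\Ev}$ restricted to each component of $\M^\circ$. Transport preserves the Morse--Smale property and the standard local form near critical points, since $\varphi$ acts by conjugation on the whole pair. Countability then suffices. There are finitely many pairs $(\cP{p},\cP{b})$ and finitely many critical points, and each moduli space has countably many components, in fact finitely many since only finitely many indices occur. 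Intersecting the residual sets gives a residual set of transports.

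Second, I will handle the broken strata and iterated conditions. These are fiber products of the same kind: products of descending cells, or of whole trajectory spaces $\M_{h_N}(\cP{v},\cP{w})\times\descending(\cP{w})$, with evaluations on products of strip spaces $\M^\circ(\cP{p},\cP{q})\times\M(\cP{q},\cP{b})$, or with fixed first factors, where the evaluation is read on the first non-fixed component via exact transport (Lemma~\ref{rem:lagrangian-exact-expansion}(1)). Iterated conditions at several invariant points on one broken configuration involve several independent sphere factors. For these I use the product of transports, or the same $\varphi$ applied at different points with independent source maps. The key point is that the map $(\varphi_1,\dots,\varphi_k)$ evaluated at $(x_1,\dots,x_k)$ is still a submersion onto $L_N^k$ when the evaluations take place in distinct sphere factors.

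The main obstacle is exactly this step when the same sphere pair must serve several evaluations into one $S_N$. This happens when two invariant ends of a broken configuration are the same point $\cP{p}$, so that one diffeomorphism $\varphi$ has to work simultaneously at two points of $L_N$. I will handle it by noting that the relevant points are distinct in the fiber product, or else the condition is a diagonal one already covered. On the complement of the diagonal, the evaluation $\varphi\mapsto(\varphi(x_1),\varphi(x_2))$ is still a submersion. The diagonal locus can be shown to be empty, or of the expected codimension, by a dimension count using \eqref{eq:floer-incidence-dimension}.
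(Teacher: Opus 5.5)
Your core argument is the paper's. With $J$ fixed and regular, the evaluations are smooth on the countably many components of the strip spaces and broken strata, read on the first non-fixed factor with preceding fixed factors carrying them by the identity. Parametric transversality over transports of the lifted pair, followed by a countable intersection of residual sets, then makes each evaluation transverse to every descending cell. The only difference in this part is the parameter space. The paper uses a finite-dimensional family of diffeomorphisms of $L_N$ whose parameter derivative spans $TL_N$ everywhere, so ordinary Sard applies. Your $\mathrm{Diff}_0(L_N)$ works too, but it needs a $C^k$ Sard--Smale step and a passage back to smooth diffeomorphisms.

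Your last paragraph, however, misidentifies the difficulty, and the fixes it proposes are either unavailable or unsupported.
\begin{itemize}
\item Independent transports $\varphi_1,\dots,\varphi_k$ on different sphere factors are not an option. One transported pair must serve every invariant point: $\Dinv$ sends $\cP{v}|_{\cP{p}}$ to $\cP{v}|_{\cP{q}}$, and in type \textup{(S4b)} the evaluation is carried across the fixed strip by the identity of the spectral spheres.
\item The coincidence you worry about never occurs. Action strictly decreases along a broken configuration, so its invariant ends are distinct.
\end{itemize}
None of this matters, though, because no condition used below couples two evaluations, so no joint submersion onto $L_N^k$ or diagonal dimension count is needed. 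Each broken or iterated stratum is a product of one of two kinds. Either its conditions are $\Ev_1(a_1)\in\descending(\cP{v}_1)$ and $\Ev_2(a_2)\in\descending(\cP{v}_2)$ on distinct strip factors, e.g.\ the source incidence together with the top-cell condition of Lemma~\ref{lem:floer-top-cells} in \textup{(S4c)}. Or it is a single incidence together with a sphere trajectory factor, which is regular because the transported pair is Morse--Smale. Since $f_1\times f_2\pitchfork D_1\times D_2$ if and only if $f_1\pitchfork D_1$ and $f_2\pitchfork D_2$, these reduce to the individual maps of your first step, with the same $\varphi$. This decoupling is exactly how the paper concludes, and it should replace your final paragraph. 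Your claim there that the diagonal locus is empty or of expected codimension is unjustified, and at such points it is irrelevant anyway.
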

\begin{proof}[Sketch]
By ambient regularity the strip spaces, and the products of their
components defining each broken stratum, are smooth manifolds, and the
evaluation is smooth on them: on a translation slice its coefficients are
linear projections of the strip at a fixed time in the adapted tube, the
normalization is smooth by Lemma~\ref{lem:floer-weighted-quotient}, and
on a broken configuration the evaluation is read on the first non-fixed
factor, preceding fixed factors carrying it by the identity.  None of this
depends on the sphere pair.  Transporting the pair by a diffeomorphism of
$L_N$ chosen from a finite-dimensional family whose parameter derivative
spans $TL_N$ everywhere, parametric transversality makes all of the
countably many evaluation maps of all components and breaking types
transverse to the descending cells for a residual set of parameters;
iterated incidences lie on distinct strip factors, or on a trajectory
factor of the transported pair for sphere breaking, so they reduce to the
individual maps.  The dimension formula
\eqref{eq:floer-incidence-dimension} follows.
\end{proof}

From now on $h_N,k_N,C_N$ and
$\Theta_N$ denote the transported data.  Compactness of the
zero-dimensional incidence spaces, and hence finiteness of the counts,
is proved in Proposition~\ref{prop:IIF-strata}.  Their orientations are
specified in Subsection~\ref{subsec:floer-orientations}.

\begin{lemma}[Top cells]\label{lem:floer-top-cells}
For an invariant source $\cP{p}$ and any target $\cP{b}$, the evaluation
of every ordinary rigid non-fixed strip lies in exactly one descending
cell of a lifted maximum $M_g$, so that
\begin{equation}\label{eq:floer-top-cell-sum}
 \M^{\circ,0}(\cP{p},\cP{b})
 =\coprod_{g\in G}\mathcal N^0_{\cP{p},N}(M_g;\cP{b}).
\end{equation}
\end{lemma}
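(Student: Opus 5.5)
The plan is to combine the Morse-theoretic decomposition of $S_N$ into descending cells with a dimension count from incidence transversality, Proposition~\ref{prop:floer-transversality-outside}.

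\emph{Step 1: decomposition of the sphere.} Since $(h_N,k_N)$ is the lift of a Morse--Smale pair on the closed manifold $L_N$, every point of $S_N$ lies on the descending manifold of exactly one critical point. That critical point is the limit of the negative gradient flow line through it as time goes to $-\infty$. Thus $S_N=\coprod_{\cP{v}}\descending_{h_N}(\cP{v})$, a disjoint union. The cells of the lifted maxima $M_g$ are open and have dimension $N-1$. Their complement $Z$ is a finite union of descending manifolds of dimension at most $N-2$. This is where uniqueness of the quotient maximum is used: the top-dimensional cells are exactly the $p$ lifts $M_g$.

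\emph{Step 2: the evaluation avoids the lower cells.} Let $u\in\M^{\circ,0}(\cP{p},\cP{b})$ be a rigid non-fixed strip, lying on an index-one component, so $\iota=1$. By Corollary~\ref{cor:lagrangian-dis-zero}, $u$ is $N$-visible, so $\Ev^-_{\cP{p},N}(u)$ is defined. Suppose it lay in $\descending_{h_N}(\cP{v})$ with $j(\cP{v})\le N-2$. Then $u$ would give a point of $\mathcal N_{\cP{p},N}(\cP{v};\cP{b})$ on a component with $\iota=1$. By Proposition~\ref{prop:floer-transversality-outside} and \eqref{eq:floer-incidence-dimension}, that incidence space is transversely cut out of dimension $1-N+j(\cP{v})\le -1$, hence empty, a contradiction. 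Equivalently, $\Ev^-$ restricted to the zero-dimensional manifold $\M^{\circ,0}$ is transverse to the positive-codimension cells, which means it misses them.

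\emph{Step 3: identify the pieces.} Each rigid $u$ therefore evaluates into exactly one open cell $\descending_{h_N}(M_g)$. The half-trajectory from $M_g$ to $\Ev^-(u)$ is uniquely determined. So $u\mapsto(\vartheta,u)$ is a bijection onto $\coprod_g\mathcal N^0_{\cP{p},N}(M_g;\cP{b})$, where the superscript $0$ matches $\iota=1$ through $\iota-N+(N-1)=0$. Conversely, any zero-dimensional configuration with $j=N-1$ has $\iota=1$, so its strip is rigid. This gives \eqref{eq:floer-top-cell-sum} as sets. For oriented counts, I would note that the top cells carry the ambient orientation of $S_N$, so the fibre-product orientation of $\mathcal N^0$ agrees with that of $u$ in $\M^{\circ,0}$, subject to the conventions of Subsection~\ref{subsec:floer-orientations}.

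\emph{Main obstacle.} The one delicate point is applying transversality to the ambient index-one space itself and not only to higher-index spaces. The index-one space must be among the ``all components'' covered by Proposition~\ref{prop:floer-transversality-outside}, and $\Ev^-$ must be smooth on it. Both follow from the sketch of that proposition, via regularity from Proposition~\ref{thm:floer-free-regularity} and Lemma~\ref{lem:floer-weighted-quotient}, so I expect the proof to be short once these are in place.
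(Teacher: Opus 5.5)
Your proposal is correct and follows the paper's proof. The descending cells of the lifted maxima are disjoint open sets whose complement is the union of the lower cells, and incidence transversality together with the negative dimension $1-N+j(\cP{v})$ forces the evaluation of every rigid non-fixed strip to miss each lower cell; your remarks on the bijection and on the orientations of the top cells match the paper's comment after Convention~\ref{conv:floer-incidence-orientation}.
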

\begin{proof}
The descending cells of the maxima are disjoint open subsets of the
sphere, and their complement is the union of the lower cells.  By
Proposition~\ref{prop:floer-transversality-outside} the evaluation on a
zero-dimensional strip space is transverse to every lower cell, where the
incidence space has negative dimension by
\eqref{eq:floer-incidence-dimension}, so it misses each of them; its value
therefore belongs to exactly one top cell.
\end{proof}

\subsection{The stabilized Floer complex}
\label{subsec:floer-stabilized}

We define the stabilized Floer complex in this subsection.
The basic idea is to replace $\Z\langle\st{\cP{p}}\rangle$ for every invariant intersection point $\cP{p}$ by its quasi-isomorphic model
\[
 \mathcal T_{\cP{p}}=C^s_{\cP{p}}\oplus\Z\langle\st{\cP{p}}\rangle,
 \qquad d_{\mathcal T}=\dsph+\epsilon_N\,\st{\cP{p}},
\]
where $C^s_{\cP{p}}$ is a copy of $C_N$. The $N$ was chosen large enough so that the module $\oplus_{\cP{p}} \Z\langle\st{\cP{p}}\rangle$ will form a subcomplex.
We write $\Theta_{\cP{p}}$ for the copy of the top cycle \eqref{eq:floer-top-cycle} in $C^s_{\cP{p}}$.

\begin{lemma}[Augmented local sphere]\label{lem:floer-augmented-sphere}
The homology of $(\mathcal T_{\cP{p}},d_{\mathcal T})$ is
$\Z\langle[\Theta_{\cP{p}}]\rangle$, and the same holds after replacing
$\Z$ by any coefficient ring.
\end{lemma}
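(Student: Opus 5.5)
The plan is to recognize $(\mathcal T_{\cP{p}},d_{\mathcal T})$ as the mapping cone of the augmentation $\epsilon_N\colon C_N\to\Z\langle\st{\cP{p}}\rangle$, and then read its homology off the long exact sequence of that cone.

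\emph{Step 1: the differential squares to zero.} First I check that $d_{\mathcal T}$ is a differential. Here $d_{\mathcal T}$ acts by $d_N$ on $C^s_{\cP{p}}$ plus the component $\epsilon_N$ into $\Z\langle\st{\cP{p}}\rangle$, and it kills $\st{\cP{p}}$. Then $d_{\mathcal T}^2=d_N^2+\epsilon_Nd_N$, and both terms vanish by Lemma~\ref{lem:floer-sphere-data}. Thus $\mathcal T_{\cP{p}}$ sits in a short exact sequence of differential modules
\[
0\to\Z\langle\st{\cP{p}}\rangle\to\mathcal T_{\cP{p}}\to C^s_{\cP{p}}\to0,
\]
whose connecting map is $H(\epsilon_N)$.

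\emph{Step 2: homology of the Morse complex.} Next I identify $H(C_N)$. Since $(h_N,k_N)$ is Morse--Smale on the closed manifold $S_N$, Morse homology gives $H(C_N)\cong H_*(S_N;\Z)$. The sphere $S_N$ has dimension $N-1\ge1$ (indeed $N$ is even and at least $2$), so
\[
H(C_N)\cong\Z[\mathrm{pt}]\oplus\Z[S_N].
\]
The class of a minimum generates $H_0$, and by Lemma~\ref{lem:floer-sphere-data} the cycle $\Theta_N$ represents the fundamental class. The augmentation induces the standard isomorphism $H_0\to\Z$: any minimum has $\epsilon_N=1$, and all minima are homologous because $S_N$ is connected. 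On the other hand $\epsilon_N(\Theta_N)=0$.

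\emph{Step 3: the long exact sequence.} The exact triangle gives the sequence
\[
H(\Z\langle\st{\cP{p}}\rangle)=\Z\to H(\mathcal T)\to H(C_N)\xrightarrow{\epsilon_*}\Z .
\]
The first arrow is zero: $\st{\cP{p}}=d_{\mathcal T}(\text{minimum})$ is a boundary, which is the statement that $\epsilon_*$ is surjective. Hence $H(\mathcal T)\cong\ker\epsilon_*=\Z[\Theta_N]$. The class $[\Theta_{\cP{p}}]$ lifts this generator, because $d_{\mathcal T}\Theta_{\cP{p}}=d_N\Theta_N+\epsilon_N(\Theta_N)\st{\cP{p}}=0$.

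\emph{Step 4: arbitrary coefficients.} For a general coefficient ring $R$, all modules involved are free over $\Z$. The same argument then applies verbatim to the complex tensored with $R$. The universal coefficient theorem gives $H(C_N\otimes R)\cong R\oplus R$, with the same generators, since the homology of the sphere is free. Alternatively, $\mathcal T_{\cP{p}}$ is a complex of free $\Z$-modules with free homology $\Z$, so tensoring with $R$ yields homology $R\langle[\Theta_{\cP{p}}]\rangle$.

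The only delicate point is that the complex is ungraded. I will handle this by running the exact triangle of differential modules, as in Lemma~\ref{lem:intro-algebraic-smith}, or by using the Morse index grading, which $d_{\mathcal T}$ respects once $\st{\cP{p}}$ is placed in degree $-1$. Either way the identification of Morse homology with $H_*(S_N)$ is standard.
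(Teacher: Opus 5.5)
Your proof is correct and is essentially the paper's argument. The paper reads $(\mathcal T_{\cP{p}},d_{\mathcal T})$ as the augmented Morse complex of $S_N\cong S^{N-1}$ with $\st{\cP{p}}$ in degree $-1$, so the augmentation removes $H_0$ and leaves the reduced homology generated by $[\Theta_{\cP{p}}]$ in degree $N-1$; your mapping-cone long exact sequence and universal-coefficient step spell out that one-line argument in detail.
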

\begin{proof}
Before forgetting the local degree this is the augmented Morse complex of
$S_N\cong S^{N-1}$, with the formal generator in degree $-1$.  The
augmentation removes the degree-zero homology, leaving the reduced
homology of the sphere, freely generated by its fundamental class
$\Theta_{\cP{p}}$ in degree $N-1$.
\end{proof}

To explain the construction, we define
\[
\begin{aligned}
    C^o&=\Z\langle\text{outside intersection points}\rangle,\\
    C^s&=\bigoplus_{\cP{p}\text{ invariant}}C^s_{\cP{p}},\\
    C^i&=\Z\langle\st{\cP{p}}\mid\cP{p}\text{ invariant}\rangle,
\end{aligned}
\]
and put
\[
    CF_{\mathrm{st}}(\widehat L_0,\widehat L_1):=C^o\oplus C^s\oplus C^i.
\]

For $\alpha,\beta\in\{o,s,i\}$, the notation
$\D{\alpha}{\beta}:C^\alpha\to C^\beta$ places the source in the subscript
and the target in the superscript.  Source summands index columns in the
matrix below.

For every invariant point, a local
generator $\cP{v}\in\crit(h_N)$ of local Morse index
$j(\cP{v})$ placed at $\cP{p}$ is written $\cP{v}|_{\cP{p}}$. 
The copy of \eqref{eq:floer-top-cycle} at $\cP{p}$ is denoted
$\Theta_{\cP{p}}$.
Let $\dsph$ be the direct sum of the local sphere Morse
differentials.  With signed counts of rigid regular spaces, oriented as in
Subsection~\ref{subsec:floer-orientations}, define
\begin{align}
    \D{o}{o}\cP{x}
    &=\sum_{\cP{y}\text{ outside}}
      \#\M^0(\cP{x},\cP{y})\,\cP{y},
    \label{eq:floer-doo}\\
    \D{o}{s}\cP{x}
    &=\sum_{\cP{q}\text{ invariant}}
      \#\M^0(\cP{x},\cP{q})\,\Theta_{\cP{q}},
    \label{eq:floer-dos}\\
    \D{s}{o}(\cP{v}|_{\cP{p}})
    &=\sum_{\cP{z}\text{ outside}}
      \#\mathcal N^0_{\cP{p},N}(\cP{v};\cP{z})\,\cP{z},
    \label{eq:floer-dso}\\
    \D{s}{s}
    &=\dsph+\Dinv+\Dnon,
    \label{eq:floer-dss}\\
    \Dinv(\cP{v}|_{\cP{p}})
    &=(-1)^{N-1-j(\cP{v})}
      \sum_{\cP{q}\text{ invariant}}
      \sum_{u\in\M^0_{M^G}(\cP{p},\cP{q})}
      \epsilon(u)\,\cP{v}|_{\cP{q}},
    \label{eq:floer-dinv}\\
    \Dnon(\cP{v}|_{\cP{p}})
    &=\sum_{\cP{q}\text{ invariant}}
      \#\mathcal N^0_{\cP{p},N}(\cP{v};\cP{q})\,
      \Theta_{\cP{q}},
    \label{eq:floer-dnon}\\
    \D{s}{i}|_{C^s_{\cP{p}}}
    &=\epsilon_N(\,\cdot\,)\,\st{\cP{p}},
    \label{eq:floer-dsi}\\
    \D{i}{i}\st{\cP{p}}
    &=\sum_{\cP{q}\text{ invariant}}
      \#\M^0_{M^G}(\cP{p},\cP{q})\,\st{\cP{q}},
    \label{eq:floer-dii}
\end{align}
The other blocks are set to zero:
\[
 \D{o}{i}=\D{i}{o}=\D{i}{s}=0.
\]
Thus the operator has matrix
\begin{equation}\label{eq:floer-block-matrix}
 \partial=\begin{pmatrix}
 \D{o}{o}&\D{s}{o}&0\\
 \D{o}{s}&\dsph+\Dinv+\Dnon&0\\
 0&\D{s}{i}&\D{i}{i}
 \end{pmatrix}.
\end{equation}
The proof of $\partial^2=0$ is given in
Subsection~\ref{subsec:floer-dsquared}, using the compactification and gluing
of Appendices~\ref{app:floer-compactification} and~\ref{app:floer-gluing}.
Once this identity is established, $C^i$ is a subcomplex, and its
restriction is the Floer differential of the fixed pair.  Its quotient is
free over $\Z[G]$ as a module.

Before we move on to the next subsection, let us explain some features of $\partial$.
\begin{enumerate}
    \item $\M^0_{M^G}(\cP{p},\cP{q})$ is the union of the index-one components
of the space of fixed strips; a fixed strip has the same index in $\widehat M$ and in
$M^G$, the normal operator along it having index zero
(Proposition~\ref{thm:floer-model-regularity}).

\item An outgoing non-invariant strip from an invariant intersection point is coupled with the incidence condition on the spectral sphere to form the moduli spaces $\mathcal N^0_{\cP{p},N}(\cP{v};\cP{z})$ and $\mathcal N^0_{\cP{p},N}(\cP{v};\cP{q})$ which contribute to $\D{s}{o}$ and $\Dnon$, respectively.

\item A rigid incoming strip at an invariant target is represented by
$\D{o}{s}$ or $\Dnon$ and lands in its top cycle.

\item The sign $(-1)^{N-1-j(\cP{v})}$ is the transport sign defined in
Definition~\ref{def:floer-transport-sign}.  It is $+1$ on the top cycle
and $-1$ on the local minima.

\end{enumerate}

\subsection{Orientations}\label{subsec:floer-orientations}

This subsection fixes the orientation data of the stabilized complex, the
conventions by which every other orientation is induced from them, and the
four facts the rest of the paper uses: the boundary signs of strip and
sphere spaces (Lemmas~\ref{lem:floer-boundary-formulas}
and~\ref{lem:floer-sphere-boundary}), the identification of the
orientations induced on fixed strips with those of the fixed data
(Lemma~\ref{lem:floer-fixed-strips}), the invariance of all orientations
under the group (Lemma~\ref{lem:floer-equivariant-orientations}), and the
transport sign (Definition~\ref{def:floer-transport-sign}).  The boundary
signs are computed in Subsection~\ref{subsec:IIF-orientations}, and the
fixed-strip lemma is proved in Appendix~\ref{app:floer-orientations}.  Two
elementary facts do most of the equivariance work: the complex linearity
of equivariant maps for the group complex structure
(Lemma~\ref{lem:group-complex-structure}), and the following.

\begin{lemma}[Odd order]\label{lem:odd-order}
Every linear action of $G$ on a real line is trivial, and every action of
$G$ on a set with two elements is trivial.
\end{lemma}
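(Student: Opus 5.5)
Both assertions come from one mechanism: a homomorphism from $G=\Z_p$, with $p$ odd, into a group of exponent two is trivial. I would prove the two assertions in turn.

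\emph{Real lines.} A linear action of $G$ on a real line $\ell$ is a homomorphism $\rho\colon G\to GL(\ell)=\R^\times$. Let $\sigma$ be a generator. Then $\rho(\sigma)^p=\rho(\sigma^p)=1$, so $\rho(\sigma)$ is a real $p$-th root of unity. Since $p$ is odd, the map $x\mapsto x^p$ is strictly increasing on $\R$, so the only real solution of $x^p=1$ is $x=1$. Hence $\rho(\sigma)=1$ and the action is trivial.

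An equivalent route is to note that a nontrivial finite subgroup of $\R^\times$ must be contained in $\{\pm1\}$, which has order $2$. Since $\gcd(2,p)=1$, the image of $\rho$ has order dividing both $p$ and $2$, hence is trivial. This also agrees with Lemma~\ref{lem:real-irreps-of-Zp}: the only one-dimensional real irreducible representation is the trivial line.

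\emph{Two-element sets.} An action of $G$ on a two-element set $T$ is a homomorphism $G\to\operatorname{Sym}(T)\cong\Z_2$. The image is a quotient of $\Z_p$, so its order divides $p$. It is also a subgroup of $\Z_2$, so its order divides $2$. As $p$ is odd these orders force the image to be trivial, and $G$ fixes both elements.

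Nothing here is a genuine obstacle; the only point needing care is where the lemma is used. The intended applications are to determinant lines of Fredholm operators and to the two-element sets of orientations of real vector spaces. In those cases one should check that the group really acts on the line, or on the set of orientations, through an honest homomorphism. That is automatic when the action is induced functorially from the $G$-action on $\widehat M$, since then $g\mapsto g_*$ respects composition.
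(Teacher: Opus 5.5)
Your argument is correct: any homomorphism from $\Z_p$, $p$ odd, into $\R^\times$ or into a group of order two is trivial, and that is exactly the elementary fact the paper takes for granted, since it states this lemma without proof. One refinement to your closing caveat: on the lines $o_{\cP{p}}$ the action comes from lifts to the Pin structures, not from the action on $\widehat M$ alone, and those lifts are a priori determined only up to sign, so functoriality is not automatic there; this is why the paper normalizes the lifts into a genuine action in Lemma~\ref{lem:floer-invariant-pin} before it invokes this lemma.
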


\subsubsection{Pin data}

Fix once and for all one of the two groups $\mathrm{Pin}^\pm(n)$; either
choice works below, and the choice must be the same for both Lagrangians.
A \emph{relative Pin structure} on a Lagrangian $L\subset M$ with respect
to a real vector bundle $E\to M$, the \emph{background bundle}, is a Pin
structure on $TL\oplus E|_L$; for $E=0$ this is a Pin structure on $L$, and
$w_2(E)|_L$ is the background class.  Relative Pin structures on $L_0$ and
$L_1$ with respect to the same background bundle are what is needed to
orient the moduli spaces of strips with boundary on $L_0$ and $L_1$ and to
define $HF(L_0,L_1)$ over $\Z$ \cite{wehrheim2015orientations}; see
\cite[Section~11]{seidel2008fukaya} for Pin structures and
\cite[Chapter~8]{fooo2009lagrangian} for relative spin structures.

A Pin structure on a Euclidean vector bundle $V\to L$ of rank $n$ is a
principal $\mathrm{Pin}(n)$-bundle $P\to L$ together with a
$\mathrm{Pin}(n)$-equivariant map $P\to\mathrm{Fr}(V)$ to the orthonormal
frame bundle covering the identity of $L$, where $\mathrm{Pin}(n)$ acts on
$\mathrm{Fr}(V)$ through $\mathrm{Pin}(n)\to O(n)$.  Put
$V_i:=TL_i\oplus E|_{L_i}$ with a $G$-invariant metric.  Since $L_i$ and
$E$ are $G$-equivariant, each $g\in G$ acts on $V_i$ by orthogonal bundle
maps covering its action on $L_i$, hence on $\mathrm{Fr}(V_i)$.  A
\emph{lift} of $g$ to a Pin structure $P_i$ on $V_i$ is a
$\mathrm{Pin}(n)$-equivariant bundle map $\widetilde g$ making
\begin{equation}\label{eq:floer-pin-lift}
\begin{tikzcd}[row sep=large, column sep=large]
 P_i \arrow[r,"\widetilde g"] \arrow[d] & P_i \arrow[d]\\
 \mathrm{Fr}(V_i) \arrow[r,"g"] \arrow[d] & \mathrm{Fr}(V_i) \arrow[d]\\
 L_i \arrow[r,"g"] & L_i
\end{tikzcd}
\end{equation}
commute.  A lift exists exactly when $g^*P_i\cong P_i$ as Pin structures,
and two lifts of the same $g$ differ by the deck transformation
$-1\in\mathrm{Pin}(n)$.  The $G$-action on $L_i$ \emph{lifts to an action
on $P_i$} if lifts $\widetilde g$ can be chosen for all $g\in G$ so that
$\widetilde{hh'}=\widetilde h\,\widetilde h'$ for all $h,h'\in G$.

\begin{lemma}[Invariant Pin structures]\label{lem:floer-invariant-pin}
Under Assumption~\ref{ass:floer-pin} there are Pin structures $P_i$ on
$V_i=TL_i\oplus E|_{L_i}$, $i=0,1$, to which the $G$-action on $L_i$ lifts
as an action.  These are the Pin structures used below.
\end{lemma}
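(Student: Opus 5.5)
We start from the Pin structures $P_i$ on $V_i=TL_i\oplus E|_{L_i}$ given by Assumption~\ref{ass:floer-pin}, and in two steps arrange, first, that each $g\in G$ admits a lift, and second, that the lifts can be chosen to form an action. Both steps use only that $|G|=p$ is odd.

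\emph{Step 1: invariance.} Pin structures on $V_i$ form a torsor over $H^1(L_i;\Z/2)$, and $G$ acts on this set by pullback $P\mapsto g^*P$. This action is affine over the linear action of $G$ on $H^1(L_i;\Z/2)$. Fix $P$ and define the cocycle $c(g)\in H^1(L_i;\Z/2)$ by $g^*P=P+c(g)$. Since $p$ is odd, $H^1(G;A)=0$ for every $\Z/2[G]$-module $A$; this holds because multiplication by $p$ is invertible on $A$ and kills group cohomology in positive degrees. So the cocycle is a coboundary, $c(g)=g^*a-a$, and $P':=P-a$ satisfies $g^*P'\cong P'$ for all $g$. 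Concretely, one can take $a$ to be the averaged expression $a=\sum_{h\in G}c(h)$, since $p\equiv1 \pmod 2$. If $L_i$ is disconnected, the same argument runs in the product of the torsors over the components. Hence every $g$ has a lift $\widetilde g$ as in \eqref{eq:floer-pin-lift}, unique up to the deck transformation $-1$.

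\emph{Step 2: the lifts form an action.} Let $\widehat G_i$ be the group of all lifts of all elements of $G$. It is a central extension
\[
 1\to\{\pm1\}\to\widehat G_i\to G\to1 .
\]
The deck transformation is central because it commutes with the whole $\mathrm{Pin}(n)$-action and with every bundle map covering the identity on frames. Since $\gcd(2,p)=1$, this extension splits by Schur--Zassenhaus, or more simply because $H^2(\Z_p;\Z/2)=0$. Explicitly, choose any lift $\widetilde\sigma$ of the generator. Then $\widetilde\sigma^{\,p}$ covers the identity, so $\widetilde\sigma^{\,p}=\pm1$. If it equals $-1$, replace $\widetilde\sigma$ by $-\widetilde\sigma$; because $p$ is odd, $(-\widetilde\sigma)^p=-\widetilde\sigma^{\,p}=1$. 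Setting $\widetilde{\sigma^k}:=\widetilde\sigma^{\,k}$ then gives a homomorphism $G\to\widehat G_i$ splitting the extension. This is precisely a lift of the action to $P_i$. Do this for $i=0,1$ independently; no compatibility between the two is needed, beyond both using the same $E$ and the same choice of $\mathrm{Pin}^\pm$.

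\emph{Main obstacle.} The delicate point is the claim in Step 1 that pullback of Pin structures is well defined and affine over $H^1$. This requires lifting $g$ to $\mathrm{Fr}(V_i)$, which is available because $G$ acts on $V_i$ by orthogonal bundle maps for an invariant metric. One must then check that $g^*(P+a)=g^*P+g^*a$. I would verify this by describing $P+a$ as $P\times_{\Z/2}Q_a$, where $Q_a$ is the double cover with class $a$, which makes compatibility with pullback evident. The rest is the two vanishing statements for odd-order groups with $2$-torsion coefficients.
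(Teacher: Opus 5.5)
Your route is the paper's. In Step~1 you make the Pin structure invariant by averaging in the $H^1(L_i;\Z/2)$-torsor; your cocycle formulation with $a=\sum_h c(h)$ is exactly the paper's affine average $\sum_j(\sigma^j)^*P$. In Step~2 you normalize a lift of $\sigma$ so that its $p$-th power is the identity, using that $p$ is odd. Step~1 is fine.

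Step~2 is correct only when $L_i$ is connected, and the paper does not assume this; you noted the disconnected case yourself in Step~1. In general the lifts of the identity are the locally constant functions $L_i\to\{\pm1\}$, acting fiberwise by the central element $-1\in\mathrm{Pin}(n)$. So $\widetilde\sigma^{\,p}=c$ is such a function, and it can be $+1$ on some components and $-1$ on others.

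Consequently the kernel of $\widehat G_i\to G$ is $\{\pm1\}^{\pi_0(L_i)}$, on which conjugation by $\widetilde\sigma$ permutes components. The extension is therefore neither central nor with kernel $\{\pm1\}$. If $c$ has mixed signs, neither $\widetilde\sigma$ nor $-\widetilde\sigma$ has $p$-th power equal to $1$, so your explicit normalization fails as written. For the same reason, ``unique up to $-1$'' at the end of Step~1 should read ``unique up to a locally constant sign.''

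The repair is the one the paper uses. Since $\widetilde\sigma\,c\,\widetilde\sigma^{-1}=c\circ\sigma^{-1}$ and $c=\widetilde\sigma^{\,p}$ commutes with $\widetilde\sigma$, the function $c$ is constant along $\sigma$-orbits of components. Then the lift $c\widetilde\sigma$, that is $q\mapsto c(\pi(q))\,\widetilde\sigma(q)$, satisfies $(c\widetilde\sigma)^p=\prod_{k=0}^{p-1}(c\circ\sigma^k)\cdot\widetilde\sigma^{\,p}=c^{p+1}=1$, because $p+1$ is even.

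Abstractly, this is the vanishing $H^2(\Z_p;A)=0$ for the $\F_2[G]$-module $A=\{\pm1\}^{\pi_0(L_i)}$ with its permutation action. That follows from the same odd-order argument you used for $H^1$ in Step~1, or from Schur--Zassenhaus, which does not need centrality. With that correction your proof is complete.
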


\begin{proof}
A Pin structure on $V_i:=TL_i\oplus E|_{L_i}$ is a principal
$\mathrm{Pin}(n)$-bundle $P_i\to L_i$ with a $\mathrm{Pin}(n)$-equivariant
map $P_i\to\mathrm{Fr}(V_i)$ to the orthonormal frame bundle covering the
identity of $L_i$.  Since $E$ is equivariant, each $g\in G$ acts on $V_i$
by orthogonal bundle maps covering its action on $L_i$, hence on
$\mathrm{Fr}(V_i)$, and a \emph{lift} of $g$ is a
$\mathrm{Pin}(n)$-equivariant bundle map $P_i\to P_i$ covering this action;
it exists exactly when $g^*P_i\cong P_i$.

\emph{An invariant Pin structure.} 
Now we construct a Pin structure $P_i$ on $V_i:=TL_i\oplus E|_{L_i}$ such that $g^*P_i\cong P_i$.
The Pin structures on $V_i$ form a
torsor over the $\F_2$-vector space $H^1(L_i;\Z_2)$
\cite[Section~2]{solomon-it}, on which $G$ acts by pullback.  Given one
Pin structure $P$, the affine combination
$P':=\sum_{j=0}^{p-1}(\sigma^j)^*P$ of its $p$ translates is a well defined point of the torsor because
the coefficients sum to $p=1$ in $\F_2$.  In other words, for any Pin structure $Q$, the sum $P':=Q+\sum_{j=0}^{p-1}((\sigma^j)^*P-Q)$ defines a Pin structure which is independent of $Q$.
Note that $\sigma^*$ permutes the
summands of $P'$, so $\sigma^*P'=P'$.  Hence every $g=\sigma^k$ can be lifted to a $\mathrm{Pin}(n)$-equivariant bundle map
$P' \to P'$.

\emph{Normalizing the lifts.}  Let $\widetilde\sigma$ be a lift of
$\sigma$.  Two lifts of the same element differ by the deck
transformation $-1\in\mathrm{Pin}(n)$, so $c:=\widetilde\sigma^{\,p}$, a
lift of the identity, is a locally constant function $L_i\to\{\pm1\}$, and
$c$ is constant along $\sigma$-orbits of components because it commutes
with $\widetilde\sigma$.  Let $c\widetilde\sigma$ be the lift
$q\mapsto c(\pi(q))\,\widetilde\sigma(q)$ of $\sigma$, with
$\pi\colon P_i\to L_i$ the projection.  Since $\widetilde\sigma$ commutes
with the deck transformations, over a component $C$ its $p$-th power is
\[
 (c\widetilde\sigma)^p=c(C)\,c(\sigma C)\cdots c(\sigma^{p-1}C)\,\widetilde\sigma^{\,p}
 =c(C)^{p+1}=1,
\]
because $c$ is constant along the orbit of $C$ and $p$ is odd.  Hence
$\sigma^k\mapsto(c\widetilde\sigma)^k$ is an action of $G$ on $P_i$ by
$\mathrm{Pin}(n)$-equivariant bundle maps covering the action on
$\mathrm{Fr}(V_i)$.
\end{proof}

For $p=2$ neither step is available: the average of two Pin structures is
not defined, and a lift of an involution need not square to one.  This is
one of the reasons the theory of \cite{seidel2010localization} is set up
over $\F_2$.

Assumption~\ref{ass:floer-pin} and the equivariant structures of
Lemma~\ref{lem:floer-invariant-pin} are preserved by the suspension and the
straightening: the suspension summand $T^*\R^d$ with
$\R^d,\sqrt{-1}\,\R^d$ is a product with $G$-equivariant Pin data, these
Lagrangians being contractible, and a $G$-equivariant
Hamiltonian isotopy carries equivariant relative Pin structures along.  We
therefore use the induced data on the adapted pair of
Convention~\ref{conv:lagrangian-preparation}; in the rest of this subsection
$E$ denotes the induced background bundle over $\widehat M$, and
$\widehat\nu=M^G\times W$ the normal bundle of $M^G$ in $\widehat M$,
trivialized by the tube of
Subsection~\ref{subsec:floer-straightening-lagrangians}.

\subsubsection{Orientation lines and the sign of a strip}

We use the formalism of \cite{wehrheim2015orientations}, see also
\cite[Section~11]{seidel2008fukaya} and
\cite[Section~8.1]{fooo2009lagrangian}; the properties of it that enter
the proofs are listed in Appendix~\ref{app:floer-orientations}.  To each
intersection point $\cP{x}\in\widehat L_0\cap\widehat L_1$ the relative Pin
structures assign a real line $o_{\cP{x}}$, the \emph{orientation line} of
$\cP{x}$: it is the determinant line of a Cauchy--Riemann operator
$D_{\cP{x}}$ on the disc with one boundary puncture, the \emph{cap
operator}, whose boundary condition is a \emph{capping path}, a path of
Lagrangian subspaces of $T_{\cP{x}}\widehat M$ from $T_{\cP{x}}\widehat L_0$
to $T_{\cP{x}}\widehat L_1$ in the homotopy class fixed in
Definition~\ref{conv:floer-orientation-data}\ref{it:F-paths} below, together
with a Pin structure along the path extending the given ones at its ends.
The extension is unique up to one sign, and changing it reverses the
isomorphism \eqref{eq:floer-gluing-isomorphism} at $\cP{x}$ for every
strip with an end there, which is the same as reversing the orientation of
$o_{\cP{x}}$.  For a regular strip $u$ from $\cP{x}$ to $\cP{y}$, linear
gluing of the cap at $\cP{y}$ to $u$ gives a canonical isomorphism
\begin{equation}\label{eq:floer-gluing-isomorphism}
    o_{\cP{x}}\;\cong\;\det(D_u)\otimes o_{\cP{y}},
\end{equation}
where $D_u$ is the linearized operator of \eqref{eq:lagrangian-floer-strip}
at $u$.  These isomorphisms are compatible with breaking: for a broken
strip $(u_1,u_2)$ through $\cP{y}$ and the glued strip $u$, the composite
of \eqref{eq:floer-gluing-isomorphism} for $u_1$ and $u_2$ agrees with
\eqref{eq:floer-gluing-isomorphism} for $u$ under the linear gluing
isomorphism $\det(D_{u_1})\otimes\det(D_{u_2})\cong\det(D_u)$, in which the
incoming factor comes first; this order is forced by the composite
$o_{\cP{x}}\cong\det(D_{u_1})\otimes o_{\cP{y}}
\cong\det(D_{u_1})\otimes\det(D_{u_2})\otimes o_{\cP{z}}$.
When $u$ is regular, $\det(D_u)=\Lambda^{\mathrm{top}}\ker D_u$ and
$\ker D_u$ is the tangent space of the parametrized moduli space,
containing $\partial_su$.

\begin{convention}[Orientation of strip spaces]
\label{conv:floer-strip-orientation}
Orient the line $o_{\cP{x}}$ for every $\cP{x}\in \widehat L_0\cap \widehat L_1$, as
specified in Definition~\ref{conv:floer-orientation-data} below.  For a
regular strip $u$ from $\cP{x}$ to $\cP{y}$, orient $\ker D_u$ so that
\eqref{eq:floer-gluing-isomorphism} reads
$o_{\cP{x}}=\det(D_u)\otimes o_{\cP{y}}$ as oriented lines, and orient
$\M(\cP{x},\cP{y})$ by
\[
    \ker D_u=T_{[u]}\M(\cP{x},\cP{y})\oplus\R\,\partial_su
\]
as oriented spaces.  On an index-one component the orientation of the point
$[u]$ is a sign $\epsilon(u)$, and $\#\M^0(\cP{x},\cP{y})$ is the sum of
these signs over all index-one components.
\end{convention}

The order in this convention is ``moduli first, translation second.''
It fixes the sign of each rigid strip once its endpoint orientation lines
have been chosen.

\begin{lemma}[Boundary orientation of strip spaces]\label{lem:floer-boundary-formulas}
Suppose that near a codimension-one stratum
$\M(\cP{x},\cP{y})\times\M(\cP{y},\cP{z})$ the compactification
$\overline{\M(\cP{x},\cP{z})}$ is a manifold with boundary carrying the
neck-length collar of the standard gluing theory
\cite{floer1995transversality,seidel2008fukaya}.  Then, with
Convention~\ref{conv:floer-strip-orientation}, the boundary orientation
along the stratum is $(-1)^{\dim\M(\cP{x},\cP{y})}$ times the product
orientation.  In particular the boundary of a one-dimensional strip space
carries the product orientation; the sum over all intermediate
intersection points is the identity $d_{\mathrm F}^2=0$.
\end{lemma}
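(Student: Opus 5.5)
The sign comes from comparing two orientations of the same collar. One is the boundary orientation of $\overline{\M(\cP{x},\cP{z})}$ along the stratum, computed with the outward normal first. The other is the product orientation of $\M(\cP{x},\cP{y})\times\M(\cP{y},\cP{z})$. The plan is to work on the parametrized spaces and then quotient by translation, using only Convention~\ref{conv:floer-strip-orientation} and the compatibility of \eqref{eq:floer-gluing-isomorphism} with breaking.

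Write $a=\dim\M(\cP{x},\cP{y})$ and $b=\dim\M(\cP{y},\cP{z})$. Take a broken pair $(u_1,u_2)$ and glued strips $u_R$ with neck length $R$.

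\begin{enumerate}
\item \emph{Linear gluing.} Linear gluing gives $\ker D_{u_R}\cong\ker D_{u_1}\oplus\ker D_{u_2}$, incoming factor first. By the compatibility statement, and because the cap at $\cP{z}$ is glued last, this isomorphism is orientation preserving for the orientations fixed by the convention:
\[
o_{\cP{x}}=\det D_{u_1}\otimes o_{\cP{y}}=\det D_{u_1}\otimes\det D_{u_2}\otimes o_{\cP{z}}.
\]
\item \emph{Split off the translations.} By the convention, $\ker D_{u_i}=T\M_i\oplus\R\partial_su_i$. Hence
\[
\ker D_{u_R}\cong T\M_1\oplus\R\partial_su_1\oplus T\M_2\oplus\R\partial_su_2 .
\]
Under the gluing map, $\partial_su_1$ and $\partial_su_2$ correspond to translating the two pieces separately. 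Their sum is the translation $\partial_su_R$ of the glued strip. Their difference $\partial_su_2-\partial_su_1$ increases the neck length $R$, so it is the inward normal to the boundary; its negative is the outward normal $\nu$.
\item \emph{Reorder.} Move $\R\partial_su_1$ past $T\M_2$, which costs $(-1)^{b}$. Then change the basis $(\partial_su_1,\partial_su_2)$ to $(\nu,\partial_su_R)$; the matrix from $(\partial_su_1,\partial_su_2)$ to $(\partial_su_1-\partial_su_2,\,\partial_su_1+\partial_su_2)$ has determinant $2$, so this step is orientation preserving. This gives
\[
\ker D_{u_R}\cong(-1)^{b}\,T\M_1\oplus T\M_2\oplus\R\nu\oplus\R\partial_su_R .
\]
On the other hand, the boundary orientation convention for $T\M(\cP{x},\cP{z})$ reads $\ker D_{u_R}=\R\nu\oplus T\partial\oplus\R\partial_su_R$.
\item \emph{Compare.} Moving $\nu$ to the front past $T\M_1\oplus T\M_2$ costs $(-1)^{a+b}$. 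The total sign is therefore $(-1)^{b}(-1)^{a+b}=(-1)^{a}$, as claimed.
\end{enumerate}

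For the one-dimensional case $a=0$, so the boundary carries the product orientation. Summing the boundary counts of the compact one-manifold over all intermediate $\cP{y}$ gives zero, and this is exactly the $(\cP{x},\cP{z})$-coefficient of $d_{\mathrm F}^2$.

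\textbf{Main obstacle.} The delicate point is step 2: one must pin down which combination of $\partial_su_1,\partial_su_2$ is outward pointing, and in which order the gluing places the factors. The sign of the normal depends on the collar parametrization, namely whether the pregluing translates $u_1$ toward $-\infty$ and $u_2$ toward $+\infty$ as $R$ grows. I would check this directly on the pregluing formula of the standard gluing theory, and confirm it by differentiating the neck-length coordinate. If the collar convention turns out reversed, only the normal changes sign; the reordering count in steps 3--4 is unchanged.
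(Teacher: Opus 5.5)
Your overall route is the paper's. You use the same ordered basis $[m_1,\partial_su_1,m_2,\partial_su_2]$ coming from the breaking-compatible linear gluing, the same transposition cost $(-1)^{b}$, the same determinant-$2$ change of basis from $(\partial_su_1,\partial_su_2)$ to $(n_{\mathrm{out}},\partial_su_R)$, and the same final reordering, which gives $(-1)^{a}$. Steps 1, 3 and 4 are correct as written.

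\emph{Step 2 is not correct as written.} Your identification of the outward normal rests on two false claims that happen to cancel.

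First, with the pregluing $u_1(s+R)$ on the left and $u_2(s-R)$ on the right, differentiate in $R$. This gives $\partial_su_1$ on the left and $-\partial_su_2$ on the right. So the direction that increases the neck length is $\partial_su_1-\partial_su_2$, not $\partial_su_2-\partial_su_1$. This pregluing is exactly your ``$u_1$ toward $-\infty$, $u_2$ toward $+\infty$''; note that $u_1(s+\sigma)$ with $\sigma>0$ moves $u_1$ to the left.

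Second, the broken configuration is added at $R=\infty$. Increasing $R$ therefore moves toward the boundary, so it is the \emph{outward} direction, not the inward one.

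Correcting both gives $\nu=\partial_su_1-\partial_su_2$ directly, as in the paper's $n_{\mathrm{out}}=\partial^{(1)}-\partial^{(2)}$. With that, steps 3 and 4 go through unchanged.

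Your closing remark also needs fixing. The lemma's sign depends entirely on this identification. If $\nu$ were $\partial_su_2-\partial_su_1$, the change of basis in step 3 would have determinant $-2$, and the result would be $(-1)^{a+1}$, not $(-1)^a$.
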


The proof is a short computation with ordered bases, given in
Subsection~\ref{subsec:IIF-orientations} once the collar is available; its
inputs are Convention~\ref{conv:floer-strip-orientation}, the order of the
factors in the linear gluing, and the fact that the outward direction of
the collar is the relative translation of the two components.

\subsubsection{Orientation data}

At an invariant intersection point $\cP{p}$ the adapted data are a
product: $T_{\cP{p}}\widehat M=T_{\cP{p}}M^G\oplus W$,
$T_{\cP{p}}\widehat L_i=T_{\cP{p}}\widehat L_i^G\oplus\Lambda_i(1)$ with
$\Lambda_0(1)=\R^m$ and $\Lambda_1(1)=\sqrt{-1}\,\R^m$, and
$J_{\cP{p}}=J_0\oplus J_W$, by Lemma~\ref{lem:floer-straightening}
and Corollary~\ref{cor:lagrangian-preparation-hat}.

\begin{definition}[Orientation data]
\label{conv:floer-orientation-data}
Fix once and for all:
\begin{enumerate}[label=\textup{(F\arabic*)}]
    \item\label{it:F-pin}
    the background bundle $E$ and the $G$-equivariant relative Pin
    structures $P_i$ on $T\widehat L_i\oplus E|_{\widehat L_i}$ of
    Lemma~\ref{lem:floer-invariant-pin}, with the lifts of the group
    elements normalized to form an action of $G$, as in its proof;
    \item\label{it:F-paths}
    at every intersection point a capping path, together with a Pin
    structure along it: at an invariant $\cP{p}$ the path
    $t\mapsto e^{\sqrt{-1}\pi t/2}\,T_{\cP{p}}\widehat L_0$ in the adapted
    chart, which ends at $T_{\cP{p}}\widehat L_1$ by
    Corollary~\ref{cor:lagrangian-preparation-hat}\ref{it:lagW-J}, is $G$-invariant
    because $G$ commutes with $J_{\cP{p}}$, with either
    of its two Pin extensions, both of which are $G$-invariant by
    Lemma~\ref{lem:odd-order}; at an outside point, a path of the same form with respect to a chart in Convention \ref{conv:lagrangian-preparation} and an extension
    chosen at one point of each free orbit and transported along the orbit;
    \item\label{it:F-fixed}
    for every invariant intersection point $\cP{p}$, an orientation of
    $o_{\cP{p}}$, chosen arbitrarily;
    \item\label{it:F-outside}
    for every outside intersection point, an orientation of $o_{\cP{x}}$
    chosen arbitrarily at one point of each free orbit and transported
    along the orbit by the isomorphisms $o_{\cP{x}}\to o_{g\cP{x}}$
    induced by the lifted action;
    \item\label{it:F-sphere}
    the orientations of $E_N^-$, of $S_N$, of $L_N$ and of the descending
    cells fixed in Lemma~\ref{lem:floer-sphere-data}.
\end{enumerate}
All other orientations are induced from these data without further choice:
those of strip spaces by Convention~\ref{conv:floer-strip-orientation}; of
boundaries and broken configurations with the outward normal first and,
for $V=A\oplus B$, with a basis of $A$ before a lifted basis of $B$
(Lemmas~\ref{lem:floer-boundary-formulas}
and~\ref{lem:floer-sphere-boundary}); of incidence spaces by the
normal-first rule of Convention~\ref{conv:floer-incidence-orientation};
and the transport of a sphere generator along a fixed strip carries the
sign of Definition~\ref{def:floer-transport-sign}.
\end{definition}

\subsubsection{Fixed strips}
Three things happen in this subsubsection.  First, the fixed strips are
oriented already: a fixed strip space is open and closed in the ambient
one (Lemma~\ref{lem:floer-fixed-open-closed}), so it carries the
orientation of Convention~\ref{conv:floer-strip-orientation} computed in
$\widehat M$, and a rigid fixed strip $u$ enters \eqref{eq:floer-dinv}
and \eqref{eq:floer-dii} with its sign $\epsilon(u)$ as a strip in
$\widehat M$.  Thus $(C^i,\D{i}{i})$ is a differential module:
$(\D{i}{i})^2=0$ follows from the boundary signs of
Lemma~\ref{lem:floer-boundary-formulas} applied in $\widehat M$ once the
compactification of Subsection~\ref{subsec:floer-dsquared} is in place,
and nothing about $M^G$ enters.  Second, the ambient data
\ref{it:F-pin}--\ref{it:F-fixed} induce orientation data on the fixed
pair $(M^G,\widehat L^G_0,\widehat L^G_1)$: relative Pin structures $Q_i$
with respect to a background bundle $E'$ on $M^G$, tangential capping
paths, and orientations of the orientation lines of the fixed pair.
Third, Lemma~\ref{lem:floer-fixed-strips} says that the sign of a fixed
strip computed in $\widehat M$ agrees with the sign computed in $M^G$
from the induced data, so that $H(C^i)$ is a Floer homology of the fixed
pair in the usual sense.  The third step is needed in one place only,
Subsection~\ref{subsec:application-normal-data}, where the induced data
are compared with the standard ones; for Theorem~\ref{thm:floer-smith}
alone, $H(C^i)$ could serve as the definition of its right-hand side.

\begin{lemma}[Fixed strips are open and closed]\label{lem:floer-fixed-open-closed}
Let $\cP{p},\cP{q}$ be invariant intersection points.  Then
$\M_{M^G}(\cP{p},\cP{q})$ is open and closed in $\M(\cP{p},\cP{q})$, so
that $\M(\cP{p},\cP{q})=\M_{M^G}(\cP{p},\cP{q})\sqcup\M^\circ(\cP{p},\cP{q})$
as manifolds, and a fixed strip is oriented by
Convention~\ref{conv:floer-strip-orientation} in $\widehat M$.
\end{lemma}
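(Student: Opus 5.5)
Two things have to be shown: that $\M_{M^G}(\cP{p},\cP{q})$ is closed in $\M(\cP{p},\cP{q})$, and that it is open there. The orientation statement then follows with no extra work.

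\emph{Closedness.} A limit, uniform on compact sets, of strips contained in the closed set $\widehat M^G$ is again contained in $\widehat M^G$. The topology on $\M(\cP{p},\cP{q})$ is the $C^\infty_{\mathrm{loc}}$ topology modulo translation, so $\M_{M^G}(\cP{p},\cP{q})$ is closed.

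\emph{Openness.} The argument mirrors Step~1 of the proof of Theorem~\ref{ass:floer-uniform-leading-bound}.

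\begin{enumerate}
    \item Let $u$ be a fixed strip from $\cP{p}$ to $\cP{q}$. Its image is the compact set $u(\R\times[0,1])\cup\{\cP{p},\cP{q}\}$. This set lies in $\widehat M^G$, and we may take it inside $V\times\{0\}$, since $V\supset Q^G$ contains every fixed finite-energy strip. Hence it lies in the interior of the adapted tube $U=\widehat U_V$.
    \item Let $u'$ be close to $u$ in $\M(\cP{p},\cP{q})$. We need $u'$ to lie entirely in $U$, and this is the one delicate point. Closeness in the Floer topology controls $u'$ on compact sets. On the ends, we use the standard exponential convergence near nondegenerate intersection points. More concretely, $u'$ has small energy outside a compact $[-S,S]\times[0,1]$, so by monotonicity or the small-energy asymptotic estimate it stays within a small neighborhood of $\cP{p}$ or $\cP{q}$ there. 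Both points lie in $U$.
    \item Once $u'\subset U$, its normal component $w_{u'}$ is a global solution of $\partial_sw+J_W\partial_tw=0$ with constant boundary conditions $\R^m$ and $\sqrt{-1}\,\R^m$, decaying at both ends. Expanding at the negative end gives only modes growing as $s\to+\infty$. Decay at $+\infty$ then forces every $c_j$ to vanish, since the asymptotic operator is invertible.
    \item By Lemma~\ref{rem:lagrangian-exact-expansion}(2), $u'\subset M^G$.
\end{enumerate}

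Therefore a neighborhood of $u$ consists of fixed strips, and $\M_{M^G}(\cP{p},\cP{q})$ is open.

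\emph{Manifold structure and orientation.} Open and closed gives the topological decomposition $\M=\M_{M^G}\sqcup\M^\circ$. Each piece is a union of components of $\M(\cP{p},\cP{q})$. By Propositions~\ref{thm:floer-model-regularity} and~\ref{thm:floer-free-regularity} each piece is a smooth manifold of the expected dimension, and the decomposition holds as manifolds. The ambient space is oriented by Convention~\ref{conv:floer-strip-orientation} via $\ker D_u$ and the lines $o_{\cP{p}},o_{\cP{q}}$. Restricting this orientation to the open subset $\M_{M^G}$ orients the fixed strips, and no choice intrinsic to $M^G$ is involved.

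The main obstacle is step~2, confining a nearby strip to the tube uniformly along its ends. I would handle it by the asymptotic decay estimate at nondegenerate intersection points. Alternatively, one can argue by contradiction with Gromov--Floer compactness, exactly as in Theorem~\ref{ass:floer-uniform-leading-bound}: a sequence of non-fixed strips converging to $u$ has an unbroken limit lying in $M^G$, and so must eventually lie in $U$.
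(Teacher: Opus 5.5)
Your proof is correct, but your argument for openness differs from the paper's.

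\emph{The paper's route.} The paper argues infinitesimally. By Proposition~\ref{thm:floer-model-regularity} the normal operator $D^\perp$ along a fixed strip has index zero and trivial kernel and cokernel, so $\ker D_u=\ker D^T$. Near a fixed strip the fixed and ambient moduli spaces are therefore smooth of the same dimension, and the fixed one is open. Closedness, as in your proof, is closedness of $M^G$.

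\emph{Your route.} You argue geometrically: a strip $C^0$-close to a fixed strip lies in the adapted tube, and there the exact linearity of the normal equation forces its normal part to vanish. This is the mechanism of Step~1 of Theorem~\ref{ass:floer-uniform-leading-bound}. The contradiction version you offer as an alternative is exactly Lemma~\ref{lem:IIF-no-degeneration}. That lemma also already supplies the uniform control on the ends that you identify as the delicate point.

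\emph{What each buys.}
\begin{itemize}
\item The paper's argument is two lines. It needs the adapted data only to first order along $u$, but it relies on regularity of both moduli spaces at $u$ for the dimension count.
\item Yours uses no transversality for the open--closed splitting. It proves the stronger statement that every strip uniformly $C^0$-close to a fixed strip is fixed.
\item The cost of yours is uniform control along the ends, and the split $J$ and straightened Lagrangians on a whole neighbourhood rather than only along $u$.
\item You then use the regularity propositions only to upgrade the topological splitting to a splitting of manifolds.
\end{itemize}
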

\begin{proof}
By Proposition~\ref{thm:floer-model-regularity} the normal operator along
a fixed strip has index zero and no kernel or cokernel, so near a fixed
strip the two spaces are smooth of the same dimension and
$\M_{M^G}(\cP{p},\cP{q})$ is open; it is closed because $M^G$ is closed.
\end{proof}

\paragraph{Induced data on the fixed pair.}
Along $L^G_i$ one has $T\widehat L_i|_{L^G_i}=TL^G_i\oplus\nu_{\widehat L_i}$
with $\nu_{\widehat L_0}=L^G_0\times\R^m$ and
$\nu_{\widehat L_1}=L^G_1\times\sqrt{-1}\,\R^m$ after the straightening,
constant subbundles of the trivialized normal bundle
$\widehat\nu=M^G\times W$.  Put
\begin{equation}\label{eq:floer-fixed-background}
 E':=E|_{M^G}\oplus(M^G\times\R^m),
\end{equation}
let $\rho_0$ be the identity of $L^G_0\times\R^m$, and let
$\rho_1\colon L^G_1\times\sqrt{-1}\,\R^m\to L^G_1\times\R^m$ be $-J_W$;
both are $G$-equivariant isomorphisms, since $J_W$ commutes with $G$.
A Pin structure on a vector bundle restricts to submanifolds and is
carried along by any isomorphism of vector bundles over the same base,
both operations acting on frame bundles; metrics play no role, the space
of metrics being contractible.  Restricting $P_i$ to $L^G_i$ therefore
gives a Pin structure on
$V_i|_{L^G_i}=TL^G_i\oplus\nu_{\widehat L_i}\oplus E|_{L^G_i}$, and
$1\oplus\rho_i\oplus1$ carries it to a Pin structure $Q_i$ on
$TL^G_i\oplus(L^G_i\times\R^m)\oplus E|_{L^G_i}=TL^G_i\oplus E'|_{L^G_i}$,
that is, a relative Pin structure on $L^G_i$ with respect to the
background bundle $E'\to M^G$.  The summand $M^G\times\R^m$ of $E'$ is
there only because a background bundle must live on all of $M^G$, whereas
$\nu_{\widehat L_0}$ and $\nu_{\widehat L_1}$ live only over $L^G_0$ and
$L^G_1$ and are different subspaces of $W$.  A Pin structure on $L^G_i$
itself need not exist; the background class $w_2(E')|_{L^G_i}$ is forced
by $P_i$.  In the same way the capping data of \ref{it:F-paths} induce
capping data for the fixed data: the tangential paths
$e^{J_0\pi t/2}\,T_{\cP{p}}\widehat L^G_0$ and Pin extensions along them
(Appendix~\ref{app:floer-orientations}).  Finally the orientations
\ref{it:F-fixed} of the lines $o_{\cP{p}}$ induce orientations of the
orientation lines $o^G_{\cP{p}}$ of the fixed pair through the splitting
$o_{\cP{p}}=o^G_{\cP{p}}\otimes o^\perp$ of the cap operator at $\cP{p}$
into its tangential and normal parts, the normal line $o^\perp$ being the
same at every invariant point and oriented once
(Appendix~\ref{app:floer-orientations}).  We call $(Q_0,Q_1,E')$, these
capping data and these orientations the \emph{induced orientation data}
of the fixed pair, and by the \emph{Floer theory of the fixed data} we
mean the theory of $(M^G,\widehat L^G_0,\widehat L^G_1,J_0)$ with the
induced orientation data.

\begin{lemma}[The two orientations of fixed strips agree]\label{lem:floer-fixed-strips}
For invariant intersection points $\cP{p},\cP{q}$, the orientation of
$\M_{M^G}(\cP{p},\cP{q})$ induced from $\widehat M$ by
Definition~\ref{conv:floer-orientation-data} and
Convention~\ref{conv:floer-strip-orientation} is the orientation given by
the same convention for the Floer theory of the fixed data.  Consequently $(C^i,\D{i}{i})$ is the Floer
complex of the fixed pair with the induced orientation data, and $H(C^i)$
is $HF(\widehat L_0^G,\widehat L_1^G;\Z)$ for these data.
\end{lemma}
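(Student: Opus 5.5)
The plan is to compare the two orientations strip by strip, using the splitting of the linearized operator along a fixed strip. For a fixed strip $u$ from $\cP{p}$ to $\cP{q}$, Proposition~\ref{thm:floer-model-regularity} gives $D_u=D^T\oplus D^\perp$. Here $D^T$ is the linearized operator of $u$ as a strip for the fixed data. The normal part $D^\perp=\partial_s+J_W\partial_t$ carries the constant boundary pair $(\R^m,\sqrt{-1}\,\R^m)$ and is an isomorphism. Hence $\det D_u\cong\det D^T\otimes\det D^\perp$, and $\det D^\perp$ is canonically $\R$. Moreover $\ker D_u=\ker D^T$, and this kernel contains $\partial_su$, so the ``moduli first, translation second'' splitting of Convention~\ref{conv:floer-strip-orientation} is literally the same decomposition in $\widehat M$ and in $M^G$. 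It therefore suffices to show that the ambient gluing isomorphism \eqref{eq:floer-gluing-isomorphism} factors as the fixed one tensored with a fixed identification of normal lines:
\[
o_{\cP{p}}=o^G_{\cP{p}}\otimes o^\perp\;\cong\;\det D^T\otimes o^G_{\cP{q}}\otimes\det D^\perp\otimes o^\perp .
\]
The normal factor should be the canonical gluing $o^\perp\cong\det D^\perp\otimes o^\perp$, where $\det D^\perp$ is trivialized by the isomorphism $D^\perp$.

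The first step is to split the cap operators. At an invariant point the capping path of \ref{it:F-paths} is $e^{\sqrt{-1}\pi t/2}(T\widehat L^G_0\oplus\R^m)$, which is a direct sum of the tangential path $e^{J_0\pi t/2}T\widehat L_0^G$ and the normal path $e^{\sqrt{-1}\pi t/2}\R^m$. So $D_{\cP{p}}=D^G_{\cP{p}}\oplus D^\perp_{\mathrm{cap}}$. The normal cap operator is the same at every invariant point, which gives the splitting $o_{\cP{p}}=o^G_{\cP{p}}\otimes o^\perp$ used to define the induced orientations. Linear gluing of split operators respects direct sums, and commutes with the direct-sum isomorphism of determinant lines up to a Koszul sign $(-1)^{\ind D^T\cdot\ind(\cdot)}$ coming from reordering factors. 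Since $\ind D^\perp=0$ and the normal cap index is the same constant at both ends, I expect these signs to cancel between $\cP{p}$ and $\cP{q}$. I will check this by writing the reordering explicitly, using the convention that the incoming factor comes first.

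The second step, which I expect to be the main obstacle, is the Pin data. The ambient orientation of $\det$ on a boundary condition uses the restricted Pin structure $P_i|_{L_i^G}$ on $TL^G_i\oplus\nu_{\widehat L_i}\oplus E$. The fixed theory instead uses $Q_i$, which is transported by $1\oplus\rho_i\oplus1$ with $\rho_1=-J_W$. I must show that splitting a Pin structure on a direct sum boundary condition into a Pin structure on the tangential part relative to $E'$ and a trivial normal part reproduces the canonical orientations of the formalism of \cite{wehrheim2015orientations}. I would argue as follows. The normal boundary loop, consisting of the path $\R^m\to\sqrt{-1}\,\R^m$ along the cap followed by the constant conditions along the strip, is identified with $\R^m$ by the rotation $e^{-\sqrt{-1}\pi t/2}$. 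This rotation ends at $-J_W$ times the identity, which is exactly $\rho_1$. So moving the normal summand into the background bundle $E'$ is a homotopy of Pin data, and the stabilization/direct-sum axiom of the orientation formalism then identifies the two orientations.

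Finally, the Pin extensions along the capping paths in \ref{it:F-paths} induce the tangential extensions, and the normal one is a single choice, so any remaining global sign is uniform. It then cancels between the two ends. Summing over rigid strips identifies $\D{i}{i}$ with the fixed Floer differential, which gives the consequence about $H(C^i)$.
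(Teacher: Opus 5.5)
Your reduction is the right one and matches the first half of the paper's argument. You split $D_u$, the caps and the boundary loop into tangential and normal parts, and use $\rho$ (ending at $\rho_1=-J_W$) to write the Pin structure on the loop as $\mathfrak s'\oplus\mathfrak s''$, with $\mathfrak s'$ the structure assembled from $Q_0,Q_1$.

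\emph{The gap is the residual normal sign.} After splitting, the normal component of \eqref{eq:floer-gluing-isomorphism} is an isomorphism $o^\perp\to\det(D^\perp_u)\otimes o^\perp=o^\perp$ of a line to itself, that is, multiplication by a sign $c_\perp$. Showing $c_\perp=+1$ is exactly your first-paragraph claim that ``the normal factor should be the canonical gluing''. The direct-sum and background axioms do not deliver it. They give $o_{\mathfrak s}=o_{\mathfrak s'}\otimes o_{\mathfrak s''}$, but the normal part of the composite \eqref{eq:app-gluing-composite} still consists of the linear gluings and the deformation of the normal operators, together with the orientation $o_{\mathfrak s''}$ of $\det(D_{F^\perp})$. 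Nothing you invoke compares this with the trivialization of $\det D^\perp_u$ by invertibility.

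Your closing claim that a uniform sign ``cancels between the two ends'' is not correct. Choices made at the endpoints (the orientation of $o^\perp$, the normal Pin extension along the caps) enter source and target alike, so they do not affect $c_\perp$ at all. The sign $c_\perp$ is intrinsic and attaches once to each strip. If it were $-1$, every fixed strip would get opposite signs in $\widehat M$ and in $M^G$. The orientations would then disagree and $\D{i}{i}$ would be minus the fixed differential; the homology would be unchanged, but the lemma as stated would fail.

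\emph{The missing idea is a normalization by the constant strip.} First, $c_\perp$ is independent of $u$. The normal operator $\partial_s+J_W\partial_t$ with boundary $(\R^m,\sqrt{-1}\,\R^m)$, the normal loop $F^\perp$ and its Pin structure $\mathfrak s''$ are the same for every fixed strip. The paper makes this robust by noting that all normal operators are $G$-equivariant, hence $J_G$-complex linear, so the normal gluings and deformation preserve complex orientations (Lemma~\ref{lem:app-equivariant-operators}). Then evaluate on the constant strip at $\cP{p}$, which is itself a fixed strip. By \ref{it:P-constant} its gluing isomorphism is the identity of $o_{\cP{p}}$, and its tangential part is the identity of $o^G_{\cP{p}}$ by \ref{it:P-constant} for the fixed data. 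Hence $c_\perp=+1$.

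A smaller point of the same kind concerns Koszul signs. Cancellation between the ends does handle the strip--cap reordering signs, via index additivity. It does not handle the sign $(-1)^{\ind D_{F^G_u}\,\ind D_{F^\perp}}$ of \ref{it:P-sum}, which belongs to the loop of the single strip $u$ and involves $\ind D_{F^G_u}$, which varies with $u$. That sign is $+1$ because $\ind D_{F^\perp}$ is even. More generally, all normal indices are even by $J_G$-complex linearity, which disposes of every reordering sign at once.
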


The proof is in Appendix~\ref{app:floer-orientations}.  The idea is that
along a fixed strip the normal operators are $G$-equivariant, hence
complex linear for the group complex structure
(Lemma~\ref{lem:group-complex-structure}), and the normal boundary data
are the same for every strip, so the normal factor of
\eqref{eq:floer-gluing-isomorphism} is a sign independent of the strip;
the constant strip shows that this sign is $+1$.

\subsubsection{Sphere and incidence orientations}

 Let $\descending(\cP{v})$ be the descending manifold of $\cP{v}$ and $\ascending(\cP{w})$ be the ascending manifold of $\cP{w}$.

\begin{lemma}[Boundary orientations on the sphere]
\label{lem:floer-sphere-boundary}
Orient the descending cells as in Lemma~\ref{lem:floer-sphere-data}, with
the positive orientation on every zero-dimensional cell.  Orient a sphere
trajectory space by the exact-sequence conventions
\[
 T\descending(\cP{v})=T(\descending(\cP{v})\cap\ascending(\cP{w}))
       \oplus TS_N/T\ascending(\cP{w}),
\]
\[
 T(\descending(\cP{v})\cap\ascending(\cP{w}))
   =T\M_{h_N}(\cP{v},\cP{w})\oplus\R\dot\vartheta,
\]
where the quotient orientation is transported from
$T_{\cP{w}}\descending(\cP{w})$.
On a codimension-one stratum of the compactified descending cell,
\[
 \M_{h_N}(\cP{v},\cP{w})\times\descending(\cP{w}),
\]
the boundary orientation is $(-1)^{\dim\M_{h_N}(\cP{v},\cP{w})}$
times the product orientation.  The same sign holds for breaking in a
sphere trajectory space.  In particular rigid sphere breaking carries
the product sign.
\end{lemma}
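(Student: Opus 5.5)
The plan is to reduce everything to the standard orientation analysis of Morse trajectory spaces on a closed manifold, working at the level of ordered bases in a neighborhood of a broken configuration. Near the stratum $\M_{h_N}(\cP{v},\cP{w})\times\descending(\cP{w})$, the ordinary Morse gluing of \cite{wehrheim2012smooth} gives a collar
\[
 [R_0,\infty)\times\M_{h_N}(\cP{v},\cP{w})\times\descending(\cP{w})\longrightarrow\overline{\descending(\cP{v})},
\]
whose outward direction is $-\partial_R$. Here $R$ is the time the glued half-trajectory spends near $\cP{w}$, so increasing $R$ moves toward breaking. The metric is constant and $h_N$ quadratic near the critical points (Lemma~\ref{lem:floer-sphere-data}). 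I will therefore first linearize: identify a neighborhood of $\cP{w}$ with $T_{\cP{w}}\descending(\cP{w})\oplus T_{\cP{w}}\ascending(\cP{w})$. In this picture the glued points are those where the trajectory from $\cP{v}$ enters along the ascending directions of $\cP{w}$ and exits along a descending point.

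The computation then proceeds in three steps. First, write the tangent space of $\descending(\cP{v})$ at a glued point using the exact-sequence convention of the statement:
\[
 T\descending(\cP{v})\cong T\M_{h_N}(\cP{v},\cP{w})\oplus\R\dot\vartheta\oplus TS_N/T\ascending(\cP{w}).
\]
Second, identify the quotient $TS_N/T\ascending(\cP{w})$, via the transport to $\cP{w}$, with $T_{\cP{w}}\descending(\cP{w})$. Under gluing this is carried to the tangent space of $\descending(\cP{w})$ at the exit point, and the transport is orientation preserving because it is a small deformation of the identity in the linear chart. Third, identify $\R\dot\vartheta$ with the gluing parameter. Moving the broken trajectory along its flow on the $\M_{h_N}(\cP{v},\cP{w})$ factor corresponds to changing $R$, and the direction of $\dot\vartheta$ is the direction that shortens the time spent before reaching the neighborhood of $\cP{w}$. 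Up to a sign to be pinned down in the linear model, $\dot\vartheta$ is thus the outward normal $-\partial_R$.

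The boundary orientation puts the outward normal first. Moving $\R\dot\vartheta$ past the $\dim\M_{h_N}(\cP{v},\cP{w})$ basis vectors of $T\M_{h_N}(\cP{v},\cP{w})$ to the front produces the sign $(-1)^{\dim\M_{h_N}(\cP{v},\cP{w})}$. What remains is $T\M_{h_N}\oplus T\descending(\cP{w})$, which is the product orientation. The same bookkeeping applies to breaking in a trajectory space $\M_{h_N}(\cP{v},\cP{u})$. One applies the descending-cell result to $\descending(\cP{v})\cap\ascending(\cP{u})$, splits off $\R\dot\vartheta$ by the second convention, and notes that the quotient term $TS_N/T\ascending(\cP{u})$ is unchanged by the breaking at $\cP{w}$; this yields the same sign. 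For rigid breaking the dimension is $0$, so the sign is $+1$.

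The main obstacle is the third step: checking that $\dot\vartheta$, and not $-\dot\vartheta$, is the outward direction of the collar, and that the quotient identification carries no hidden sign. I would settle both in the explicit quadratic model at $\cP{w}$, with $h=-|x|^2+|y|^2$ near $\cP{w}$, computing the one-parameter family of glued flow lines directly.
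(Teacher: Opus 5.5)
Your plan for the descending-cell stratum is the paper's: in the quadratic chart at $\cP{w}$ the outward normal is the flow direction $\dot\vartheta$, and moving it past the $d=\dim\M_{h_N}(\cP{v},\cP{w})$ slice vectors gives $(-1)^d$. The conclusion you state is right, but the heuristic you give for the third step contains two sign slips that cancel.

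First, with $R$ the time spent near $\cP{w}$ and breaking at $R=\infty$, the outward direction of your collar is $+\partial_R$, not $-\partial_R$. Second, $\dot\vartheta$ does not correspond to $-\partial_R$. Advancing the point along the incoming trajectory while the exit point on $\descending(\cP{w})$ stays fixed lengthens the time near $\cP{w}$.

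The chart computation settles this directly. Take $h_N=h_N(\cP{w})-|x|^2/2+|y|^2/2$. The cell is $\R^{j(\cP{w})}_x\times C$ for a flow-invariant cone $C$ whose rays are the trajectories from $\cP{v}$; write $y=rm$. The half-trajectory ending at $(x,rm)$ spends time about $\log(1/r)$ near $\cP{w}$. The compactification adds $r=0$, so $-\partial_r$ is outward. The flow on $C$ is $\dot y=-y=-r\partial_r$. Hence $\dot\vartheta$ is a positive multiple both of $+\partial_R$ and of the outward normal. The same chart disposes of your second step: since $\ascending(\cP{w})=\{x=0\}$, the quotient $TS_N/T\ascending(\cP{w})$ is literally $\R^{j(\cP{w})}_x=T\descending(\cP{w})$, so no deformation argument is needed.

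For breaking in a trajectory space $\M_{h_N}(\cP{v},\cP{u})$ you take a genuinely different route. The paper redoes the computation in the chart. Near $\cP{w}$ it writes $\descending(\cP{v})\cap\ascending(\cP{u})=C'\times C$ and uses the flow-invariant function $\rho=rr'$ as collar parameter. The outward normal is $\dot\vartheta_C-\dot\vartheta'$ while the glued flow is $\dot\vartheta_C+\dot\vartheta'$, and a determinant-$2$ change of basis plus reordering yields $(-1)^d$.

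You instead intersect the cell statement with the transverse submanifold $\ascending(\cP{u})$ and divide by the flow. This is valid, but you should state the fact that justifies splitting off $\R\dot\vartheta$. The flow extends to the half-trajectory compactification of $\descending(\cP{v})\cap\ascending(\cP{u})$. It preserves the boundary stratum $\M_{h_N}(\cP{v},\cP{w})\times(\descending(\cP{w})\cap\ascending(\cP{u}))$ and restricts there to the outgoing flow $\dot\vartheta'$; in the chart its generator $x\partial_x-r\partial_r$ becomes $x\partial_x$ at $r=0$.

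Consequently the glued $\dot\vartheta$ in the second convention for $\M_{h_N}(\cP{v},\cP{u})$ and the $\dot\vartheta'$ for $\M_{h_N}(\cP{w},\cP{u})$ agree along the boundary, and both sit last. The outward normal then descends to the quotient. Both applications of the first convention use the same oriented normal bundle of the contractible cell $\ascending(\cP{u})$, so no further sign appears.

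Your route is shorter and reuses the first part. The paper's route produces an explicit outward normal and parallels the strip computation in Lemma~\ref{lem:floer-boundary-formulas}.
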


The proof, a computation in a Morse chart at $\cP{w}$, is given in
Subsection~\ref{subsec:IIF-orientations}.  Only the rigid case
$\dim\M_{h_N}(\cP{v},\cP{w})=0$ is used: a one-dimensional trajectory
space compactifies to a compact one-manifold with boundary, and near a
rigid break the closure of a descending cell is a manifold with boundary
(Lemma~\ref{lem:IIF-sheets}).  For the boundary formulas of
finite-dimensional Morse theory compare \cite{hutchings2002morse}.

\begin{convention}[Orientation of incidence spaces]
\label{conv:floer-incidence-orientation}
For a sphere critical point $\cP{v}$, let
\[
 \nu(\cP{v})=TS_N|_{\descending(\cP{v})}/T\descending(\cP{v}),
 \qquad c(\cP{v})=N-1-j(\cP{v}).
\]
Orient it by $TS_N=T\descending(\cP{v})\oplus\nu(\cP{v})$.
Transversality identifies the normal bundle of the incidence space in
$\M^\circ(\cP{p},\cP{b})$ with the pullback of $\nu(\cP{v})$.
We orient the incidence space by
\begin{equation}\label{eq:floer-incidence-orientation}
 T\M^\circ(\cP{p},\cP{b})
  =\Ev^*\nu(\cP{v})\oplus T\mathcal N_{\cP{p},N}(\cP{v};\cP{b}),
\end{equation}
with the normal directions first.  A rigid incidence configuration has
the sign of the induced isomorphism from the ambient tangent space to
$\nu(\cP{v})$.
\end{convention}

For a lifted maximum $M_g$ the bundle $\nu(M_g)$ has rank zero and the
cell carries the orientation of the sphere, so
\eqref{eq:floer-incidence-orientation} returns the orientation of the
strip: the partition \eqref{eq:floer-top-cell-sum} of
Lemma~\ref{lem:floer-top-cells} holds as oriented zero-dimensional spaces,
and summing the incidence counts over the lifted top cells, that is, over
$\Theta_{\cP{p}}$, counts each ordinary rigid non-fixed strip exactly once
with its ordinary sign.

\subsubsection{Equivariance and the transport sign}

\begin{lemma}[The $G$-action preserves all orientations]
\label{lem:floer-equivariant-orientations}
For every $g\in G$:
\begin{enumerate}
    \item the lifted action induces isomorphisms $g_*\colon o_{\cP{x}}\to o_{g\cP{x}}$
    compatible with \eqref{eq:floer-gluing-isomorphism} and preserving the
    orientations of Definition~\ref{conv:floer-orientation-data}; for an
    invariant $\cP{p}$ the action on $o_{\cP{p}}$ is trivial;
    \item $g\colon\M(\cP{x},\cP{y})\to\M(g\cP{x},g\cP{y})$ is an
    orientation-preserving diffeomorphism, and for invariant
    $\cP{p},\cP{q}$ it preserves the decomposition of
    Lemma~\ref{lem:floer-fixed-open-closed}; deck transformations act
    orientation-preservingly on the trajectory spaces of $h_N$;
    \item $g\colon\mathcal N_{\cP{p},N}(\cP{v};\cP{b})\to\mathcal N_{\cP{p},N}(g\cP{v};g\cP{b})$
    preserves orientations, and every block of \eqref{eq:floer-block-matrix}
    is $G$-equivariant, for the permutation action on outside and sphere
    generators and the trivial action on the formal invariant generators.
\end{enumerate}
\end{lemma}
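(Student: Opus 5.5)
The plan is to derive everything from two facts: the lifted $G$-action on the Pin data of Lemma~\ref{lem:floer-invariant-pin} is a genuine action, and every relevant sign is either a sign attached to an odd-order action or a sign fixed by a choice transported along orbits. The three items are then proved in order, each using the previous one.

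\emph{Step 1: item (1).} Since $g$ is a symplectomorphism preserving $J$, the Lagrangians, the capping paths \ref{it:F-paths} (those at outside points by construction, those at invariant points because $G$ commutes with $J_{\cP{p}}$), and the Pin extensions, it conjugates the cap operator $D_{\cP{x}}$ to $D_{g\cP{x}}$. This yields an isomorphism $g_*\colon o_{\cP{x}}\to o_{g\cP{x}}$. Because the lifts form an action, $(gh)_*=g_*h_*$ and $\sigma_*^p=\mathrm{id}$. Conjugation by $g$ commutes with linear gluing, so $g_*$ is compatible with \eqref{eq:floer-gluing-isomorphism}. At an invariant $\cP{p}$, $g_*$ is a linear action of $G$ on a real line, hence trivial by Lemma~\ref{lem:odd-order}. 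At outside points the orientations \ref{it:F-outside} were transported along free orbits by these very isomorphisms. Since the action is strict, transporting around a full orbit returns the identity, so all the $g_*$ preserve orientations.

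\emph{Step 2: item (2).} The map $u\mapsto g\circ u$ carries $\ker D_u$ to $\ker D_{gu}$ and $\partial_su$ to $\partial_s(gu)$. By Step 1 it intertwines the oriented identities $o_{\cP{x}}=\det D_u\otimes o_{\cP{y}}$, so Convention~\ref{conv:floer-strip-orientation} gives an orientation-preserving map. Fixed strips go to fixed strips because $M^G$ is invariant, so the decomposition of Lemma~\ref{lem:floer-fixed-open-closed} is preserved. On the sphere, $h_N,k_N$ are lifted from $L_N$ and transported equivariantly. The descending cell orientations are lifted from the quotient, and the deck group acts orientation-preservingly on $S_N$ (Lemma~\ref{lem:real-irreps-of-Zp}). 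Hence the orientations of trajectory spaces, defined in Lemma~\ref{lem:floer-sphere-boundary} from these data, are preserved.

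\emph{Step 3: item (3), the expected main obstacle.} First check that the evaluation is equivariant: $\Ev^-_{\cP{p},N}(gu)=g\,\Ev^-_{\cP{p},N}(u)$. This holds because $G$ acts linearly on $W$ and hence on each coefficient $c_j$, and $q_N$ is equivariant by Lemma~\ref{lem:floer-weighted-quotient}. So $g$ maps $\mathcal N_{\cP{p},N}(\cP{v};\cP{b})$ to $\mathcal N_{\cP{p},N}(g\cP{v};g\cP{b})$. For orientations, $g$ respects the splitting \eqref{eq:floer-incidence-orientation}. It acts on $\Ev^*\nu(\cP{v})$ through $dg$ on $S_N$, which preserves both $TS_N$ and $T\descending$, and therefore preserves the quotient orientation. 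Combined with Step 2, $g$ preserves the orientation of the incidence space. The delicate point is to confirm that the normal-first rule introduces no orbit-dependent sign, which follows because every ingredient has been shown to be preserved.

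Equivariance of the blocks then follows. For $\D{o}{o}$, $\D{s}{o}$, $\D{s}{s}$ and the outside-to-sphere block, it comes from the count identities $\#\M^0(g\cdot,g\cdot)=\#\M^0$ and the corresponding identities for $\mathcal N^0$. The target $\Theta_{\cP{q}}$ is $G$-fixed, since $g$ permutes the lifts $M_h$. For $\Dinv$, $g$ fixes fixed strips and commutes with transport of $\cP{v}$, and the sign $(-1)^{N-1-j}$ is invariant. The blocks $\epsilon_N$ and $\D{i}{i}$ are invariant because $g$ permutes minima and acts trivially on $\st{\cP{p}}$.
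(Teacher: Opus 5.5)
Your proposal is correct and follows the same route as the paper. The orientation data are $G$-invariant by construction: normalized Pin lifts, invariant capping data, orientations at outside points transported along orbits, and lifted sphere orientations. The action on $o_{\cP{p}}$ at an invariant point is a linear action of an odd-order group on a line, hence trivial. All further orientations and block counts are preserved because the gluing, fiber-product and product conventions are natural under the lifted action. Your explicit checks that $\Ev^-_{\cP{p},N}$ is equivariant, that $j(g\cP{v})=j(\cP{v})$, and that $\Theta_{\cP{q}}$ is $G$-invariant spell out what the paper leaves implicit.
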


\begin{proof}
The data of Definition~\ref{conv:floer-orientation-data} are
$G$-invariant by construction: the Pin structures and the lifted action by
Lemma~\ref{lem:floer-invariant-pin}, the capping data by \ref{it:F-paths},
the orientations at outside points by transport, and the sphere
orientations by Lemma~\ref{lem:floer-sphere-data}.  At an invariant
$\cP{p}$ the lifted action, which is an action of $G$ because the lifts
are normalized, acts on the cap operator $D_{\cP{p}}$ and hence linearly on
the line $o_{\cP{p}}$; such an action is trivial by
Lemma~\ref{lem:odd-order}.  This gives (1).  Every orientation in (2)
and (3) is obtained from these data by gluing determinant lines along the
isomorphisms \eqref{eq:floer-gluing-isomorphism}, by the fiber-product
convention for incidence spaces, and by the product convention for broken
configurations, each of which is natural with respect to the isomorphisms
induced by the lifted action; so $g$ preserves them.  For the trajectory
spaces of $h_N$ this is the statement that the deck transformations
preserve the lifted orientations of the cells.  Equivariance of the
blocks follows since each block is a signed count of an oriented moduli
space on which $G$ acts preserving the orientation.
\end{proof}

\begin{definition}[Transport sign]\label{def:floer-transport-sign}
Transport of a sphere generator $\cP{v}$ along a rigid fixed strip $u$
is counted with the sign
\[
 (-1)^{c(\cP{v})}\epsilon(u)=(-1)^{N-1-j(\cP{v})}\epsilon(u)
 =(-1)^{j(\cP{v})+1}\epsilon(u),
\]
the last form because $N=rm$ is even.  This is the sign in
\eqref{eq:floer-dinv}: it is $+\epsilon(u)$ on the lifted maxima, whose
index is $N-1$, and $-\epsilon(u)$ on the minima.
\end{definition}

\subsection{Compactification, gluing, and the chain identity}
\label{subsec:floer-dsquared}
This subsection states the compactness and gluing
results, proved in Appendices~\ref{app:floer-compactification}
and~\ref{app:floer-gluing}, and deduces $\partial^2=0$ from them block by
block.  The gluing theorem does more than give compactness: it identifies
each boundary point of a one-dimensional moduli space with exactly one
algebraic term of $\partial^2$.

The four kinds of spaces to be compactified are ordinary strip spaces
$\M(\cP{x},\cP{b})$ with outside source, fixed strip spaces
$\M_{M^G}(\cP{p},\cP{q})$, incidence spaces
$\mathcal N_{\cP{p},N}(\cP{v};\cP{b})$, and auxiliary sphere trajectory
spaces $\M_{h_N}(\cP{v},\cP{w})$.  Superscripts denote their
actual dimensions; in particular $\M^d$ consists of index-$(d+1)$
components, whereas \eqref{eq:floer-incidence-dimension} specifies the
index for $\mathcal N^d$.

\begin{proposition}[Sequential compactness and evaluation]
\label{thm:floer-compactification-cited}
For the adapted data and the chosen spectral cutoff, a sequence of strips
with prescribed ends has a subsequence converging to a broken strip with
finitely many components, with no bubbling.  A sequence of sphere trajectories or half-trajectories
has a subsequence converging in the corresponding broken topology.
For a sequence of incidence configurations, these convergences are
compatible: its evaluation converges to the evaluation of the first
non-fixed strip of the limit, carried across the preceding fixed strips
by the identity of the spectral spheres.  A limit of non-fixed strips has
at least one non-fixed component; a limit of fixed strips consists only of
fixed strips.
\end{proposition}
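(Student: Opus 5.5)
The plan has four parts, taken in order: strip compactness, sphere compactness, compatibility of the evaluation, and the dichotomy between fixed and non-fixed limits.

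\emph{Strip compactness.} This part is the standard Gromov--Floer argument in the exact convex setting. By the confinement discussion in Subsection~\ref{subsec:lagrangian-preparation}, every strip of the straightened pair lies in a fixed compact set, because $J$ is cylindrical at infinity and the Lagrangians are conical. The energy is the action difference $\mathcal A(\cP{x})-\mathcal A(\cP{y})$, so it is uniformly bounded. Exactness of $\omega$ and of the $\widehat L_i$ rules out disc and sphere bubbles, since these would have zero energy and hence be constant. Gradient bounds then follow by the usual rescaling argument. The number of components of the limit is bounded because each nonconstant component carries at least the minimal action gap between distinct intersection points, and there are finitely many of these.

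\emph{Sphere compactness.} For sphere trajectories and half-trajectories, I would cite the broken convergence for Morse--Smale pairs on the compact manifold $S_N$ \cite{wehrheim2012smooth}. The pair $(h_N,k_N)$ is lifted from $L_N$, so the standard theory applies directly.

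\emph{Compatibility of evaluations.} This is the main point. Suppose $u_n\to(v_1,\dots,v_k)$, and let $v_i$ be the first non-fixed component; it exists by Step~1 of the proof of Theorem~\ref{ass:floer-uniform-leading-bound}. I would reuse that proof. In the translation adapted to $v_i$, choose a slice $\{s_1\}\times[0,1]$ in the negative end region of $v_i$ inside the tube $U$. On the neighbourhood of the preceding fixed components and necks, the normal part of $u_n$ solves the constant linear equation exactly, so its coefficients can be read on that slice. By Lemma~\ref{rem:lagrangian-exact-expansion}(3), $c_N(u_n)\to c_N(v_i)$ in the adapted parametrization.

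Corollary~\ref{cor:lagrangian-dis-zero} gives $c_N(v_i)\neq0$. Continuity of $q_N$ (Lemma~\ref{lem:floer-weighted-quotient}) then gives convergence of the evaluations. Translation invariance of the evaluation, via Lemma~\ref{rem:lagrangian-exact-expansion}(1) and the weighted action~\eqref{eq:floer-weighted-scaling}, shows that the choice of parametrization does not matter. The preceding fixed components have all normal coefficients zero, so they transport the spectral sphere at their source to that at their target by the identity; this is the meaning of ``carried across by the identity.''

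The main obstacle I expect is justifying that the exact linear expansion for $u_n$ on $(-\infty,s_1]$ is valid for large $n$, which requires $u_n$ to stay in the tube on that half-strip. I would argue this by $C^0$ convergence on compact sets together with the fact that the neck images converge to the invariant break points, which lie in the tube. Since $J$ is split on the whole tube, the normal component is exactly holomorphic there with constant boundary conditions, so no error terms enter.

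\emph{Fixed versus non-fixed limits.} Given incidence configurations $(\vartheta_n,u_n)$, the evaluations converge and the half-trajectories converge in the broken topology. Their endpoints therefore agree in the limit, giving a limiting broken incidence configuration. Finally, a limit of fixed strips lies in $M^G$ because $M^G$ is closed. A limit of non-fixed strips must have a non-fixed component: otherwise, by Step~1 of the proof of Theorem~\ref{ass:floer-uniform-leading-bound}, $u_n\subset U$ for large $n$, and the global linear normal solution vanishes, so $u_n$ would be fixed.
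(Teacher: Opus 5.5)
Your proposal is correct and follows the paper's proof essentially step for step. The ingredients match one by one: standard Gromov--Floer compactness from exactness, the action bound and the confinement; cited Morse compactness on $S_N$; the no-degeneration argument of Lemma~\ref{lem:IIF-no-degeneration} (the same as Step~1 of Theorem~\ref{ass:floer-uniform-leading-bound}); and, as in Theorem~\ref{thm:IIF-evaluation}, reading the selected coefficients on a slice in the negative end of the first non-fixed component, once the whole preceding stretch of $u_n$ is placed in the adapted tube where the normal equation is exactly linear. The one point you leave implicit is that this component's negative end is invariant because all earlier components are fixed, and that is what makes Corollary~\ref{cor:lagrangian-dis-zero} applicable to it.
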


\begin{theorem}[Compactification and gluing]
\label{thm:floer-gluing-package}
Every zero-dimensional space of these four kinds is finite, and every
one-dimensional space has a compactification as a compact oriented
one-manifold with boundary.  Its boundary consists precisely of the
following products of zero-dimensional spaces.
\begin{enumerate}[label=\textup{(S\arabic*)}]
\item\label{it:S1} \emph{Outside breaking.}  At an outside intersection
$\cP{y}$, the boundary points of an ordinary space and of an incidence space are,
respectively,
\[
 \M^0(\cP{x},\cP{y})\times\M^0(\cP{y},\cP{b}),\qquad
 \mathcal N^0_{\cP{p},N}(\cP{v};\cP{y})\times\M^0(\cP{y},\cP{b}).
\]
For the latter the limiting evaluation is that of the incoming factor.
\item\label{it:S2} \emph{Sphere breaking.}  For an incidence space the boundary points
are
\[
 \M_{h_N}^0(\cP{v},\cP{w})
 \times\mathcal N^0_{\cP{p},N}(\cP{w};\cP{b}),
 \qquad j(\cP{w})=j(\cP{v})-1.
\]
An ordinary one-dimensional sphere trajectory space has its usual broken
sphere trajectories as boundary.
\item\label{it:S3} \emph{Fixed breaking.}  For invariant $\cP{p},\cP{b}$, a fixed strip space has boundary
\[
 \coprod_{\cP{q}\text{ invariant}}
 \M_{M^G}^0(\cP{p},\cP{q})\times\M_{M^G}^0(\cP{q},\cP{b}).
\]
\item\label{it:S4} \emph{Invariant intermediate intersection.}  At an invariant
$\cP{q}$ there are three mixed possibilities.  
\begin{enumerate}[label=\textup{(\alph*)}]
\item\label{it:S4a} \emph{A non-fixed strip followed by a fixed strip.}  The boundary points are
\[
 \M^0(\cP{x},\cP{q})\times\M_{M^G}^0(\cP{q},\cP{b}),\qquad
 \mathcal N^0_{\cP{p},N}(\cP{v};\cP{q})
       \times\M_{M^G}^0(\cP{q},\cP{b}),
\]
for an outside source and an incidence source, respectively.  Then
$\cP{b}$ is invariant, and for an incidence source the evaluation is that
of the incoming strip.
\item\label{it:S4b} \emph{A fixed strip followed by a non-fixed strip.}  The boundary
points of an incidence space are
\[
 \M_{M^G}^0(\cP{p},\cP{q})
       \times\mathcal N^0_{\cP{q},N}(\cP{v};\cP{b}).
\]
The source incidence is transported to $\cP{q}$ by the identity.  The
fixed factor carries the sign of
Definition~\ref{def:floer-transport-sign}.
\item\label{it:S4c} \emph{Two non-fixed strips.}  Partition the ordinary
rigid outgoing space by its top-cell incidences
(Lemma~\ref{lem:floer-top-cells}):
\[
 \M^{\circ,0}(\cP{q},\cP{b})
 =\coprod_{g\in G}\mathcal N^0_{\cP{q},N}(M_g;\cP{b}).
\]
The boundary points are its products with $\M^0(\cP{x},\cP{q})$ or
$\mathcal N^0_{\cP{p},N}(\cP{v};\cP{q})$, respectively.  
\end{enumerate}
\end{enumerate}
For each of the finite boundary strata $Z$ above there is a gluing map
\[
 \mathcal G_Z:Z\times[T_1,\infty)\longrightarrow\mathcal X_Z
\]
into the one-dimensional space $\mathcal X_Z$ whose boundary contains $Z$.  It is a homeomorphism onto its
image and extends at $T=\infty$ by the inclusion of $Z$ in the boundary;
the extension is a collar of $Z$.  The maps can be chosen equivariantly
on the union of group translates of the strata.  The boundary signs are
those of Lemma~\ref{lem:floer-boundary-formulas} for strip spaces, of
Lemma~\ref{lem:floer-sphere-boundary} for sphere trajectory spaces, and
of Lemma~\ref{lem:floer-boundary-signs} for incidence spaces.
\end{theorem}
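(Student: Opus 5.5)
The plan is to deduce Theorem~\ref{thm:floer-gluing-package} in three layers: finiteness of rigid spaces from Proposition~\ref{thm:floer-compactification-cited}, an exhaustive boundary list from a dimension count, and gluing collars with signs.

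\emph{Finiteness.} Take a sequence in a zero-dimensional space. By Proposition~\ref{thm:floer-compactification-cited} it has a subsequence converging to a broken configuration. For a strip space, transversality (Propositions~\ref{thm:floer-model-regularity}, \ref{thm:floer-free-regularity}) gives each component index at least one, so a genuinely broken limit would force the total index to be at least two; hence the limit is unbroken, and zero-dimensional manifolds are discrete. For an incidence space the same argument applies to the coupled configuration. The point is the compatibility of evaluations: the limiting evaluation is read on the first non-fixed component $v_i$, carried by the identity across the preceding fixed components. This uses Corollary~\ref{cor:lagrangian-dis-zero}, so that $v_i$ is $N$-visible; this is where the uniform bound of Theorem~\ref{ass:floer-uniform-leading-bound} enters. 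The limiting sphere half-trajectory breaks through critical points, and each break lowers $j$. Proposition~\ref{prop:floer-transversality-outside} makes every broken limit lie in a stratum of negative expected dimension, so no such limit exists.

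\emph{Boundary list.} For a one-dimensional space, the same count shows that the limits have exactly one break, either in the strip factor or in the sphere factor. I would then sort these by the type of the intermediate point.
\begin{itemize}
\item An outside intermediate point gives \ref{it:S1}.
\item A sphere break gives \ref{it:S2}, using the codimension-one description of compactified descending cells in Lemma~\ref{lem:floer-sphere-boundary}.
\item Two fixed factors give \ref{it:S3}. Here a limit of fixed strips is fixed, and Lemma~\ref{lem:floer-fixed-open-closed} separates fixed from non-fixed spaces.
\item An invariant intermediate $\cP{q}$ with mixed factors gives \ref{it:S4}:
\begin{itemize}
\item if the non-fixed factor comes first, it carries the evaluation (\ref{it:S4a});
\item if a fixed factor comes first, the incidence is transported to $\cP{q}$ (\ref{it:S4b});
\item if both factors are non-fixed, the second factor is a rigid ordinary strip with no incidence constraint, and Lemma~\ref{lem:floer-top-cells} places its evaluation in a unique top cell (\ref{it:S4c}).
\end{itemize}
\end{itemize}
Dimension bookkeeping uses \eqref{eq:floer-incidence-dimension}. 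For instance, in \ref{it:S4c} the incoming incidence factor absorbs the full codimension $N-1-j(\cP{v})$, so the outgoing strip must be rigid.

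\emph{Gluing.} For each stratum, standard Floer gluing \cite{floer1995transversality,seidel2008fukaya} provides a pregluing and Newton iteration on the ambient strip space; this is done via the separated-transition method of \cite{hutchings2009gluing}. In the exactly linear chart of Corollary~\ref{cor:lagrangian-preparation-hat}, the normal coefficients of the glued strip are controlled explicitly. The key estimate is that the negative evaluation of the glued strip converges in $C^1$, as the neck length $T\to\infty$, to the evaluation of the incoming non-fixed factor. In case \ref{it:S4b}, the incoming fixed factor contributes no normal part, and the glued coefficients are an exponentially small perturbation of the translated coefficients of the second factor, rescaled by the weighted action \eqref{eq:floer-weighted-scaling}. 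Given this, transversality of the evaluation to the descending cell persists, and the implicit function theorem cuts the one-dimensional incidence space out of the glued family. Surjectivity of gluing (recovery) follows from the convergence statement above. Equivariance of the collars comes from averaging or transporting the gluing data over free orbits, using the invariance in Lemma~\ref{lem:floer-equivariant-orientations}. Signs follow from Lemmas~\ref{lem:floer-boundary-formulas} and~\ref{lem:floer-sphere-boundary}, together with the normal-first convention~\ref{conv:floer-incidence-orientation}.

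\emph{Main obstacle.} I expect the hardest step to be the $C^1$ coefficient estimate in cases \ref{it:S4b} and \ref{it:S4c}. In these cases the strip factor being glued has dimension greater than zero, and the evaluation must be shown to extend smoothly to the boundary of the compactified ambient space. I would first use exact linearity on the neck to write the glued normal part as the translated expansion plus a correction of size $e^{-\delta T}$ in weighted norms. I would then check that $q_N$, being a submersion by Lemma~\ref{lem:floer-weighted-quotient}, preserves this bound.
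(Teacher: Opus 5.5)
\emph{Overall.} Your outline follows the paper's architecture closely:
\begin{itemize}
\item finiteness and the boundary list come from sequential compactness together with the count ``each break costs one dimension'' (Lemma~\ref{lem:IIF-codim-two}, Proposition~\ref{prop:IIF-strata});
\item the collars come from separated-cutoff gluing: corrections are solved for fixed $(T,\kappa)$, and the incidence equation is then solved for the kernel parameter;
\item in \ref{it:S4b} the evaluation is read on the outgoing factor and transported back;
\item \ref{it:S2} comes from the sheet structure of the compactified descending cell;
\item signs come from the ambient and sphere boundary formulas combined with the normal-first convention.
\end{itemize}

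\emph{The gap: recovery.} You write that surjectivity of gluing ``follows from the convergence statement above.'' This does not follow. The $C^1$ convergence of $\Ev^-_{\cP{p},N}(u_T(\kappa))$ is a statement about the strips you construct. It says nothing about an arbitrary incidence configuration near $Z$. The collar assertion needs exactly that: every unbroken configuration sufficiently close to $Z$ must lie in the image of $\mathcal G_Z$. So does the claim that the boundary consists precisely of \ref{it:S1}--\ref{it:S4}. Without exhaustion, another branch of the one-dimensional space could end at the same broken point, and the boundary count behind $\partial^2=0$ would fail.

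\emph{What is needed.} You need an independent argument that a nearby actual strip $u$ equals $u_T(\kappa)$ for some $(T,\kappa)$, with corrections in the uniqueness ball of the contraction. Only after that does uniqueness of $\kappa(T)$ in the incidence step force $\kappa=\kappa(T)$; that uniqueness is where your $C^1$ estimate is actually used. The paper proves this directly on the exactly linear neck:
\begin{itemize}
\item it splits the neck solution into stable and unstable spectral parts (Lemma~\ref{lem:IIF-neck-estimates});
\item it builds component maps by \eqref{eq:IIF-reconstruction}, which satisfy the coupled equations with zero error;
\item it fixes the two translations by the $L^2$ phase condition $\langle\eta_\pm,\partial_su_\pm\rangle=0$;
\item it identifies the resulting corrections with the unique fixed point of Proposition~\ref{prop:IIF-contraction}.
\end{itemize}
Alternatively, you could cite the surjectivity part of standard ambient gluing for a positive-dimensional incoming family and then apply uniqueness in the incidence equation. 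Either way, that input must be stated explicitly.

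\emph{Two smaller points.} First, you do not need the evaluation to extend smoothly to the boundary of the compactified ambient space. Smoothness in the gluing parameter at $T=\infty$ is a harder problem than the theorem requires. The paper instead applies Lemma~\ref{lem:IIF-transverse-ift} at each fixed $T$ with $T$-uniform constants, which needs only $C^1$ dependence on $\kappa$ and continuity in $T$.

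Second, in \ref{it:S4b} you must use exact linearity of the normal equation along the whole adapted tube containing the fixed incoming stretch, not only on the neck. The glued strip's normal part there is small but nonzero. The error must also be measured on the outgoing slice, before the weighted rescaling. An additive $e^{-\delta T}$ error at the source would swamp the higher selected modes, which decay there at different exponential rates.
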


\begin{lemma}[Incidence boundary signs]\label{lem:floer-boundary-signs}
Let $(z_1,z_2)$ be a boundary point of a one-dimensional incidence space,
and let $\epsilon(z_i)$ be the sign of the rigid factor $z_i$: of a strip
(Convention~\ref{conv:floer-strip-orientation}), of an incidence
configuration (Convention~\ref{conv:floer-incidence-orientation}), or of a
sphere trajectory (Lemma~\ref{lem:floer-sphere-boundary}).  With the
outward direction that of increasing neck length, the boundary sign is
$\epsilon(z_1)\epsilon(z_2)$, except in type \textup{(S4b)}, where it is
$(-1)^{N-1-j(\cP{v})}\epsilon(z_1)\epsilon(z_2)$.
\end{lemma}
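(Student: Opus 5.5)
The proof is a linear-algebra computation with ordered bases at each boundary point. It uses the gluing collar of Theorem~\ref{thm:floer-gluing-package}, with the outward direction equal to increasing neck length, together with Conventions~\ref{conv:floer-strip-orientation} and~\ref{conv:floer-incidence-orientation}. The one-dimensional incidence space $\mathcal N^1_{\cP{p},N}(\cP{v};\cP{b})$ sits inside $\M^\circ(\cP{p},\cP{b})$ with normal bundle $\Ev^*\nu(\cP{v})$ placed first. Near a boundary stratum, the ambient strip space also has a collar, with boundary sign $(-1)^{\dim\text{(first factor)}}$ by Lemma~\ref{lem:floer-boundary-formulas}. The plan is to compare, at each stratum, three things: the normal-first splitting of $T\M^\circ$, the product splitting of the broken configuration, and the outward normal. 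I will then count the transpositions needed to move the outward vector past the normal block.

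The cases are as follows.

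\emph{Types (S1), (S4a) and (S4c).} The incidence sits on the incoming factor $z_1$. Its ambient strip space has dimension $c(\cP{v})$, since $z_1$ is rigid as an incidence configuration. The strip boundary sign is therefore $(-1)^{c(\cP{v})}$. Moving the outward vector across the $c(\cP{v})$ normal directions of $\nu(\cP{v})$ contributes another $(-1)^{c(\cP{v})}$. The two signs cancel and leave $\epsilon(z_1)\epsilon(z_2)$. In (S4c), the second factor is an ordinary rigid strip. It is counted through its top-cell incidence, and $\nu(M_g)$ has rank zero, so this is consistent by the remark after Convention~\ref{conv:floer-incidence-orientation}.

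\emph{Type (S2).} Here the strip is unchanged and only the half-trajectory breaks. The descending cell $\descending(\cP{v})$ has boundary stratum $\M^0_{h_N}(\cP{v},\cP{w})\times\descending(\cP{w})$, with the product sign by Lemma~\ref{lem:floer-sphere-boundary} in the rigid case. I would relate $\nu(\cP{v})$ to $\nu(\cP{w})$ along this stratum by $\nu(\cP{w})\cong\nu(\cP{v})\oplus\R\cdot n_{\mathrm{out}}$, and then check that the fiber-product convention produces exactly the product sign.

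\emph{Type (S4b).} The first factor is a rigid fixed strip $u$ of dimension $0$, so the strip boundary sign is $+1$. However, the normal block $\nu(\cP{v})$ must now be carried from the source $\cP{p}$ to $\cP{q}$ through the identity transport of spectral spheres. In the normal-first rule for the glued space, the outward vector lies in the $u$-direction and sits before the normal block. In the product orientation it is the normal block of the second factor that comes first. Commuting the outward vector past $c(\cP{v})$ normal vectors gives $(-1)^{c(\cP{v})}=(-1)^{N-1-j(\cP{v})}$, which is the claimed exceptional sign. I would also use Lemma~\ref{lem:floer-fixed-strips}/Lemma~\ref{lem:floer-fixed-open-closed} to confirm that the normal operator along $u$ contributes no additional sign. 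That operator is complex linear and has a trivial kernel.

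The main obstacle is the precise identification of the tangent space of the glued one-dimensional incidence space with the linear-gluing direct sum, in the correct factor order. This is most delicate in (S4b), where the evaluation is read on the second factor but the determinant lines are glued with the incoming factor first. I would first work out the case $c(\cP{v})=0$, the top cells, where everything reduces to Lemma~\ref{lem:floer-boundary-formulas}. The general case then follows by inserting the normal block and counting the transpositions, using that $N$ is even so that only the parity of $c(\cP{v})$ matters.
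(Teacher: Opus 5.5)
Your cases \textup{(S1)}, \textup{(S4a)}, \textup{(S4c)} and \textup{(S4b)} follow the paper's computation. The ambient collar formula of Lemma~\ref{lem:floer-boundary-formulas} gives $(-1)^{c}\epsilon_2[n_{\mathrm{out}},m_1]$ in the first group and $\epsilon_1[n_{\mathrm{out}},m_2]$ in \textup{(S4b)}. The normal-first orientation is $\tilde s_i\,s\,[m_i,n_{\mathrm{out}}]$, so comparing the two costs $(-1)^{c}$: it cancels in the first group and survives in \textup{(S4b)}. Two small points about \textup{(S4b)}. First, your verbal description of the two orderings is reversed: it is the collar/product side that puts $n_{\mathrm{out}}$ first, and the normal-first side that puts the $c$ normal vectors first. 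The transposition count is unaffected. Second, Lemma~\ref{lem:floer-fixed-strips} is not needed there, because $\epsilon(z_1)$ is by definition the sign of the fixed strip computed in $\widehat M$.

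The gap is \textup{(S2)}. There you only say you would relate $\nu(\cP w)$ to $\nu(\cP v)\oplus\R n_{\mathrm{out}}$ and check that the product sign comes out. That check is the entire content of this case, and it is not automatic, because the isomorphism carries an orientation sign. To find it, insert the rigid case of Lemma~\ref{lem:floer-sphere-boundary}, $o(\descending(\cP v))=\epsilon(\vartheta)[n]\oplus o(\descending(\cP w))$, into the two splittings $o(S_N)=o(\descending(\cP v))\oplus o(\nu(\cP v))=o(\descending(\cP w))\oplus o(\nu(\cP w))$. Then move $n$ past the $j(\cP w)$ descending directions and the $c(\cP v)$ normal directions. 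This gives
\[
 o(\nu(\cP w))=\epsilon(\vartheta)(-1)^{j(\cP w)+c(\cP v)}\,o(\nu(\cP v))\oplus[n].
\]
Next, lift through the transverse evaluation slice; the lift of $n$ is the outward tangent of the incidence arc. Let $\tilde s$ be the sign of the rigid incidence at $\cP w$. Comparing $\tilde s\,\Ev^*o(\nu(\cP w))$ with the normal-first orientation $\Ev^*o(\nu(\cP v))\oplus s[n_{\mathrm{out}}]$ yields $s=(-1)^{j(\cP w)+c(\cP v)}\tilde s\,\epsilon(\vartheta)$.

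Since $j(\cP w)+c(\cP v)=N-2$, the product sign holds exactly because $N=rm$ is even: $m$ is even since $\R^m$ is a sum of rotation planes. For odd $N$ the sign would be the opposite one. So evenness of $N$ is the missing input, and it belongs in \textup{(S2)}. In \textup{(S4b)}, where you invoke it, it plays no role beyond rewriting $(-1)^{N-1-j(\cP v)}$ as $(-1)^{j(\cP v)+1}$.
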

The determinant-line and normal-first sign calculation is given in
Subsection~\ref{subsec:IIF-orientations}, after the collars have been
constructed.

\subsubsection{The block identities}
For the incidence space $\mathcal N^1_{\cP{p},N}(\cP{v};\cP{q})$ with
invariant target, the boundary types of
Theorem~\ref{thm:floer-gluing-package} contribute to $\partial^2$ as
follows; compositions are read from right to left, and the five terms are
those of \eqref{eq:floer-square-incidence} below.
\begin{center}
\renewcommand{\arraystretch}{1.15}
\begin{tabular}{lll}
\hline
boundary configuration & type & contribution to $\partial^2$\\
\hline
break at an outside intersection point & \textup{(S1)} & $\D{o}{s}\D{s}{o}$\\
break of the source sphere half-trajectory & \textup{(S2)} & $\Dnon\dsph$\\
a fixed strip followed by a non-fixed strip & \textup{(S4b)} & $\Dnon\Dinv$\\
a non-fixed strip followed by a fixed strip & \textup{(S4a)} & $\Dinv\Dnon$\\
two non-fixed strips & \textup{(S4c)} & $\Dnon\Dnon$\\
\hline
\end{tabular}
\end{center}
The remaining blocks of $\partial^2$ are handled by the ordinary-strip
identities and by the local and fixed identities of
Lemma~\ref{lem:floer-fixed-identities}.

The local relations \eqref{eq:floer-local-identities} imply
\begin{equation}\label{eq:floer-support-identities}
\begin{gathered}
 \dsph\Theta=0,\qquad\D{s}{i}\Theta=0,\qquad
 \D{s}{i}\dsph=0,\\
 \dsph\D{o}{s}=\dsph\Dnon=0,\qquad
 \D{s}{i}\D{o}{s}=\D{s}{i}\Dnon=0.
\end{gathered}
\end{equation}
The last line uses that both $\D{o}{s}$ and $\Dnon$ land in top cycles.

\begin{lemma}[Identities of the local and fixed blocks]\label{lem:floer-fixed-identities}
The operator on $C^s\oplus C^i$ consisting only of the local sphere,
augmentation, and fixed-strip terms squares to zero, that is,
\[
 (\dsph+\Dinv)^2=0,\qquad(\D{i}{i})^2=0,
 \qquad \D{s}{i}\Dinv+\D{i}{i}\D{s}{i}=0;
\]
in fact $(\Dinv)^2=0$ and $\dsph\Dinv+\Dinv\dsph=0$ separately.
\end{lemma}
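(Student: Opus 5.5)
The plan is to write $\Dinv$ as a tensor product of a fixed-strip operator with a sign operator on the sphere complex, and reduce everything to $(\D{i}{i})^2=0$ and the local sphere identities \eqref{eq:floer-local-identities}. Concretely, identify $C^s=C^i\otimes C_N$ by $\cP{v}|_{\cP{p}}\leftrightarrow\st{\cP{p}}\otimes\cP{v}$. Under this identification, \eqref{eq:floer-dinv} reads $\Dinv=\D{i}{i}\otimes\tau$, where $\tau\cP{v}=(-1)^{N-1-j(\cP{v})}\cP{v}$. The counts $\#\M^0_{M^G}(\cP{p},\cP{q})=\sum_u\epsilon(u)$ are exactly those of \eqref{eq:floer-dii}. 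Likewise $\dsph=\mathrm{id}\otimes d_N$ and $\D{s}{i}=\mathrm{id}\otimes\epsilon_N$, both by definition.

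\emph{Step 1: $(\D{i}{i})^2=0$.} By Lemma~\ref{lem:floer-fixed-open-closed}, $\M_{M^G}(\cP{p},\cP{b})$ is a union of components of the ambient strip space, and it is oriented by Convention~\ref{conv:floer-strip-orientation}. By Proposition~\ref{thm:floer-compactification-cited}, limits of fixed strips consist only of fixed strips. Hence by \ref{it:S3} the boundary of a one-dimensional fixed strip space is $\coprod_{\cP{q}}\M^0_{M^G}(\cP{p},\cP{q})\times\M^0_{M^G}(\cP{q},\cP{b})$. Lemma~\ref{lem:floer-boundary-formulas} says this boundary carries the product orientation, so summing over boundary points gives $(\D{i}{i})^2=0$. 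From this, $(\Dinv)^2=(\D{i}{i})^2\otimes\tau^2=0$.

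\emph{Step 2: the anticommutation.} We have $\dsph\Dinv+\Dinv\dsph=\D{i}{i}\otimes(d_N\tau+\tau d_N)$. Since $d_N$ lowers the index $j$ by one, $\tau d_N=-d_N\tau$, and the bracket vanishes. This is a parity count on $j$ and needs no geometry. Combining it with $(d_N)^2=0$ and Step 1 gives $(\dsph+\Dinv)^2=0$.

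\emph{Step 3: the augmentation identity.} We compute $\D{s}{i}\Dinv=\D{i}{i}\otimes\epsilon_N\tau$. Since $\epsilon_N$ is supported on minima, where $j=0$, we get $\epsilon_N\tau=(-1)^{N-1}\epsilon_N=-\epsilon_N$, using that $N=rm$ is even (Definition~\ref{def:floer-transport-sign}). Hence $\D{s}{i}\Dinv=-\D{i}{i}\otimes\epsilon_N=-\D{i}{i}\D{s}{i}$, which is the third identity.

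The only point needing care is the bookkeeping of signs in Step 3. One must check that the tensor identification introduces no hidden sign, i.e.\ that no Koszul sign is implied. This holds because the blocks are defined directly by the formulas \eqref{eq:floer-dinv}, \eqref{eq:floer-dsi} and \eqref{eq:floer-dii}, with no graded tensor convention involved. The geometric input is confined to Step 1, which is already supplied by Theorem~\ref{thm:floer-gluing-package} and Lemma~\ref{lem:floer-boundary-formulas}.
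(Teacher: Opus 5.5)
Your proof is correct and follows the paper's argument. The paper proves $(\D{i}{i})^2=0$ from fixed-strip breaking; it gets $(\Dinv)^2=0$ because the transport sign squares to one, and the anticommutation $\dsph\Dinv+\Dinv\dsph=0$ because $d_N$ lowers $j$ by one, while the augmentation identity comes from $(-1)^{N-1}=-1$ at $j=0$ with $N$ even; your identification $C^s=C^i\otimes C_N$ with $\Dinv=\D{i}{i}\otimes\tau$ is simply a clean packaging of the same sign bookkeeping.
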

\begin{proof}
 The fixed Floer differential
squares to zero by ordinary fixed-strip breaking, and applying the
transport sign twice multiplies its square by
$(-1)^{2c(\cP{v})}=1$.  Thus $(\Dinv)^2=(\D{i}{i})^2=0$.

The two orders of a sphere step and a fixed strip have opposite signs,
because a sphere step decreases $j$ by one.  Hence
$\dsph\Dinv+\Dinv\dsph=0$.

The augmentation kills the sphere differential and is supported at
$j=0$, where the transport sign is $(-1)^{N-1}=-1$.  It therefore
anticommutes with the fixed differential giving $\D{s}{i}\Dinv+\D{i}{i}\D{s}{i}=0$.
\end{proof}

\begin{proposition}\label{prop:floer-differential-squares-zero}
The stabilized operator satisfies $\partial^2=0$ over $\Z$.
\end{proposition}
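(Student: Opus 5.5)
We compute $\partial^2$ from the block matrix \eqref{eq:floer-block-matrix} one block at a time. There are nine blocks, indexed by source and target summand in $\{o,s,i\}$. The three blocks with source $C^i$ reduce to $(\D{i}{i})^2$, because $\D{i}{o}=\D{i}{s}=0$, and this vanishes by Lemma~\ref{lem:floer-fixed-identities}. Each remaining block will be identified with the signed count of the boundary of a compact oriented one-manifold from Theorem~\ref{thm:floer-gluing-package}, or reduced to the local identities \eqref{eq:floer-support-identities} and Lemma~\ref{lem:floer-fixed-identities}. Since a compact oriented one-manifold has signed boundary count zero, each such block vanishes.

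\emph{Blocks with outside source.}
\begin{itemize}
\item The $(o\to o)$ block is $\D{o}{o}\D{o}{o}+\D{s}{o}\D{o}{s}$. We apply Theorem~\ref{thm:floer-gluing-package} to $\M^1(\cP{x},\cP{z})$ with $\cP{x},\cP{z}$ outside. The boundary of type (S1) gives the first term. A break at an invariant $\cP{q}$ must be of type (S4c). By Lemma~\ref{lem:floer-top-cells}, its outgoing factor splits over top-cell incidences, so it equals $\D{s}{o}$ applied to $\Theta_{\cP{q}}=\D{o}{s}$ of the incoming factor.
\item The $(o\to s)$ block is $\D{o}{s}\D{o}{o}+(\dsph+\Dinv+\Dnon)\D{o}{s}$. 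Here $\dsph\D{o}{s}=0$ by \eqref{eq:floer-support-identities}. The remaining terms are the boundary of $\M^1(\cP{x},\cP{q})$ with $\cP{q}$ invariant, coming from (S1), (S4a) and (S4c). For (S4a) we must check that $\Dinv\Theta_{\cP{q}}$ carries the sign $+\epsilon(u)$ on maxima, which is Definition~\ref{def:floer-transport-sign}.
\item The $(o\to i)$ block is $\D{s}{i}\D{o}{s}=0$ by \eqref{eq:floer-support-identities}.
\end{itemize}

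\emph{Blocks with sphere source.}
\begin{itemize}
\item The $(s\to o)$ block is $\D{o}{o}\D{s}{o}+\D{s}{o}(\dsph+\Dinv+\Dnon)$. It is the boundary count of $\mathcal N^1_{\cP{p},N}(\cP{v};\cP{z})$ with $\cP{z}$ outside. The strata (S1), (S2), (S4b) and (S4c) give the four terms in order. The sign in (S4b) from Lemma~\ref{lem:floer-boundary-signs} matches the transport sign built into $\Dinv$.
\item The $(s\to s)$ block expands as
\begin{equation}\label{eq:floer-square-incidence}
\D{o}{s}\D{s}{o}+(\dsph+\Dinv+\Dnon)^2 .
\end{equation}
We use $(\dsph+\Dinv)^2=0$ from Lemma~\ref{lem:floer-fixed-identities}, together with $\dsph\Dnon=0$. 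What remains are the five terms of the table: $\D{o}{s}\D{s}{o}$, $\Dnon\dsph$, $\Dnon\Dinv$, $\Dinv\Dnon$ and $\Dnon\Dnon$. These match the boundary of $\mathcal N^1_{\cP{p},N}(\cP{v};\cP{q})$ with $\cP{q}$ invariant, through strata (S1), (S2), (S4b), (S4a) and (S4c), with the signs of Lemma~\ref{lem:floer-boundary-signs}.
\item The $(s\to i)$ block is $\D{s}{i}(\dsph+\Dinv+\Dnon)+\D{i}{i}\D{s}{i}$. It vanishes because $\D{s}{i}\dsph=\D{s}{i}\Dnon=0$ and $\D{s}{i}\Dinv+\D{i}{i}\D{s}{i}=0$.
\end{itemize}

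\emph{Main obstacle.} The hard part is sign bookkeeping in the mixed strata (S4a), (S4b) and (S4c). First, we must verify that the boundary orientation of Lemma~\ref{lem:floer-boundary-signs} reproduces exactly the signs in the definitions \eqref{eq:floer-dinv} and \eqref{eq:floer-dnon}. In particular, in (S4a) the outgoing fixed strip acts on $\Theta_{\cP{q}}$ with the $+$ transport sign, while in (S4b) it carries $(-1)^{N-1-j(\cP{v})}$. Second, we must verify that the top-cell partition in (S4c) is an equality of oriented zero-dimensional spaces; this holds by the remark after Convention~\ref{conv:floer-incidence-orientation}. The step we would check first is the interaction between (S4a) and (S4b). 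The fixed strip in (S4a) comes \emph{after} a non-fixed strip, and its collar sign is the plain product sign. We would confirm that this agrees with $\Dinv$ on maxima, where $c(M_g)=0$. With that confirmed, every boundary point is counted exactly once with the correct sign, and all blocks vanish.
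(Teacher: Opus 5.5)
Your proposal is correct and follows essentially the same route as the paper: a block-by-block expansion of $\partial^2$, with the $o\to o$, $o\to s$, $s\to o$ and $s\to s$ blocks identified with signed boundary counts of the one-dimensional spaces $\M^1$ and $\mathcal N^1$ through the strata (S1)--(S4c) and the top-cell partition, and with the remaining blocks reduced to \eqref{eq:floer-support-identities} and Lemma~\ref{lem:floer-fixed-identities}. Your sign bookkeeping also matches the paper's: (S4a) gives $\Dinv\Dnon$ with transport sign $+1$ on the maxima, and (S4b) gives $\Dnon\Dinv$ with sign $(-1)^{N-1-j(\cP{v})}$, as in Lemma~\ref{lem:floer-boundary-signs}.
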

\begin{proof}
Multiplying the matrix \eqref{eq:floer-block-matrix} leaves seven
potentially nonzero blocks:
\begin{align}
 (\partial^2)_o^o&=\D{o}{o}\D{o}{o}+\D{s}{o}\D{o}{s},&
 (\partial^2)_o^s&=\D{o}{s}\D{o}{o}+\D{s}{s}\D{o}{s},\label{eq:floer-square-outside}\\
 (\partial^2)_o^i&=\D{s}{i}\D{o}{s},&
 (\partial^2)_s^o&=\D{o}{o}\D{s}{o}+\D{s}{o}\D{s}{s},\label{eq:floer-square-mixed}\\
 (\partial^2)_s^s&=\D{o}{s}\D{s}{o}+(\D{s}{s})^2,&
 (\partial^2)_s^i&=\D{s}{i}\D{s}{s}+\D{i}{i}\D{s}{i},\label{eq:floer-square-sphere}\\
 (\partial^2)_i^i&=(\D{i}{i})^2.&&\label{eq:floer-square-fixed}
\end{align}

\emph{Outside source.}
The boundary of $\M^1(\cP{x},\cP{z})$ for outside endpoints consists of
outside breaks and invariant breaks with two non-fixed factors.  The first
type contributes $\D{o}{o}\D{o}{o}$; the top-cell partition at an invariant
break gives $\D{s}{o}\D{o}{s}$.  Its signed boundary sum is zero.  For
an invariant final target, the outside break and the two possibilities for
the outgoing strip at an invariant break give
\[
 \D{o}{s}\D{o}{o}+(\Dinv+\Dnon)\D{o}{s}=0.
\]
Together with \eqref{eq:floer-support-identities}, these are the $o\to o$
and $o\to s$ identities.  The $o\to i$ identity is already
$\D{s}{i}\D{o}{s}=0$.

\emph{Incidence source and outside target.}
The four boundary types of
$\mathcal N^1_{\cP{p},N}(\cP{v};\cP{z})$ are an outside break, a sphere
break, a fixed incoming strip, and two non-fixed strips breaking at an
invariant point.  Their oriented sum, with the transport sign on the fixed
incoming factor, is
\[
 \D{o}{o}\D{s}{o}
 +\D{s}{o}\dsph+\D{s}{o}\Dinv+\D{s}{o}\Dnon=0.
\]
This is the $s\to o$ block.

\emph{Incidence source and invariant target.}
For $\mathcal N^1_{\cP{p},N}(\cP{v};\cP{q})$, the outside break, sphere
break, fixed incoming, fixed outgoing, and two-non-fixed break give
\begin{equation}\label{eq:floer-square-incidence}
 \D{o}{s}\D{s}{o}+\Dnon\dsph
 +\Dnon\Dinv+\Dinv\Dnon+\Dnon\Dnon=0.
\end{equation}
The term $\dsph\Dnon$ vanishes because $\Theta_{\cP{q}}$ is a cycle,
\eqref{eq:floer-support-identities}.
Adding $(\dsph+\Dinv)^2=0$ from
Lemma~\ref{lem:floer-fixed-identities}, and using
$\dsph\Dnon=0$, gives the $s\to s$ block.

\emph{Augmentation and fixed source.}
The $s\to i$ block reduces to
$\D{s}{i}\Dinv+\D{i}{i}\D{s}{i}=0$ by
\eqref{eq:floer-support-identities}; this is
Lemma~\ref{lem:floer-fixed-identities}.  Finally $(\D{i}{i})^2=0$ is the
ordinary fixed-pair Floer identity, with boundary described by
\textup{(S3)}.  The two remaining source-$i$ blocks vanish identically
from the zero entries of the matrix.  Theorem~\ref{thm:floer-gluing-package}
exhausts the boundary types, so no block receives a further contribution.
\end{proof}

\subsection{Comparison with the ordinary Floer complex}
\label{subsec:floer-localization}
Let $C^f$ be the free module on the invariant intersections, retaining
their ordinary Floer generators, so that
$CF(\widehat L_0,\widehat L_1)=C^o\oplus C^f$.  Write its differential as
\[
 d_{\mathrm F}=\begin{pmatrix}d_o^o&d_f^o\\d_o^f&d_f^f\end{pmatrix}.
\]
Define $\jmath:C^f\to C^s$ by $\jmath(\cP{p})=\Theta_{\cP{p}}$ and
\begin{equation}\label{eq:floer-comparison-map}
 F:CF(\widehat L_0,\widehat L_1)\longrightarrow
       CF_{\mathrm{st}}(\widehat L_0,\widehat L_1),\qquad
 F(\cP{x})=\cP{x},\quad F(\cP{p})=\Theta_{\cP{p}}.
\end{equation}

\begin{lemma}[Comparison on top cells]\label{lem:floer-top-cell-comparison}
The map $F$ commutes with the differentials.  More precisely,
\[
 d_o^o=\D{o}{o},\quad \jmath d_o^f=\D{o}{s},\quad
 \D{s}{o}\jmath=d_f^o,\quad
 \D{s}{s}\jmath=\jmath d_f^f,\quad \D{s}{i}\jmath=0.
\]
\end{lemma}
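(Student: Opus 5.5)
The plan is to verify the five block identities one at a time, directly from the definitions \eqref{eq:floer-doo}--\eqref{eq:floer-dii} and the ordinary differential \eqref{eq:ordinary-floer-differential}. Together they say that $F$ intertwines $d_{\mathrm F}$ with $\partial$: since $F$ is the identity on $C^o$ and $\jmath$ on $C^f$, we have
\[
 \partial F(\cP{x})=\D{o}{o}\cP{x}+\D{o}{s}\cP{x},\qquad
 \partial F(\cP{p})=\D{s}{o}\Theta_{\cP{p}}+\D{s}{s}\Theta_{\cP{p}}+\D{s}{i}\Theta_{\cP{p}}.
\]
These must be compared with $F$ applied to $d_o^o\cP{x}+d_o^f\cP{x}$ and to $d_f^o\cP{p}+d_f^f\cP{p}$. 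The block $\partial^i$ of $F(\cP{x})$ vanishes because $\D{o}{i}=0$.

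The first two identities are tautological. Both $d_o^o$ and $\D{o}{o}$ count $\M^0(\cP{x},\cP{y})$ for outside $\cP{y}$ with the signs of Convention~\ref{conv:floer-strip-orientation}, so they agree. For the second, \eqref{eq:floer-dos} is literally $\jmath$ applied to $d_o^f\cP{x}=\sum_{\cP{q}}\#\M^0(\cP{x},\cP{q})\cP{q}$. The identity $\D{s}{i}\jmath=0$ is $\epsilon_N(\Theta_N)=0$ from Lemma~\ref{lem:floer-sphere-data}.

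The third identity, $\D{s}{o}\jmath=d_f^o$, rests on Lemma~\ref{lem:floer-top-cells} and the remark following Convention~\ref{conv:floer-incidence-orientation}. We have $\D{s}{o}\Theta_{\cP{p}}=\sum_g\sum_{\cP{z}}\#\mathcal N^0_{\cP{p},N}(M_g;\cP{z})\cP{z}$. By \eqref{eq:floer-top-cell-sum}, the union over $g$ of these incidence spaces is exactly $\M^{\circ,0}(\cP{p},\cP{z})=\M^0(\cP{p},\cP{z})$, since the target is outside. Because $\nu(M_g)$ has rank zero and the top cells carry the sphere orientation, the incidence signs coincide with the strip signs $\epsilon(u)$. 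We also use Corollary~\ref{cor:lagrangian-dis-zero}: every such strip is $N$-visible, so none is lost from the count.

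The main step is the fourth identity, $\D{s}{s}\jmath=\jmath d_f^f$, and I would split $d_f^f$ into fixed and non-fixed parts using Lemma~\ref{lem:floer-fixed-open-closed}:
\[
 d_f^f\cP{p}=\sum_{\cP{q}}\bigl(\#\M^0_{M^G}(\cP{p},\cP{q})+\#\M^{\circ,0}(\cP{p},\cP{q})\bigr)\cP{q}.
\]
For the three pieces of $\D{s}{s}$:
\begin{itemize}
\item $\dsph\Theta_{\cP{p}}=0$ by Lemma~\ref{lem:floer-sphere-data}.
\item $\Dinv\Theta_{\cP{p}}=\sum_{\cP{q}}\#\M^0_{M^G}(\cP{p},\cP{q})\,\Theta_{\cP{q}}$. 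Here each top cell $M_g|_{\cP{p}}$ is transported to $M_g|_{\cP{q}}$, and the transport sign $(-1)^{N-1-j}$ equals $+1$ for $j=N-1$ (Definition~\ref{def:floer-transport-sign}).
\item $\Dnon\Theta_{\cP{p}}=\sum_{\cP{q}}\bigl(\sum_g\#\mathcal N^0_{\cP{p},N}(M_g;\cP{q})\bigr)\Theta_{\cP{q}}=\sum_{\cP{q}}\#\M^{\circ,0}(\cP{p},\cP{q})\,\Theta_{\cP{q}}$, again by \eqref{eq:floer-top-cell-sum} with oriented signs.
\end{itemize}
Summing gives $\jmath d_f^f\cP{p}$.

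I expect the only delicate point to be the sign bookkeeping. One needs that the rigid counts defining $d_{\mathrm F}$ in the straightened, suspended setting use exactly the orientation conventions under which the top-cell partition is an oriented identity, and that the fixed strips in $d_f^f$ carry their $\widehat M$-signs $\epsilon(u)$. Both hold by the conventions of Subsection~\ref{subsec:floer-orientations}, so the proof reduces to citing them.
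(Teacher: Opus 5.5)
Your proposal is correct and follows the paper's proof essentially step for step. The first two identities are definitional. The top-cell partition of Lemma~\ref{lem:floer-top-cells}, with its oriented signs, gives $\D{s}{o}\jmath=d_f^o$ and the non-fixed part of $\D{s}{s}\jmath=\jmath d_f^f$. The fixed part uses the transport sign $+1$ on the top cells, and $\dsph\Theta=0$ and $\epsilon_N(\Theta)=0$ dispose of the remaining terms. Your explicit splitting of $d_f^f$ via Lemma~\ref{lem:floer-fixed-open-closed} simply writes out what the paper states in a single sentence.
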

\begin{proof}
The first two equalities are the definitions of the outside-source
blocks.  For a non-fixed rigid strip leaving an invariant point,
Lemma~\ref{lem:floer-top-cells} partitions its evaluation by the lifted
top cells; summing their incidence counts counts the strip once with its
ordinary sign.  This gives $\D{s}{o}\jmath=d_f^o$ and the non-fixed part
of $\D{s}{s}\jmath=\jmath d_f^f$.  A fixed strip transports the top cycle
by the identity with transport sign $+1$ on the top cells, and it enters
$\D{s}{s}$ and $d_f^f$ with the same sign $\epsilon(u)$, which is
unambiguous by Lemma~\ref{lem:floer-fixed-open-closed}.  Finally the sphere differential
and augmentation both kill $\Theta$.  These prove all five identities,
which are precisely $\partial F=Fd_{\mathrm F}$.
\end{proof}

The theorem says that the stabilization does not change Floer homology.

\begin{theorem}[Floer comparison]\label{thm:floer-F-quasi-isomorphism}
The map $F$ is a quasi-isomorphism over $\Z$, and after extension to any
coefficient ring.
\end{theorem}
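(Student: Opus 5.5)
The plan is to show that the mapping cone of $F$ is acyclic, using the action filtration to reduce to local statements at each generator. Lemma~\ref{lem:floer-top-cell-comparison} already shows that $F$ is a chain map. Filter both complexes by action. On $CF(\widehat L_0,\widehat L_1)$ a generator $\cP{x}$ sits in filtration level $\mathcal A(\cP{x})$. On $CF_{\mathrm{st}}$ every generator of the local block $\mathcal T_{\cP{p}}=C^s_{\cP{p}}\oplus\Z\langle\st{\cP{p}}\rangle$ sits at level $\mathcal A(\cP{p})$, and an outside generator $\cP{x}$ at $\mathcal A(\cP{x})$.

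The first step is to check that every block of \eqref{eq:floer-block-matrix} is non-increasing for this filtration. Moreover, every block other than $\dsph$ and $\D{s}{i}$ strictly lowers it. Each such block counts a nonconstant strip, possibly with a sphere incidence attached, and a nonconstant strip strictly lowers action by $E(u)=\mathcal A(\cP{x})-\mathcal A(\cP{y})>0$. The blocks $\dsph$ and $\D{s}{i}$ are local and preserve the level. Since $L_0\cap L_1$ is finite, the filtration is finite and exhaustive. We may perturb to separate action values; if values coincide, we refine to a total order on generators compatible with action, which the strict decrease still respects.

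The second step identifies the associated graded pieces. On the ordinary side, the graded piece at an outside point $\cP{x}$ is $\Z\langle\cP{x}\rangle$ with zero differential, and at an invariant point $\cP{p}$ it is $\Z\langle\cP{p}\rangle$. On the stabilized side, the graded piece at an outside point is again $\Z\langle\cP{x}\rangle$, with $\mathrm{gr}F$ the identity. At an invariant point it is $(\mathcal T_{\cP{p}},d_{\mathcal T})$, and $\mathrm{gr}F$ sends $\cP{p}$ to $\Theta_{\cP{p}}$. By Lemma~\ref{lem:floer-augmented-sphere} the homology of $\mathcal T_{\cP{p}}$ is freely generated by $[\Theta_{\cP{p}}]$, so $\mathrm{gr}F$ is a quasi-isomorphism on each graded piece. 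Since all modules are free, the same holds after tensoring with any coefficient ring, again by Lemma~\ref{lem:floer-augmented-sphere}.

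The last step is the standard comparison for finite filtrations. The cone of $F$ carries the induced finite filtration, and its graded pieces are the cones of $\mathrm{gr}F$, which are acyclic. Induction on the filtration length, using the long exact sequences of the short exact sequences $0\to F_{k-1}\to F_k\to\mathrm{gr}_k\to0$, then shows the whole cone is acyclic. Each of these sequences splits as modules because the modules are free, so the sequences survive any change of coefficients. The long exact sequences do not need a grading: for differential modules the exact triangle comes from the snake lemma, exactly as in Lemma~\ref{lem:intro-algebraic-smith}. Hence $F$ is a quasi-isomorphism over $\Z$ and over any ring.

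The main obstacle is the claim that the local blocks $\dsph$ and $\D{s}{i}$ are the only level-preserving terms, in particular that $\Dinv$ and $\Dnon$ strictly lower the level. This holds because the fixed and non-fixed strips they count are nonconstant elements of $\M(\cP{p},\cP{q})$ with $\cP{p}\neq\cP{q}$. I would also verify that the associated graded of $F$ is exactly $\cP{p}\mapsto\Theta_{\cP{p}}$ with no correction terms, which is immediate from \eqref{eq:floer-comparison-map}.
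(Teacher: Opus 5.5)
Your proposal is correct and follows the paper's proof. You filter by action with every local generator at $\cP{p}$ placed at $\mathcal A(\cP{p})$, identify the associated graded pieces as the outside generators and the augmented local spheres $\mathcal T_{\cP{p}}$, apply Lemma~\ref{lem:floer-augmented-sphere} to each piece, and induct over the finite filtration using exact triangles. Passing through the mapping cone is a cosmetic variation, and your observation that the filtration sequences split over $\Z$ is a useful explicit justification of the base-change claim. Perturbing to separate action values is unnecessary, since no strip joins two generators of equal action, so each graded level is simply a direct sum of such pieces.
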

\begin{proof}
Filter both differential modules by the finitely many values of the action
\eqref{eq:floer-action} for the adapted pair.  Assign every sphere or
formal generator at $\cP{p}$ the action of $\cP{p}$.  Each strip term
strictly lowers action.  The only terms preserving action are the local
sphere differential and augmentation.  Consequently the associated graded
stabilized module is a direct sum of the outside generators and the local
modules $\mathcal T_{\cP{p}}$.  The associated graded ordinary module has
zero differential.

By Lemma~\ref{lem:floer-augmented-sphere}, $F$ induces an isomorphism on
\emph{the homology} of every associated graded piece.  Induction over the
finite filtration, using the exact homology triangles of successive
short exact sequences of differential modules, proves that $F$ is a
quasi-isomorphism.  Both the local homology calculation and the filtered
argument hold over any coefficient ring, proving the base-change claim.
\end{proof}

\subsection{The Smith inequality}
\label{subsec:floer-smith}

This subsection records the equivariant structure of the stabilized
complex and deduces the Smith inequality for $(L_0,L_1)$ and its fixed pair
from the algebraic lemma proved below.

\begin{lemma}[Subcomplex and free quotient]\label{lem:floer-module-structure}
With the orientations above:
\begin{enumerate}
    \item the differential of $CF_{\mathrm{st}}(\widehat L_0,\widehat L_1)$ is $G$-equivariant;
    \item $C^i$ is a subcomplex with trivial $G$-action, equal to the
    Floer differential module of the fixed data with the orientation system of
    Lemma~\ref{lem:floer-fixed-strips}, that of the induced relative Pin
    structures $Q_0,Q_1$, and
    \[
        H(C^i\otimes\F_p)
        \cong HF(L_0^G,L_1^G;\F_p);
    \]
    \item the quotient $CF_{\mathrm{st}}(\widehat L_0,\widehat L_1)/C^i$, which is
    $C^o\oplus C^s$ as a $\Z[G]$-module, is a complex of free
    $\F_p[G]$-modules after tensoring with $\F_p$.
\end{enumerate}
\end{lemma}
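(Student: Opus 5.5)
The plan is to read the three items off the block matrix \eqref{eq:floer-block-matrix}, together with the orientation results already established. None of the steps needs new analysis.

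For (1), I would invoke Lemma~\ref{lem:floer-equivariant-orientations}(3). It gives that every block of \eqref{eq:floer-block-matrix} is $G$-equivariant, where $G$ acts by permutation on outside and sphere generators and trivially on $C^i$. The one point to check separately is that the augmentation block $\D{s}{i}$ and the top-cycle blocks are compatible with the action. For $\D{s}{i}$: $\epsilon_N(g\cP{v})=\epsilon_N(\cP{v})$, since deck transformations preserve Morse index and the minima carry positive orientations. For the top cycles: $g\Theta_{\cP{q}}=\Theta_{\cP{q}}$, since $G$ permutes the lifted maxima $M_g$. It follows that $\D{o}{s}$ and $\Dnon$, which land in top cycles, are equivariant. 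The sign $(-1)^{N-1-j(\cP{v})}$ in $\Dinv$ depends only on the index, and this index is $G$-invariant.

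For (2), note that $\D{i}{o}=\D{i}{s}=0$, so $\partial(C^i)\subset C^i$ once Proposition~\ref{prop:floer-differential-squares-zero} is available. The action on $C^i$ is trivial by definition, consistent with (1). By \eqref{eq:floer-dii}, the restricted differential is $\D{i}{i}$, which counts rigid fixed strips with their ambient signs $\epsilon(u)$. Lemma~\ref{lem:floer-fixed-strips} identifies this with the Floer differential of $(M^G,\widehat L_0^G,\widehat L_1^G,J_0)$ for the induced data $(Q_0,Q_1,E')$. Regularity of this fixed-pair complex is Proposition~\ref{thm:floer-model-regularity}. Tensoring with $\F_p$ commutes with taking this complex. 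It remains to replace $\widehat L_i^G$ by $L_i^G$. Suspension leaves the fixed locus unchanged ($\widehat M^G=M^G$, $\widehat L_i^G=L_i^G$). The straightening isotopies fix $\widehat M^G$ pointwise, so they do not change the fixed pair either. Only the almost complex structure $J_0$ was chosen specially, and continuation invariance for the fixed pair removes that dependence.

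For (3), observe that as a module the quotient $C/C^i$ is $C^o\oplus C^s$. The module $C^o$ is spanned by outside points. These lie in free orbits because $p$ is prime, so $C^o$ is a free $\Z[G]$-module with one basis element per orbit. Each $C^s_{\cP{p}}$ is a copy of $C_N$. The Morse function there is lifted from the lens space $L_N$ on which $G$ acts freely (Lemma~\ref{lem:real-irreps-of-Zp}), so its critical points form free orbits and $C_N$ is free over $\Z[G]$. By (1) the action on these generators is by permutation without signs, because Lemma~\ref{lem:floer-equivariant-orientations}(1),(3) show orientations are transported along orbits. Freeness over $\F_p[G]$ follows after tensoring. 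The quotient differential is equivariant by (1), so the quotient is a differential module of free $\F_p[G]$-modules.

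I expect the only delicate point to be the identification in (2) of $H(C^i)$ with the fixed-pair homology for the original fixed data, in particular the independence of the auxiliary $J_0$ and of the induced orientation system. That identification rests entirely on Lemma~\ref{lem:floer-fixed-strips} and on standard continuation invariance within $M^G$. The remaining items are bookkeeping.
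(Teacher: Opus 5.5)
Your proposal is correct and follows the paper's own proof. Equivariance comes from Lemma~\ref{lem:floer-equivariant-orientations}, the subcomplex property from the zero blocks $\D{i}{o}=\D{i}{s}=0$ together with Lemma~\ref{lem:floer-fixed-strips}, and freeness from the free orbits of the outside points and the lifted sphere critical points. Your extra checks only make explicit what the paper leaves implicit: invariance of $\epsilon_N$ and of $\Theta_{\cP{q}}$, and the fact that suspension and the straightening isotopies fix $\widehat M^G$ pointwise and so leave the fixed pair unchanged.

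One small wording slip: $G$ acts freely on $S_N$, whose quotient is $L_N$, not on $L_N$ itself.
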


\begin{proof}
The group permutes strips and incidence configurations and preserves all
orientations (Lemma~\ref{lem:floer-equivariant-orientations}).  The blocks
from $C^i$ to $C^o$ and $C^s$ vanish, so $C^i$ is a subcomplex and the
restriction of $\partial$ to it is the Floer differential of the fixed pair;
the augmentation $\D{s}{i}$ maps $C^s$ into $C^i$, so $C^o\oplus C^s$ is
not a subcomplex, and it is the quotient that carries a differential.
Outside points and all critical points of the lifted local Morse function
lie in free $G$-orbits, making $C^o$ and $C^s$ free permutation modules.
\end{proof}

\begin{lemma}[Algebraic Smith inequality; Lemma~\ref{lem:intro-algebraic-smith}]\label{lem:algebraic-smith}
Let $k=\F_p$, $G=\Z_p$, $R=k[G]$.  Let $C$ be a finitely generated
$R$-module with an $R$-linear endomorphism $\partial$ satisfying
$\partial^2=0$, and let $C'\subset C$ be a $\partial$-invariant submodule on
which $G$ acts trivially and such that $C/C'$ is a free $R$-module.  Then
\[
    \dim_kH(C')\;\le\;\dim_kH(C),
    \qquad
    H(C):=\ker\partial/\operatorname{im}\partial .
\]
\end{lemma}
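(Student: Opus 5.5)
The plan is to run the classical Smith-sequence argument with the nilpotent generator of $R$. No grading is used, only exact triangles of differential modules. Put $\tau=1-\sigma\in R$, so that $R=k[\tau]/(\tau^p)$. Since $\partial$ is $R$-linear, every $\tau^aC$ is a $\partial$-invariant submodule, hence a differential module. All modules are finite-dimensional over $k$ because $C$ is finitely generated, so every homology group below has finite dimension. Write $h(X)=\dim_kH(X)$ and $F:=C/C'$, with projection $c\mapsto\bar c$. The only input about $F$ is the standard fact for free $R$-modules:
\[
 \ker(\tau^b\colon F\to F)=\tau^{p-b}F \qquad\text{for } 0\le b\le p .
\]
This holds for $R$ itself, hence for any direct sum of copies of $R$. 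The only input about $C'$ is $\tau C'=0$.

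\emph{Step 1: Smith sequences.} For $1\le a\le p-1$ put $\rho=\tau^a$ and $\bar\rho=\tau^{p-a}$. I will show that
\[
 0\longrightarrow \bar\rho C\oplus C'\xrightarrow{\ (x,y)\mapsto x+y\ }C\xrightarrow{\ \rho\ }\rho C\longrightarrow0
\]
is a short exact sequence of differential modules.

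The maps commute with $\partial$ and $\rho$ is onto, so two things must be checked.
\begin{itemize}
 \item $\bar\rho C\cap C'=0$. If $\bar\rho c\in C'$, then $\tau^{p-a}\bar c=0$ in $F$. Hence $\bar c\in\tau^aF$ and $c=\tau^ac_1+c'$ with $c'\in C'$. Therefore $\bar\rho c=\tau^pc_1+\tau^{p-a}c'=0$, using $p-a\ge1$ and $\tau C'=0$.
 \item $\ker\rho=\bar\rho C+C'$. If $\rho c=0$, then $\tau^a\bar c=0$, so $\bar c\in\tau^{p-a}F$ and $c\in\bar\rho C+C'$. Conversely, $\rho$ kills $\bar\rho C$ because $\tau^p=0$, and kills $C'$ because $a\ge1$.
\end{itemize}

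\emph{Step 2: homology inequalities.} A short exact sequence $0\to A\to B\to Q\to0$ of differential modules gives a $3$-periodic exact triangle $H(A)\to H(B)\to H(Q)\to H(A)$. Exactness at $H(A)$ gives $h(A)\le h(B)+h(Q)$. Applying this to Step 1 and using $H(\bar\rho C\oplus C')=H(\bar\rho C)\oplus H(C')$ yields
\[
 h(\tau^{p-a}C)+h(C')\le h(C)+h(\tau^aC).
\]
Take $a=1$ and $a=p-1$, both allowed since $p\ge2$. Adding the two inequalities, the terms $h(\tau C)$ and $h(\tau^{p-1}C)$ appear once on each side and cancel. This leaves $2h(C')\le 2h(C)$, which is the claim.

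The only step needing care is the exactness check in Step 1. It is exactly where freeness of $C/C'$ and triviality of the action on $C'$ are used. Once it is verified for a general exponent $a$, the rest is formal.
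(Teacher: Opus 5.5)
Your proof is correct and is essentially the paper's argument: the paper uses the two Smith sequences for $\rho\in\{e,\mathsf n\}$, where $e=\sigma-1$ and $\mathsf n=1+\sigma+\cdots+\sigma^{p-1}=e^{p-1}$ in characteristic $p$, so these are your cases $a=1$ and $a=p-1$, and it adds the same two inequalities. The only difference is cosmetic: you verify $\ker\rho=\bar\rho C\oplus C'$ by passing to the quotient $C/C'$, whereas the paper first splits $C=C'\oplus F$ as $R$-modules using projectivity of $C/C'$.
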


For the cellular chains of a finite $G$-CW complex $X$ with $C'=C(X^G)$,
summed over all degrees, this is the Smith inequality
\cite{smith1938transformations,floyd1952on}.  The proof uses only that a
short exact sequence $0\to A\to B\to C\to0$ of differential modules
induces an exact triangle $H(A)\to H(B)\to H(C)\to H(A)$.

\begin{proof}
Let $\sigma$ generate $G$ and put $e=\sigma-1$, $\mathsf n=1+\sigma+\cdots+\sigma^{p-1}$,
so that $e\mathsf n=\mathsf n e=0$; on a free $R$-module $\ker e=\operatorname{im}\mathsf n$ and
$\ker \mathsf n=\operatorname{im}e$, and both $e$ and $\mathsf n$ kill $C'$.  Since $C/C'$
is free, hence projective, $C=C'\oplus F$ for some free submodule $F$, and
for $\rho\in\{e,\mathsf n\}$ with $\bar\rho$ the other element,
\[
    \rho C=\rho F,\qquad
    \ker(\rho\colon C\to C)=C'\oplus\bar\rho C .
\]
As $\partial$ is $R$-linear, $\rho C$ and $\bar\rho C$ are
$\partial$-invariant, and
\begin{equation}\label{eq:smith-sequence}
    0\longrightarrow C'\oplus\bar\rho C\longrightarrow C
    \xrightarrow{\;\rho\;}\rho C\longrightarrow0
\end{equation}
is a short exact sequence of differential modules.  Write $b=\dim H(C)$,
$c=\dim H(C')$, and $a^\rho=\dim H(\rho C)$.  Exactness of the triangle of
\eqref{eq:smith-sequence} at $H(C'\oplus\bar\rho C)=H(C')\oplus H(\bar\rho C)$
gives
\[
    c+a^{\bar\rho}
    =\operatorname{rank}\bigl(H(\rho C)\to H(C')\oplus H(\bar\rho C)\bigr)
    +\operatorname{rank}\bigl(H(C')\oplus H(\bar\rho C)\to H(C)\bigr)
    \le a^{\rho}+b .
\]
This holds for both $\rho=e$ and $\rho=\mathsf n$; adding the two inequalities,
$2c+a^e+a^{\mathsf n}\le a^e+a^{\mathsf n}+2b$, so $c\le b$.  (For $p=2$ one has $e=\mathsf n$ and a
single inequality suffices.)
\end{proof}

\begin{theorem}[Smith inequality for Floer homology]
\label{thm:floer-smith}
Let $(M,\omega=d\theta)$, $G=\Z_p$, and $L_0\pitchfork L_1$ satisfy the
hypotheses of Subsection~\ref{subsec:lagrangian-floer-setup} and
Assumptions~\ref{ass:floer-normal-triviality} and~\ref{ass:floer-pin}.  Then
\[
    \dim_{\F_p}HF(L_0,L_1;\F_p)
    \;\ge\;
    \dim_{\F_p}HF(L_0^G,L_1^G;\F_p),
\]
where the right side is $H(C^i\otimes\F_p)$, the homology of the fixed
complex with the orientations induced from $\widehat M$, which by
Lemma~\ref{lem:floer-fixed-strips} is the Floer homology of the fixed
pair for the induced relative Pin structures $Q_0,Q_1$ with respect to the
background bundle $E'$ of \eqref{eq:floer-fixed-background}.
\end{theorem}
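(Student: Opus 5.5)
The plan is to assemble the results already established; essentially no new analysis is needed. First reduce from $(L_0,L_1)$ to the suspended, straightened pair $(\widehat L_0,\widehat L_1)$ in $\widehat M$ with an adapted regular $J\in\mathcal J$. Lemma~\ref{lem:floer-suspension} identifies $CF(\widehat L_0,\widehat L_1)$ with $CF(L_0,L_1)$. Hamiltonian invariance under the compactly supported equivariant isotopies of Lemma~\ref{lem:floer-straightening} identifies the homology of the straightened pair with $HF(L_0,L_1)$. Both identifications hold over $\Z$, and after tensoring with $\F_p$ by the universal coefficient theorem, or by running continuation with $\F_p$ coefficients directly. The fixed pair is unchanged, since $\widehat M^G=M^G$, $\widehat L_i^G=L_i^G$, and the isotopies fix $\widehat M^G$ pointwise.

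Next fix $r\ge r(J)$ from Theorem~\ref{ass:floer-uniform-leading-bound} and set $N=rm$. Choose the transported sphere data of Proposition~\ref{prop:floer-transversality-outside}, and form $CF_{\mathrm{st}}(\widehat L_0,\widehat L_1)$. It is a differential module over $\Z[G]$ by Proposition~\ref{prop:floer-differential-squares-zero} and Lemma~\ref{lem:floer-module-structure}(1). Tensor with $\F_p$ to get $C:=CF_{\mathrm{st}}\otimes\F_p$, a finitely generated $R=\F_p[G]$-module. It is finitely generated because $\widehat L_0\cap\widehat L_1$ and $\crit(h_N)$ are finite. Theorem~\ref{thm:floer-F-quasi-isomorphism}, with coefficients $\F_p$, gives $H(C)\cong HF(\widehat L_0,\widehat L_1;\F_p)\cong HF(L_0,L_1;\F_p)$.

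Take $C':=C^i\otimes\F_p$. By Lemma~\ref{lem:floer-module-structure}(2) it is $\partial$-invariant, with trivial $G$-action. It is a subcomplex because the blocks $\D{i}{o}$ and $\D{i}{s}$ vanish. By Lemma~\ref{lem:floer-fixed-strips} its homology is $HF(L_0^G,L_1^G;\F_p)$ for the induced orientation data $(Q_0,Q_1,E')$. By Lemma~\ref{lem:floer-module-structure}(3), $C/C'\cong(C^o\oplus C^s)\otimes\F_p$ is a free $R$-module. This holds because outside points and the lifted critical points of $h_N$ lie in free orbits, and $S_N$ is a free $G$-space by Lemma~\ref{lem:real-irreps-of-Zp}. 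Lemma~\ref{lem:algebraic-smith} then gives $\dim H(C')\le\dim H(C)$, which is the claimed inequality.

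Nothing above is an obstacle at this stage, since the substantive work is in the cited results. The only points to check carefully are bookkeeping.
\begin{enumerate}
\item Freeness in Lemma~\ref{lem:algebraic-smith} is required of $C/C'$ only as a module, not as a complex. This is exactly what Lemma~\ref{lem:floer-module-structure}(3) provides.
\item The right-hand side must be read with the orientation system induced from $\widehat M$.
\item Base change to $\F_p$ commutes with the quasi-isomorphism $F$. This is the coefficient-ring clause of Theorem~\ref{thm:floer-F-quasi-isomorphism}.
\end{enumerate}
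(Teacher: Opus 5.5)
Your proposal is correct and follows the paper's proof. The paper likewise applies Lemma~\ref{lem:algebraic-smith} to $C=CF_{\mathrm{st}}(\widehat L_0,\widehat L_1)\otimes\F_p$ and $C'=C^i\otimes\F_p$, checks the hypotheses and identifies $H(C')$ via Lemma~\ref{lem:floer-module-structure}, and identifies $H(C)$ with $HF(L_0,L_1;\F_p)$ via Theorem~\ref{thm:floer-F-quasi-isomorphism} over $\F_p$ together with the suspension and Hamiltonian-invariance identifications.
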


\begin{proof}
Apply Lemma~\ref{lem:algebraic-smith} to the differential modules
\[
    C=CF_{\mathrm{st}}(\widehat L_0,\widehat L_1)\otimes\F_p,
    \qquad
    C'=C^i\otimes\F_p;
\]
Lemma~\ref{lem:floer-module-structure} verifies its hypotheses and
identifies $H(C')$ with the right side, and
Theorem~\ref{thm:floer-F-quasi-isomorphism}, tensored with $\F_p$,
identifies $H(C)$ with $HF(L_0,L_1;\F_p)$.
\end{proof}

\section{Periodic links and symplectic Khovanov homology}\label{sec:applications}

In this section we apply the localization theorem to $p$-periodic links,
generalizing Seidel--Smith \cite[Corollary~3]{seidel2010localization} from
$p=2$ to odd primes.  The section is organized as follows: we recall symplectic Khovanov homology, which is the Floer
homology of two matching Lagrangians in a nilpotent slice
(Subsection~\ref{subsec:matching-lagrangians}); we show that for a $p$-periodic link, there is
a $\Z_p$-action on the slice preserving both Lagrangians, whose fixed locus
and fixed Lagrangians are the corresponding data of the quotient link
(Subsection~\ref{subsec:cyclic-action}); we show that the normal bundle of the fixed locus
satisfies the stable triviality hypothesis and the Lagrangians carry the
required relative Pin data (Subsection~\ref{subsec:application-normal-data}); and
therefore Theorem~\ref{thm:floer-smith} gives the rank inequality
(Subsection~\ref{subsec:application-inequality}).

\subsection{The slice and matching Lagrangians}\label{subsec:matching-lagrangians}

We first give an overview of symplectic Khovanov homology. 
A crossingless matching $\frm$ of $2m$ points in
the plane is a collection of $m$ pairwise disjoint paths in the plane whose endpoints are the $2m$ points. 
Any link $K$ can be presented as the plat closure of a braid on $2m$ strands, or
equivalently by two crossingless matchings $\frm_0,\frm_1$ of $2m$ points in
the plane related by the braid. 
The crossingless matchings $\frm_0,\frm_1$ are allowed to intersect, and wherever there is an intersection, one should think of the points in $\frm_0$ as going under the points in $\frm_1$ so that the union of $\frm_0$ and $\frm_1$ forms the link $K$.
We will define a symplectic manifold $\cY_\tau$, which is a fiber of the adjoint quotient map restricted
to a nilpotent slice, and to each crossingless matching $\frm$, we associate a
Lagrangian $L_\frm$ in $\cY_\tau$.
The symplectic Khovanov homology of $K$ is defined to be
\[
    \Kh(K):=HF(L_{\frm_0},L_{\frm_1}).
\]
This subsection recalls the slice, its fibers, and the matching
Lagrangians, so that we can explain how to exhibit the periodic
symmetry as a $\Z_p$-action on these data in the remaining subsections.

Let $\frgl_{2m}=\frgl_{2m}(\mathbb{C})$ be the space of $2m \times 2m$ matrices.
The general linear group $GL(2m)$ acts on $\frgl_{2m}$ by conjugation (i.e., the adjoint action).
The adjoint quotient map\footnote{In Lie-theoretic language, the target is the Cartan subalgebra of $\frgl_{2m}$ quotiented by the Weyl group.}
\[
\chi_{\frgl}:\frgl_{2m} \to \mathbb{C}^{2m}
\]
sends a matrix $A \in \frgl_{2m}$ to the coefficients of its characteristic polynomial.
Let $\Sym^{2m}(\mathbb{C})=\mathbb{C}^{2m}/\Sym_{2m}$ be the $2m$-fold symmetric product.
We have an isomorphism
\begin{align}\label{eq:taut_sym}
\Sym^{2m}(\mathbb{C}) \simeq \mathbb{C}^{2m}
\end{align}
given by sending a tuple to its elementary symmetric polynomials.
We use $\Sym^{2m}(\mathbb{C})$ and $ \mathbb{C}^{2m}$ interchangeably by \eqref{eq:taut_sym}.
In particular, we also interpret $\chi_{\frgl}$ as sending a matrix to its eigenvalues.

Let $\Conf^{2m}(\mathbb{C})$ be the configuration space of $2m$ points in $\mathbb{C}$ (i.e., the subset of $ \Sym^{2m}(\mathbb{C})$ consisting of tuples with pairwise distinct entries).
It is the set of regular values of $\chi_{\frgl}$.

Consider the following affine slice of $\frgl_{2m}$.
\begin{align}\label{eq:S_m}
S_m=\left\{ A=\left(
\begin{array}{cccccc}
   A_1  &  I & 0\\
   A_2  &  0 & I\\
   \cdots & & &\cdots\\
   A_{m-1} & &  &  &I\\
   A_{m}& 0& \cdots &\cdots&0
\end{array}
\right) :  A_j \in \frgl_2 \text{ for all }j\right\}.
\end{align}
We sometimes denote $A$ by $(A_1,\dots,A_m)$.
Define $\chi:=\chi_{\frgl}|_{S_m}$.
For any $\tau \in \Sym^{2m}(\mathbb{C})$, we define
\[
\cY_{\tau}:= \chi^{-1}(\tau) = \chi^{-1}_{\frgl}(\tau) \cap S_m.
\]

Let $\alpha>m$ be a real number. Let $\varrho_i(z):=|z|^{\frac{2\alpha}{i}}$.
Let $\eta_i:\mathbb{C} \to \mathbb{R}$ be a compactly supported function such that $\psi_i:=\varrho_i+\eta_i$ is $C^{\infty}$ and $-dd^c \psi_i>0$ everywhere.
Let $\psi:S_m \to \mathbb{R}$ be the function given by summing $\psi_i(z)$ over all entries $z$ in $A_i$ and all $i=1,\dots,m$.
Then $\psi$ defines a K\"ahler potential with K\"ahler form $-dd^c \psi$ on $S_m$.
When $\tau \in \Conf^{2m}(\mathbb{C})$, $\cY_{\tau}$ is a smooth affine variety with the restricted K\"ahler form.

Given a path $\gamma:[0,1] \to \Conf^{2m}(\mathbb{C})$ and a relatively compact open set $U \subset \cY_{\gamma(0)}$, Seidel--Smith defined a rescaled parallel transport map \cite[Section~4]{seidel2006link}, which is a symplectic embedding
\[
h^{resc}_{\gamma}: U \to \cY_{\gamma(1)} .
\]
There are choices involved in the construction of $h^{resc}_{\gamma}$, and different choices are related by an exact isotopy.

Let $0<\mu_1< \dots <\mu_m$.
Let $\epsilon>0$ be a number smaller than $|\mu_i-\mu_j|/2$ for all $i \neq j$.
Let $D_{\epsilon}$ be the polydisk
\begin{align}\label{eq:disc}
D_{\epsilon}:=
\{(\mu_1 \pm \sqrt{\epsilon_1}, \dots, \mu_m \pm \sqrt{\epsilon_m}): \epsilon_i \in \mathbb{C}, |\epsilon_i| \le \epsilon^2 \text{ for all }i\} \subset \Sym^{2m}(\mathbb{C}).
\end{align}
There is a local isomorphism near the minimal adjoint orbit $\mathcal O^{min}$ (the unique fixed point) in $\chi^{-1}(D_{\epsilon})$
\begin{equation}\label{eq:local_iso}
	\begin{tikzcd}[row sep=large, column sep=large]
     \chi^{-1}(D_{\epsilon}) \arrow [d, "\chi"] \arrow[rrr, "\text{local isom near }\mathcal O^{min} \simeq 0"] & & & (\mathbb{C}^3)^m  \arrow [d, "f"]\\
     D_{\epsilon}  \arrow[rrr ] & & &\mathbb{C}^m_{\epsilon_1,\dots,\epsilon_m}
	\end{tikzcd}
\end{equation}
where $f(a_1,b_1,c_1,\dots,a_m,b_m,c_m)=(a_1^2+b_1^2+c_1^2, \dots, a_m^2+b_m^2+c_m^2)$.

Let $0<\delta<\epsilon^2$ and define $\tau_\delta=(\mu_1 \pm \sqrt{\delta},\dots, \mu_m\pm \sqrt{\delta}) \in D_{\epsilon}$.
For $i=1,\dots,m$, let $\gamma_{std,i}:[0,1] \to \mathbb{C}$ be the path $(\mu_i-\sqrt{\delta})+t(2\sqrt{\delta})$ so its two endpoints are $\mu_i \pm \sqrt{\delta}$.
Let $\frm_{std}$ be the crossingless matching $\{\gamma_{std,1},\dots,\gamma_{std,m}\}$ with endpoints on $\tau_{\delta}$, and call it the standard crossingless matching.
There is a Lagrangian submanifold $L_{\frm_{std}} \subset \chi^{-1}(\tau_\delta) =\cY_{\tau_\delta}$ which, under the isomorphism \eqref{eq:local_iso}, becomes
\[
\{a_i^2+b_i^2+c_i^2=\delta, a_i,b_i,c_i \in \mathbb{R}\} \subset f^{-1}(\delta,\dots,\delta).
\]
We call $L_{\frm_{std}}$ the standard Lagrangian.

Let $\tau \in \Conf^{2m}(\mathbb{C})$ and let $\frm$ be a crossingless matching with endpoints on $\tau$.
Then Seidel--Smith constructed a Lagrangian $L_{\frm}$ in $\cY_\tau$ as follows.
Write $\frm$ as $\{\gamma_1,\dots,\gamma_m\}$.
Consider the path $\Gamma: [0,1) \to \Conf^{2m}(\mathbb{C})$ given by $\Gamma(t)=\{\gamma_1(t/2), \gamma_1(1-t/2), \dots, \gamma_m(t/2), \gamma_m(1-t/2)\}$.
Let $\mu_i:=\gamma_i(1/2)$.
When $t$ is close to $1$, $\Gamma(t)$ lies in $D_{\epsilon}$ for our chosen $\mu_i$.
We can perturb $\gamma_i$ such that $\Gamma(t_0)=\tau_{\delta}$ for some small $\delta$ and $t_0$ close to $1$.
Let $h_\Gamma^{resc}$ be the rescaled parallel transport from $\cY_{\Gamma(t_0)}$ to $\cY_{\Gamma(0)}$ along $\Gamma(t_0-t)$.
We define
\[
L_\frm:=h_\Gamma^{resc}(L_{\frm_{std}}),
\]
which is well defined up to Hamiltonian isotopy.

For a link $K$, there is $m$ such that we can present $K$ as the union of crossingless matchings $\frm_0:=\frm_{std}$ and $\frm_1$ (in particular $\frm_0$ and $\frm_1$ have the same endpoints).
The symplectic Khovanov homology is defined to be
\[
\Kh(K):=HF(L_{\frm_0},L_{\frm_1}).
\]
It is well-defined and independent of the choices of presentation of $K$.

\subsection{The cyclic action and fixed locus}\label{subsec:cyclic-action}

Consider the $\mathbb{Z}_p$-action on $S_{pm}$ given by $\xi \cdot (A_1,\dots,A_{pm}) =(\xi A_1,\dots,\xi^{pm}A_{pm})$, or in matrix form,
\begin{align}\label{eq:S_action}
\xi \cdot A=\xi \cdot \left(
\begin{array}{cccccc}
   A_1  &  I & 0\\
   A_2  &  0 & I\\
   \cdots & & &\cdots\\
   A_{pm-1} & &  &  &I\\
   A_{pm}& 0& \cdots &\cdots&0
\end{array}
\right) := \left(
\begin{array}{cccccc}
 \xi  A_1  &  I & 0\\
 \xi^2  A_2  &  0 & I\\
   \cdots & & &\cdots\\
  \xi^{pm-1} A_{pm-1} & &  &  &I\\
  \xi^{pm} A_{pm}& 0& \cdots &\cdots&0
\end{array}
\right).
\end{align}
The fixed locus $S_{pm}^{G}$ consists of the elements with $A_j=0$ for all $j\not\equiv 0 \pmod p$.

Consider the standard $\mathbb{Z}_p$-action on $\mathbb{C}$ given by $\xi \cdot z=\xi z$, and the induced diagonal action on $\Sym^{2pm}(\mathbb{C})$.

\begin{lemma}
   Suppose that $\tau \in (\Sym^{2pm}(\mathbb{C}))^{G}$.  Then the $\mathbb{Z}_p$-action on $S_{pm}$ restricts to a $\mathbb{Z}_p$-action on $\cY_{\tau}$.
\end{lemma}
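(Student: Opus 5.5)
The plan is to show that the action preserves $S_{pm}$ and intertwines $\chi$ with the diagonal action on $\Sym^{2pm}(\C)$. That is, we will show that for $A\in S_{pm}$ the eigenvalue multiset of $\xi\cdot A$ is $\xi\cdot\chi(A)$. Granting this, if $\chi(A)=\tau$ with $\tau$ invariant, then $\chi(\xi\cdot A)=\xi\cdot\tau=\tau$, so $\xi\cdot A\in\cY_\tau$. Preservation of $S_{pm}$ is immediate from \eqref{eq:S_action}, and the action is a group action since $\xi^{pm\cdot j}$-type exponents compose multiplicatively.

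\emph{Conjugation step.} Let $D=\operatorname{diag}(I,\xi I,\xi^2I,\dots,\xi^{pm-1}I)$ in $2\times2$ blocks. The block $(i,j)$ of $DAD^{-1}$ is $\xi^{i-j}A_{ij}$. The first block column entries $A_i$ sit at $(i,1)$ and get multiplied by $\xi^{i-1}$. The superdiagonal identities sit at $(i,i+1)$ and get multiplied by $\xi^{-1}$. Therefore
\[
\xi^{-1}\,\bigl(\xi\cdot A\bigr)
\]
has first column blocks $\xi^{i-1}A_i$ and identities on the superdiagonal, which is exactly $DAD^{-1}$. Hence $\xi\cdot A=\xi\, DAD^{-1}$, so its eigenvalues are $\xi$ times those of $A$. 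This gives $\chi(\xi\cdot A)=\xi\cdot\chi(A)$.

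Equivalently, one can argue with the characteristic polynomial. For a companion-type block matrix, $\det(\lambda-A)$ is $\det\bigl(\lambda^{pm}-\sum_i\lambda^{pm-i}A_i\bigr)$ up to the usual block identity. Substituting $A_i\mapsto\xi^iA_i$ and $\lambda\mapsto\xi\lambda$ multiplies the whole expression by $\xi^{2pm}$, which gives the same conclusion. I would use the conjugation argument as the primary one, since it avoids the noncommutative block-determinant formula.

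The only point needing care is the bookkeeping of the exponents $\xi^{i-j}$ for the first-column and superdiagonal blocks; there is no real obstacle. I will also note that the restricted action is by biholomorphisms of $\cY_\tau$, being linear. Whether it preserves the Kähler form $-dd^c\psi$ needs checking: each $\psi_i$ depends only on $|z|$ plus compactly supported $\eta_i$, so it is invariant provided $\eta_i$ is chosen radial, which can be arranged by averaging. This is not part of the lemma, though.
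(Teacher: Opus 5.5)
Your argument is correct, but your primary route differs from the paper's. The paper's proof is the one in your ``equivalently'' paragraph. It writes the characteristic polynomial of $A\in S_{pm}$ as $\det(t^{pm}-A_1t^{pm-1}-\dots-A_{pm})$, asserting the block-companion determinant formula without proof, and rescales $t\mapsto\xi^{-1}t$, where the scalar factor $\xi^{2pm}$ equals $1$.

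Your conjugation by $D=\operatorname{diag}(I,\xi I,\dots,\xi^{pm-1}I)$ gives $\xi\cdot A=\xi\,DAD^{-1}$. This avoids the noncommutative block-determinant identity entirely. It also proves more: $\xi\cdot A$ is similar to $\xi A$, so the action preserves the whole conjugacy type, not just the eigenvalue multiset. The paper's route has a different advantage: it sets up the characteristic-polynomial formula that the paper reuses in Lemma~\ref{l:fixed set symp} to identify the fixed locus.

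One bookkeeping slip: $\xi^{-1}(\xi\cdot A)$ has $\xi^{-1}I$ on the superdiagonal, not $I$. That is exactly the superdiagonal of $DAD^{-1}$ you computed a line earlier, so the identity $\xi\cdot A=\xi\,DAD^{-1}$ and the conclusion $\chi(\xi\cdot A)=\xi\cdot\chi(A)$ stand.
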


\begin{proof}
    The characteristic polynomial of $A \in S_{pm}$ is
    \[
    \det(t^{pm}-A_1 t^{pm-1}-A_2 t^{pm-2} -\dots-A_{pm}).
    \]
    Therefore, the characteristic polynomial of $\xi \cdot A$ is
    \begin{align*}
    &\det(t^{pm}-\xi A_1 t^{pm-1}-\xi^2 A_2 t^{pm-2} -\dots-\xi^{pm}A_{pm})\\
    =&\det((\xi^{-1} t)^{pm}- A_1 (\xi^{-1} t)^{pm-1} -\dots-A_{pm}).
    \end{align*}
    Since $\tau$ is chosen to be $\mathbb{Z}_p$-invariant, if $A \in \chi^{-1}(\tau)$, then $\xi \cdot A \in \chi^{-1}(\tau)$.
\end{proof}

We now verify the standing hypotheses of Subsection~\ref{subsec:floer-setup}.
The K\"ahler form is $\omega=-dd^c\Psi$ for the potential $\Psi$ built from the
$\psi_i=\varrho_i+\eta_i$ above; the radial part $\varrho_i$ is invariant
under $z\mapsto\xi^jz$, and we choose the compactly supported $\eta_i$
radial as well, so that $\Psi$, hence the primitive $\theta=-d^c\Psi$, is
$\Z_p$-invariant: the action is by $\theta$-preserving symplectomorphisms.
Convexity at infinity holds by the construction of the rescaled metric in
\cite[Section~5]{seidel2010localization}.  The matching Lagrangians are
compact, diffeomorphic to $(S^2)^{pm}$, so there is no intersection along the
ends; they are exact since $H^1((S^2)^{pm};\R)=0$.  Transversality of
$L_{\frm_0}$ and $L_{\frm_1}$ is achieved by an equivariant Hamiltonian
perturbation of $L_{\frm_1}$: a generic $\Z_p$-invariant Hamiltonian makes
the fixed pair transverse inside $\cY_\tau^G$, and at an invariant
intersection point transversality in the normal directions is a condition
on the invariant symmetric part of its Hessian, which is also generic; away
from the fixed locus the action is free and ordinary genericity applies.
By construction, the symplectic form $\omega_{S_{pm}}$ on $S_{pm}$ is $\mathbb{Z}_p$-equivariant.
Therefore, the rescaled parallel transport is $\mathbb{Z}_p$-equivariant.
Moreover, for any equivariant crossingless matching $\frm$ (in the sense that $\xi \cdot \frm=\frm$), the crossingless matching Lagrangian $L_{\frm}$ is $\mathbb{Z}_p$-equivariant.

Now, we analyze the fixed locus.
The $p$-th power map defines an isomorphism
\[
(\Sym^{2pm}(\mathbb{C}))^{G} \simeq \Sym^{2m}(\mathbb{C}),
\]
so for $\tau \in (\Sym^{2pm}(\mathbb{C}))^{G}$, we denote its image under this isomorphism by $\tau^p \in \Sym^{2m}(\mathbb{C})$.

\begin{lemma}\label{l:fixed set symp}
    The fixed locus $\cY_{\tau}^{G}$ is isomorphic to $\cY_{\tau^p}$.
\end{lemma}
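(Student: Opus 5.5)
The plan is to write down an explicit isomorphism of slices and then check that it intertwines the characteristic polynomial maps. By the description after \eqref{eq:S_action}, a point of $S_{pm}^G$ is a tuple $(A_1,\dots,A_{pm})$ with $A_j=0$ unless $p\mid j$. Define
\[
\Phi\colon S_{pm}^G\to S_m,\qquad \Phi(A_1,\dots,A_{pm})=(A_p,A_{2p},\dots,A_{pm}),
\]
which is a linear isomorphism of affine spaces.

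The key computation uses the formula for the characteristic polynomial from the proof of the preceding lemma. For $A\in S_{pm}^G$,
\[
\det\bigl(t^{pm}-A_pt^{pm-p}-A_{2p}t^{pm-2p}-\cdots-A_{pm}\bigr)=P(t^p),
\qquad
P(s):=\det\bigl(s^m-A_ps^{m-1}-\cdots-A_{pm}\bigr),
\]
and $P$ is exactly the characteristic polynomial of $\Phi(A)\in S_m$. The same block-companion determinant identity, $\det(t^{k}-\sum B_jt^{k-j})$ for matrices in $S_k$, applies on both sides.

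Next I identify the roots. The roots of $P(t^p)$ are the $p$-th roots of the roots of $P$. Hence, if $\chi(\Phi(A))=\{\beta_1,\dots,\beta_{2m}\}$, then $\chi(A)$ is the $G$-invariant multiset of all $p$-th roots of the $\beta_i$. Conversely, the invariant multiset $\tau$ is sent by the $p$-th power map to $\tau^p$, and this is precisely the isomorphism $(\Sym^{2pm}\C)^G\simeq\Sym^{2m}\C$ used in the statement. Therefore $\chi(A)=\tau$ if and only if $\chi(\Phi(A))=\tau^p$.

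One small point needs care here. A $G$-invariant multiset is determined by the image of its $p$-th powers: for nonzero entries, orbits have size $p$; for the root $0$, the multiplicities must be handled so that the map is well defined. Since the paper already asserts this isomorphism, I will invoke it rather than reprove it.

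Combining the two steps, $\Phi$ restricts to a bijection $\cY_\tau^G=\cY_\tau\cap S_{pm}^G\to\cY_{\tau^p}$. It is an isomorphism of affine varieties, being the restriction of a linear isomorphism that carries one fiber onto the other.

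I expect the main obstacle to be the interpretation of ``isomorphic''. If the statement means only an isomorphism of varieties, the argument above finishes the proof. If it is also meant symplectically, the Kähler forms do not match under $\Phi$: the potential on $S_{pm}$ restricted to $S_{pm}^G$ involves $\psi_{kp}$ on the entries of $A_{kp}$, whereas $S_m$ uses $\psi_k$. In that case I would interpolate linearly between the two invariant Kähler potentials. The restrictions of both are plurisubharmonic with the same type of growth at infinity, so a Moser-type argument gives an exact symplectomorphism on compact pieces, compatible with rescaled parallel transport, up to Hamiltonian isotopy. This suffices for Floer-theoretic purposes; I would state it as a remark rather than as part of the lemma.
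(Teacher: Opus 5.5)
Your proposal is correct and is essentially the paper's proof: your $\Phi$ is the inverse of the paper's map $f(A_1,\dots,A_m)=(0,\dots,0,A_1,\dots,0,A_m)$. Both arguments rest on the same identity, that the characteristic polynomial of a point of $S_{pm}^G$ is $P(t^p)$, where $P$ is the characteristic polynomial of the compressed tuple, so the eigenvalues correspond under the identification $(\Sym^{2pm}\C)^G\simeq\Sym^{2m}\C$. The paper proves only the identification as varieties, so your closing symplectic remark goes beyond it. The mismatch you describe can even be avoided outright: use the exponent $p\alpha$ on $S_{pm}$, since $|z|^{2p\alpha/(kp)}=|z|^{2\alpha/k}$, and match the $\eta$'s. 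Then the restricted potential agrees with the one on $S_m$ up to a constant.
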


\begin{proof}
    We define a map $f:\cY_{\tau^p} \to S_{pm}^{G}$ given by
    \[
    f(A)=f(A_1, \dots, A_m) =(0, \dots,0, A_1, 0, \dots,0,A_m),
    \]
    where there are $p-1$ zeros in front of each $A_j$.
    The characteristic polynomial of $f(A)$ is
    \begin{align}\label{eq:char_p}
    \det(t^{pm}-A_1 t^{p(m-1)}-\dots -A_m)=\det((t^p)^m-A_1 (t^p)^{(m-1)}-\dots -A_m).
    \end{align}
    Therefore, the eigenvalues of $f(A)$ are exactly the $p$-th roots of the eigenvalues of $A$.
    Since the $p$-th roots of the entries of $\tau^p$ are exactly $\tau$, we have $f(A) \in \cY_{\tau}^{G}$ for all $A \in \cY_{\tau^p}$.
    Conversely, if $A \in \cY_{\tau}^{G}$, then its eigenvalues are given by $\tau$ and the characteristic polynomial is of the form \eqref{eq:char_p}, so $A$ is in the image of $f$.
\end{proof}

\begin{lemma}\label{l:fixed matching}
Under the identification of Lemma~\ref{l:fixed set symp}, the fixed part
$L_{\frm}^{G}\subset\cY_\tau^{G}$ of the matching Lagrangian of an
equivariant matching $\frm$ is Hamiltonian isotopic to the matching
Lagrangian $L_{\frm^p}\subset\cY_{\tau^p}$ of the quotient matching
$\frm^p$, where $\frm^p$ consists of the $p$-th powers of the matching paths in $\frm$.
\end{lemma}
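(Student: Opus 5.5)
The plan is to show that the whole Seidel--Smith construction of $L_{\frm}$ is compatible with the $G$-action. We then restrict each ingredient to fixed loci and compare the result, under the map $f$ of Lemma~\ref{l:fixed set symp}, with the same ingredient for $\frm^p$ on $S_m$. There are three ingredients: the path $\Gamma$, the local model near $\mathcal O^{min}$, and the rescaled parallel transport.

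\emph{The path.} Since $\frm$ is equivariant, we can choose the path $\Gamma$ in $(\Sym^{2pm}(\C))^G$. We use $\Gamma(t)$ built from $\gamma_i(t/2),\gamma_i(1-t/2)$, and pick the collapsing points $\mu_i$ in $G$-orbits. Applying the $p$-th power map then gives the path $\Gamma^p$ that collapses $\frm^p$. The collapsing points of $\frm^p$ are the $\mu_i^p$. Here we use that the matching paths of an equivariant matching avoid $0$ (or we treat a path through the origin separately) and that $\frm^p$ is again a crossingless matching.

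\emph{The fixed locus of a family is the family of fixed loci.} The map $f$ is defined on all of $S_m$ and intertwines $\chi$ on $S_m$ with $\chi$ on $S_{pm}^G$ via $\tau\mapsto\tau^p$. So $f$ identifies the family $\chi^{-1}(\Gamma^p)\to\Gamma^p$ with $\chi^{-1}(\Gamma)^G\to\Gamma$. The rescaled parallel transport $h^{resc}_\Gamma$ is equivariant, so it preserves fixed loci. Its restriction to $\cY^G$ is the parallel transport defined by the restricted symplectic connection, since the horizontal lift of an invariant vector field is invariant and hence tangent to the fixed locus. This agrees with the Seidel--Smith rescaled transport along $\Gamma^p$ for the pulled-back Kähler form $f^*\omega$, up to a change of the Kähler data. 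The form $f^*\omega$ is not the form $-dd^c\psi$ on $S_m$: the weights $|z|^{2\alpha/i}$ are reindexed, with $i\mapsto pi$. It is, however, an exhaustive Kähler form of the same type. Seidel--Smith's construction is independent of such choices up to exact isotopy, so the two transports differ by a Hamiltonian isotopy. I would deduce this by interpolating the potentials linearly and using the fact that convexity estimates are preserved.

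\emph{The local model and the standard Lagrangian.} Near a fixed $\mathcal O^{min}$ for $\tau_\delta$, the equivariant local isomorphism \eqref{eq:local_iso} should restrict to the local isomorphism for $S_m$ at the $p$-th powers. The standard vanishing sphere products $\{a_i^2+b_i^2+c_i^2=\delta\}$ have fixed part equal to the product of the spheres attached to the invariant $G$-orbits of matching arcs. These are matched with the spheres for the arcs of $\frm^p$.

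\emph{The main obstacle.} The real difficulty is this local step. The $G$-action permutes the arcs of $\frm$ in free orbits, or fixes an arc through the origin, so the fixed part of a product of spheres permuted cyclically is a diagonal sphere. One must check that this diagonal is the vanishing cycle of $\frm^p$ and not a rescaled or twisted copy. I would handle it by writing the vanishing cycle as the Lefschetz thimble of the invariant degeneration. Thimbles of the fixed family are the fixed parts of equivariant thimbles, because the gradient flow of $\mathrm{Re}$ of an invariant function preserves the fixed locus. I would then apply uniqueness of Lagrangian spheres obtained as vanishing cycles, up to Hamiltonian isotopy. Composing these identifications with the transport step gives $L_\frm^G\simeq L_{\frm^p}$ up to Hamiltonian isotopy.
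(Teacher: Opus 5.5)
Your proposal is correct and follows the same route as the paper, which defers to the argument of Seidel--Smith's Lemma~37: the fixed part of the standard product of vanishing spheres is the standard Lagrangian of the quotient configuration, and equivariant rescaled parallel transport along the invariant collapsing path restricts on the fixed locus (identified via $f$) to rescaled transport along the $p$-th power path. Two of your hedges can be dropped for odd $p$: no arc of an equivariant matching is $G$-invariant or passes through $0$, and $f^*\psi$ is, up to a constant, exactly a potential of the same form on $S_m$ with $\alpha$ replaced by $\alpha/p>m$, so no interpolation of K\"ahler data is needed.
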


\begin{proof}
The arguments in \cite[Lemma~37]{seidel2010localization} can be applied here.
%The fixed part of the Lagrangian sphere $L$ over the configuration
%$\tau$ is the Lagrangian sphere of the quotient configuration, and the
%rescaled parallel transport $h_\gamma^{resc}$ along an equivariant path
%$\gamma$ restricts on the fixed locus to the rescaled parallel transport along
%the quotient path, which produces $L_{\frm^p}$ up to Hamiltonian isotopy.
\end{proof}

\subsection{Normal trivializations and Pin structures}\label{subsec:application-normal-data}

Fix a $\tau \in (\Sym^{2pm}(\mathbb{C}))^{G}$.
We have a short exact sequence of bundles
\begin{align}\label{eq:bdl_ses}
    0 \to T\cY_{\tau} \to TS_{pm}|_{\cY_{\tau}}=\ul{\mathbb{C}}^{4pm} \to T_{\tau}\Sym^{2pm}(\mathbb{C})=\ul{\mathbb{C}}^{2pm} \to 0
\end{align}
where $\ul{\mathbb{C}}^r$ refers to the trivial rank-$r$ bundle, the second arrow is induced by the inclusion $j:\cY_{\tau} \to S_{pm}$, and the third arrow is $d\chi$.
Therefore, $T\cY_{\tau}$ is stably trivial as a complex vector bundle.
Moreover, since $\chi$ is $\mathbb{Z}_p$-invariant, the exact sequence is equivariant with respect to the $\mathbb{Z}_p$-action.
In particular, it restricts to an exact sequence over $\cY_{\tau}^{G}$:
\[
0 \to T\cY_{\tau}|_{\cY_{\tau}^{G}} \to TS_{pm}|_{\cY_{\tau}^{G}}=\ul{\mathbb{C}}^{4pm} \to T_{\tau}\Sym^{2pm}(\mathbb{C})=\ul{\mathbb{C}}^{2pm} \to 0.
\]
For each irreducible real representation $\lambda$ of $\mathbb{Z}/p\mathbb{Z}$, we can take its isotypic component and obtain the following induced short exact sequence:
\begin{align}\label{eq:bdl_ses_perp}
    0 \to (T\cY_{\tau}|_{\cY_{\tau}^{G}})^{\lambda} \to (TS_{pm}^{\perp}|_{\cY_{\tau}^{G}})^{\lambda} \to (T_{\tau}\Sym^{2pm}(\mathbb{C}))^{\lambda}  \to 0.
\end{align}
The middle and right-hand bundles in \eqref{eq:bdl_ses_perp} are trivial:
they are the isotypic subbundles of the constant representations defining
the ambient tangent bundle and the tangent space of the base.

\begin{corollary}
    The isotypic components of the normal bundle of $\cY^{G}_{\tau}$ in $\cY_{\tau}$ are stably trivial as complex vector bundles.
\end{corollary}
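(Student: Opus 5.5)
The plan is to read off the corollary from the isotypic sequence \eqref{eq:bdl_ses_perp}, after first relating the normal bundle $\nu$ of $\cY_\tau^G$ in $\cY_\tau$ to the nontrivial isotypic parts of $T\cY_\tau|_{\cY_\tau^G}$. At a fixed point the tangent space splits as the invariant part plus the nontrivial isotypic parts. The invariant part is $T\cY_\tau^G$, since the fixed locus of a finite group action is a smooth submanifold whose tangent space is the invariant subspace. Hence
\[
\nu\cong\bigoplus_{\lambda\ne 1}(T\cY_\tau|_{\cY_\tau^G})^{\lambda},
\]
and the isotypic components of $\nu$ are exactly the bundles $(T\cY_\tau|_{\cY_\tau^G})^\lambda$ for nontrivial $\lambda$. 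The complex structure on each of these is the group complex structure of Definition~\ref{def:group-complex-structure}. By Lemma~\ref{lem:group-complex-structure} it agrees, up to conjugation on each summand, with the restriction of the ambient complex structure, because the action on $S_{pm}$ is complex linear. Stable triviality for one of these complex structures is then equivalent to stable triviality for the other, since conjugating a trivial bundle gives a trivial bundle.

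Next I will justify that the isotypic decomposition of the restricted sequence is again a short exact sequence of complex bundles over $\cY_\tau^G$. The $G$-action covers the identity on $\cY_\tau^G$, so $G$ acts fiberwise linearly on all three bundles. The projection onto the $\lambda$-isotypic part, $\frac1p\sum_g\bar\chi_\lambda(g)g$ (suitably realified), is a bundle endomorphism of constant rank. Equivariant maps preserve isotypic parts, and the isotypic functor is exact over a field of characteristic zero. So \eqref{eq:bdl_ses_perp} is exact, and its terms are genuine subbundles.

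The key step is that the middle and right-hand terms are trivial. The action on $S_{pm}$ is linear, given by scalars $\xi^j$ on the entries of $A_j$, and independent of the base point. So $TS_{pm}|_{\cY_\tau^G}=\cY_\tau^G\times\C^{4pm}$ with a constant representation, and its isotypic parts are constant subspaces. Likewise $T_\tau\Sym^{2pm}(\C)$ is a single vector space: the third map $d\chi$ is evaluated at the fixed point $\tau$ for all points of $\cY_\tau$, and $\tau$ is $G$-invariant, so the target representation is constant. Its isotypic part is therefore a trivial bundle.

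Finally, every short exact sequence of complex vector bundles over a paracompact base splits, for instance by choosing an invariant Hermitian metric and taking orthogonal complements. Splitting \eqref{eq:bdl_ses_perp} gives
\[
(T\cY_\tau|_{\cY_\tau^G})^\lambda\oplus\ul{\C}^{a_\lambda}\cong\ul{\C}^{b_\lambda},
\]
which is the claimed stable triviality for each nontrivial $\lambda$. I expect the main obstacle to be bookkeeping rather than depth. The delicate points are that the base term is genuinely a constant representation, and that the complex structure used on $\nu$ matches the one needed in Assumption~\ref{ass:floer-normal-triviality}; both are handled by the remarks above.
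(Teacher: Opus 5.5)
Your argument is essentially the paper's. You identify $\nu_\lambda$ with $(T\cY_\tau|_{\cY_\tau^G})^\lambda$ for nontrivial $\lambda$, then split \eqref{eq:bdl_ses_perp} with an invariant Hermitian metric, using that the middle and right-hand terms are constant representations. Your extra checks (exactness of the isotypic projections, constancy of those two terms) are correct and make explicit what the paper leaves implicit.

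The one thing to fix is your aside on complex structures.
\begin{itemize}
\item For the ambient structure $i$, the real isotypic component splits into complex eigenbundles, $\nu_\lambda=\nu_{\xi^k}\oplus\nu_{\xi^{-k}}$.
\item $J_G$ is $i$ on the first summand and $-i$ on the second, so $(\nu_\lambda,J_G)=\nu_{\xi^k}\oplus\overline{\nu_{\xi^{-k}}}$.
\item Conjugating only one summand does not preserve stable triviality. Over $S^2$, $\mathcal{O}(1)\oplus\mathcal{O}(-1)$ is trivial, but $\mathcal{O}(1)\oplus\overline{\mathcal{O}(-1)}\cong\mathcal{O}(1)\oplus\mathcal{O}(1)$ has $c_1=2$.
\end{itemize}
So the equivalence you claim is false in general.

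It does no harm here. The corollary, as the paper proves it, concerns the ambient structure. If you want the statement for $J_G$, you get it directly. The splitting by an invariant metric is $G$-equivariant, so $\nu_\lambda\oplus\ul{\C}^{a_\lambda}\cong\ul{\C}^{b_\lambda}$ is an equivariant isomorphism whose trivial summands carry constant representations. It is therefore $J_G$-linear by Lemma~\ref{lem:group-complex-structure}. Alternatively, run your argument on each eigenbundle $\nu_{\xi^{j}}$ separately; each is then stably trivial, and conjugation no longer matters.
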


\begin{proof}
Along $\cY_\tau^G$, there is a $G$-equivariant splitting
$$ T\cY_\tau|_{\cY_\tau^G} \cong T\cY_\tau^G\oplus \nu, $$
where $\nu=\nu_{\cY_\tau^G/\cY_\tau}$ is the normal bundle. Hence, for every nontrivial irreducible representation $\lambda$,
$$ \nu_\lambda=(T\cY_\tau|_{\cY_\tau^G})^\lambda. $$
We can choose an equivariant Hermitian metric to split the short exact sequence \eqref{eq:bdl_ses_perp} so we have 
\[\nu_\lambda \oplus (T_{\tau}\Sym^{2pm}(\mathbb{C}))^{\lambda}=(TS_{pm}^{\perp}|_{\cY_{\tau}^{G}})^{\lambda}\] and hence $\nu_\lambda$ is stably trivial.

%    The normal bundle of $\cY_{\tau}^{G}$ in $\cY_{\tau}$ is isomorphic to $(T\cY_{\tau}^{G})^{\perp}$, which by \eqref{eq:bdl_ses_perp} is stably trivial.
\end{proof}

We want to express $(T_{\tau}\Sym^{2pm}(\mathbb{C}))^{\lambda}$ and $(TS_{pm}^{\perp}|_{\cY_{\tau}^{G}})^{\lambda}$ as cotangent bundles of real $G$-representations.
To do this, we consider the irreducible representation $\rho_k \simeq \mathbb{R}^2$ such that $\sigma$ acts by
\[
 \left(
\begin{array}{cccccc}
   \cos(\frac{2\pi k}{p})  &  -\sin(\frac{2\pi k}{p}) \\
   \sin(\frac{2\pi k}{p})  &  \cos(\frac{2\pi k}{p}) 
\end{array}
\right).
\]
The induced action on $T^*\rho_k$ is given by $\sigma \cdot (q,p)=(\sigma q,\sigma^{-T} p)=(\sigma q,\sigma p)$.
Denote the canonical symplectic form by $\omega_{can}=dq_1 \wedge dp_1+dq_2 \wedge dp_2$.
On the other hand, let $\mathbb{C}_z \oplus \mathbb{C}_w$, with $z=x+iy$ and $w=u+iv$, be equipped with the standard symplectic form
$\omega_{std}:=dx \wedge dy+du \wedge dv$ and the $G$-action $\sigma \cdot (z,w)=(e^{\frac{2\pi i k}{p}}z, e^{\frac{-2\pi i k}{p}}w)$. One can check that the map $\mathbb{C}_z \oplus \mathbb{C}_w \to T^*\rho_k $ given by
\[
(z,w) \mapsto (q_1,q_2,p_1,p_2)=\left(\operatorname{Re}\left(\frac{z+\bar{w}}{\sqrt{2}}\right),\operatorname{Im}\left(\frac{z+\bar{w}}{\sqrt{2}}\right), \operatorname{Re}\left(-\frac{i(z-\bar{w})}{\sqrt{2}}\right), \operatorname{Im}\left(-\frac{i(z-\bar{w})}{\sqrt{2}}\right)\right)
\]
is a $G$-equivariant symplectomorphism.

By taking the direct sum of these isomorphisms, we obtain the following:

\begin{lemma}\label{l:CtoR}
    Let $W$ be a complex $G$-representation without invariant vectors such that for all $k$, the multiplicities of the eigenvalues $e^{\frac{2\pi i k}{p}}$ and $e^{\frac{-2\pi i k}{p}}$ of the $\sigma$-action are the same. Then the standard product symplectic form on $W$ with respect to the product decomposition into irreducible representations is $G$-equivariantly symplectomorphic to the cotangent bundle of a real $G$-representation.   
\end{lemma}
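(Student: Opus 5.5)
The plan is to reduce Lemma~\ref{l:CtoR} to the rank-one computation carried out just before its statement, using the isotypic decomposition of $W$. Since $W$ has no invariant vectors, $\sigma$ is diagonalizable with eigenvalues $\zeta^{\pm k}$, $1\le k\le (p-1)/2$ (no eigenvalue equals $1$). We decompose $W=\bigoplus_k (W_{\zeta^k}\oplus W_{\zeta^{-k}})$ into eigenspaces and fix a $G$-invariant Hermitian inner product for which this decomposition is orthogonal. The ``standard product symplectic form'' is the sum of the standard forms $\omega_{std}$ on the complex lines of a decomposition into irreducibles. Such a decomposition can be chosen inside the eigenspaces, since each complex irreducible is a line spanned by an eigenvector.

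By hypothesis $\dim_\C W_{\zeta^k}=\dim_\C W_{\zeta^{-k}}=:n_k$. Choose unitary bases $z_1,\dots,z_{n_k}$ of $W_{\zeta^k}$ and $w_1,\dots,w_{n_k}$ of $W_{\zeta^{-k}}$ compatible with the given line decomposition, and pair $z_j$ with $w_j$. This exhibits $W$ as a $G$-equivariant symplectic direct sum of copies of $\C_z\oplus\C_w$, each carrying the action $(z,w)\mapsto(\zeta^kz,\zeta^{-k}w)$ and the form $\omega_{std}$. If the lines of the given decomposition are not adapted to such bases, we first note that $\omega_{std}$ is the imaginary part of the Hermitian form. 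Any two unitary bases of an eigenspace differ by an equivariant unitary map, which is therefore an equivariant symplectomorphism, so we may replace the given decomposition by an adapted one without changing the symplectic structure up to equivariant symplectomorphism.

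On each pair we apply the explicit map $\C_z\oplus\C_w\to T^*\rho_k$ displayed before the lemma, which is a $G$-equivariant symplectomorphism onto $(T^*\rho_k,\omega_{can})$ with the cotangent-lifted action. A direct sum of cotangent bundles is the cotangent bundle of the direct sum, $\bigoplus T^*\rho_k^{\oplus n_k}=T^*\bigl(\bigoplus_k\rho_k^{\oplus n_k}\bigr)$, with product canonical form and diagonal cotangent-lifted action. Taking the direct sum of the pairwise maps therefore gives the required equivariant symplectomorphism $W\cong T^*\R^n$, where $\R^n=\bigoplus_k\rho_k^{\oplus n_k}$.

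The only real work is the rank-one claim that the displayed map is symplectic and equivariant. I would check equivariance from $z+\bar w\mapsto \zeta^k(z+\bar w)$ and $z-\bar w\mapsto\zeta^k(z-\bar w)$: multiplication by $\zeta^k$ on $q_1+iq_2$ and on $p_1+ip_2$ is exactly the rotation $\sigma$ on $q$ and on $p=\sigma^{-T}p$. The symplectic property follows by computing $dq\wedge dp$ in real coordinates. This step is routine, so I expect no genuine obstacle beyond bookkeeping the pairing of $\zeta^k$ with $\zeta^{-k}$ eigenlines and fixing the sign convention, replacing $w$ by $\bar w$ if needed.
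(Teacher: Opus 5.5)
Your proposal is correct and follows the paper's argument. The paper proves the lemma by pairing the $\zeta^{k}$- and $\zeta^{-k}$-lines and taking the direct sum of the explicit rank-one equivariant symplectomorphisms $\C_z\oplus\C_w\to T^*\rho_k$ displayed before the statement; your checks of equivariance and of the $dq\wedge dp$ computation are right. The step where you change to unitary bases is harmless but unnecessary: every irreducible summand of the given decomposition is already an eigenline with its own coordinate, so you can pair the lines directly.
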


\begin{corollary}\label{c:symp_trivial}
    For each irreducible real representation $\lambda$ of $G$, the corresponding isotypic components 
$(T_{\tau}\Sym^{2pm}(\mathbb{C}))^{\lambda}$ and $(TS_{pm}^{\perp}|_{\cY_{\tau}^{G}})^{\lambda}$
    are $G$-equivariantly symplectomorphic to a cotangent bundle of a $G$-representation. Therefore, we have a $G$-equivariant isomorphism of symplectic vector bundles
    \[
    \nu_\lambda \oplus T^*\mathbb{R}^{d_1}=T^*\mathbb{R}^{d_2}
    \]
    for some $G$-representations $\mathbb{R}^{d_1}$ and $\mathbb{R}^{d_2}$.
\end{corollary}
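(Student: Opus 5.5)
The plan is to apply Lemma~\ref{l:CtoR} to each of the two constant representations, and then to combine the result with the split short exact sequence \eqref{eq:bdl_ses_perp} exactly as in the preceding corollary.

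\emph{Step 1: the hypothesis of Lemma~\ref{l:CtoR}.}  The spaces $T_\tau\Sym^{2pm}(\C)\cong\C^{2pm}$ and $TS_{pm}\cong\C^{4pm}$ are complex $G$-representations, and $G$ acts on them complex linearly, since the action on $S_{pm}$ and on the base is by complex scalars on coordinates.  Their isotypic parts for a nontrivial real irreducible $\lambda=\rho_k$ are the sums of the $\zeta^{k}$- and $\zeta^{-k}$-eigenspaces of $\sigma$.  I would check that these two eigenvalues occur with equal multiplicity.

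\begin{itemize}
\item[] For $S_{pm}$, the coordinates of $A_j$ have weight $\xi^j$.  For $j$ ranging over $1,\dots,pm$, each residue class mod $p$ occurs exactly $m$ times, and each class contributes $4$ complex coordinates.  So the multiplicities of $\zeta^k$ and $\zeta^{-k}=\zeta^{p-k}$ are both $4m$.
\item[] For the base, the coordinate $e_j$ (the $j$-th elementary symmetric function) has weight $\xi^j$ for $j=1,\dots,2pm$.  Again each residue class occurs equally often, namely $2m$ times.
\end{itemize}

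Lemma~\ref{l:CtoR} then identifies $(T_\tau\Sym^{2pm})^\lambda$ and $(TS_{pm}|_{\cY^G_\tau})^\lambda$ with $T^*\R^{d_1}$ and $T^*\R^{d_2}$ as constant symplectic $G$-representations.  The one thing to verify is that the standard product form used in Lemma~\ref{l:CtoR} is the relevant symplectic form.

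\emph{Step 2: the symplectic forms.}  This is where I expect the main obstacle.  The Kähler form $-dd^c\psi$ on $S_{pm}$ is not constant, so "symplectic vector bundle" has to be read up to homotopy of the fiberwise symplectic structure.  I would argue as follows.

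\begin{itemize}
\item[] On each isotypic piece, the $G$-invariant compatible Hermitian forms for the fixed complex structure form a contractible set.  The restriction of $-dd^c\psi$ and the standard form are both such forms.
\item[] Hence there is a $G$-equivariant bundle automorphism, homotopic to the identity, carrying one form to the other, for instance via polar decomposition of the relating endomorphism.  This holds over $\cY_\tau^G$ and also on the base.
\end{itemize}

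A further caveat: Lemma~\ref{l:CtoR} is phrased with the weights $e^{\pm 2\pi ik/p}$ realized in the specific model $\C_z\oplus\C_w$.  I would pair $\zeta^k$-coordinates with $\zeta^{-k}$-coordinates arbitrarily, which is possible because the multiplicities are equal.

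\emph{Step 3: assembling.}  Choose an equivariant Hermitian splitting of \eqref{eq:bdl_ses_perp}, as in the preceding corollary.  This gives
\[
\nu_\lambda\oplus(T_\tau\Sym^{2pm})^\lambda\cong(TS_{pm}|_{\cY^G_\tau})^\lambda .
\]
The splitting can be chosen symplectically orthogonal, since the subbundle $T\cY_\tau$ is symplectic and we may take its $\omega$-complement.  Substituting the identifications of Step 1 gives $\nu_\lambda\oplus T^*\R^{d_1}\cong T^*\R^{d_2}$, $G$-equivariantly and symplectically.  Summing over $\lambda$ gives the form needed in Assumption~\ref{ass:floer-normal-triviality}.
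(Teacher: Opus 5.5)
Your proposal is correct and follows the paper's proof. You use the same weight count ($4m$ copies of each $\xi^k$ in $TS_{pm}$ and $2m$ in $T_\tau\Sym^{2pm}(\C)$) to verify the equal-multiplicity hypothesis of Lemma~\ref{l:CtoR}, and you conclude from the split sequence \eqref{eq:bdl_ses_perp} as in the preceding corollary. Your Step~2, which matches the non-constant K\"ahler form to the standard product form by polar decomposition of $G$-invariant Hermitian forms (on $TS_{pm}$ and on the $\omega$-complement of $T\cY_\tau$), makes explicit a point the paper passes over silently, but it does not change the route.
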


\begin{proof}
    The action of $G$ on $S_{pm}$ is given by \eqref{eq:S_action}. The multiplicity of $\xi^k$ is $4m$ for all $k=0,\dots,p-1$.
    The isotypic component $(TS_{pm}^{\perp}|_{\cY_{\tau}^{G}})^{\lambda}$ picks up the eigenspaces with eigenvalues $\xi^k$ and $\xi^{p-k}$ for some $k \neq 0$, so Lemma~\ref{l:CtoR} applies.

    The coordinates of $\Sym^{2pm}(\mathbb{C})$ are elementary symmetric polynomials.
    For $d=1,\dots,2pm$, there is exactly one degree-$d$ elementary symmetric polynomial in $2pm$ variables and $\sigma$ acts on it by $\xi^d$.
    Therefore, the multiplicity of $\xi^k$ is $2m$ for all $k=0,\dots,p-1$ and we can apply Lemma~\ref{l:CtoR} to $(T_{\tau}\Sym^{2pm}(\mathbb{C}))^{\lambda}$.
\end{proof}

Now we need to discuss the Lagrangian submanifolds. Recall $D_{\epsilon}$ from Equation~\eqref{eq:disc}.
There is a contractible Lagrangian submanifold $\Delta \subset \chi^{-1}(D_\epsilon)$ such that $\Delta \subset \chi^{-1}(D_{\epsilon} \cap \Sym^{2m}(\mathbb{R}))$,
$L= \Delta \cap \chi^{-1}(\tau_\delta)$, and the following diagram commutes \cite[Lemma~31]{seidel2010localization}:
\begin{equation}\label{eq: rel stable trivialization}
	\begin{tikzcd}[row sep=large, column sep=large]
    0  \arrow[r] & TL \otimes_\mathbb{R}\mathbb{C} \arrow [d, "\simeq", sloped] \arrow[r, "dj"] & T\Delta|_{L} \otimes_\mathbb{R}\mathbb{C} \arrow [d, "\simeq", sloped] \arrow[r, "d\chi"] &  \ul{\mathbb{R}}^{m} \otimes_{\mathbb{R}} \mathbb{C}  \arrow [d, "\simeq", sloped]\arrow[r] & 0 \\
     0 \arrow[r] & T\cY_{\tau_\delta}|_{L}  \arrow[r, "dj"] & T\chi^{-1}(D_{\epsilon})|_{L}  \arrow[r, " d\chi"] &  \ul{\mathbb{C}}^{m}  \arrow[r] & 0
	\end{tikzcd}
\end{equation}
Up to a Moser argument, $\Delta$ is simply
\[
\Delta=\{a_i^2+b_i^2+c_i^2 \le \delta, a_i,b_i,c_i \in \mathbb{R}\} \subset (\mathbb{C}^3)^{m}
\]
under the local isomorphism \eqref{eq:local_iso}.

Similarly, we consider the following polydisk:
\[
D^{(p)}_{\epsilon}=\left\{ \{\xi^r (\mu_i \pm \sqrt{\epsilon_{i,r}}): r=1,\dots,p, \text{ and }i=1,\dots,m\} \in \Sym^{2pm}(\mathbb{C}) : |\epsilon_{i,r}|^2  <\epsilon \right\}.
\]
When $\epsilon_{i,r}=0$, it is understood that $\xi^r \mu_i$ appears twice.
The group $\mathbb{Z}_p$ acts on $D^{(p)}_{\epsilon}$ by complex multiplication.
Let $\tau_{\delta}^{(p)}$ be the point where all $\epsilon_{i,r}=\delta>0$.
Let $D^{(p)}_{\epsilon,\mathbb{R}}:=D^{(p)}_{\epsilon} \cap \{\epsilon_{i,r} \in \mathbb{R} \text{ for all }i,r\}$.
There is a contractible Lagrangian submanifold $\Delta^{(p)} \subset \chi^{-1}(D^{(p)}_\epsilon)$ such that $\Delta^{(p)} \subset \chi^{-1}(D^{(p)}_{\epsilon,\mathbb{R}})$,
$L^{(p)}:= \Delta^{(p)} \cap \chi^{-1}(\tau_\delta^{(p)})$, and the following diagram commutes \cite[Lemma~31]{seidel2010localization}:
\begin{equation}\label{eq: rel stable trivialization_zp}
	\begin{tikzcd}[row sep=large, column sep=large]
    0  \arrow[r] & TL^{(p)} \otimes_\mathbb{R}\mathbb{C} \arrow [d, "\simeq", sloped] \arrow[r, "dj"] & T\Delta^{(p)}|_{L^{(p)}} \otimes_\mathbb{R}\mathbb{C} \arrow [d, "\simeq", sloped] \arrow[r, "d\chi"] &TD^{(p)}_{\epsilon,\mathbb{R}} \otimes_{\mathbb{R}} \mathbb{C}=  L^{(p)} \times \mathbb{C}^{pm}  \arrow [d, "\simeq", sloped]\arrow[r] & 0 \\
     0 \arrow[r] & T\cY_{\tau_\delta^{(p)}}|_{L^{(p)}}  \arrow[r, "dj"] & T\chi^{-1}(D^{(p)}_{\epsilon})|_{L^{(p)}}  \arrow[r, " d\chi"] &  TD^{(p)}_{\epsilon}=L^{(p)} \times \mathbb{C}^{pm}   \arrow[r] & 0
	\end{tikzcd}
\end{equation}
where the maps are $\mathbb{Z}_p$-equivariant.
Up to an equivariant Moser argument, $\Delta^{(p)}$ is simply
\[
\Delta^{(p)}=\{(a_{i,r},b_{i,r},c_{i,r}):a_{i,r}^2+b_{i,r}^2+c_{i,r}^2 \le \delta, a_{i,r},b_{i,r},c_{i,r} \in \mathbb{R} \text{ for all }i,r\} \subset (\mathbb{C}^3)^{mp},
\]
and the $\mathbb{Z}_p$-action cyclically permutes the $(\mathbb{C}^3)^m$-factors.

\begin{corollary}[cf.\ {\cite[Lemma~31]{seidel2010localization}}]
\label{cor:application-normal-triviality}
For any $\Z_p$-equivariant crossingless matchings $\frm_0,\frm_1$, the
triple $(\cY_{\tau_\delta^{(p)}},L_{\frm_0},L_{\frm_1})$ satisfies
Assumption~\ref{ass:floer-normal-triviality}.
\end{corollary}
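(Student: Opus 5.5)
We have to check Assumption~\ref{ass:floer-normal-triviality} in two parts: the equivariant stable trivialization \eqref{eq:stably-trivial} of the normal bundle $\nu$ of $\cY^G_{\tau}$ in $\cY_\tau$ for $\tau=\tau^{(p)}_\delta$, and the two paths $\Lambda_i(t)$ of invariant Lagrangian subbundles over $L_{\frm_i}^G$.

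\emph{The normal bundle.} We apply Corollary~\ref{c:symp_trivial} to each nontrivial isotypic component $\nu_\lambda$, $\lambda=\rho_k$. This gives $G$-equivariant symplectic isomorphisms $\nu_\lambda\oplus T^*\R^{d_1(\lambda)}\cong T^*\R^{d_2(\lambda)}$, where $\R^{d_j(\lambda)}$ are sums of copies of $\rho_k$. Taking the direct sum over $k$ gives \eqref{eq:stably-trivial} with $\R^d=\bigoplus_\lambda\R^{d_1(\lambda)}$ and $\R^m=\bigoplus_\lambda\R^{d_2(\lambda)}$; neither has invariant vectors. One point needs care: Corollary~\ref{c:symp_trivial} is phrased via complex stable triviality, whereas we need a symplectic isomorphism. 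The splitting by an equivariant Hermitian metric is unitary, so complex and symplectic bundle isomorphisms agree here up to homotopy. This also matches the equivalent formulation in the assumption, that each isotypic component is trivialized as a complex bundle for $J_G$ (Lemma~\ref{lem:group-complex-structure}).

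\emph{The Lagrangian paths.} This is the main obstacle. We use the contractible Lagrangian $\Delta^{(p)}$ and the equivariant diagram \eqref{eq: rel stable trivialization_zp}. First treat $L^{(p)}=L_{\frm_{std}}$ at $\tau_\delta^{(p)}$. Along the fixed part $L^{(p),G}$, diagram \eqref{eq: rel stable trivialization_zp} identifies
\[
T\cY_\tau|_{L^{(p)}}\oplus(L^{(p)}\times\C^{pm})\cong T\chi^{-1}(D^{(p)}_\epsilon)|_{L^{(p)}}
\]
equivariantly. Under this identification, $TL^{(p)}\oplus\R^{pm}$ is carried to $T\Delta^{(p)}|_{L^{(p)}}$, the real form of the complexified top row.

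Since $\Delta^{(p)}$ is contractible and invariant, its tangent bundle along the fixed locus is equivariantly trivial. Taking nontrivial isotypic parts, the normal Lagrangian $\nu_{L}\oplus\R^{d}$ is therefore identified with a constant invariant Lagrangian subspace of the constant representation. Here $\R^d$ should be the real part of the nontrivial isotypic summand of $TD^{(p)}_{\epsilon,\R}$, and $\R^m$ the corresponding real summand of $T\chi^{-1}(D^{(p)}_\epsilon)$. Explicitly, in the model $(\C^3)^{mp}$ with cyclic permutation of factors, the constant subspace is the real form $\R^{3mp}$ restricted to nontrivial isotypic parts, with $\R^d$ its part in the $\epsilon$-directions. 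The trivializations must be chosen compatibly with the ones used for $\nu$ above; in particular, the ambient trivialization should be taken to be that of the fixed $S_{pm}$ restricted to the $\Delta^{(p)}$ model. The space of such constant invariant Lagrangians is connected by Lemma~\ref{l:eqLag}, so we can connect this constant subspace to $\R^m$, and $\sqrt{-1}\R^d$ to the fiber as in Remark~\ref{r:rotate}. This produces $\Lambda_0$.

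For general equivariant matchings, $L_{\frm_i}$ is the image of $L^{(p)}$ under the $G$-equivariant rescaled parallel transport along an equivariant path $\Gamma$ in $\Conf^{2pm}(\C)$. Parallel transport of the relative diagram along $\Gamma$ carries the trivializations along continuously, so the constant-subspace identification transports too. We concatenate with the connecting path given by Lemma~\ref{l:eqLag} (rotating to $\sqrt{-1}\,\R^m$ for $i=1$), which gives $\Lambda_1$.

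Finally, we must check that all trivializations are homotopic to the single ambient trivialization fixed in the first part. The paths over $L_{\frm_0}^G$ and $L_{\frm_1}^G$ are independent, as in Remark~\ref{rem:floer-paths-independent}, so only this common ambient trivialization has to agree. We would verify it using the fact that $TS_{pm}$ is globally trivial and equivariantly constant, with all identifications induced from it.
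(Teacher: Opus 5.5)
Your outline follows the paper's route: Corollary~\ref{c:symp_trivial} for $\nu$, the contractible model $\Delta^{(p)}$ for the Lagrangians, Lemma~\ref{l:eqLag} for connectivity, and parallel transport for general matchings. However, the two halves never refer to the same data.

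\textbf{The stabilizations do not match.} In your first step $T^*\R^d\cong(T_\tau\Sym^{2pm}(\C))^{\lambda}$ and $T^*\R^m\cong(TS_{pm})^{\lambda}$. In the Lagrangian step you instead take $\R^d$ from $TD^{(p)}_{\epsilon,\R}$ and $\R^m$ from $T\chi^{-1}(D^{(p)}_\epsilon)$. These are different representations. The polydisk $D^{(p)}_\epsilon$ freezes the centres $\xi^r\mu_i$, so its complex dimension is $pm$, half that of $\Sym^{2pm}(\C)$: each nontrivial character occurs with multiplicity $m$ rather than $2m$. Likewise $T\chi^{-1}(D^{(p)}_\epsilon)$ is a proper subbundle of $TS_{pm}$, not the constant bundle of \eqref{eq:stably-trivial}. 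The paper closes this with the normal bundle $N$ of $D^{(p)}_\epsilon$ in $\Sym^{2pm}(\C)$. Its isotypic parts are of cotangent type $T^*\R^{d_N}$, because those of $TD^{(p)}_\epsilon$ and $T\Sym^{2pm}(\C)$ are. One then uses the Lagrangian $T\Delta^{(p)}\oplus\R^{d_N}$ inside $T\chi^{-1}(D^{(p)}_\epsilon)\oplus N\cong TS_{pm}$, as in \eqref{eq:TStriv}.

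\textbf{The deferred last step is the real content.} Work in the single ambient frame $(TS_{pm})^\lambda\cong T^*\R^{d_2}$ fixed by the first step. There the subbundle $(T\Delta^{(p)}\oplus\R^{d_N})^\lambda$ over $(L^{(p)})^G$ is a priori a nonconstant, possibly homotopically nontrivial, family of invariant Lagrangian subspaces. The frame in which you made it constant is induced by $\Delta^{(p)}$, which is a different frame. Global triviality of $TS_{pm}$ says nothing about the transition between the two frames, so ``we would verify it using that $TS_{pm}$ is globally trivial'' is not an argument.

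The missing idea is this: the isomorphism \eqref{eq:Lagiso}, and with it the family of Lagrangians, extends over $(\Delta^{(p)})^G$, which is contractible. Hence the family over $(L^{(p)})^G$ is homotopic to a constant family, and Lemma~\ref{l:eqLag} joins that constant to $\R^{d_2}$ or $\sqrt{-1}\,\R^{d_2}$. Alternatively, $L_{\frm_i}^G\cong(S^2)^m$, and the invariant Lagrangians in each summand $T^*(\rho_k^{n})$ form a copy of $U(n)$. So after further cotangent stabilization any such family is null-homotopic, because $K^{-1}((S^2)^m)=0$. Without one of these arguments, the paths $\Lambda_i$ are not produced in the trivialization \eqref{eq:stably-trivial} that the assumption requires.
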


\begin{proof}
%We verify the two clauses of the assumption.
Denote $\tau_\delta^{(p)}$ by $\tau$ in this Corollary.

By Corollary~\ref{c:symp_trivial}, after adding $(T_{\tau}\Sym^{2pm}(\mathbb{C}))^{\lambda} \simeq T^*\mathbb{R}^{d_1}$, $\nu_{\lambda}$ 
can be trivialized to be $(TS_{pm}^{\perp}|_{\cY_{\tau}^{G}})^{\lambda}\simeq T^*\mathbb{R}^{d_2}$.

On the other hand, we can split the exact sequence \eqref{eq: rel stable trivialization_zp} and obtain
\begin{align}\label{eq:TLag}
TL^{(p)} \oplus TD^{(p)}_{\epsilon,\mathbb{R}} \simeq T\Delta^{(p)}|_{L^{(p)}}.
\end{align}
Note that both $T\Delta^{(p)}$ and $TD^{(p)}_{\epsilon,\mathbb{R}}$ are trivial bundles (for $T\Delta^{(p)}$, it is because $\Delta^{(p)}$ is contractible).
By restricting it to $(L^{(p)})^G$ and taking the non-trivial isotypic summands as before, it gives a trivialization of the isotypic components of the normal bundle of $(L^{(p)})^G$  that is compatible with 
\[
\nu_{\lambda}|_{(L^{(p)})^G} \oplus (TD^{(p)}_{\epsilon})^{\lambda} \simeq   (T\chi^{-1}(D^{(p)}_{\epsilon})|_{(L^{(p)})^G})^\lambda.
\]
Let $N$ be the normal bundle of $D^{(p)}_{\epsilon}$ in $\Sym^{2pm}(\mathbb{C})$.
Since $(TD^{(p)}_{\epsilon})^{\lambda}=(TD^{(p)}_{\epsilon,\mathbb{R}})^{\lambda} \otimes_\mathbb{R}\mathbb{C}$ and $(T\Sym^{2pm}(\mathbb{C}))^\lambda$ are of cotangent type,  so is $N \simeq T^*\R^{d_N}$.
Moreover, we have
\begin{align}
 (TD^{(p)}_{\epsilon})^{\lambda} \oplus N^{\lambda} &\simeq (T_{\tau}\Sym^{2pm}(\mathbb{C}))^{\lambda} \nonumber \\
\nu_{\lambda}|_{(L^{(p)})^G} \oplus (TD^{(p)}_{\epsilon})^{\lambda} \oplus N^{\lambda} &\simeq   (T\chi^{-1}(D^{(p)}_{\epsilon})|_{(L^{(p)})^G})^\lambda \oplus N^{\lambda} \simeq (TS_{pm}^{\perp}|_{(L^{(p)})^G})^{\lambda}. \label{eq:TStriv} 
\end{align}
Therefore, the stable trivialization \eqref{eq:TLag} further stabilized by $\R^{d_N}$ is compatible with the stable trivialization \eqref{eq:TStriv}.

%we have $T\Delta^{(p)}|_{L^{(p)}} \otimes_\mathbb{R}\mathbb{C} \simeq T\chi^{-1}(D^{(p)}_{\epsilon})|_{L^{(p)}}=TS_{pm}^{\perp}|_{L^{(p)}}$ and
%$ TD^{(p)}_{\epsilon,\mathbb{R}} \otimes_{\mathbb{R}} \mathbb{C} \simeq TD^{(p)}_{\epsilon} \simeq T_{\tau}\Sym^{2pm}(\mathbb{C})$.
%Note that both $T\Delta^{(p)}$ and $TD^{(p)}_{\epsilon,\mathbb{R}}$ are trivial bundle (for $T\Delta^{(p)}$, it is because $\Delta^{(p)}$ is contractible).
%Therefore, over $L^{(p)}$ and at the point $\tau$ respectively, we have
%\begin{align}\label{eq:anotherReal}
%T^*(T\Delta^{(p)}|_{L^{(p)}}) \simeq TS_{pm}^{\perp}|_{L^{(p)}} \quad  T^*(TD^{(p)}_{\epsilon,\mathbb{R}}) \simeq T_{\tau}\Sym^{2pm}%(\mathbb{C})
%\end{align}
%giving another way to write $TS_{pm}^{\perp}|_{L^{(p)}}$ and $T_{\tau}\Sym^{2pm}(\mathbb{C})$ as cotangent bundle of real $G$ representations.
%Moreover, that is compatible with \eqref{eq:anotherReal}.

A priori, it is possible that the isomorphism \eqref{eq:TLag} stabilized by $\R^{d_N}$ is not compatible with the global isomorphism $(TS_{pm}^{\perp}|_{\cY_{\tau}^{G}})^{\lambda}\simeq T^*\mathbb{R}^{d_2}$ in the sense that the isomorphism
\begin{align}\label{eq:Lagiso}
(T\Delta^{(p)} \otimes_{\R} \mathbb{C}|_{(L^{(p)})^G})^\lambda \oplus N^{\lambda} \simeq (TS_{pm}^{\perp}|_{(L^{(p)})^G})^\lambda \simeq T^*\mathbb{R}^{d_2}
\end{align}
sends $T\Delta^{(p)} \oplus \R^{d_N}$ to a non-trivial $(L^{(p)})^G$-family of $G$-invariant Lagrangian subspaces in $T^*\mathbb{R}^{d_2}$, and similarly for $(TD^{(p)}_{\epsilon,\mathbb{R}})^\lambda$.
However, $(L^{(p)})^G$ is contained in $(\Delta^{(p)})^G$ which is contractible.
Moreover, the isomorphism \eqref{eq:Lagiso} can be extended over $(\Delta^{(p)})^G$.
%\[
%(T^*(T\Delta^{(p)}|_{(\Delta^{(p)})^G}))^\lambda \simeq (TS_{pm}^{\perp}|_{(\Delta^{(p)})^G})^\lambda \simeq T^*\mathbb{R}^{d_2}
%\]
It guarantees a homotopy from the $(L^{(p)})^G$-family of $G$-invariant Lagrangian subspaces to a constant $(L^{(p)})^G$-family and hence to the constant family $\mathbb{R}^{d_2}$ or $\sqrt{-1}\mathbb{R}^{d_2}$ (see Lemma~\ref{l:eqLag} for the connectivity of the space of $G$-invariant Lagrangian subspaces).
It proves the existence of $\Lambda_i$ in Assumption~\ref{ass:floer-normal-triviality}.

%We want to use a homotopic argument to show that any $(L^{(p)})^G$-family of $G$-invariant Lagrangians in $T^*\mathbb{R}^{d_2}$ is stably trivial.
%By Lemma \ref{l:eqLag}, the space of $G$-invariant Lagrangian subspaces in $T^*\mathbb{R}^{d_2}$ is homeomorphic to $U(d_2,d_2)/GL(d_2,\mathbb{C})$ so it is homotopic equivalent to $U(d_2) \times U(d_2)/U(d_2) \simeq U(d_2)$.
%Therefore, the space of stable $G$-invariant Lagrangian subspaces with respect to $\lambda$ is homotopy equivalent to $\cup_d U(d) =U$.
%Since $(L^{(p)})^G$ is diffeomorphic to $(S^2)^m$ and $[(S^2)^m,U]=0$, it means that after possibly further stabilizing 
%$T^*\mathbb{R}^{d_2}$ the $(L^{(p)})^G$-family of $G$-invariant Lagrangian subspaces in $T^*\mathbb{R}^{d_2}$ coming from 
%$T\Delta^{(p)}$ is homotopic to a constant $(L^{(p)})^G$-family and hence to the constant family $\mathbb{R}^{d_2}$.
%It proves the existence of $\Lambda_i$ in Assumption~\ref{ass:floer-normal-triviality}.

Finally, $L_{\frm_i}$ is obtained from $L^{(p)}$ by equivariant rescaled
parallel transport along a path.
Since the stable trivialization is obtained using the fibration $\chi$, the parallel transport map respects the stable trivialization up to canonical homotopy.
Therefore, it verifies the assumption for $(\cY_{\tau_\delta^{(p)}},L_{\frm_0},L_{\frm_1})$.
\end{proof}

\paragraph{Pin structures.}

The remaining hypothesis of Theorem~\ref{thm:floer-smith} is
Assumption~\ref{ass:floer-pin}.  The Lagrangians $L_{\frm}$ are diffeomorphic
to $(S^2)^{pm}$, hence spin, and a spin structure is a Pin structure with
$E=0$; this is all the assumption asks.  Here $\Kh(K)=HF(L_{\frm_0},L_{\frm_1})$ and
$\rank_{\F_p}$ denotes the total rank over $\F_p$.

\paragraph{The orientation system of the fixed pair.}
Theorem~\ref{thm:floer-smith} gives the Floer homology of the fixed pair
$(L_{\frm_0}^G,L_{\frm_1}^G)$ with the orientation system of
Lemma~\ref{lem:floer-fixed-strips}: that of the relative Pin structures
$Q_i$ induced from the spin structures of $L_{\frm_i}$, with respect to the
background bundle $E'=\cY^G_\tau\times\R^m$
of \eqref{eq:floer-fixed-background}; whereas $\Kh(\bar K)$ is defined with the standard system,
background bundle zero and the unique Pin structures of the simply
connected fixed matching Lagrangians.  The two homologies have the same
rank, as follows.  The background bundle $E'$ is trivial; let $\mathfrak t$ be the
spin structure induced by its trivialization.  Then
$Q_i=Q^0_i\oplus\mathfrak t|_{L^G_{\frm_i}}$ for a unique Pin structure
$Q^0_i$ on $TL^G_{\frm_i}$, which is the unique Pin structure of
$L^G_{\frm_i}\cong(S^2)^m$; this conclusion does not depend on
$\mathfrak t$, since another spin structure on $E'$ differs from it by a
class in $H^1(\cY^G_\tau;\Z_2)$ whose restriction to $L^G_{\frm_i}$
vanishes.  Finally the orientation systems of $(Q_0,Q_1)$ with background
bundle $E'$ and of $(Q^0_0,Q^0_1)$ with background bundle zero coincide:
the boundary of a strip $u$ of the fixed pair bounds the disc
$u(D)\subset\cY^G_\tau$, over which $E'$ is trivialized, and the
restriction of $\mathfrak t$ to that boundary is the Pin structure of the
trivialization, so the Pin structure induced on the boundary loop by
$Q_0,Q_1$ after trivializing $E'$ over the disc is the one induced by
$Q^0_0,Q^0_1$, and the gluing isomorphisms are built from these loop
structures (Appendix~\ref{app:floer-orientations},
\cite{wehrheim2015orientations}).  Hence the rank of the fixed-pair Floer
homology over $\F_p$ is that of $\Kh(\bar K)$.  This matters because
ranks over $\F_p$ can depend on the orientation system.

\subsection{The rank inequality}\label{subsec:application-inequality}

The following result generalizes \cite[Corollary~3]{seidel2010localization}.

\begin{theorem}\label{thm:application-khovanov}
    For any $p$-periodic link $K$ with quotient $\bar K$, there is a rank inequality
    \[
    \rank_{\mathbb{F}_p}(\Kh(K)) \ge \rank_{\mathbb{F}_p}(\Kh(\bar K)).
    \]
\end{theorem}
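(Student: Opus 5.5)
The plan is to reduce Theorem~\ref{thm:application-khovanov} to Theorem~\ref{thm:floer-smith}, applied to the equivariant matching Lagrangians in $\cY_\tau$ with $\tau=\tau^{(p)}_\delta$.

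\emph{Step 1: an equivariant presentation.} Write the $p$-periodic link $K$ with axis through the origin of the plane, and place the rotation $z\mapsto\xi z$ on the plane. Present the quotient link $\bar K$ as a plat closure of a braid on $2m$ strands, given by crossingless matchings $\bar\frm_0,\bar\frm_1$ on $2m$ points away from $0$, with $\bar\frm_0$ standard near the points $\mu_i\pm\sqrt\delta$ after an isotopy. Pulling back along the branched $p$-fold cover $z\mapsto z^p$ gives $\Z_p$-equivariant crossingless matchings $\frm_0,\frm_1$ on the $2pm$ points $\tau$. Their union is $K$, and $\frm_i^p=\bar\frm_i$. One has to check that the preimage of the standard matching is, up to equivariant isotopy, the configuration inside $D^{(p)}_\epsilon$ used in Corollary~\ref{cor:application-normal-triviality}. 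Its arcs are the $p$ rotates of short arcs around $\xi^r\mu_i$, so this holds.

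\emph{Step 2: the hypotheses.} By the verifications at the start of Subsection~\ref{subsec:cyclic-action}, the triple $(\cY_\tau,L_{\frm_0},L_{\frm_1})$ satisfies the following.
\begin{enumerate}
\item It is exact, convex, and the Lagrangians are compact.
\item It carries a $\theta$-preserving $\Z_p$-action.
\item It can be made transverse by an equivariant Hamiltonian perturbation.
\end{enumerate}
Corollary~\ref{cor:application-normal-triviality} gives Assumption~\ref{ass:floer-normal-triviality}. The spin structures with $E=0$ give Assumption~\ref{ass:floer-pin}. Theorem~\ref{thm:floer-smith} then yields
\[
\rank_{\F_p}\Kh(K)=\dim_{\F_p}HF(L_{\frm_0},L_{\frm_1};\F_p)\ge\dim_{\F_p}HF(L^G_{\frm_0},L^G_{\frm_1};\F_p),
\]
with the induced orientation system on the right.

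\emph{Step 3: identifying the right side.} By Lemma~\ref{l:fixed set symp}, $\cY_\tau^G\cong\cY_{\tau^p}$. By Lemma~\ref{l:fixed matching}, $L^G_{\frm_i}$ is Hamiltonian isotopic to $L_{\bar\frm_i}$. Hamiltonian invariance therefore identifies the fixed-pair Floer homology with $HF(L_{\bar\frm_0},L_{\bar\frm_1})$. The paragraph on the orientation system of the fixed pair shows that the induced system $(Q_0,Q_1,E')$ agrees with the standard one. Hence the right side equals $\rank_{\F_p}\Kh(\bar K)$.

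The main obstacle is that the identification $\cY^G_\tau\cong\cY_{\tau^p}$ must be symplectic, or at least compatible with Floer theory. The restricted Kähler form on the fixed locus need not literally be the Seidel--Smith form on $\cY_{\tau^p}$. I would handle this by interpolating linearly between the two exact Kähler forms, both of which are convex at infinity. A Moser argument, or invariance of $HF$ under deformation of the convex exact structure, then transports the Lagrangians and their Floer homology. This is as in \cite[Lemma~37]{seidel2010localization}.

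Finally, independence of $\Kh$ from the presentation lets us use these particular equivariant presentations for both $K$ and $\bar K$.
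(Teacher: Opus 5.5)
Your proposal is correct and follows the paper's proof. The paper lifts a plat presentation of $\bar K$, taken in the complement of the axis, through the $p$-fold branched cover to get equivariant matchings. It then applies Theorem~\ref{thm:floer-smith}, with the hypotheses supplied by Corollary~\ref{cor:application-normal-triviality} and the spin structures, and identifies the fixed pair with the quotient data via Lemmas~\ref{l:fixed set symp} and~\ref{l:fixed matching} together with the discussion of the induced orientation system. You also flag two further points: that the lifted standard matching is the configuration in $D^{(p)}_\epsilon$, and that $\cY_\tau^G\cong\cY_{\tau^p}$ must be compatible with the symplectic forms. The paper absorbs these into its citation of \cite[Lemma~37]{seidel2010localization} in Lemma~\ref{l:fixed matching}, and your Moser/deformation argument is a reasonable way to fill them in.
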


\begin{proof}
Choose an equivariant presentation of $K$ by crossingless matchings
$\frm_0,\frm_1$; one exists: present the quotient link $\bar K$ as a plat
closure in the complement of the quotient axis, taken to be the vertical
axis through the origin, and lift through the $p$-fold cyclic branched
cover of $\R^3$ along the axis, under which cups, caps and the braid lift
to cups, caps and a rotation-invariant braid on $2pm$ strands.  By Lemmas~\ref{l:fixed set symp}
and~\ref{l:fixed matching}, the fixed pair of
$(\cY_\tau,L_{\frm_0},L_{\frm_1})$ is $(\cY_{\tau^p},L_{\frm_0^p},L_{\frm_1^p})$
up to Hamiltonian isotopy, which computes $\Kh(\bar K)$.  The hypotheses
of Theorem~\ref{thm:floer-smith} hold by Corollary~\ref{cor:application-normal-triviality} and by the
discussion of the Pin structures. Therefore, the inequality follows from the theorem.
\end{proof}

\begin{remark}
Throughout, $\Kh$ denotes symplectic Khovanov homology.  Its agreement with
combinatorial Khovanov homology is known over fields of characteristic zero
\cite{abouzaid2019khovanov}; whether it holds over $\F_p$ is not known to
us, and the inequality above is a statement about the symplectic invariant.
\end{remark}

\begin{remark}
    We do not know how to generalize the double branched cover spectral sequence \cite[Corollary~34]{seidel2010localization} to $p$-fold covers, because there is no obvious $\mathbb{Z}_p$-action on $\cY$ with the corresponding fixed locus.
\end{remark}

%{\color{red}
%\input{application_916}
%}
\appendix
\renewcommand{\thesection}{\Alph{section}}
\numberwithin{theorem}{section}

\section{Orientations of the fixed data}\label{app:floer-orientations}
This appendix proves Lemma~\ref{lem:floer-fixed-strips}.  The boundary signs of
Lemmas~\ref{lem:floer-boundary-formulas} and~\ref{lem:floer-sphere-boundary}
are computed in Subsection~\ref{subsec:IIF-orientations}, after the collars
have been constructed.

\subsection{The normal orientation line}\label{subsec:app-normal-line}

Recall the group complex structure $J_G$ of
Definition~\ref{def:group-complex-structure} and the complex orientation
it induces.

\begin{lemma}[Equivariant operators are complex linear]
\label{lem:app-equivariant-operators}
Let $D$ be a $G$-equivariant Fredholm operator between spaces of sections
of $G$-equivariant vector bundles whose fibers have no invariant vectors,
over a base on which $G$ acts trivially, and let $J_G$ act on sections
fiberwise.  Then $D$ is $J_G$-complex linear, $\ker D$ and
$\operatorname{coker}D$ are complex vector spaces for $J_G$, and $\det(D)$
carries the resulting \emph{complex orientation}.  This orientation is
continuous along families of such operators and is preserved by
$G$-equivariant isomorphisms between them.
\end{lemma}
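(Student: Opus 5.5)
The proof splits into a pointwise step and a global step. \emph{Pointwise}, equivariant operators are complex linear for $J_G$. \emph{Globally}, the complex orientation of $\det(D)$ is well defined, continuous in families, and natural.

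For the first step, the base has trivial $G$-action, so a $G$-equivariant bundle is fiberwise a representation. Because the fibers have no invariant vectors, each fiber splits into isotypic components $U_k$, $1\le k\le(p-1)/2$. These components are locally constant in rank, being the images of the projectors $\frac1p\sum_g \chi_k(g)^{-1}g$ built from the real characters. Hence they form subbundles, and $J_G$ of Definition~\ref{def:group-complex-structure} is a smooth fiberwise complex structure. It is given by an explicit polynomial in $\sigma$ on each component: $J_G=(\sigma-\cos(2\pi k/p))/\sin(2\pi k/p)$ on $U_k$.

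The key observation is that $J_G$ acting on sections is a real polynomial in the operator $\sigma$ acting on sections. Combine this with the isotypic projectors, which are also polynomials in $\sigma$: explicitly $J_G=\sum_k P_k\,(\sigma-\cos(2\pi k/p))/\sin(2\pi k/p)$, where $P_k$ is the projector onto $U_k$. This is the section-level version of Lemma~\ref{lem:group-complex-structure}. Since $D$ commutes with $\sigma$, it commutes with every polynomial in $\sigma$, and hence with $J_G$. Consequently $\ker D$ and the range are $J_G$-invariant. The cokernel, taken as the quotient by the closed range, inherits $J_G$. Both are finite-dimensional complex spaces, and $\det D=\Lambda^{\mathrm{top}}\ker D\otimes(\Lambda^{\mathrm{top}}\operatorname{coker}D)^*$ receives the tensor product of complex orientations.

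For continuity I would use the standard stabilization. Near an operator $D_0$ in a family, choose a finite-dimensional space $V$ mapping onto the cokernel. Make the choice $G$-invariant by averaging, or simply take $V=\operatorname{coker}D_0$ realized by an equivariant complement. Then $V$ is $J_G$-complex, and $D_b\oplus\phi$ is surjective with kernel a complex vector bundle over the parameter space, because all the maps are complex linear. The local trivialization of the determinant line bundle is then $\det(\ker(D_b\oplus\phi))\otimes\det(V)^*$, and both factors carry complex orientations that vary continuously.

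The main obstacle I anticipate is checking that the complex orientation is independent of the stabilization choice. The canonical isomorphisms relating different $V$ arise from exact sequences $0\to\ker D\to\ker(D\oplus\phi)\to V\to\operatorname{coker}D\to0$ of complex spaces with complex maps. The Knudsen–Mumford sign convention for determinants of such sequences preserves complex orientations. The reason is that every sign that appears is of the form $(-1)^{ab}$ with $a,b$ real dimensions of complex spaces, which are even. This also handles reordering of summands.

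Naturality follows directly. A $G$-equivariant isomorphism of such operators is, by the same polynomial-in-$\sigma$ argument, complex linear on kernels and cokernels. Complex linear isomorphisms preserve complex orientations, so they preserve the induced orientation of $\det$.
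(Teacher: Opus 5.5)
Your proposal is correct and follows essentially the same route as the paper. Equivariance forces $J_G$-linearity: the paper applies Lemma~\ref{lem:group-complex-structure} to the spaces of sections, while you make it explicit by writing $J_G$ as a real polynomial in $\sigma$, which also handles the infinite-dimensional section spaces directly. Kernel and cokernel are therefore complex, $\det(D)$ gets its complex orientation, and naturality follows in the same way. The only other difference is that you sketch the stabilization argument for continuity in families, using the parity of the Knudsen--Mumford signs, where the paper simply cites McDuff--Salamon.
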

\begin{proof}
The spaces of sections are $G$-representations without invariant vectors,
since an invariant section takes values in the invariant part of the
fibers, so Lemma~\ref{lem:group-complex-structure} applies to $D$ and to
equivariant isomorphisms.  The kernel is an invariant subspace, hence
$J_G$-complex; the cokernel is an invariant quotient, and it has no
invariant vectors, since a vector that is invariant modulo the image
differs from its average, which is invariant and hence zero, by an
element of the image; so the cokernel is $J_G$-complex as well.  The
determinant line of a complex linear Fredholm operator is canonically
oriented, continuously in families \cite[Appendix~A.2]{mcduff2012j}, and
an equivariant isomorphism preserves the complex orientation by
Lemma~\ref{lem:group-complex-structure}.
\end{proof}

\emph{The normal cap operator.}  At every invariant intersection point
$\cP{p}$ the normal factor of the adapted data is the same: the space $W$
with $J_W$, the boundary pair $\R^m,\sqrt{-1}\,\R^m$, and the normal part
$\lambda^\perp(t)=e^{J_W\pi t/2}\,\R^m$ of the capping path of
Definition~\ref{conv:floer-orientation-data}\ref{it:F-paths}.  Let
$D^\perp$ be the Cauchy--Riemann operator on the disc with one boundary
puncture, with values in $W$ and boundary condition $\lambda^\perp$.  The
cap operator at $\cP{p}$ splits as $D_{\cP{p}}=D^G_{\cP{p}}\oplus D^\perp$,
where $D^G_{\cP{p}}$ is the cap operator of the fixed data for the
tangential path $\lambda^G_{\cP{p}}(t)=e^{J_0\pi t/2}\,T_{\cP{p}}\widehat L^G_0$,
and accordingly
\begin{equation}\label{eq:floer-orientation-line-splitting}
    o_{\cP{p}}=o^G_{\cP{p}}\otimes o^\perp,
    \qquad o^G_{\cP{p}}:=\det(D^G_{\cP{p}}),
    \quad o^\perp:=\det(D^\perp).
\end{equation}
The \emph{normal orientation line} $o^\perp$ is one line, the same at
every invariant point.  The operator $D^\perp$ is $G$-equivariant, since
$J_W$ and $\lambda^\perp$ are $G$-invariant, so by
Lemma~\ref{lem:app-equivariant-operators} its kernel and cokernel are
complex, hence even-dimensional, and $o^\perp$ carries a complex
orientation.  Consequently no sign enters in
\eqref{eq:floer-orientation-line-splitting} or in any reordering of normal
and tangential factors below.  We orient $o^\perp$ by its complex
orientation; the orientations \ref{it:F-fixed} of $o_{\cP{p}}$ then become
orientations of the tangential lines $o^G_{\cP{p}}$, which are the
orientation lines of the fixed data.  Any other orientation of $o^\perp$
would reverse all $o^G_{\cP{p}}$ at once and change nothing below.

\emph{The induced capping data.}  Recall from
Subsection~\ref{subsec:floer-orientations} that $Q_i$ is the transport of
$P_i|_{L^G_i}$ through $1\oplus\rho_i\oplus1$, where $\rho_0=\mathrm{id}$
and $\rho_1=-J_W$ identify $\nu_{\widehat L_i}=L^G_i\times\Lambda_i(1)$
with $L^G_i\times\R^m$.  Likewise
$\rho_t:=e^{-J_W\pi t/2}\colon\lambda^\perp(t)\to\R^m$, which is $\rho_0$
at $t=0$ and $\rho_1$ at $t=1$, transports the Pin extension of
\ref{it:F-paths} along $\lambda_{\cP{p}}=\lambda^G_{\cP{p}}\oplus\lambda^\perp$
to a Pin extension along $\lambda^G_{\cP{p}}$ for the fixed data with
background bundle $E'$, agreeing with $Q_0$ and $Q_1$ at the ends.  These
are the capping data of the fixed data.

\subsection{Proof of Lemma~\ref{lem:floer-fixed-strips}}
\label{subsec:app-fixed-strips}

\emph{The gluing isomorphism in the formalism.}  We use the following
description of \eqref{eq:floer-gluing-isomorphism} and the following
properties of the orientations entering it
\cite{wehrheim2015orientations,seidel2008fukaya,fooo2009lagrangian}.  Let
$u$ be a regular strip from $\cP{x}$ to $\cP{y}$ and trivialize
$u^*T\widehat M$ over the disc.  Gluing the cap at $\cP{y}$ to $u$ gives
an operator $D_u\sharp D_{\cP{y}}$ on the disc with one puncture, at
$\cP{x}$; its boundary condition is a path from $T_{\cP{x}}\widehat L_0$ to
$T_{\cP{x}}\widehat L_1$ which, followed by the capping path at $\cP{x}$
reversed, is a loop $F_u$ of Lagrangian subspaces, and the glued operator
is deformed, through operators on the disc with one puncture, to the
operator $D_{\cP{x}}\sharp D_{F_u}$ obtained by attaching to the cap at
$\cP{x}$ a closed disc with boundary condition $F_u$.  The isomorphism
\eqref{eq:floer-gluing-isomorphism} is the composite
\begin{equation}\label{eq:app-gluing-composite}
 \det(D_u)\otimes o_{\cP{y}}\cong\det(D_u\sharp D_{\cP{y}})
 \cong\det(D_{\cP{x}}\sharp D_{F_u})
 \cong o_{\cP{x}}\otimes\det(D_{F_u})\cong o_{\cP{x}}
\end{equation}
of two linear gluing isomorphisms, the identification along the
deformation, and the trivialization of $\det(D_{F_u})$ by the orientation
$o_{\mathfrak s}$ determined by the Pin structure $\mathfrak s$ on
$F_u\oplus E|_{F_u}$ assembled from $P_0$ along $u(\cdot,0)$, the
extension at $\cP{y}$, $P_1$ along $u(\cdot,1)$ and the extension at
$\cP{x}$.  The isomorphism \eqref{eq:floer-gluing-isomorphism}
and the orientation $o_{\mathfrak s}$ of the determinant line of the disc
operator with boundary loop $F$ and Pin structure $\mathfrak s$ have the
following properties.
\begin{enumerate}[label=\textup{(P\arabic*)},start=0]
    \item\label{it:P-constant} For the constant strip at an
    intersection point $\cP{x}$, whose linearized operator is invertible so
    that its determinant line is canonically $\R$, the isomorphism
    \eqref{eq:floer-gluing-isomorphism} is the identity of $o_{\cP{x}}$;
    this is forced by compatibility with breaking, since gluing the
    constant strip to any strip returns that strip.
    \item\label{it:P-homotopy} $o_{\mathfrak s}$ is natural under deformations of
    $(F,\mathfrak s)$ and under isomorphisms of Pin structures covering
    the identity of $F$.
    \item\label{it:P-sum} It is compatible with direct sums: if
    $F=F'\oplus F''$ and $\mathfrak s=\mathfrak s'\oplus\mathfrak s''$, then
    $o_{\mathfrak s}=o_{\mathfrak s'}\otimes o_{\mathfrak s''}$ under
    $\det(D_F)=\det(D_{F'})\otimes\det(D_{F''})$, up to the Koszul sign
    $(-1)^{\ind D_{F'}\,\ind D_{F''}}$.
    \item\label{it:P-stab} A relative Pin structure with respect to a
    background bundle enters through the Pin structure it induces on $F$
    after trivializing the background bundle over the disc, for the
    summand $M^G\times\R^m$ by its constant trivialization, and this
    destabilization is compatible with direct sums.
\end{enumerate}

\begin{proof}[Proof of Lemma~\ref{lem:floer-fixed-strips}]
\emph{The data split.}  Let $u$ be a fixed strip from $\cP{p}$
to $\cP{q}$.  Everything in \eqref{eq:app-gluing-composite} splits into a
tangential and a normal part.  The trivialization of $u^*T\widehat M$ may
be taken to be the direct sum of a trivialization of $u^*TM^G$ and the
constant $W$; the operators split, $D_u=D^G_u\oplus D^\perp_u$
(Proposition~\ref{thm:floer-model-regularity}, where $D^G_u$ is written $D^T$) and
$D_{\cP{p}}=D^G_{\cP{p}}\oplus D^\perp$, $D_{\cP{q}}=D^G_{\cP{q}}\oplus D^\perp$;
the loop splits, $F_u=F^G_u\oplus F^\perp$, where $F^\perp$ is the loop in
$W$ consisting of $\R^m$ along $u(\cdot,0)$, $\lambda^\perp$ at $\cP{q}$,
$\sqrt{-1}\,\R^m$ along $u(\cdot,1)$ and $\lambda^\perp$ reversed at
$\cP{p}$; the linear gluings of split operators are split; and the
deformation can be chosen split, with normal part running from
$D^\perp_u\sharp D^\perp$ to $D^\perp\sharp D_{F^\perp}$ through
$G$-equivariant operators, since all normal data are $G$-invariant.  Every
Koszul sign produced by reordering tangential and normal factors is $+1$,
the normal operators having complex, hence even-dimensional, kernels and
cokernels.

\emph{The Pin structure splits.}  The bundle isomorphism
$\rho\colon F^\perp\to\partial D\times\R^m$ given by $\rho_0$, $\rho_t$,
$\rho_1$ and $\rho_t$ on the four pieces of the loop is continuous.  Let
$\mathfrak s_0$ be the Pin structure on $\partial D\times\R^m$ induced by
the constant trivialization of $M^G\times\R^m$ over the disc, as in
\ref{it:P-stab}, and
$\mathfrak s'':=\rho^*\mathfrak s_0$.  Since
the Pin structures on $F^G_u\oplus E|_{F_u}\oplus F^\perp$ and on
$F^G_u\oplus E|_{F_u}$ are torsors over $H^1(\partial D;\Z_2)$ and
$\mathfrak s'\mapsto\mathfrak s'\oplus\mathfrak s''$ is equivariant, there is
a unique Pin structure $\mathfrak s'$ on $F^G_u\oplus E|_{F_u}$ with
$\mathfrak s=\mathfrak s'\oplus\mathfrak s''$.  Transporting through $\rho$,
the Pin structure $(1\oplus\rho)_*\mathfrak s=\mathfrak s'\oplus\mathfrak s_0$
on $F^G_u\oplus E'|_{F_u}$ is the one assembled from $Q_0$, $Q_1$ and the
induced extensions, and by \ref{it:P-stab} the Pin structure it induces on
$F^G_u$ after trivializing $E'=E\oplus(M^G\times\R^m)$ over the disc is the one induced
by $\mathfrak s'$ after trivializing $E$.  Hence the tangential part of
\eqref{eq:app-gluing-composite}, with $\det(D_{F^G_u})$ trivialized by
$o_{\mathfrak s'}$, is exactly the gluing isomorphism of the fixed data
with the relative Pin structures $Q_0,Q_1$.

\emph{The normal part.}  By \ref{it:P-sum},
$o_{\mathfrak s}=o_{\mathfrak s'}\otimes o_{\mathfrak s''}$ on
$\det(D_{F_u})=\det(D_{F^G_u})\otimes\det(D_{F^\perp})$, the Koszul sign
being $+1$ because $\ind D_{F^\perp}$ is even.  The normal part of
\eqref{eq:app-gluing-composite} is therefore the composite of the linear
gluings and of the identification along the normal part of the
deformation, which involve only $G$-equivariant operators and hence
preserve complex orientations
(Lemma~\ref{lem:app-equivariant-operators}), and of the trivialization of
$\det(D_{F^\perp})$ by $o_{\mathfrak s''}$.  Neither the loop $F^\perp$ nor
its Pin structure $\mathfrak s''$ depends on $u$, so $o_{\mathfrak s''}$
differs from the complex orientation of $\det(D_{F^\perp})$ by a sign
$c_\perp\in\{\pm1\}$ independent of $u$.  Since $D^\perp_u$ is invertible,
$\det(D^\perp_u)=\R$ with its canonical orientation, which is also its
complex orientation, so the normal part is an isomorphism
$o^\perp\cong\R\otimes o^\perp$ equal to $c_\perp$ times the identity, for
every fixed strip $u$.

\emph{Conclusion.}  Under \eqref{eq:floer-orientation-line-splitting} at
both ends, \eqref{eq:floer-gluing-isomorphism} for $u$ is therefore
$c_\perp$ times the tensor product of the gluing isomorphism of the fixed
data with the identity of $o^\perp$.  The constant strip at
$\cP{p}$ is a fixed strip to which all of the above applies: for it,
\eqref{eq:floer-gluing-isomorphism} is the identity of $o_{\cP{p}}$ by
\ref{it:P-constant}, and its tangential part is the identity of
$o^G_{\cP{p}}$ by \ref{it:P-constant} for the fixed data, so its normal
part is the identity and $c_\perp=+1$.  Hence the orientation of
$\det(D_u)=\det(D^G_u)$ defined by
Convention~\ref{conv:floer-strip-orientation} from the orientations
\ref{it:F-fixed} is the one defined by the same convention for the fixed
data from the induced orientations of the $o^G_{\cP{p}}$; the translation
vector $\partial_su$ is tangential, so the orientations of
$\M_{M^G}(\cP{p},\cP{q})$ agree.  The remaining
assertions follow: the two complexes have the same generators and, by
Proposition~\ref{thm:floer-model-regularity}, the same rigid strips, now
with the same signs.
\end{proof}

\section{Floer compactification}\label{app:floer-compactification}
We prove Proposition~\ref{thm:floer-compactification-cited} and classify the
boundary of the zero- and one-dimensional moduli spaces.  The gluing
collars and their orientations are constructed in
Appendix~\ref{app:floer-gluing}.

\subsection{Broken convergence}
A sequence of incidence configurations $(\vartheta_n,u_n)$ converges in the
\emph{broken topology} when the strip components converge, after separate
translations of their levels, to a broken Floer strip and the sphere
half-trajectories converge to a broken half-trajectory, with compatible
endpoint evaluations.  The number of levels is finite.

For fixed intersection endpoints the energy of every strip is the action
difference \eqref{eq:floer-action}.  Exactness excludes sphere and disc
bubbles; the confinement of Subsection~\ref{subsec:lagrangian-preparation} keeps the images in a compact
set.  Gromov--Floer compactness therefore gives convergence on compact
subsets of the translated components, with the neck images approaching
the intermediate intersection points.  No nonconstant zero-energy level
occurs.  Since there are finitely many intersections and each nonconstant
level strictly lowers action, the number of levels is uniformly bounded.
The sphere factors have the ordinary broken-trajectory compactification
for the fixed Morse--Smale pair on the compact sphere
\cite{seidel2008fukaya,wehrheim2012smooth}.

\begin{lemma}[No degeneration to an entirely fixed building]
\label{lem:IIF-no-degeneration}
A sequence of non-fixed strips cannot converge to a broken configuration
all of whose strip components are fixed.
\end{lemma}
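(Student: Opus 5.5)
The argument is by contradiction, reusing the exact linearity of the normal equation in the adapted tube, as in Step~1 of the proof of Theorem~\ref{ass:floer-uniform-leading-bound}. Suppose non-fixed strips $u_n$ from $\cP{x}$ to $\cP{y}$ converge in the broken topology to a building $(v_1,\ldots,v_k)$ with every $v_i\subset M^G$. Then the ends $\cP{x},\cP{y}$ and every break point are invariant intersection points, since they are asymptotic limits of fixed strips. Each $v_i$ is a fixed strip between invariant points, so by the confinement convention its image lies in a compact subset of $Q^G\subset V$. Hence each $v_i$ lies in the open adapted tube $\widehat U_V$, at positive distance from its frontier.

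The next step is to show that $u_n$ lies entirely in $\widehat U_V$ for large $n$. The domain $\R\times[0,1]$ splits into three kinds of region. On compact sets in the translations adapted to each $v_i$, $u_n$ is $C^0$-close to $v_i$. On the necks between levels and on the two unbounded ends, the energy of $u_n$ tends to zero and the images converge to the break points or to $\cP{x},\cP{y}$. This is the standard content of Gromov--Floer convergence recorded in Proposition~\ref{thm:floer-compactification-cited}.

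For the neck and end regions I would use the usual small-energy estimate on long strips with boundary on the transverse pair $\widehat L_0,\widehat L_1$: a strip of small energy stays in a small neighborhood of an intersection point. This gives $C^0$ control on the necks as well. Since each break point and endpoint is invariant and lies in the interior of $\widehat U_V$, all three kinds of region map into $\widehat U_V$ for $n$ large. I expect this uniform $C^0$ control on the necks, rather than only on compact pieces, to be the main technical point. It is exactly the ``images of the necks converge to the break points'' clause of Gromov--Floer compactness, so I would cite that rather than reprove it.

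Once $u_n\subset\widehat U_V$, its normal component $w_{u_n}$ is defined on all of $\R\times[0,1]$. It solves $\partial_sw+J_W\partial_tw=0$ with constant boundary conditions $\R^m,\sqrt{-1}\,\R^m$ and decays at both ends, because both ends are invariant. Expand $w_{u_n}$ in the eigenmodes $e^{-\lambda s}e^{-\sqrt{-1}\lambda t}c$ with $\lambda\in-\pi/2+\pi\Z$. Decay as $s\to-\infty$ forces the coefficients of all modes with $\lambda>0$ to vanish. Decay as $s\to+\infty$ forces the coefficients of all modes with $\lambda<0$ to vanish. Since $0$ is not in the spectrum, $w_{u_n}\equiv0$, so $u_n\subset M^G$ is fixed; equivalently, Lemma~\ref{rem:lagrangian-exact-expansion}(2) applies. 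This contradicts the hypothesis that the $u_n$ are non-fixed.
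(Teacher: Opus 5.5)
Your proof is correct and follows essentially the same route as the paper. Both confine the fixed components to $Q^G$ inside the adapted tube, then use Gromov--Floer convergence on the components and necks to place $u_n$ entirely in the tube for large $n$, and finally conclude that the normal part vanishes because every eigenmode of the invertible asymptotic operator blows up at one end. Your added discussion of small-energy control on the necks only spells out the ``necks converge to the break points'' clause, which the paper cites directly.
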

\begin{proof}
Suppose otherwise.  All fixed components lie in $Q^G$, and the adapted
tube contains a neighborhood of this compact set.  Convergence on the
components, together with convergence of the necks to the break points,
places the whole strip in the adapted tube for sufficiently large $n$.
Its normal part then solves
\[
 (\partial_s+J_W\partial_t)w_n=0,
 \qquad w_n(s,i)\in\Lambda_i(1),
 \qquad w_n(s,\cdot)\longrightarrow0\quad(s\to\pm\infty).
\]
Each nonzero eigenmode of the invertible asymptotic operator fails to
decay at one end.  Thus $w_n=0$, contrary to the assumption that the strips
are non-fixed.
\end{proof}

\subsection{Persistence of evaluation}
\begin{theorem}\label{thm:IIF-evaluation}
For a broken limit of an incidence sequence, let $u_*$ be the first
non-fixed strip component.  Its negative end is invariant, its selected
coefficient vector is nonzero, and
\[
 \Ev^-_{\cP{p},N}(u_n)\longrightarrow\Ev^-_{\cP{q},N}(u_*),
\]
where $\cP{q}$ is the negative end of $u_*$ and the spheres at $\cP{p}$
and $\cP{q}$ are identified by the constant normal model.  Thus evaluation
extends continuously to the broken compactification.
\end{theorem}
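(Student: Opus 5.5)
The plan is to reuse the argument of Theorem~\ref{ass:floer-uniform-leading-bound}, now tracking the weighted normalization as well as the leading index. Let $(\vartheta_n,u_n)$ be an incidence sequence with source $\cP{p}$ converging in the broken topology to a building $(v_1,\ldots,v_k)$. By Lemma~\ref{lem:IIF-no-degeneration} some component is non-fixed; let $u_*=v_i$ be the first. The components $v_1,\ldots,v_{i-1}$ are fixed and lie in $M^G$, so the negative end $\cP{q}$ of $u_*$ is the positive end of $v_{i-1}$ (or $\cP{p}$ if $i=1$), hence invariant. Since $u_*$ is a non-fixed strip with invariant negative end, Corollary~\ref{cor:lagrangian-dis-zero} (with $r\ge r(J)$) gives $c_N(u_*)\neq 0$. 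This settles the first two assertions.

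For convergence, first show that for large $n$ the region of $u_n$ mapping onto $v_1,\ldots,v_{i-1}$ and their necks, together with the negative-end region of $v_i$ up to a slice $\{s_1\}\times[0,1]$ in the $v_i$-translation, lies in the adapted tube $\widehat U_V$. The fixed components lie in $Q^G\subset V$; convergence on compact sets and the neck images shrinking to break points put this region in the tube for large $n$, as in Lemma~\ref{lem:IIF-no-degeneration}. On it the normal part $w_{u_n}$ solves the constant linear equation with the constant boundary conditions $\R^m,\sqrt{-1}\,\R^m$. The expansion of $w_{u_n}$ from $s=-\infty$ therefore holds on the whole half-strip $(-\infty,s_1]\times[0,1]$ in the $v_i$-adapted parametrization. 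The invariant points $\cP{p},\cP{q}$ share the same normal model $W$, which is how the spheres at $\cP{p}$ and $\cP{q}$ are identified. Reading coefficients by the Fourier projection $c_j=\mathrm{proj}_j\,w(s_1,\cdot)\,e^{\lambda_j s_1}$ on that slice, Lemma~\ref{rem:lagrangian-exact-expansion}(3) and $C^0$ convergence $u_n(s_1,\cdot)\to v_i(s_1,\cdot)$ give $c_N(u_n)^{(v_i)}\to c_N(u_*)$, where the superscript denotes the $v_i$-translated parametrization. The original parametrization differs by a translation $\sigma_n$, which by Lemma~\ref{rem:lagrangian-exact-expansion}(1) acts as the weighted action $\sigma_n\star$. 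Since $q_N$ is invariant under $\star$ and continuous by Lemma~\ref{lem:floer-weighted-quotient}, $\Ev^-_{\cP{p},N}(u_n)=q_N(c_N(u_n)^{(v_i)})\to q_N(c_N(u_*))=\Ev^-_{\cP{q},N}(u_*)$.

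The main obstacle is the step placing the entire long initial segment in the tube, which the exact expansion requires. Without it, the expansion on $(-\infty,s_1]$ could be invalidated by an excursion out of $\widehat U_V$ along a neck. I would handle this with the uniform neck estimate: the energy on each neck tends to zero, and small-energy strips with ends near an invariant point stay in a small ball, hence in the chart of Corollary~\ref{cor:lagrangian-preparation-hat}. Continuity of the extended evaluation on the compactification then follows by applying the same argument to sequences of broken configurations stratum by stratum.
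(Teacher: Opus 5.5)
Your proposal is correct and follows essentially the same route as the paper: the first non-fixed component exists by Lemma~\ref{lem:IIF-no-degeneration} and its coefficient vector is nonzero by Corollary~\ref{cor:lagrangian-dis-zero}. The initial segment of $u_n$ up to a slice in the negative end of $u_*$ lies in the adapted tube for large $n$, and the exact slice read-off of the selected coefficients together with the weighted-scaling invariance of $q_N$ gives the convergence; your explicit handling of the translation via Lemma~\ref{rem:lagrangian-exact-expansion}(1) and of tube containment via the neck estimate only spells out what the paper states more briefly.
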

\begin{proof}
The component $u_*$ exists by Lemma~\ref{lem:IIF-no-degeneration}.
Every preceding component is fixed, so its negative end $\cP{q}$ is
invariant.  The cutoff corollary applies to $u_*$, giving $c_N(u_*)\ne0$.

If the first component is non-fixed, use its translation coordinates and
choose a fixed slice $s_0$ in its adapted negative end.  For large $n$
the same slice lies in the adapted end of $u_n$.  The evaluation formula
in Lemma~\ref{lem:floer-coefficient-readoff} gives convergence of each
selected coefficient; weighted normalization is continuous at the nonzero
limit.

If fixed components precede $u_*$, translate to the coordinates of $u_*$
and choose such a slice $s_0$ there.  The entire part of $u_n$ before this
slice lies in the adapted tube for large $n$: it shadows fixed components
in $Q^G$ and necks at invariant intersections.  Its normal equation is
exactly constant-coefficient.  Hence its source evaluation equals
\[
 q_N\bigl(\Pi_{E_N^-}w_{u_n}(s_0,\cdot)\bigr).
\]
 where $\Pi_{E_N^-}$ projects the coefficients of $w_{u_n}$ to $E_N^-$.
This is invariant under the elapsed-time weighted scaling.  Slice
convergence identifies the limit with the same expression for $u_*$,
which is defined because its selected projection is nonzero.  The sphere
half-trajectories have this limiting endpoint and converge or break
accordingly.
\end{proof}

\begin{proof}[Proof of Proposition~\ref{thm:floer-compactification-cited}]
The strip compactness described above and compactness of the sphere
half-trajectories provide the subsequences.  Theorem~\ref{thm:IIF-evaluation}
identifies the limiting incidence.  The non-fixed assertion is
Lemma~\ref{lem:IIF-no-degeneration}; closedness of $M^G$ implies that a
limit of fixed strips has only fixed components.
\end{proof}

\subsection{Low-dimensional boundary strata}
\begin{lemma}[No higher-codimension limits]\label{lem:IIF-codim-two}
A limit of a space of one of the four kinds in
Theorem~\ref{thm:floer-gluing-package}, of dimension $d\le1$, has at most
$d$ breaks in total.  If $d=1$ and a sphere half-trajectory breaks from
$\cP{v}$ to $\cP{w}$, then $j(\cP{w})=j(\cP{v})-1$.
\end{lemma}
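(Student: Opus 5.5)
The plan is a dimension count, carried out stratum by stratum on the limiting configuration of Proposition~\ref{thm:floer-compactification-cited}.  A limit of a sequence in one of the four kinds of spaces is a broken configuration.  Its pieces are strip components $v_1,\ldots,v_k$, and in the incidence case also a broken sphere half-trajectory with pieces running from $\cP{v}=\cP{w}_0$ through $\cP{w}_1,\ldots,\cP{w}_\ell$, together with the descending cell of $\cP{w}_\ell$.  The number of breaks is $(k-1)+\ell$.  I will show that this configuration lies in a stratum that is cut out transversally and has dimension at most $d-(k-1)-\ell$.  Transversality comes from Propositions~\ref{thm:floer-model-regularity}, \ref{thm:floer-free-regularity} and~\ref{prop:floer-transversality-outside}, the last applied to the broken-stratum and iterated incidence conditions.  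A nonempty transverse stratum has nonnegative dimension, so $(k-1)+\ell\le d$.

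\emph{Ordinary and fixed spaces.}  For an outside source, or for two invariant ends with fixed strips, the Fredholm indices add under breaking: $\iota=\sum_i\iota(v_i)$.  Each nonconstant component $v_i$ is regular and translation acts freely on it, so $\iota(v_i)\ge1$.  Fixed components are regular in $\widehat M$ with the same index by Proposition~\ref{thm:floer-model-regularity}, and non-fixed components by Proposition~\ref{thm:floer-free-regularity}.  Hence the stratum has dimension $\sum(\iota(v_i)-1)=d-(k-1)\ge0$.

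\emph{Incidence spaces.}  Let $v_a$ be the first non-fixed component, which exists by Lemma~\ref{lem:IIF-no-degeneration}.  Theorem~\ref{thm:IIF-evaluation} says the limiting constraint is that $\Ev^-(v_a)$ lies in $\descending(\cP{w}_\ell)$.  The preceding fixed components contribute no constraint, since the evaluation is carried across them by the identity.  The sphere pieces contribute the trajectory spaces $\M_{h_N}(\cP{w}_{i-1},\cP{w}_i)$, of dimension $j(\cP{w}_{i-1})-j(\cP{w}_i)-1$.  The strip factors contribute the dimensions above, and transversality cuts the stratum out at codimension $N-1-j(\cP{w}_\ell)$.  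Adding these up gives
\[
\sum_i(\iota(v_i)-1)+\sum_{i=1}^{\ell}\bigl(j(\cP{w}_{i-1})-j(\cP{w}_i)-1\bigr)-\bigl(N-1-j(\cP{w}_\ell)\bigr)=\iota-N+j(\cP{v})-(k-1)-\ell.
\]
By \eqref{eq:floer-incidence-dimension} this equals $d-(k-1)-\ell$.

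\emph{The index drop.}  Each sphere trajectory space is nonempty and Morse--Smale, so its dimension is nonnegative, i.e.\ $j(\cP{w}_{i-1})-j(\cP{w}_i)\ge1$.  When $d=1$ and $\ell=1$, the count above forces $k=1$, and then the total dimension is $\bigl(j(\cP{v})-j(\cP{w})-1\bigr)+\dim\mathcal N(\cP{w};\cP{b})=0$.  Both summands are nonnegative, so $j(\cP{w})=j(\cP{v})-1$.

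\emph{Main obstacle.}  The delicate point is the index additivity when a break occurs at an invariant point with fixed components involved.  One has to check that the limiting stratum really is a transverse fiber product of the listed dimension, and in particular that the fixed factors carry no hidden normal constraint.  I would handle this with the normal splitting $D_u=D^T\oplus D^\perp$ and invertibility of $D^\perp$ (Proposition~\ref{thm:floer-model-regularity}).  These show that indices in $\widehat M$ and in $M^G$ agree and that $\Ev$ on the broken stratum is read off $v_a$ alone.  A further concern is that the constraint might degenerate because $c_N(v_a)=0$.  This cannot happen: $c_N(v_a)\ne0$ by Corollary~\ref{cor:lagrangian-dis-zero}, since $v_a$ has an invariant negative end.
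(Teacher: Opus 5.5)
Your proposal is correct and follows the paper's own argument. Both use the same expected-dimension count on the limiting stratum: index additivity loses one translation per extra strip component, each sphere break costs one dimension through the telescoping trajectory-space dimensions, and the incidence codimension $N-1-j$ is read on the first non-fixed factor. Both then combine this with emptiness of negative-dimensional transverse strata, and make the same observation that a sphere break dropping $j$ by two or more would leave a negative-dimensional incidence factor. Your version is simply more explicit than the paper's terse proof. The pure sphere-trajectory spaces, which you do not treat separately, are covered by the sphere part of your telescoping count.
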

\begin{proof}
For a strip building with $k$ nonconstant components, additivity of the
Fredholm index and quotienting each component by its translation reduce
the total dimension by $k-1$.  A sphere break reduces dimension by one as
well.  The incidence codimension is $N-1-j(\cP{v})$ on the first non-fixed
factor, after transporting across fixed factors.  Thus a stratum with $b$
breaks has expected dimension $d-b$.  All its strip and incidence factors
are transverse by the ambient and incidence regularity theorems, and the
sphere pair is Morse--Smale.  Negative-dimensional strata are empty.
If a single sphere break decreases $j$ by at least two, its trajectory
factor has positive dimension and the remaining incidence factor in a
zero-dimensional boundary stratum would have negative dimension.  It is
therefore empty.
\end{proof}

\begin{proposition}[Exhaustion and finiteness]\label{prop:IIF-strata}
Every zero-dimensional space of the four kinds is finite.  The boundary
limits of a one-dimensional space are exactly the configurations listed
in \textup{(S1)}--\textup{(S4)} of
Theorem~\ref{thm:floer-gluing-package}, each a finite product stratum.
\end{proposition}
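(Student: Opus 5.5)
The plan is to combine the sequential compactness of Proposition~\ref{thm:floer-compactification-cited} with the dimension count of Lemma~\ref{lem:IIF-codim-two}. The argument then sorts every possible limit by the position of its break and by the fixed or non-fixed nature of the two levels.

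\emph{Finiteness.} Take a zero-dimensional space of one of the four kinds and suppose it is infinite. Extract a sequence of distinct elements; by Proposition~\ref{thm:floer-compactification-cited} a subsequence converges in the broken topology. By Theorem~\ref{thm:IIF-evaluation} the evaluation, and hence the incidence condition, passes to the limit. So the limit lies in a stratum of the same kind, possibly broken. Lemma~\ref{lem:IIF-codim-two} with $d=0$ rules out any break, so the limit is an unbroken element of the space. Regularity (Propositions~\ref{thm:floer-model-regularity}, \ref{thm:floer-free-regularity}, \ref{prop:floer-transversality-outside}) makes the space a zero-dimensional manifold, so this limit is isolated, which contradicts convergence of distinct elements. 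For incidence spaces one also needs that the limit is still non-fixed, that is, in $\M^\circ$. This follows from Lemma~\ref{lem:IIF-no-degeneration} and Lemma~\ref{lem:floer-fixed-open-closed}.

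\emph{Exhaustion for $d=1$.} By Lemma~\ref{lem:IIF-codim-two} a limit point not in the interior has exactly one break, and each factor is rigid. A sphere break drops the index by exactly one. Next I would enumerate the cases.
\begin{enumerate}
\item A sphere break of the half-trajectory gives (S2), the strip factor satisfying the incidence with the lower cell by Theorem~\ref{thm:IIF-evaluation}.
\item A strip break at an outside point $\cP{y}$ gives (S1). If the source is invariant, the first level is non-fixed, since a fixed strip cannot end at an outside point. The evaluation is therefore that of the incoming factor, again by Theorem~\ref{thm:IIF-evaluation}.
\item A strip break at an invariant point $\cP{q}$ splits into four subcases according to which levels are fixed.
\begin{enumerate}
\item Both levels fixed occurs only for a fixed space, by Lemma~\ref{lem:IIF-no-degeneration}, and gives (S3).
\item Non-fixed followed by fixed gives (S4a), with the evaluation read on the first level.
\item Fixed followed by non-fixed gives (S4b). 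The evaluation is transported by the identity of the constant normal model, so the second factor lies in $\mathcal N^0_{\cP{q},N}(\cP{v};\cP{b})$. Dimension additivity forces the fixed factor to be rigid.
\item Two non-fixed levels give (S4c). The second level carries no incidence constraint and is rigid as an ordinary strip, so Lemma~\ref{lem:floer-top-cells} places it in exactly one top cell.
\end{enumerate}
\end{enumerate}
In every case the strata are finite products of zero-dimensional spaces, and these are finite by the first part.

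The main obstacle is the dimension bookkeeping in (S4b) and (S4c): one must check that the incidence codimension $N-1-j(\cP{v})$ falls on the correct factor. In (S4b) it is carried across the fixed level, since the fixed level has a zero-dimensional normal operator contribution and is transverse by Proposition~\ref{thm:floer-model-regularity}. In (S4c) the codimension sits entirely on the first non-fixed level. From this, the second level has index one and the first has the dimension making the incidence rigid. I would verify this using the transversality of iterated incidences in Proposition~\ref{prop:floer-transversality-outside}. The result is that the expected dimension of each broken stratum is exactly $d-1=0$, and that no configuration with the codimension split differently can occur.
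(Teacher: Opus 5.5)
Your finiteness argument and your sorting of limits match the paper's proof: sequential compactness with Lemma~\ref{lem:IIF-codim-two}, a case split by the type of break, persistence of evaluation to place the incidence on the right factor, and Lemma~\ref{lem:floer-top-cells} for an outgoing non-fixed rigid strip. However, this establishes only one inclusion: every boundary limit of a one-dimensional space lies in one of the strata \textup{(S1)}--\textup{(S4)}.

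The statement says the boundary limits are \emph{exactly} these configurations. So you also need the converse: every element of each listed product stratum is actually the limit of unbroken elements of the one-dimensional space. Compactness and index counting cannot give this, because they only restrict what a limit can look like. The converse is an existence statement. The paper obtains it from the gluing constructions of Appendix~\ref{app:floer-gluing}, namely Propositions~\ref{thm:IIF-outside-gluing}, \ref{prop:IIF-ordinary-gluing}, \ref{thm:IIF-fixed-incoming}, \ref{thm:IIF-both-nonfixed}, \ref{prop:IIF-sphere-breaking} and Corollary~\ref{cor:IIF-fixed-continuation-incidence}. For incidence spaces the preglued strip must additionally be corrected, through the kernel parameter and Lemma~\ref{lem:IIF-transverse-ift}, so that it satisfies the incidence condition.

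This missing half matters. In Proposition~\ref{prop:floer-differential-squares-zero}, each listed configuration is matched with a term of $\partial^2$ through the vanishing of the signed boundary count of a compact one-manifold. A listed broken configuration that was not an end of the one-manifold would leave its term unaccounted for.

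Citing gluing here is not circular. The gluing propositions use only finiteness of the strata, and your first part proves that from compactness alone. Your closing paragraph on dimension bookkeeping is fine, but it is already contained in Lemma~\ref{lem:IIF-codim-two}.
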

\begin{proof}
Sequential compactness and Lemma~\ref{lem:IIF-codim-two} show that a
zero-dimensional space has no broken limits, so it is compact and
discrete.  For dimension one there is at most one break.  An outside
intermediate intersection gives \textup{(S1)}, and a sphere break gives
\textup{(S2)}.  Fixed strips remain fixed and give \textup{(S3)}.
At an invariant intermediate intersection in a non-fixed configuration,
there is at least one non-fixed component; the three possibilities are
fixed outgoing, fixed incoming, and both non-fixed, as in
\textup{(S4a)}--\textup{(S4c)}.  Persistence of evaluation places the
constraint on the stated factor.  If the outgoing strip is an ordinary
rigid non-fixed strip, Lemma~\ref{lem:floer-top-cells} partitions it by
top cells without an extra constraint.  The boundary factors are
zero-dimensional and hence finite.  Conversely the gluing constructions
of Appendix~\ref{app:floer-gluing} realize every listed configuration as
a boundary limit.
\end{proof}

\section{Floer gluing}\label{app:floer-gluing}
There are four analytic phenomena to handle: breaking at an outside
intersection point; breaking at an invariant intersection point, where the
evaluation constraint is carried through the break along a fixed component;
breaking of the auxiliary sphere trajectory; and the recovery of every
nearby unbroken strip, which makes the collars a description of the whole
boundary.  Subsection~\ref{subsec:IIF-outside-gluing} treats the outside
break in full, as the analytic model; the remaining subsections adapt the
construction to the other strata, each beginning with what changes, and the
last assembles the collars into the compactification and gluing theorem and
computes the boundary orientations.

Four parameters play distinct roles throughout: $\ell$ is the fixed cutoff
width, $T$ the neck length, $\kappa$ the kernel parameter, and $\xi$ the
correction in the complement of the kernel.  The analytic order is: choose
$\ell$ once and for all; for each $(T,\kappa)$ solve for $\xi(T,\kappa)$ by
a contraction; then solve the incidence equation for $\kappa(T)$.  In
particular $\kappa$ is not an independent parameter of the final collar,
which is parametrized by $T$ alone: it is eliminated by incidence
transversality.

\subsection{Gluing at an outside break}\label{subsec:IIF-outside-gluing}
Let $\cP{p}$ be invariant, $\cP{y}$ outside, $\cP{b}$ an arbitrary
intersection, and $\cP{v}\in\crit(h_N)$.  The broken stratum is
\[
 Z=\mathcal N^0_{\cP{p},N}(\cP{v};\cP{y})
       \times\M^0(\cP{y},\cP{b}).
\]
Its first factor is a rigid \emph{incidence configuration}; the ambient
strip need not be rigid.  Its second factor is an ordinary rigid strip.
Both strips are non-fixed.  The sphere half-trajectory in the first
factor is determined by the evaluation.

\begin{proposition}[Outside-break gluing]\label{thm:IIF-outside-gluing}
There are $T_1>0$ and a map
\[
 \mathcal G:
 \mathcal N^0_{\cP{p},N}(\cP{v};\cP{y})
 \times\M^0(\cP{y},\cP{b})\times[T_1,\infty)
 \longrightarrow\mathcal N^1_{\cP{p},N}(\cP{v};\cP{b})
\]
which is a homeomorphism onto its image.  It extends continuously by
$\mathcal G(u_-,u_+,\infty)=(u_-,u_+)$ to a collar of the whole finite
stratum $Z$.  Every unbroken configuration sufficiently close to $Z$ is
in its image.  The maps are equivariant on the union of translates of
$Z$, and the boundary sign is the product of the signs of the two factors.
\end{proposition}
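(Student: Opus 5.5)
The plan is to reduce to standard strip gluing at the outside point $\cP{y}$ and then impose the incidence condition by a finite-dimensional implicit function argument, following the order of parameters fixed at the start of this appendix. Let $(\vartheta_-,u_-)\in\mathcal N^0_{\cP{p},N}(\cP{v};\cP{y})$ and $u_+\in\M^0(\cP{y},\cP{b})$. The ambient strip $u_-$ lies in a component $\M^\circ(\cP{p},\cP{y})$ of dimension $c(\cP{v})=N-1-j(\cP{v})$, regular by Proposition~\ref{thm:floer-free-regularity}. Let $K_-=\ker D_{u_-}$, a space of dimension $c(\cP{v})+1$ containing $\partial_su_-$; this kernel is the parameter $\kappa$.

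\emph{Step 1: pregluing and Newton iteration.} Fix the cutoff width $\ell$. For $T\ge T_1$ and small $\kappa$ in a slice of $K_-$ complementary to translation, form the preglued strip $u_\kappa\#_Tu_+$. Here $u_\kappa=\exp_{u_-}(\kappa+\text{correction})$ is the chart of $\M(\cP{p},\cP{y})$ near $u_-$. The splicing is done in the linear Darboux chart of Convention~\ref{conv:lagrangian-preparation} at $\cP{y}$, where $J$ is constant and both Lagrangians are linear, so the pregluing error is supported on the neck and decays like $e^{-\delta T}$. Since $u_+$ is rigid and regular, the operators $D_{u_-}$ and $D_{u_+}$ are surjective. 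A right inverse of the glued linearization, restricted to the complement of the approximate kernel $K_-\oplus\R\partial_su_+$, is then bounded uniformly in $T$, by the usual separated-transition estimates of \cite{hutchings2009gluing}. A contraction mapping argument gives a unique correction $\xi(T,\kappa)$, smooth in $\kappa$ and converging to $0$ as $T\to\infty$. The result is a family of genuine strips $u_{T,\kappa}\in\M^\circ(\cP{p},\cP{b})$ which, for $T$ large, is a local chart of the one-dimensional-larger strip space near the broken strip.

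\emph{Step 2: incidence equation.} Gluing happens only at the positive end of $u_-$, so on a fixed negative slice $\{s_0\}\times[0,1]$ inside the adapted tube at $\cP{p}$ we have $u_{T,\kappa}\to u_\kappa$ in $C^1$, uniformly in $\kappa$, with error $O(e^{-\delta T})$. By exact linearity of the normal equation (Lemma~\ref{lem:floer-coefficient-readoff}), the coefficients $c_N(u_{T,\kappa})$ are linear functionals of the normal part on that slice. Hence $\Ev^-_{\cP{p},N}(u_{T,\kappa})$ is $C^1$-close to $\Ev^-_{\cP{p},N}(u_\kappa)$, and this uses Lemma~\ref{lem:floer-weighted-quotient} and $c_N(u_-)\ne0$. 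By Proposition~\ref{prop:floer-transversality-outside}, the map $\kappa\mapsto\Ev^-(u_\kappa)$ composed with projection to $\nu(\cP{v})$ is a local diffeomorphism of the slice, of dimension $c(\cP{v})$, at $\kappa=0$. The perturbed incidence equation therefore has a unique solution $\kappa(T)\to0$, again by the implicit function theorem. The map $\mathcal G(u_-,u_+,T)=(\vartheta_T,u_{T,\kappa(T)})$ is then defined, where $\vartheta_T$ is the half-trajectory determined by the evaluation. It is continuous and injective, since $T$ is read off from the relative translation, and it extends continuously to $T=\infty$ by $(u_-,u_+)$. Finiteness of $Z$ (Proposition~\ref{prop:IIF-strata}) gives a uniform $T_1$. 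Equivariance follows by applying the construction to one stratum per orbit and transporting it with $g$; this is consistent because $G$ acts freely on these configurations and preserves $J$, the charts and the sphere data.

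\emph{Step 3: surjectivity and sign.} I expect the main obstacle to be the recovery statement, that every unbroken configuration near $Z$ lies in the image. Take incidence configurations $(\vartheta_n,w_n)$ converging to $(u_-,u_+)$ by Proposition~\ref{thm:floer-compactification-cited}. I would show that for large $n$ the strip $w_n$ can be written as $\exp(\text{preglued}+\xi)$ with $\xi$ in the complement of the approximate kernel and small in the gluing norm. This requires exponential decay estimates of $w_n$ on the neck, which are straightforward here because the neck lies in the linear chart at $\cP{y}$ and the equation is linear there. Uniqueness in the contraction then identifies $w_n=u_{T_n,\kappa_n}$, and uniqueness of $\kappa(T)$ forces $\kappa_n=\kappa(T_n)$. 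For the sign, the glued kernel is $K_-\oplus\R\partial_su_+$ under the linear gluing isomorphism, with the incoming factor first. Applying Convention~\ref{conv:floer-strip-orientation} together with the normal-first rule of Convention~\ref{conv:floer-incidence-orientation}, the normal directions $\Ev^*\nu(\cP{v})$ sit entirely in the $K_-$ factor. They can therefore be moved to the front without sign, and the outward direction is increasing $T$, as in Lemma~\ref{lem:floer-boundary-formulas} in dimension zero. This gives the boundary sign $\epsilon(u_-)\epsilon(u_+)$.
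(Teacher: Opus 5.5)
Your proposal is correct and has the same skeleton as the paper's proof. For each neck length $T$ and each $\kappa$ in a $c(\cP{v})$-dimensional slice of $\ker D_{u_-}$ you produce a genuine strip. You read its evaluation on a fixed slice of the adapted negative end at $\cP{p}$, where it is $O(e^{-\delta T})$-close in $C^1$ to that of the unglued family. You then solve the $c(\cP{v})$-dimensional incidence equation for $\kappa(T)$, using that the transverse normal derivative is an isomorphism (the paper's Lemma~\ref{lem:IIF-linear-T2}). Your sign computation agrees with the incoming-factor case of Lemma~\ref{lem:floer-boundary-signs}.

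What differs is the analytic engine.

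\emph{Contraction.} You invert the linearization at the preglued strip, which needs a right inverse on the complement of the approximate kernel that is bounded uniformly in $T$. The paper never inverts a glued operator. It uses the separated cutoffs of \cite{hutchings2009gluing} to split the problem into the coupled component equations \eqref{eq:IIF-theta-minus}--\eqref{eq:IIF-theta-plus}. These are solved with the fixed right inverses $P_\pm$ of $D_\pm$, and the coupling is contracting because its Lipschitz constant is $C/\ell$ for a large fixed $\ell$ (Lemma~\ref{lem:IIF-cross}). So citing \cite{hutchings2009gluing} for a uniform glued right inverse is misplaced: that method exists to avoid such an estimate. You would have to prove it separately, by splicing the component inverses as in the classical Floer treatment. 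In exchange, pregluing the actual solutions $u_\kappa$ makes your comparison of evaluations nearly immediate. The paper instead has to compare the coupled fixed point with the uncoupled one $\xi_-^\infty$.

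\emph{Recovery.} You propose decay estimates on the neck plus a choice of $(T,\kappa,\text{shift})$ that puts the difference into the complement of the approximate kernel. You state this step but do not carry it out; it requires the usual implicit-function argument on the orthogonality conditions. The paper exploits the exact linearity of the neck at $\cP{y}$. It reconstructs component maps from the stable and unstable spectral projections \eqref{eq:IIF-reconstruction}, which satisfy the coupled equations exactly, and fixes the translations by $L^2$ phase conditions. It then concludes directly from uniqueness of the contraction and of $\kappa_-(T)$, with no nearness estimate to a preglued map needed.
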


We first fix $(u_-,u_+)\in Z$ and construct its local collar.  The common
constants and assembly over $Z$ are discussed in
Subsection~\ref{subsec:IIF-proof}.

\subsubsection{Linear theory}\label{subsec:IIF-linear}
Fix a Sobolev exponent $q>2$.  The linearizations of
$\mathcal F(u)=\partial_su+J_t(u)\partial_tu$ at $u_\pm$ act as
\[
 D_\pm:W^{1,q}(u_\pm^*T\widehat M;T\widehat L_0,T\widehat L_1)
         \longrightarrow L^q(u_\pm^*T\widehat M).
\]
Boundary values are in $T\widehat L_i$ along $t=i$, and sections decay at
the ends in the Sobolev sense.  Nondegenerate intersections give Fredholm
operators; the kernel elements are smooth and exponentially decaying.

Because $W^{1,q}$ is not a Hilbert space, specify the complement by
\[
 (\ker D_\pm)^\perp=\{\xi\in W^{1,q}:
       \langle\xi,\kappa\rangle_{L^2}=0\text{ for all }\kappa\in\ker D_\pm\}.
\]
The pairing is bounded since the kernel elements belong to $L^{q'}$.
For an $L^2$-orthonormal basis $(\kappa_a)$ of the kernel,
$\Pi_\pm\xi=\sum_a\langle\xi,\kappa_a\rangle_{L^2}\kappa_a$ is a
bounded projection in $W^{1,q}$.  Thus the complement is closed.  If
$D_\pm$ is surjective, its restriction to this complement has a bounded
inverse $P_\pm$, with
\[
 D_\pm P_\pm=\operatorname{id},\qquad
 P_\pm D_\pm=\operatorname{id}-\Pi_\pm.
\]
The two independent inputs to gluing are:
\begin{enumerate}[label=\textup{(T\arabic*)}]
\item surjectivity of $D_-$ and $D_+$, provided by ambient regularity;
\item transversality of $\Ev^-_{\cP{p},N}$ to
$\descending_{h_N}(\cP{v})$ at $[u_-]$.
\end{enumerate}
The first solves the ambient strip equation; the second solves the
finite-dimensional incidence equation.

\begin{lemma}[Kernel dimensions]\label{lem:IIF-linear-T1}
At the fixed broken pair,
\[
 \dim\ker D_-=N-j(\cP{v}),\qquad\ker D_+=\R\partial_su_+.
\]
\end{lemma}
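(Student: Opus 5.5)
Both statements are dimension counts combined with the regularity results. At the fixed broken pair the operators $D_\pm$ are the linearizations of the strip equation at $u_\pm$, before quotienting by translation. By Proposition~\ref{thm:floer-free-regularity} both $u_-$ and $u_+$ are non-fixed regular strips. Hence $D_\pm$ is surjective and $\dim\ker D_\pm=\ind D_\pm$. The kernel is the tangent space to the parametrized strip space, which contains $\R\partial_su_\pm$. It therefore suffices to compute the two Fredholm indices.

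For $u_+\in\M^0(\cP{y},\cP{b})$ the convention fixed in Subsection~\ref{subsec:floer-dsquared} says that $\M^0$ consists of index-one components. So $\ind D_+=1$, and $\ker D_+$ is one-dimensional. Since $u_+$ is nonconstant, $\partial_su_+\neq0$ lies in the kernel, giving $\ker D_+=\R\partial_su_+$.

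For $u_-$ the point $(\vartheta,u_-)$ lies in the zero-dimensional incidence space $\mathcal N^0_{\cP{p},N}(\cP{v};\cP{y})$. By Proposition~\ref{prop:floer-transversality-outside} this space is transversely cut out. The dimension formula \eqref{eq:floer-incidence-dimension} then gives $0=\iota-N+j(\cP{v})$, where $\iota=\ind D_-$. Thus $\dim\ker D_-=\iota=N-j(\cP{v})$.

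As a consistency check, the ambient unparametrized strip space near $[u_-]$ then has dimension $N-1-j(\cP{v})$, which matches the remark following \eqref{eq:floer-incidence-dimension}. Indeed, the incidence condition cuts out a codimension $c(\cP{v})=N-1-j(\cP{v})$ submanifold, exactly the rank of $\nu(\cP{v})$ in Convention~\ref{conv:floer-incidence-orientation}.

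The only point needing care, and the main obstacle, is that \eqref{eq:floer-incidence-dimension} is stated with $\iota$ the Fredholm index of the component. I must therefore confirm that the parametrized-versus-unparametrized bookkeeping is consistent. The evaluation $\Ev^-_{\cP{p},N}$ is translation invariant: the weighted normalization of Lemma~\ref{lem:floer-weighted-quotient} absorbs the factors from Lemma~\ref{rem:lagrangian-exact-expansion}(1). So the evaluation descends to $\M^\circ(\cP{p},\cP{y})$, which has dimension $\iota-1$. The fibre product with the $j(\cP{v})$-dimensional cell $\descending_{h_N}(\cP{v})$ inside the $(N-1)$-dimensional sphere $S_N$ then has dimension $(\iota-1)+j(\cP{v})-(N-1)$, which agrees with the formula. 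Finally, surjectivity of $D_-$ (input (T1)) is what identifies $\ker D_-$ with the index, rather than merely bounding its dimension below.
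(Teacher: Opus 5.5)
Your proposal is correct and is essentially the paper's own argument: ambient regularity makes $D_\pm$ surjective, so kernel dimensions equal Fredholm indices; the rigid incidence configuration together with \eqref{eq:floer-incidence-dimension} gives $\ind D_-=N-j(\cP{v})$; and $\ind D_+=1$ together with the nonzero translation vector $\partial_su_+$ gives $\ker D_+=\R\partial_su_+$. Your additional fibre-product check (translation-invariant evaluation on the $(\iota-1)$-dimensional unparametrized space) is consistent with the paper's dimension formula, but the paper's proof does not need it.
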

\begin{proof}
Rigidity of the first incidence configuration and
\eqref{eq:floer-incidence-dimension} give $\ind D_-=N-j(\cP{v})$.
The second strip has index one.  Surjectivity identifies these indices
with kernel dimensions; translation supplies the nonzero kernel vector
of $D_+$.
\end{proof}

Let
\[
 \mathcal K=\ker D_-\cap(\partial_su_-)^\perp.
\]
On the adapted negative end a kernel section has a full expansion in
negative normal modes.  Define $\varsigma$ to retain only the first $r$
coefficients:
\[
 \varsigma:\ker D_-\to E_N^-,\qquad
 \varsigma(\kappa)=\bigl(c_{\lambda_1}(\kappa),\ldots,
                              c_{\lambda_r}(\kappa)\bigr).
\]
This does not discard the other modes from the strip equation.  By
differentiating translation,
\begin{equation}\label{eq:IIF-translation-coefficient}
 \varsigma(\partial_su_-)=
   (-\lambda_1c_{\lambda_1}(u_-),\ldots,-\lambda_rc_{\lambda_r}(u_-)).
\end{equation}
It is the weighted Euler vector and is killed by $dq_N$.

\begin{lemma}[The incidence linearization]\label{lem:IIF-linear-T2}
Set $z=\Ev^-_{\cP{p},N}(u_-)$.  Then
\[
 \mathcal L:\mathcal K\xrightarrow{\varsigma}E_N^-
 \xrightarrow{dq_N|_{c_N(u_-)}}T_zS_N
 \longrightarrow T_zS_N/T_z\descending_{h_N}(\cP{v}),
\]
the last map the quotient by the tangent space of the descending cell,
is an isomorphism.  Both its domain and target have dimension
$N-1-j(\cP{v})$, and $\varsigma$ is injective on $\ker D_-$.
\end{lemma}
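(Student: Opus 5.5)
The plan is to identify $\mathcal L$ with the derivative of the evaluation map on the moduli space, and then read off transversality (T2) together with the dimension count.

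\emph{Step 1: the parametrized strip space near $u_-$.} By ambient regularity (T1) and Lemma~\ref{lem:IIF-linear-T1}, the space of parametrized strips near $u_-$ is a manifold with tangent space $\ker D_-$, of dimension $N-j(\cP{v})$. The vector $\partial_su_-$ spans the translation direction. Hence $\mathcal K=\ker D_-\cap(\partial_su_-)^\perp$ is a complement to it and maps isomorphically onto $T_{[u_-]}\M^\circ(\cP{p},\cP{y})$, which has dimension $N-1-j(\cP{v})$. The target $T_zS_N/T_z\descending_{h_N}(\cP{v})$ has dimension $(N-1)-j(\cP{v})$, so the two dimensions agree.

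\emph{Step 2: $\mathcal L$ is the derivative of the evaluation.} Fix a slice $s_0$ in the adapted negative end. On this slice the coefficients $c_{\lambda_j}$ are bounded linear functionals of the restriction $w(s_0,\cdot)$; by exact linearity of the normal equation they are the $L^2$-pairings with the modes, rescaled by $e^{-\lambda_js_0}$. Differentiating $u\mapsto c_N(u)$ along a kernel direction therefore gives $\varsigma$, and the chain rule gives
\[
d\Ev^-_{\cP{p},N}=dq_N|_{c_N(u_-)}\circ\varsigma .
\]
This is legitimate by Lemma~\ref{lem:floer-weighted-quotient}, since $c_N(u_-)\neq0$ by Corollary~\ref{cor:lagrangian-dis-zero}.

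By \eqref{eq:IIF-translation-coefficient}, $\varsigma(\partial_su_-)$ is tangent to the weighted orbit and is therefore killed by $dq_N$. So restricting to $\mathcal K$ computes the derivative of the descended evaluation on $\M^\circ$. Composing with the quotient map gives exactly the linearized incidence condition of the fiber product \eqref{eq:floer-incidence}.

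\emph{Step 3: isomorphism.} Proposition~\ref{prop:floer-transversality-outside} says $\Ev^-_{\cP{p},N}$ is transverse to $\descending_{h_N}(\cP{v})$ at $[u_-]$, so $\mathcal L$ is surjective. It is then an isomorphism by the dimension equality of Step 1. Rigidity of the incidence configuration confirms that its kernel, which is the tangent space of $\mathcal N^0$, is zero.

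\emph{Step 4: injectivity of $\varsigma$ on all of $\ker D_-$; the main obstacle.} Suppose $\varsigma(\kappa)=0$ and write $\kappa=\kappa'+a\,\partial_su_-$ with $\kappa'\in\mathcal K$. Then
\[
0=dq_N\varsigma(\kappa)=dq_N\varsigma(\kappa'),
\]
so $\mathcal L(\kappa')=0$, and Step 3 gives $\kappa'=0$. It remains to show $a\,\varsigma(\partial_su_-)=0$ forces $a=0$. By \eqref{eq:IIF-translation-coefficient}, $\varsigma(\partial_su_-)$ has components $-\lambda_jc_{\lambda_j}(u_-)$, and these do not all vanish because every $\lambda_j\neq0$ and $c_N(u_-)\neq0$. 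Hence $a=0$.

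The subtle point is that $\kappa$ and $\partial_su_-$ are not the same type of object, so the argument needs one justification. Both are sections whose normal parts on the adapted end solve the same constant linear equation, since along $u_-$ the linearization in the tube is exactly $\partial_s+J_W\partial_t$ on the normal factor (by the split $J$ and linear boundary conditions). Their mode expansions are therefore genuine and $\varsigma$ is well defined and linear. Once this is checked, the decomposition above closes the argument.
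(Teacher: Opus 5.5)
Your proposal is correct and follows the paper's proof step for step. Like the paper, you identify $\mathcal K$ with the tangent space of the unparametrized moduli space and the evaluation derivative with $dq_N\circ\varsigma$ via exact slice read-off; you get invertibility of $\mathcal L$ from transversality of a rigid incidence plus the dimension count; and you prove injectivity of $\varsigma$ by splitting off the translation component, whose image is the nonzero weighted Euler vector. One cosmetic slip that does not affect the argument: the read-off factor is $c_{\lambda_j}=e^{\lambda_j s_0}\Pi_{E_j}w(s_0,\cdot)$, not $e^{-\lambda_j s_0}$.
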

\begin{proof}
The kernel slice is the tangent space to the unparametrized ambient
moduli space.  Read-off of the exact negative-end expansion identifies
its evaluation derivative with $dq_N\circ\varsigma$.
A transverse zero-dimensional incidence has an invertible normal
derivative, proving the first claim.  If $\varsigma(\kappa)=0$, its
slice component lies in $\ker\mathcal L=0$.  The remaining translation
component also vanishes, because
\eqref{eq:IIF-translation-coefficient} is nonzero.  No surjectivity of
$\varsigma$ onto the whole $E_N^-$ is asserted or needed.
\end{proof}

\subsubsection{Adapted charts and separated cutoffs}
\label{subsec:IIF-contraction}
Choose representatives so that $u_-(s,\cdot)$ is in the adapted chart
$\Phi_{\cP{y}}$ for $s\ge0$, and $u_+(s,\cdot)$ is in it for $s\le0$.
Set $V_\pm=\Phi_{\cP{y}}^{-1}\circ u_\pm$ on these half-strips.
The two Lagrangians are linear and $J=J_{\cP{y}}$ is constant on the
whole chart, by Convention~\ref{conv:lagrangian-preparation} and the
admissibility conditions.  Thus
\[
 (\partial_s+A_{\cP{y}})V_\pm=0,\qquad
 A_{\cP{y}}=J_{\cP{y}}\partial_t,
\]
with constant transverse Lagrangian boundary conditions.
Choose $2\delta>0$ below the smallest absolute eigenvalue of
$A_{\cP{y}}$.  The mode expansion and elliptic estimates give
\begin{equation}\label{eq:IIF-rates}
 \|V_-(s,\cdot)\|_{C^1}+\|\partial_sV_-(s,\cdot)\|_{C^0}
       \le Ce^{-2\delta s}\quad(s\ge0),
\end{equation}
and the analogous estimate $Ce^{2\delta s}$ for $V_+$ when $s\le0$.
Kernel sections satisfy the corresponding estimates with their norms
as an additional factor.

Let $\upsilon:\R\to[0,1]$ be smooth and nondecreasing, zero on
$(-\infty,-1]$ and one on $[1,\infty)$.  For a cutoff width $\ell\ge1$
and a gluing length $T>4\ell$, set
\begin{equation}\label{eq:IIF-separated-cutoffs}
 \beta_+^{T,\ell}(s)=\upsilon\left(\frac{s+T/2}{\ell}\right),\qquad
 \beta_-^{T,\ell}(s)=1-\upsilon\left(\frac{s-T/2}{\ell}\right).
\end{equation}
Their derivative supports lie in disjoint intervals centered at $-T/2$
and $T/2$, respectively.  In particular,
\begin{equation}\label{eq:IIF-cutoff-supports}
 \beta_-^{T,\ell}=1\text{ on }\operatorname{supp}(\beta_+^{T,\ell})',
 \qquad
 \beta_+^{T,\ell}=1\text{ on }\operatorname{supp}(\beta_-^{T,\ell})',
\end{equation}
\begin{equation}\label{eq:IIF-cutoff-derivative}
 \| (\beta_\pm^{T,\ell})'\|_{L^\infty}\le C/\ell.
\end{equation}
Both cutoffs equal one on $[-T/2+\ell,T/2-\ell]$; they do not form a
partition of unity.  We choose $\ell$ sufficiently large and then hold it
fixed while $T$ increases, writing $\beta_\pm^T$ for short.  This is the
separated-transition arrangement of \cite[Section 5]{hutchings2009gluing};
Figure~\ref{fig:cutoffs} shows it.

\begin{figure}[ht]
\centering
\begin{tikzpicture}[x=0.9cm,y=1.7cm]
  % axis
  \draw[->] (-6.8,0) -- (7.4,0) node[right] {$s$};
  % ticks at -T/2 and T/2
  \foreach \x/\lab in {-4/-\tfrac T2,4/\tfrac T2}
    {\draw (\x,0.06) -- (\x,-0.06) node[below] {$\lab$};}
  % transition intervals
  \foreach \x/\lab in {-5/-\tfrac T2-\ell,-3/-\tfrac T2+\ell,3/\tfrac T2-\ell,5/\tfrac T2+\ell}
    {\draw[dotted] (\x,0) -- (\x,1.25) node[above,font=\scriptsize] {$\lab$};}
  % beta_+ (blue): 0 on the left, rises over [-T/2-l,-T/2+l], 1 on the right
  \draw[thick,blue] (-6.4,0) -- (-5,0)
    .. controls (-4.3,0) and (-3.7,1) .. (-3,1) -- (6.4,1)
    node[above left,font=\small] {$\beta_+^T$};
  % beta_- (red, drawn slightly higher for visibility): 1 on the left, falls over [T/2-l,T/2+l], 0 on the right
  \draw[thick,red] (-6.4,1.08) -- (3,1.08)
    .. controls (3.7,1.08) and (4.3,0.08) .. (5,0.08) -- (6.4,0.08)
    node[below right,font=\small,yshift=-2pt] {$\beta_-^T$};
  \node[red,font=\small] at (-5.9,1.25) {$\beta_-^T$};
  \node[blue,font=\small] at (-5.9,-0.18) {$\beta_+^T$};
  % annotations
  \draw[<->,blue] (-5,0.5) -- (-3,0.5) node[midway,fill=white,inner sep=1pt,font=\scriptsize] {$\operatorname{supp}(\beta_+^T)'$};
  \draw[<->,red] (3,0.5) -- (5,0.5) node[midway,fill=white,inner sep=1pt,font=\scriptsize] {$\operatorname{supp}(\beta_-^T)'$};
  \draw[<->] (-3,-0.42) -- (3,-0.42) node[midway,below,font=\small] {$\beta_-^T=\beta_+^T=1$: both components present};
  \node[font=\small,below] at (-5.9,-0.55) {$a_-(s+T,\cdot)$};
  \node[font=\small,below] at (5.9,-0.55) {$a_+(s-T,\cdot)$};
\end{tikzpicture}
\caption{The separated cutoffs of \eqref{eq:IIF-separated-cutoffs}, $\beta_+^T$
in blue and $\beta_-^T$ in red, the latter drawn slightly above its true
height so that the two graphs are visible where they coincide.  Each
rises or falls over an interval of fixed width $2\ell$ centered at
$\mp T/2$; on the middle segment both equal one and the preglued map
\eqref{eq:IIF-preglue} is the sum of the two translated components in the
chart at $\cP{y}$.  Where one cutoff varies the other equals one, which is
\eqref{eq:IIF-cutoff-supports}.  The width $\ell$ is fixed first; only $T$
grows.}
\label{fig:cutoffs}
\end{figure}

\paragraph{Component maps and pregluing.}
Choose boundary-compatible tubular charts along $u_\pm$, affine in the
adapted intersection charts and in the normal coordinate of the
adapted tube.  They can be obtained from a metric making the Lagrangian
boundary conditions totally geodesic along the boundary, and agreeing
with the adapted affine charts near the ends.  We write these charts as
$\exp_{u_\pm}$ and use their induced bundle identifications for the equation.
For $\kappa_-\in\mathcal K$ and complementary corrections $\xi_\pm$, set
\[
 a_- =\exp_{u_-}(\kappa_-+\xi_-),\qquad
 a_+ =\exp_{u_+}(\xi_+).
\]
For small $W^{1,q}$ norms these maps obey the Lagrangian boundary
conditions.  Define the preglued map in the common chart by
\begin{equation}\label{eq:IIF-preglue}
\begin{split}
 u_T^{\mathrm{par}}(s,t)=\Phi_{\cP{y}}\bigl(&
   \beta_-^T(s)(V_-+\kappa_-+\xi_-)(s+T,t)\\
  &+\beta_+^T(s)(V_++\xi_+)(s-T,t)\bigr).
\end{split}
\end{equation}
It equals $a_-(s+T,t)$ on the left, where $\beta_+^T=0$, and
$a_+(s-T,t)$ on the right, where $\beta_-^T=0$.  The overlap lies in the
adapted ball for large $T$ and small corrections, by the embedding
$W^{1,q}\hookrightarrow C^0$.  The sum in
\eqref{eq:IIF-preglue} is a sum of coordinate vectors, not of manifold
points.  It respects the linear boundary conditions.

In the component charts write the nonlinear equation as
\[
 \mathcal F_\pm(\eta)=D_\pm\eta+\mathcal Q_\pm(\eta).
\]
Following \cite[Section~5.4]{hutchings2009gluing}, call a map
$\mathcal Q$ from sections of $u^*T\widehat M$ to sections of
$u^*T\widehat M\otimes\Lambda^{0,1}$ \emph{type 1 quadratic} if it can be
written as
\[
 \mathcal Q(\eta)=P(\eta)+Q(\eta)\cdot\nabla\eta,
\]
where $P$ and $Q$ are fiberwise smooth bundle maps with uniformly bounded
fiber derivatives to every order, satisfying $|P(\eta)|\le c|\eta|^2$ and
$|Q(\eta)|\le c|\eta|$.

\begin{lemma}[Type 1 quadratic estimate]\label{lem:IIF-type1}
The component nonlinearities $\mathcal Q_\pm$ are type 1 quadratic, and
every type 1 quadratic map satisfies, on a ball in $W^{1,q}$,
\begin{equation}\label{eq:IIF-quadratic}
 \|\mathcal Q(\eta)-\mathcal Q(\widetilde\eta)\|_{L^q}
 \le C\bigl(\|\eta\|_{W^{1,q}}+\|\widetilde\eta\|_{W^{1,q}}\bigr)
                   \|\eta-\widetilde\eta\|_{W^{1,q}},
\end{equation}
with $C$ depending on $c$, on the radius of the ball, and on $q$, but not
on $T$; in particular $\mathcal Q(0)=0$ and $\|\mathcal Q(\eta)\|_{L^q}\le C\|\eta\|_{W^{1,q}}^2$.
\end{lemma}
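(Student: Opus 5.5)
The proof has two parts: first showing that $\mathcal Q_\pm$ are type 1 quadratic, then proving the estimate \eqref{eq:IIF-quadratic} for an arbitrary type 1 quadratic map.

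\emph{Part 1: $\mathcal Q_\pm$ are type 1 quadratic.} Write $u=u_\pm$ and work in the tubular chart $\exp_u$. Using the bundle identification $\Phi_u(\eta)\colon T_u\widehat M\to T_{\exp_u\eta}\widehat M$ induced by the chart,
\[
 \mathcal F(\eta)=\Phi_u(\eta)^{-1}\bigl(\partial_s\exp_u\eta+J_t(\exp_u\eta)\,\partial_t\exp_u\eta\bigr).
\]
By the chain rule, $\partial_s\exp_u\eta=E_1(u,\eta)\partial_su+E_2(u,\eta)\nabla_s\eta$, where $E_1,E_2$ are the two partial derivatives of the exponential-type chart map. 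These are smooth in $\eta$ and satisfy $E_1(u,0)=E_2(u,0)=\mathrm{id}$; the same holds in $t$.

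Collecting terms gives
\[
 \mathcal F(\eta)=A(\eta)\,\partial u+B(\eta)\,\nabla\eta,
\]
with $A,B$ fiberwise smooth and $A(0)\partial u=\mathcal F(0)=0$. Taylor expansion in $\eta$ to second order in the $A$-term and first order in the $B$-term yields
\[
 \mathcal F(\eta)=D\eta+P(\eta)+Q(\eta)\cdot\nabla\eta,
\]
where $|P(\eta)|\le c|\eta|^2$ and $|Q(\eta)|\le c|\eta|$. The term $P$ absorbs $\partial u$, and $\partial u$ is bounded.

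The uniform bounds on all fiber derivatives hold for three reasons:
\begin{enumerate}
\item the strips lie in a fixed compact set;
\item $\partial u$ is bounded and decays exponentially at the ends;
\item on the adapted ends the chart is affine and $J$ is constant, so there $P=Q=0$.
\end{enumerate}
Point (3) is what makes the constants independent of position along the strip.

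\emph{Part 2: the estimate.} For $q>2$ the Sobolev embedding $W^{1,q}\hookrightarrow C^0$ on $\R\times[0,1]$ holds with a constant that does not depend on the domain length, because the embedding is local in strips of unit width. Hence on a ball of radius $R$ we have $\|\eta\|_{C^0},\|\widetilde\eta\|_{C^0}\le C_qR$.

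\emph{The $P$-term.} Write
\[
 P(\eta)-P(\widetilde\eta)=\int_0^1dP\bigl(\widetilde\eta+\tau(\eta-\widetilde\eta)\bigr)\,(\eta-\widetilde\eta)\,d\tau.
\]
Since $P$ vanishes to second order at $0$ with bounded second fiber derivative, $|dP(\zeta)|\le c'|\zeta|$. This gives the pointwise bound
\[
 |P(\eta)-P(\widetilde\eta)|\le c'(|\eta|+|\widetilde\eta|)\,|\eta-\widetilde\eta|.
\]
Take the $L^q$ norm, putting $|\eta|+|\widetilde\eta|$ in $L^\infty$ and $|\eta-\widetilde\eta|$ in $L^q$.

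\emph{The $Q$-term.} Split
\[
 Q(\eta)\nabla\eta-Q(\widetilde\eta)\nabla\widetilde\eta=\bigl(Q(\eta)-Q(\widetilde\eta)\bigr)\nabla\eta+Q(\widetilde\eta)\,\nabla(\eta-\widetilde\eta).
\]
\begin{itemize}
\item In the first summand, $|Q(\eta)-Q(\widetilde\eta)|\le c''|\eta-\widetilde\eta|$. Put this factor in $L^\infty$, where it is controlled by $\|\eta-\widetilde\eta\|_{W^{1,q}}$, and put $\nabla\eta$ in $L^q$.
\item In the second summand, put $|Q(\widetilde\eta)|\le c|\widetilde\eta|$ in $L^\infty$ and $\nabla(\eta-\widetilde\eta)$ in $L^q$.
\end{itemize}

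Summing the $P$- and $Q$-estimates gives \eqref{eq:IIF-quadratic}. The constant depends only on $c$, the bounds on the fiber derivatives over the ball of radius $C_qR$, and the Sobolev constant, none of which involve $T$. Setting $\widetilde\eta=0$ and using $\mathcal Q(0)=0$ gives the final quadratic bound.

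\emph{Main obstacle.} The step needing the most care is the uniformity claim in Part 1: the bounds on $P$ and $Q$ must not depend on the position along the non-compact strip. The argument relies on the affine charts and the constant $J$ in the adapted charts at the ends, where the nonlinearity vanishes identically, so that only a compact middle region of each strip contributes to the constants.
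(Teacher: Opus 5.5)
Your proposal is correct and follows the paper's proof: you identify $\mathcal Q_\pm$ as the Floer operator at $\exp_{u_\pm}\eta$ minus its linearization, with $P,Q$ built from $J_t$ and the chart maps and vanishing on the adapted linear ends. You then use the same three pointwise bounds, closing with $W^{1,q}\hookrightarrow C^0$ and $L^\infty\cdot L^q\subset L^q$; your explicit expansion of $A(\eta)\,\partial u+B(\eta)\,\nabla\eta$ simply fills in what the paper states in one line.
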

\begin{proof}
In the component chart, $\mathcal Q_\pm(\eta)$ is the difference between the
Floer operator at $\exp_{u_\pm}\eta$ and its linearization, which has the
stated form with $P,Q$ built from $J_t$, the chart maps and their
derivatives; the pointwise bounds hold because $\mathcal Q_\pm$ vanishes to
second order at $\eta=0$, and the bundle maps vanish identically on the
adapted half-strips, where the equation is linear.  For the estimate,
$P(\eta)-P(\widetilde\eta)$ and $(Q(\eta)-Q(\widetilde\eta))\cdot\nabla\eta$
are bounded pointwise by $c(|\eta|+|\widetilde\eta|)|\eta-\widetilde\eta|$
and $c|\eta-\widetilde\eta||\nabla\eta|$, and
$Q(\widetilde\eta)\cdot\nabla(\eta-\widetilde\eta)$ by
$c|\widetilde\eta||\nabla(\eta-\widetilde\eta)|$; the embedding
$W^{1,q}\hookrightarrow C^0$ for $q>2$ and the product estimate
$L^\infty\cdot L^q\subset L^q$ give \eqref{eq:IIF-quadratic}.
\end{proof}

\paragraph{The exact coupled equations.}
On the overlap the product rule for $\partial_s+A_{\cP{y}}$ produces the
two cutoff derivatives.  By \eqref{eq:IIF-cutoff-supports}, the term
$(\beta_+^T)'(V_++\xi_+)(s-T,t)$ belongs to the incoming equation, and
the term $(\beta_-^T)'(V_-+\kappa_-+\xi_-)(s+T,t)$ belongs to the outgoing
equation.  In the individual component coordinate $\tilde s$, define
\begin{equation}\label{eq:IIF-cross}
\begin{aligned}
 X_-^T(\xi_+)(\tilde s,t)
  &=(\beta_+^T)'(\tilde s-T)(V_++\xi_+)(\tilde s-2T,t),\\
 X_+^T(\kappa_-,\xi_-)(\tilde s,t)
  &=(\beta_-^T)'(\tilde s+T)(V_-+\kappa_-+\xi_-)(\tilde s+2T,t).
\end{aligned}
\end{equation}
Each term is extended by zero outside its cutoff support.  The right-hand
side is evaluated in the common adapted chart.  Set
\begin{align}
 \mathcal E_-&=D_-\xi_-+\mathcal Q_-(\kappa_-+\xi_-)+X_-^T(\xi_+),
       \label{eq:IIF-theta-minus}\\
 \mathcal E_+&=D_+\xi_++\mathcal Q_+(\xi_+)+X_+^T(\kappa_-,\xi_-).
       \label{eq:IIF-theta-plus}
\end{align}
Because $D_-\kappa_-=0$, the full error, in the indicated chart
identifications, is exactly
\begin{equation}\label{eq:IIF-theta}
 \mathcal F(u_T^{\mathrm{par}})(s,t)
 =\beta_-^T(s)\mathcal E_-(s+T,t)+\beta_+^T(s)\mathcal E_+(s-T,t).
\end{equation}
There is no extra nonlinear overlap remainder: $J$ is constant throughout
the overlap.  The component nonlinearities away from that region remain
in $\mathcal Q_\pm$.  Thus $\mathcal E_-=\mathcal E_+=0$ is sufficient for a
solution of the strip equation.

\begin{lemma}[Cross-term estimates]\label{lem:IIF-cross}
For fixed $\ell$, the cross terms satisfy
\begin{gather*}
 \|X_-^T(0)\|_{L^q}+\|X_+^T(\kappa_-,0)\|_{L^q}
   \le C_\ell e^{-\delta T}(1+|\kappa_-|),\\
 \|X_-^T(\xi_+)-X_-^T(\widetilde\xi_+)\|_{L^q}
   \le \frac{C}{\ell}\|\xi_+-\widetilde\xi_+\|_{L^q},\\
 \|X_+^T(\kappa_-,\xi_-)-X_+^T(\kappa_-,\widetilde\xi_-)\|_{L^q}
   \le \frac{C}{\ell}\|\xi_--\widetilde\xi_-\|_{L^q},\\
 \|\partial_{\kappa_-}X_+^T\|\le C_\ell e^{-\delta T}.
\end{gather*}
The linear constants $C/\ell$ are independent of $T$.
\end{lemma}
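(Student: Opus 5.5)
The plan is to estimate each cross term directly from its definition \eqref{eq:IIF-cross}, using two facts: the support of a cutoff derivative is confined to an interval of length $2\ell$, and the component maps decay exponentially on the adapted half-strips by \eqref{eq:IIF-rates}. Consider first $X_-^T(0)$. In the component coordinate $\tilde s$, the factor $(\beta_+^T)'(\tilde s-T)$ is supported where $\tilde s-T\in[-T/2-\ell,-T/2+\ell]$. There the argument of $V_+$ is $\tilde s-2T\in[-3T/2-\ell,-3T/2+\ell]$. This lies deep in the negative end of $u_+$, where $|V_+|\le Ce^{2\delta(\tilde s-2T)}\le Ce^{2\delta\ell}e^{-3\delta T}$. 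The $L^q$ norm over a region of area $2\ell$ then carries an extra factor $(2\ell)^{1/q}$. We also have $|(\beta_+^T)'|\le C/\ell$ by \eqref{eq:IIF-cutoff-derivative}. Together these give $\|X_-^T(0)\|_{L^q}\le C_\ell e^{-\delta T}$, with plenty of room.

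The term $X_+^T(\kappa_-,0)$ is handled symmetrically. Its argument $\tilde s+2T$ lies near $3T/2$ in the positive end of $u_-$. There $V_-$ decays like $e^{-2\delta s}$ by \eqref{eq:IIF-rates}. The kernel section $\kappa_-$ obeys the same estimate multiplied by its norm, as noted after \eqref{eq:IIF-rates}. Since $\mathcal K$ is finite dimensional, all norms on it are equivalent, and this produces the factor $(1+|\kappa_-|)$. The same computation applied to $\partial_{\kappa_-}X_+^T$ gives the last bound. This works because $X_+^T$ is affine in $\kappa_-$, with derivative $\kappa\mapsto(\beta_-^T)'(\tilde s+T)\,\kappa(\tilde s+2T,t)$, whose operator norm is $C_\ell e^{-\delta T}$ uniformly on the unit ball of $\mathcal K$.

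For the Lipschitz estimates, use linearity of the cross terms in $\xi_\pm$. The difference $X_-^T(\xi_+)-X_-^T(\widetilde\xi_+)$ equals $(\beta_+^T)'(\tilde s-T)\,(\xi_+-\widetilde\xi_+)(\tilde s-2T,t)$. Its pointwise modulus is at most $(C/\ell)\,|\xi_+-\widetilde\xi_+|$ evaluated at a shifted point. Translation is an isometry of $L^q$ on the strip, so the claimed bound with constant $C/\ell$ follows. This constant is independent of $T$, and the same holds for $\ell\ge1$. The $X_+^T$ difference is identical.

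The only point needing care is that the right-hand sides are read in the common adapted chart at $\cP{y}$. The addition of $\xi_\pm$ to $V_\pm$ is addition of coordinate vectors there, so the affine charts $\exp_{u_\pm}$ agree with the linear chart on the half-strips where the cutoff derivatives are supported. I expect this chart-compatibility to be the main (mild) obstacle. One must check that for $T>4\ell$ the shifted support $\tilde s-2T\le -3T/2+\ell$ (respectively $\tilde s+2T\ge 3T/2-\ell$) lies inside the region $s\le0$ (respectively $s\ge0$) where $u_\pm$ is in $\Phi_{\cP{y}}$. Given that check, the estimates are purely linear.
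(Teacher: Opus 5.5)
Your proposal is correct and follows the paper's proof. The first and last bounds come from the tail and kernel decay \eqref{eq:IIF-rates} at the shifted arguments near $\mp 3T/2$, integrated over an interval of length $2\ell$, and the Lipschitz bounds come from linearity in $\xi_\pm$, the pointwise bound $|(\beta_\pm^T)'|\le C/\ell$, and translation invariance of the $L^q$ norm. You also spell out several checks the paper leaves implicit: the actual rate $e^{-3\delta T}$, the affine dependence on $\kappa_-$ together with finite dimensionality of $\mathcal K$, and the fact that for $T>4\ell$ the shifted supports lie in the chart regions $s\le0$ and $s\ge0$.
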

\begin{proof}
On the support of $(\beta_+^T)'$, the outgoing argument $s-T$ is at most
$-3T/2+\ell$; on the support of $(\beta_-^T)'$, the incoming argument
$s+T$ is at least $3T/2-\ell$.  The tail and kernel estimates
\eqref{eq:IIF-rates}, integrated over intervals of fixed length $2\ell$,
give the first and last bounds.  For arbitrary trial corrections no
pointwise tail estimate is used: multiplication by the cutoff derivative
has norm at most $C/\ell$, and translation preserves the $L^q$ norm.
This proves the two Lipschitz estimates.
\end{proof}

\subsubsection{Solving the component equations}\label{subsec:IIF-fixed-point}
Use the right inverses on the chosen complements and define
\begin{equation}\label{eq:IIF-contraction}
 \mathcal T_T(\xi_-,\xi_+)=
 \begin{pmatrix}
  -P_-\bigl[\mathcal Q_-(\kappa_-+\xi_-)+X_-^T(\xi_+)\bigr]\\
  -P_+\bigl[\mathcal Q_+(\xi_+)+X_+^T(\kappa_-,\xi_-)\bigr]
 \end{pmatrix}.
\end{equation}
A fixed point solves both equations and hence the strip equation by
\eqref{eq:IIF-theta}.

\begin{proposition}[Uniform contraction]\label{prop:IIF-contraction}
Choose $\ell$ sufficiently large.  There are $\rho_0>0$, $T_1>4\ell$,
and $C>0$ such that, for $T\ge T_1$ and $|\kappa_-|\le\rho_0$,
$\mathcal T_T$ has a unique fixed point in a fixed sufficiently small
neighborhood of zero in the product of complements.  It satisfies
\begin{equation}\label{eq:IIF-correction-estimates}
 \|\xi_\pm\|_{W^{1,q}}\le C(e^{-\delta T}+|\kappa_-|^2),\qquad
 \|d_{\kappa_-}\xi_\pm\|\le C(e^{-\delta T}+|\kappa_-|).
\end{equation}
The fixed point is continuous in $T$ and $C^1$ in $\kappa_-$.
Write $u_T(\kappa_-)$ for the resulting strip modulo translation.
\end{proposition}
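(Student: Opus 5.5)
The plan is to apply the Banach fixed point theorem to $\mathcal T_T$ on the closed ball
\[
B_\rho=\{(\xi_-,\xi_+)\in(\ker D_-)^\perp\times(\ker D_+)^\perp:\|\xi_-\|_{W^{1,q}}+\|\xi_+\|_{W^{1,q}}\le\rho\}.
\]
The radius $\rho$ will be small, independent of $T$ and $\kappa_-$. The right inverses $P_\pm$ of Subsection~\ref{subsec:IIF-linear} are bounded, $\|P_\pm\|\le C_P$, and map into the chosen complements, so $\mathcal T_T$ takes values in the product of complements. The order of constants is the one announced at the start of the appendix. First fix $\ell$ so that $C_P\,C/\ell\le\tfrac14$, where $C/\ell$ is the Lipschitz constant of the cross terms in Lemma~\ref{lem:IIF-cross}; this uses that this constant is independent of $T$ and uses the $W^{1,q}\hookrightarrow L^q$ bound. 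Then choose $\rho$ and $\rho_0$ small so that $C_P\,C(\rho+\rho_0)\le\tfrac14$ in the quadratic estimate \eqref{eq:IIF-quadratic}. Only after that choose $T_1$ large.

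\textbf{Self-map and contraction.} For $(\xi_-,\xi_+)\in B_\rho$, Lemma~\ref{lem:IIF-type1} gives $\|\mathcal Q_-(\kappa_-+\xi_-)\|_{L^q}\le C(|\kappa_-|+\|\xi_-\|)^2$ and the analogous bound for $\mathcal Q_+(\xi_+)$. Lemma~\ref{lem:IIF-cross} bounds each cross term by its value at zero plus $(C/\ell)$ times the correction. Hence
\[
\|\mathcal T_T(\xi)\|\le C_P\bigl(C_\ell e^{-\delta T}(1+\rho_0)+C(\rho_0+\rho)^2\bigr)+\tfrac14\rho .
\]
This is at most $\rho$ once $T\ge T_1$ and $\rho_0,\rho$ are small; for this step the norm on $\kappa_-\in\mathcal K$ is any fixed norm on this finite-dimensional space. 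For differences, \eqref{eq:IIF-quadratic} contributes $C_PC(\rho_0+2\rho)$ and the cross terms contribute $C_PC/\ell$, so the Lipschitz constant is at most $\tfrac12$. The contraction principle then gives a unique fixed point in $B_\rho$. Since $X^T_\pm$ is extended by zero, translation preserves $L^q$ norms; this is what makes every constant other than $C_\ell e^{-\delta T}$ independent of $T$.

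\textbf{Estimates.} For the first bound in \eqref{eq:IIF-correction-estimates}, evaluate the fixed point equation at the fixed point and absorb the Lipschitz part:
\[
\|\xi\|\le\tfrac12\|\xi\|+C_P\bigl(C_\ell e^{-\delta T}+C|\kappa_-|^2\bigr)+\text{small}\cdot\|\xi\|.
\]
The term $\text{small}\cdot\|\xi\|$ comes from the cross term $(|\kappa_-|+\|\xi\|)^2$ and is absorbed. Here $\mathcal Q_-(\kappa_-)$ is bounded by $C|\kappa_-|^2$ because $\kappa_-$ is smooth and exponentially decaying with norm comparable to $|\kappa_-|$. For differentiability, note that $\mathcal T_T$ is $C^1$ in $(\kappa_-,\xi)$: $\mathcal Q_\pm$ is a smooth Nemytskii-type map on $W^{1,q}$ for $q>2$, and $X^T_\pm$ is affine. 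Its $\xi$-derivative has norm at most $\tfrac12$, so $\operatorname{id}-d_\xi\mathcal T_T$ is invertible. The implicit function theorem (equivalently, the parametric contraction principle) then gives a $C^1$ dependence on $\kappa_-$ with
\[
d_{\kappa_-}\xi=(\operatorname{id}-d_\xi\mathcal T_T)^{-1}\partial_{\kappa_-}\mathcal T_T .
\]
The last cross-term bound of Lemma~\ref{lem:IIF-cross} gives $\|\partial_{\kappa_-}X^T_+\|\le C_\ell e^{-\delta T}$, and $\|d\mathcal Q_-\|\le C(|\kappa_-|+\|\xi\|)$. Combined with the first estimate, this yields $\|d_{\kappa_-}\xi\|\le C(e^{-\delta T}+|\kappa_-|)$.

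\textbf{Continuity in $T$.} The map $T\mapsto\mathcal T_T(\xi)$ is continuous in $L^q$, because $T$ enters only through translation of fixed smooth decaying functions and through the smoothly varying cutoffs. Uniform contractions that depend continuously on a parameter have fixed points that depend continuously on it. I expect this step to be the main obstacle. Translation is not norm-continuous on arbitrary $L^q$ functions, so the argument must check strong continuity of $T\mapsto X^T_\pm(\xi)$ for each fixed $\xi$, which holds since translation is strongly continuous on $L^q$. It must also ensure that the overlap stays inside the adapted chart, which is guaranteed by $\rho$ small together with $W^{1,q}\hookrightarrow C^0$. By \eqref{eq:IIF-theta}, the resulting strip $u_T(\kappa_-)$ solves the Floer equation.
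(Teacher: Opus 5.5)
Your proposal is correct and follows essentially the same route as the paper's proof. In that proof, $\ell$ is fixed first so that the cross-term coupling $C_PC/\ell$ is at most $\tfrac14$, then a small ball and $\rho_0$, then $T_1$; the contraction principle gives the fixed point, the derivative bound comes from differentiating the fixed-point equation with $(1-d_\xi\mathcal T_T)^{-1}$ of norm at most two, and continuity in $T$ follows from strong continuity of translation together with the uniform contraction. The one point to state explicitly is that $\rho_0$ must be small relative to $\rho$ (e.g.\ $\rho_0\le\rho$), not merely small, so that the $C(\rho_0+\rho)^2$ term fits inside the ball; the paper's ordering, ``a fixed small ball and then $\rho_0$'', builds this in.
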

\begin{proof}
The component operators and right inverses are fixed, independent of $T$.
On a correction ball of radius $\rho$,
the type 1 quadratic estimate \eqref{eq:IIF-quadratic} and
Lemma~\ref{lem:IIF-cross} give
\[
 \operatorname{Lip}(\mathcal T_T)
 \le C_P C\bigl(\ell^{-1}+|\kappa_-|+\rho\bigr),\qquad
 \|\mathcal T_T(0,0)\|\le C_0(e^{-\delta T}+|\kappa_-|^2).
\]
First choose $\ell$ so that the first summand in the Lipschitz bound is
at most $1/4$, then a fixed small ball and $\rho_0$ so that the other
summands make the bound at most $1/2$.  Finally enlarge $T_1$ so that the
value at zero lies in half the fixed ball.  The same map preserves the
smaller ball of radius $2C_0(e^{-\delta T}+|\kappa_-|^2)$.
The contraction theorem gives the unique fixed point and the first bound
in \eqref{eq:IIF-correction-estimates}; uniqueness holds in the original
fixed neighborhood as well.

Differentiate the fixed-point equation.  The inverse of
$1-D_\xi\mathcal T_T$ has norm at most two.  The derivative in
$\kappa_-$ comes from the component nonlinearity, bounded by
$C(|\kappa_-|+\|\xi_-\|)$, and from $X_+^T$, bounded by
$C_\ell e^{-\delta T}$.  This proves the derivative bound.  Continuity
in $T$ follows from translation continuity in the Sobolev spaces and
the uniform contraction.
\end{proof}

The two parameters have different roles: a large \emph{fixed} $\ell$
makes the linear coupling small, and a large $T$ makes the original tails
small on its supports.  Smallness of a correction ball alone would not
make a fixed linear coupling small.  The construction retains all strip
modes; the finite spectral selection enters only in the evaluation step.

\subsubsection{Evaluation of the glued strip}\label{subsec:IIF-evaluation}
All coefficient estimates in this subsection use the incoming
parametrization
\[
 (s',t)\longmapsto u_T^{\mathrm{par}}(s'-T,t).
\]
It agrees with $\exp_{u_-}(\kappa_-+\xi_-)$ for $s'\le T/2-\ell$.
The coefficients in the original glued coordinate differ by the weighted
factors $e^{-\lambda_aT}$, but the evaluation does not depend on this
translation.

\begin{lemma}[Evaluation of a nearby strip]\label{lem:IIF-readoff}
Fix a slice $s_0$ in the adapted negative end of $u_-$.  If a strip $u$
in the same incoming parametrization agrees on that end with
$\exp_{u_-}\eta$, where $\eta$ has small $W^{1,q}$ norm, then
\[
 c_{\lambda_a}(u)=e^{\lambda_as_0}\Pi_{E_a}w_u(s_0,\cdot),
       \qquad 1\le a\le r.
\]
For the decomposition $\eta=\eta^{\ker}+\eta^{\mathrm{comp}}$ along
$\ker D_-\oplus(\ker D_-)^\perp$,
\[
 \bigl|c_N(u)-c_N(u_-)-\varsigma(\eta^{\ker})\bigr|
       \le C\|\eta^{\mathrm{comp}}\|_{W^{1,q}}.
\]
In particular equality with zero remainder holds for a kernel section.
\end{lemma}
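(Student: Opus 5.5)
The plan is to exploit exact linearity of the normal equation on the adapted negative end, so that the coefficient read-off is a bounded linear functional of the normal component on a single slice.

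\emph{Step 1: the read-off formula.} On the adapted end, by Lemma~\ref{rem:lagrangian-exact-expansion}, the normal component has the expansion
\[
w_u(s,t)=\sum_j c_j e^{-\lambda_j s}e^{-\sqrt{-1}\lambda_j t},
\]
with $c_j\in E_j=\R^m$. The functions $e^{-\sqrt{-1}\lambda_j t}$, $\lambda_j\in-\pi/2+\pi\Z$, are $L^2([0,1])$-orthogonal eigenfunctions of the self-adjoint asymptotic operator $-J_W\partial_t$ with boundary conditions $\R^m,\sqrt{-1}\,\R^m$. Hence $\Pi_{E_a}$, the $L^2$-orthogonal projection onto the $\lambda_a$-eigenspace identified with $E_a$, extracts $c_a e^{-\lambda_a s_0}$ from $w_u(s_0,\cdot)$. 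This gives $c_{\lambda_a}(u)=e^{\lambda_a s_0}\Pi_{E_a}w_u(s_0,\cdot)$.

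\emph{Step 2: linearity.} On that end the charts $\exp_{u_-}$ were chosen affine in the normal coordinate of the adapted tube, so $w_u(s_0,\cdot)=w_{u_-}(s_0,\cdot)+\eta^\perp(s_0,\cdot)$ exactly, where $\eta^\perp$ is the $W$-component of $\eta$. Consequently
\[
c_N(u)=c_N(u_-)+\Lambda(\eta),\qquad \Lambda(\eta):=\bigl(e^{\lambda_a s_0}\Pi_{E_a}\eta^\perp(s_0,\cdot)\bigr)_{a\le r},
\]
with no nonlinear remainder. For a kernel section $\kappa$, which solves the same constant linear equation there, Step~1 applied to $\kappa$ gives $\Lambda(\kappa)=\varsigma(\kappa)$. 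This settles the zero-remainder claim.

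\emph{Step 3: the bound.} Split $\eta=\eta^{\ker}+\eta^{\mathrm{comp}}$. Then $\Lambda(\eta)=\varsigma(\eta^{\ker})+\Lambda(\eta^{\mathrm{comp}})$, and it remains to show $|\Lambda(\xi)|\le C\|\xi\|_{W^{1,q}}$. This follows because slice evaluation $W^{1,q}\to C^0(\{s_0\}\times[0,1])$ is bounded for $q>2$ by Sobolev embedding, and each $\Pi_{E_a}$ is bounded on $C^0\subset L^2$, with a constant depending on $s_0$ and $r$ only.

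The only delicate point is Step~2. One must check that $\eta^{\mathrm{comp}}$ need not satisfy the linear equation, yet the read-off still applies to $u$ itself, which does. Each strip is read off on its own, and only the linear functional $\Lambda$, not any expansion, is applied to $\eta^{\mathrm{comp}}$. Since $\Lambda$ is bounded and linear on all of $W^{1,q}$, this avoids the issue.
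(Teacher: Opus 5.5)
Your proposal is correct and is essentially the paper's argument. It reads off the coefficients exactly by the slice eigenprojection on the linear adapted end, and uses exact additivity of normal parts in the normal-affine chart, so the kernel part contributes precisely $\varsigma(\eta^{\ker})$. It then controls $\eta^{\mathrm{comp}}$ by the bounded trace $W^{1,q}\to C^0$ followed by a finite-dimensional projection.

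One harmless slip concerns the operator you name. With the paper's conventions the asymptotic operator is $J_W\partial_t$, whose eigenvalue on $e^{-\sqrt{-1}\lambda t}c$ is $\lambda$, so the ``$\lambda_a$-eigenspace'' should be taken for $J_W\partial_t$ rather than $-J_W\partial_t$. The intended span $\{e^{-\sqrt{-1}\lambda_a t}c : c\in\R^m\}$ and your orthogonality argument are unaffected.
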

\begin{proof}
On the negative half-strip the normal equation is linear and the chart
is affine in the normal direction.  The coefficient is therefore exactly
the stated eigenprojection of the slice.  Normal parts add in that chart;
for a kernel section their selected coefficients are $\varsigma$ by
definition.  For the complementary section the bounded trace at the fixed
slice, followed by a finite-dimensional eigenprojection, bounds the
remaining coefficient by its $W^{1,q}$ norm.  Here the superscript
$\mathrm{comp}$ denotes the kernel complement, not a normal-bundle
projection.
\end{proof}

\begin{proposition}[Persistence of the evaluation]
\label{prop:IIF-persistence}
For large $T$ and small $\kappa_-$, with the incoming parametrization,
\begin{equation}\label{eq:IIF-persistence}
\begin{aligned}
 c_N(u_T(\kappa_-))&=c_N(u_-)+\varsigma(\kappa_-)+R_T(\kappa_-),\\
 |R_T(\kappa_-)|&\le C(e^{-\delta T}+|\kappa_-|^2),\\
 |d_{\kappa_-}R_T(\kappa_-)|&\le C(e^{-\delta T}+|\kappa_-|).
\end{aligned}
\end{equation}
The coefficient vector remains nonzero.  The evaluation is $C^1$ in
$\kappa_-$ and converges in $C^1$ on a smaller parameter ball to the
evaluation of the local unbroken family $u_-(\kappa_-)$ as $T\to\infty$.
\end{proposition}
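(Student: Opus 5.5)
The plan is to apply Lemma~\ref{lem:IIF-readoff} with $\eta=\kappa_-+\xi_-$, where $\xi_-=\xi_-(T,\kappa_-)$ is the fixed point of Proposition~\ref{prop:IIF-contraction}. In the incoming parametrization, $u_T^{\mathrm{par}}(s'-T,\cdot)$ agrees with $\exp_{u_-}(\kappa_-+\xi_-)$ for $s'\le T/2-\ell$. The slice $s_0$ is fixed in the adapted negative end of $u_-$, so for $T$ large it lies in this region. There the normal equation is exactly linear and the chart is affine in the normal direction, so $c_N(u_T(\kappa_-))$ is read off exactly on the slice.

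The first step is to split $\eta$ along $\ker D_-\oplus(\ker D_-)^\perp$. Since $\kappa_-\in\mathcal K\subset\ker D_-$ and $\xi_-$ lies in the complement, we have $\eta^{\ker}=\kappa_-$ and $\eta^{\mathrm{comp}}=\xi_-$. The read-off is linear in the normal part on the slice: it is the trace at $s_0$ followed by the eigenprojections $e^{\lambda_as_0}\Pi_{E_a}$. Hence the remainder is exactly
\[
R_T(\kappa_-)=\bigl(e^{\lambda_as_0}\Pi_{E_a}\bigl(\text{normal part of }\xi_-\bigr)(s_0,\cdot)\bigr)_{a\le r},
\]
which is a bounded linear functional $\Lambda$ of $\xi_-$. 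One subtlety remains: $\exp_{u_-}$ is affine only in the normal coordinate of the tube, and the tangential coordinate is nonlinear. The normal component of $\exp_{u_-}(\eta)$ is nevertheless the normal component of $u_-$ plus that of $\eta$, which is what Lemma~\ref{lem:IIF-readoff} records.

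The estimates then follow directly. From $|R_T|\le\|\Lambda\|\,\|\xi_-\|_{W^{1,q}}$ and \eqref{eq:IIF-correction-estimates} we get $|R_T|\le C(e^{-\delta T}+|\kappa_-|^2)$. Because $R_T=\Lambda\circ\xi_-$ with $\Lambda$ linear and independent of $\kappa_-$ and $T$, we have $d_{\kappa_-}R_T=\Lambda\circ d_{\kappa_-}\xi_-$, which gives the derivative bound and $C^1$ regularity in $\kappa_-$.

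For nonvanishing, note that $c_N(u_-)\ne0$ by Corollary~\ref{cor:lagrangian-dis-zero}. Since $\varsigma$ is linear and $|R_T|$ is small, $c_N(u_T(\kappa_-))$ stays in a small ball around $c_N(u_-)$ once $\rho_0$ is shrunk and $T_1$ is enlarged. Then $\Ev^-=q_N\circ c_N$ is $C^1$ in $\kappa_-$ by Lemma~\ref{lem:floer-weighted-quotient}, and it is independent of the choice of parametrization by the weighted transformation law of Lemma~\ref{rem:lagrangian-exact-expansion}(1).

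For the convergence claim, let $u_-(\kappa_-)$ be the unbroken family obtained by solving $D_-\xi+\mathcal Q_-(\kappa_-+\xi)=0$ in the complement, i.e.\ the implicit-function chart of the moduli space near $u_-$. Its coefficients are $c_N(u_-)+\varsigma(\kappa_-)+\Lambda\xi^\infty(\kappa_-)$. It then suffices to show $\xi_-(T,\cdot)\to\xi^\infty$ in $C^1$ on a smaller ball, which I expect to be the main obstacle. I would compare the two fixed-point equations. Their difference is driven by $X_-^T(\xi_+)$ and by the coupling through $\xi_+$. The $C^0$ part of this driving term is bounded by $C_\ell e^{-\delta T}$, plus $(C/\ell)\|\xi_+\|$ where $\|\xi_+\|\le C(e^{-\delta T}+|\kappa_-|^2)$; the uniform contraction then turns this into $C^0$ convergence. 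The derivative part needs more care, because $d_{\kappa_-}\xi_+$ is only bounded by $C(e^{-\delta T}+|\kappa_-|)$, which is not small in $T$. To handle it, I would check that $\xi_+$ depends on $\kappa_-$ only through $X_+^T$, whose $\kappa_-$-derivative is $O(e^{-\delta T})$, and through $\xi_-$ in a way that carries the same small factor. This should give $\|d_{\kappa_-}\xi_+\|\le C_\ell e^{-\delta T}$ after differentiating the fixed-point equation, and hence $d_{\kappa_-}\xi_-\to d_{\kappa_-}\xi^\infty$.
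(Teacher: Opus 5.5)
Your handling of the expansion, the two remainder bounds, the non-vanishing and the $C^1$ regularity is the paper's argument: Lemma~\ref{lem:IIF-readoff} with $\eta=\kappa_-+\xi_-$ and $\eta^{\ker}=\kappa_-$, combined with the bounds of Proposition~\ref{prop:IIF-contraction} passed through the fixed bounded linear read-off.

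\textbf{The gap is in the convergence claim.} Your comparison of the coupled and uncoupled fixed-point equations uses only three inputs:
\begin{itemize}
\item the a priori bound $\|\xi_+\|\le C(e^{-\delta T}+|\kappa_-|^2)$;
\item the bound $\|d_{\kappa_-}\xi_\pm\|\le C(e^{-\delta T}+|\kappa_-|)$;
\item the Lipschitz constants $C/\ell$ of Lemma~\ref{lem:IIF-cross}.
\end{itemize}
Since $\ell$ is fixed, this only yields $\|\xi_-(T,\kappa_-)-\xi_-^\infty(\kappa_-)\|\le C(e^{-\delta T}+|\kappa_-|^2/\ell)$. That bound does not tend to zero as $T\to\infty$ for fixed $\kappa_-\ne0$, so the $C^0$ step already fails, not just the derivative step. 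For the same reason, your expectation that the $\xi_-$-dependence of $X_+^T$ ``carries the same small factor'' cannot come from Lemma~\ref{lem:IIF-cross}. Its Lipschitz constant $C/\ell$ is independent of $T$, because those estimates deliberately ignore the spatial decay of the argument.

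\textbf{The missing idea is to use the decaying tails of the actual unbroken family.} Substitute the approximate solution $(\xi_-^\infty,0)$ into $\mathcal T_T$ of \eqref{eq:IIF-contraction}. Since $\xi_-^\infty=-P_-\mathcal Q_-(\kappa_-+\xi_-^\infty)$ and $\mathcal Q_+(0)=0$, the residual is exactly $\bigl(-P_-X_-^T(0),\,-P_+X_+^T(\kappa_-,\xi_-^\infty)\bigr)$.
\begin{itemize}
\item The first entry only sees $V_+$ near $-3T/2$, and it does not depend on $\kappa_-$.
\item The second entry sees $(V_-+\kappa_-+\xi_-^\infty)(\tilde s+2T)$ near $3T/2$. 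This is the chart representative of the actual strip $u_-(\kappa_-)$ on its positive end, hence a decaying solution of the linear neck equation. By Lemma~\ref{lem:IIF-neck-estimates}, it and its $\kappa_-$-derivative are $O(e^{-\delta T})$ uniformly on a small ball.
\end{itemize}
The uniform contraction, with Lipschitz constant at most $\frac12$, then gives $\|(\xi_-,\xi_+)-(\xi_-^\infty,0)\|\le Ce^{-\delta T}$. Differentiating the two fixed-point identities in $\kappa_-$ gives the same bound in $C^1$. In particular $\|\xi_+\|_{C^1}\le Ce^{-\delta T}$, which your a priori route cannot produce. With that estimate, the read-off $\Lambda$ and the smoothness of $q_N$ away from $0$ finish the convergence as you intended.
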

\begin{proof}
For large $T$, the fixed slice $s_0$ is before the outgoing cutoff.
Apply Lemma~\ref{lem:IIF-readoff} with $\eta=\kappa_-+\xi_-$ and the
bounds of Proposition~\ref{prop:IIF-contraction}.  The resulting bounded
linear map $\Pi_{E_N^-}$ gives both estimates in \eqref{eq:IIF-persistence}.
Since $c_N(u_-)\ne0$, shrinking the parameter ball and increasing $T_1$
keep the vector away from zero.

For the last claim, solve the uncoupled equation
\[
 \xi_-^\infty=-P_-\mathcal Q_-(\kappa_-+\xi_-^\infty)
\]
on the same complement.  Its solution defines
$u_-(\kappa_-)=\exp_{u_-}(\kappa_-+\xi_-^\infty)$, a local family of
unbroken strips.  Its adapted positive tails, and their parameter
derivatives, decay uniformly.  Substitution of $(\xi_-^\infty,0)$ in
\eqref{eq:IIF-contraction} leaves a residual of size $O(e^{-\delta T})$,
including its first $\kappa_-$ derivative, by the cross-term estimates
on these actual tails.  Uniform contraction and differentiation of the
fixed-point equation imply
\[
 \|\xi_-(T,\cdot)-\xi_-^\infty\|_{C^1(W^{1,q})}
       +\|\xi_+(T,\cdot)\|_{C^1(W^{1,q})}
       \le Ce^{-\delta T}.
\]
The trace formula and smoothness of $q_N$ on nonzero vectors
now prove the asserted convergence.
\end{proof}

\subsubsection{The incidence equation}\label{subsec:IIF-constraint}
The remaining condition on the glued strip is the incidence
$\Ev^-_{\cP{p},N}(u_T(\kappa_-))\in\descending_{h_N}(\cP{v})$, with the
kernel parameter $\kappa_-$ as the unknown.  Recall from
Lemma~\ref{lem:IIF-linear-T2} that $z=\Ev^-_{\cP{p},N}(u_-)$ is the
evaluation of the incoming component of the \emph{broken} configuration
and lies on the cell, and that the composite $\mathcal L$ of the
coefficient map with the quotient by $T_z\descending_{h_N}(\cP{v})$ is an
isomorphism from $\mathcal K$ onto the normal space
$T_zS_N/T_z\descending_{h_N}(\cP{v})$ of the cell at $z$, both of
dimension $c(\cP{v})$.  For finite
$T$ the glued strip at $\kappa_-=0$ is not the broken configuration: its
incoming piece is $\exp_{u_-}(\xi_-(T,0))$ with $\xi_-(T,0)=O(e^{-\delta T})$
nonzero, so its evaluation is $O(e^{-\delta T})$-close to $z$ but not on
the cell, and $\kappa_-$ must be adjusted.

The adjustment is found by the following quantitative form of the inverse
function theorem.

\begin{lemma}[Quantitative inverse function theorem]\label{lem:IIF-transverse-ift}
Let $F\colon B_R\to\R^{c}$ be a $C^1$ map on the closed ball of radius $R$
about $0$ in $\R^c$, and let $L\colon\R^c\to\R^c$ be an invertible linear
map.  Suppose that for some $\eta,\theta\ge0$
\[
    |F(0)|\le\eta,
    \qquad
    \|dF(x)-L\|\le\theta\ \text{ for all }x\in B_R,
    \qquad
    \|L^{-1}\|\,\theta\le\tfrac12,
    \qquad
    2\|L^{-1}\|\,\eta\le R .
\]
Then there is exactly one $x\in B_R$ with $F(x)=0$, and it satisfies
$|x|\le2\|L^{-1}\|\,\eta$.  If $F$ depends continuously on a parameter and
the hypotheses hold uniformly, the solution depends continuously on the
parameter.
\end{lemma}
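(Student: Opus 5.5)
The plan is to rewrite the zero problem as a fixed-point problem and apply the contraction mapping theorem on $B_R$. Define
\[
 \Phi(x):=x-L^{-1}F(x),\qquad x\in B_R,
\]
so that the zeros of $F$ in $B_R$ are exactly the fixed points of $\Phi$, since $L^{-1}$ is injective.

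\emph{Step 1: contraction.} We have $d\Phi(x)=L^{-1}(L-dF(x))$. Hence $\|d\Phi(x)\|\le\|L^{-1}\|\,\theta\le\tfrac12$ on the convex set $B_R$. The mean value inequality then gives $|\Phi(x)-\Phi(y)|\le\tfrac12|x-y|$ for all $x,y\in B_R$.

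\emph{Step 2: $\Phi$ maps $B_R$ into itself.} For $|x|\le R$,
\[
 |\Phi(x)|\le|\Phi(x)-\Phi(0)|+|\Phi(0)|\le\tfrac12|x|+\|L^{-1}\|\,\eta .
\]
By the hypothesis $2\|L^{-1}\|\eta\le R$, the right side is at most $\tfrac12R+\tfrac12R=R$. Since $B_R$ is closed, hence complete, Banach's fixed point theorem gives a unique fixed point $x$, that is, a unique zero of $F$ in $B_R$.

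\emph{Step 3: the bound.} From the fixed point equation, $|x|=|\Phi(x)|\le\tfrac12|x|+\|L^{-1}\|\eta$. Rearranging gives $|x|\le2\|L^{-1}\|\eta$.

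\emph{Step 4: continuous dependence.} Let $F_\tau$ depend continuously on $\tau$ (in $C^0$ on $B_R$, which is how it will be applied), with the hypotheses holding uniformly. Write $x_\tau$ and $x_{\tau'}$ for the two fixed points. Then
\[
 |x_\tau-x_{\tau'}|\le|\Phi_\tau(x_\tau)-\Phi_\tau(x_{\tau'})|+|\Phi_\tau(x_{\tau'})-\Phi_{\tau'}(x_{\tau'})|\le\tfrac12|x_\tau-x_{\tau'}|+\|L^{-1}\|\sup_{B_R}|F_\tau-F_{\tau'}| .
\]
This gives $|x_\tau-x_{\tau'}|\le2\|L^{-1}\|\sup|F_\tau-F_{\tau'}|$, which tends to zero as $\tau'\to\tau$. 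If $L$ is also allowed to vary, the same estimate applies with $L^{-1}$ frozen at one parameter value, since the uniform hypotheses persist under small changes.

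The only point needing care is Step 2, where the exact constant $2\|L^{-1}\|\eta\le R$ is precisely what closes the self-map estimate; the other steps are routine.
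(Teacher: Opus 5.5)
Your proof is correct and is exactly the paper's argument, which simply observes that $x\mapsto x-L^{-1}F(x)$ is a $\tfrac12$-contraction of $B_R$ into itself; you have supplied the routine details (mean value inequality, self-map estimate, a priori bound, continuity estimate). In Step 4 the caveat about varying $L$ is unnecessary, since your displayed estimate only uses that $\Phi_\tau$ is a contraction and that $F_{\tau'}(x_{\tau'})=0$.
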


The lemma is the contraction principle applied to $x\mapsto x-L^{-1}F(x)$,
a $\tfrac12$-contraction of $B_R$ into itself under the stated
inequalities.

\begin{proposition}[Unique solution of the incidence]\label{prop:IIF-ift}
For $T\ge T_1$ sufficiently large there is a unique $\kappa_-(T)$ in a
fixed small ball of $\mathcal K$ with
$\Ev^-_{\cP{p},N}(u_T(\kappa_-(T)))\in\descending_{h_N}(\cP{v})$.  It is
continuous in $T$ and satisfies $|\kappa_-(T)|\le Ce^{-\delta T}$.
\end{proposition}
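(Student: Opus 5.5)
We reduce the incidence condition to a finite-dimensional equation on $\mathcal K$ and solve it with Lemma~\ref{lem:IIF-transverse-ift}.

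\emph{Local coordinates near $z$.} Near $z=\Ev^-_{\cP{p},N}(u_-)$ choose a submersion $\Psi\colon U\to\R^{c}$, $c=c(\cP{v})$, defined on a neighborhood $U$ of $z$ in $S_N$, with $\Psi(z)=0$, such that $\descending_{h_N}(\cP{v})\cap U=\Psi^{-1}(0)$. Such a chart exists because the descending cell is an embedded submanifold near the interior point $z$, and we may take it linear in the constant-metric Morse coordinates if $z$ happens to lie there. Then $d\Psi_z$ induces an isomorphism $T_zS_N/T_z\descending_{h_N}(\cP{v})\cong\R^c$. Define
\[
F_T(\kappa_-)=\Psi\bigl(q_N(c_N(u_T(\kappa_-)))\bigr),\qquad \kappa_-\in\mathcal K\cong\R^c,
\]
and let $L=d\Psi_z\circ\mathcal L$. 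By Lemma~\ref{lem:IIF-linear-T2}, $L$ is invertible. The incidence condition for $u_T(\kappa_-)$ is exactly $F_T(\kappa_-)=0$, as long as the evaluation stays in $U$. This is guaranteed for small $\kappa_-$ and large $T$ by Proposition~\ref{prop:IIF-persistence}.

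\emph{Verifying the hypotheses.} Write $F_\infty(\kappa_-)=\Psi(q_N(c_N(u_-)+\varsigma(\kappa_-)))$. Then $F_\infty(0)=0$ and $dF_\infty(0)=d\Psi_z\circ dq_N\circ\varsigma=L$. The differential of the unperturbed map, like $F_\infty$ itself, depends only on $\varsigma$ and the smooth maps $q_N,\Psi$. By \eqref{eq:IIF-persistence} and smoothness of $\Psi\circ q_N$ on a neighborhood of the nonzero vector $c_N(u_-)$ we obtain two estimates:
\[
|F_T(0)|\le C|R_T(0)|\le Ce^{-\delta T},
\qquad
\|dF_T(\kappa_-)-L\|\le C\bigl(|\kappa_-|+e^{-\delta T}\bigr).
\]
In the second estimate, one contribution comes from the variation of $d(\Psi\circ q_N)$ over a ball of size $O(|\kappa_-|+e^{-\delta T})$. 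The other comes from the derivative bound $|d_{\kappa_-}R_T|\le C(e^{-\delta T}+|\kappa_-|)$.

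\emph{Applying the inverse function theorem.} Choose $R$ small so that $\|L^{-1}\|C(R+e^{-\delta T_1})\le\tfrac12$. Then enlarge $T_1$ so that $2\|L^{-1}\|Ce^{-\delta T}\le R$. Lemma~\ref{lem:IIF-transverse-ift} now gives a unique zero $\kappa_-(T)\in B_R$ with $|\kappa_-(T)|\le 2\|L^{-1}\|Ce^{-\delta T}$. Continuity in $T$ follows from the parametric clause of the lemma, because $u_T(\kappa_-)$, and hence $F_T$ together with $dF_T$, is continuous in $T$ by Proposition~\ref{prop:IIF-contraction}. Uniqueness holds in the fixed ball $B_R$. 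Inside $U$, points of the cell are precisely the zeros of $\Psi$, so no other intersection with the descending cell occurs in that ball.

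\emph{Main obstacle.} The delicate point is the $C^1$ closeness $\|dF_T-L\|$ uniformly in $T$. This requires the $C^1$-in-$\kappa_-$ control of the fixed point of the component equations, which is Proposition~\ref{prop:IIF-contraction}, and its transfer to the coefficients through the exact read-off of Lemma~\ref{lem:IIF-readoff}. The read-off is linear and bounded at the fixed slice $s_0$, so the derivative estimate passes directly through it. If $z$ were close to the boundary of the cell, we would first shrink $U$; since $z$ lies in the open cell, this is harmless.
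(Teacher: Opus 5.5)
Your proposal is correct and follows the paper's proof essentially step by step. Your submersion $\Psi$ plays the role of the paper's adapted coordinates, in which the descending cell is a zero set. You get the same two estimates on $F(0)$ and on $dF-\mathcal L$ from Proposition~\ref{prop:IIF-persistence}, and you conclude with Lemma~\ref{lem:IIF-transverse-ift}. The one difference is that the paper treats the top-cell case $c=0$ separately; your argument covers it implicitly, since then $\mathcal K=0$ and the incidence reduces to the evaluation staying inside the open cell.
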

\begin{proof}
\emph{The degenerate case.}  If $c=0$, that is, if $\cP{v}$ is a maximum
of $h_N$ and its cell is open in $S_N$, then $\mathcal K=0$, the incidence
holds for large $T$ because $\Ev^-_{\cP{p},N}(u_T(0))$ is within
$O(e^{-\delta T})$ of the interior point $z$ of the cell, and
$\kappa_-(T)=0$.  Assume $c>0$ from now on.

\emph{Coordinates.}  Take coordinates $(y_1,\dots,y_{N-1})$ on a
neighborhood of $z$ in $S_N$ in which the descending cell is the coordinate
subspace $\{y_{j+1}=\dots=y_{N-1}=0\}$, $j=j(\cP{v})$, with $z$ at the
origin.  Such coordinates exist for any submanifold, and the last $c$ of
them identify the normal space $T_zS_N/T_z\descending_{h_N}(\cP{v})$ with
$\R^c$.  Let $F(\kappa_-)\in\R^c$ be the last $c$ coordinates of
$\Ev^-_{\cP{p},N}(u_T(\kappa_-))$, for $\kappa_-$ in a ball
$B_R\subset\mathcal K$.  The glued strip evaluates into the cell exactly
when $F(\kappa_-)=0$; this condition does not depend on the coordinates.

\emph{Estimates.}  By Proposition~\ref{prop:IIF-persistence} the evaluation
at $\kappa_-=0$ is $z+O(e^{-\delta T})$ and its derivative is
$\mathcal L$ up to $O(e^{-\delta T}+|\kappa_-|)$, that is,
\[
    |F(0)|\le C_1e^{-\delta T},
    \qquad
    \|dF(\kappa_-)-\mathcal L\|\le C_2\bigl(e^{-\delta T}+|\kappa_-|\bigr)
    \quad\text{on }B_R,
\]
and $\mathcal L$ is invertible by Lemma~\ref{lem:IIF-linear-T2}.

\emph{Choice of constants.}  Fix $R$ with $\|\mathcal L^{-1}\|C_2\,R\le\frac14$,
and then $T_1$ with $\|\mathcal L^{-1}\|C_2\,e^{-\delta T_1}\le\frac14$ and
$2\|\mathcal L^{-1}\|C_1\,e^{-\delta T_1}\le r$.

\emph{Conclusion.}  For $T\ge T_1$ the hypotheses of
Lemma~\ref{lem:IIF-transverse-ift} hold with $L=\mathcal L$,
$\eta=C_1e^{-\delta T}$ and $\theta=C_2(e^{-\delta T}+R)$.  The lemma gives
the unique zero $\kappa_-(T)\in B_R$, the bound
$|\kappa_-(T)|\le2\|\mathcal L^{-1}\|C_1e^{-\delta T}$, and continuity in $T$.
\end{proof}

Let $\vartheta^T$ be the sphere half-trajectory ending at
$\Ev^-_{\cP{p},N}(u_T(\kappa_-(T)))$ in the descending cell of $\cP{v}$.
The local gluing map is
\[
 \mathcal G(u_-,u_+,T)=\bigl(\vartheta^T,u_T(\kappa_-(T))\bigr).
\]
Only the displacement transverse to the descending cell is constrained by
the incidence; the displacement along the cell determines the new
half-trajectory.

\subsubsection{Recovery, uniqueness, and the collar}\label{subsec:IIF-proof}
To prove that the map is a collar, we recover the component maps of every
nearby unbroken strip.  The exact linear neck and the separated cutoffs
make this reconstruction explicit.

\begin{lemma}[Solutions on a linear neck]
\label{lem:IIF-neck-estimates}
Let $A=A_{\cP{y}}=J_{\cP{y}}\,d/dt$ on paths $[0,1]\to T_{\cP{y}}\widehat M$
with boundary values in $T_{\cP{y}}\widehat L_0$ and $T_{\cP{y}}\widehat L_1$,
and let $U$ be a smooth solution of $(\partial_s+A)U=0$ on
$[a,b]\times[0,1]$.  Then $U$ is the finite-or-convergent sum
\[
    U(s,t)=\sum_{\lambda\in-\pi/2+\pi\Z}c_\lambda\,e^{-\lambda s}e^{-\sqrt{-1}\lambda t},
    \qquad c_\lambda\in T_{\cP{y}}\widehat L_0 ,
\]
and its stable and unstable parts
$\Pi^{\mathrm s}U=\sum_{\lambda>0}$, $\Pi^{\mathrm u}U=\sum_{\lambda<0}$
are again solutions, with
$\Pi^{\mathrm s}U(s)=e^{-(s-a)A}\Pi^{\mathrm s}U(a)$ and
$\Pi^{\mathrm u}U(s)=e^{-(s-b)A}\Pi^{\mathrm u}U(b)$.  For every unit
sub-strip $[s_0,s_0+1]\times[0,1]$ with $s_0-a\ge1$, respectively
$b-s_0-1\ge1$,
\[
    \|\Pi^{\mathrm s}U\|_{W^{1,q}([s_0,s_0+1]\times[0,1])}
    \le C\,e^{-\frac{\pi}{2}(s_0-a)}\,\|U(a,\cdot)\|_{L^2[0,1]},
\] 
\[
    \|\Pi^{\mathrm u}U\|_{W^{1,q}([s_0,s_0+1]\times[0,1])}
    \le C\,e^{-\frac{\pi}{2}(b-s_0-1)}\,\|U(b,\cdot)\|_{L^2[0,1]},
\]
with $C$ depending only on $q$.  In particular $\delta$ of
\eqref{eq:IIF-rates} may be taken to be any number below $\pi/4$.
\end{lemma}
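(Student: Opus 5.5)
The plan is to diagonalize the constant operator $A=J_{\cP{y}}\,d/dt$ on the domain of paths with $U(0)\in T_{\cP{y}}\widehat L_0$ and $U(1)\in T_{\cP{y}}\widehat L_1$, and then solve the equation mode by mode. In the adapted chart at $\cP{y}$ we may take $T\widehat L_0=\R^{n'}$, $T\widehat L_1=\sqrt{-1}\,\R^{n'}$ and $J$ standard, by Convention~\ref{conv:lagrangian-preparation}.

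\emph{Step 1: the spectrum.} The eigenfunctions of $A$ are $e^{-\sqrt{-1}\lambda t}c$ with $c\in\R^{n'}$ and $\lambda\in-\pi/2+\pi\Z$. The value at $t=0$ is real, and the value at $t=1$ is imaginary exactly when $\cos\lambda=0$. Up to sign conventions the eigenvalue is $\lambda$, and $A$ is self-adjoint for the $L^2$ inner product. For each fixed $\lambda$, these eigenfunctions form an orthogonal decomposition of $L^2([0,1];\C^{n'})$: the real and imaginary parts give cosine and sine families, which together are complete on $[0,1]$ with the mixed boundary conditions.

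\emph{Step 2: the expansion and the two projections.} Expand $U(s,\cdot)=\sum_\lambda c_\lambda(s)e^{-\sqrt{-1}\lambda t}$ in $L^2$. The equation $(\partial_s+A)U=0$ gives $c_\lambda'=-\lambda c_\lambda$, so $c_\lambda(s)=c_\lambda e^{-\lambda s}$. Smoothness up to the boundary justifies differentiating the expansion termwise, since the $L^2$ coefficients of $\partial_sU$ and $AU$ are computed by integration by parts with the boundary terms vanishing. Define $\Pi^{\mathrm s}$ and $\Pi^{\mathrm u}$ as the spectral projections onto $\lambda>0$ and $\lambda<0$. They commute with $\partial_s+A$, so each part is again a solution. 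The two propagation formulas are the mode-wise identities $c_\lambda(s)=e^{-\lambda(s-a)}c_\lambda(a)$ and $c_\lambda(s)=e^{-\lambda(s-b)}c_\lambda(b)$.

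\emph{Step 3: the estimates.} Every positive eigenvalue satisfies $\lambda\ge\pi/2$. Hence
\[
\|\Pi^{\mathrm s}U(s)\|_{L^2}\le e^{-\frac{\pi}{2}(s-a)}\|U(a)\|_{L^2},
\]
and the same bound holds with an extra factor $\lambda^ke^{-\lambda\cdot1}$ for $A^k$ applied, on any slice at distance at least one from $a$. To bound the $W^{1,q}$ norm on the unit sub-strip, I would control $\sup_{s\in[s_0,s_0+1]}\|\Pi^{\mathrm s}U(s)\|_{H^k}$ for $k=2$ using these weighted sums. The factor $\sum_\lambda\lambda^{2k}e^{-2\lambda(s-a)}$, taken over $s-a\ge1$, is bounded by $C\,e^{-\pi(s-a)}$, which gives the $H^2$ bound on the strip with $\partial_s=-A$. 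Sobolev embedding $H^2\subset W^{1,q}$ in two dimensions then gives the stated inequality with a constant depending only on $q$. The unstable case is symmetric, with the reference slice at $b$ and the distance $b-s_0-1$.

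The main obstacle is Step 3. Extracting the strict $W^{1,q}$ bound with a constant independent of the neck length requires the uniform margin $s_0-a\ge1$ to absorb the growth of the high-mode factors $\lambda^k$. A second delicate point is checking that the boundary conditions are preserved by the $H^k$-norms defined through $A$; I would handle this by defining these norms spectrally from $A$ itself, and comparing them with the usual Sobolev norms on $[0,1]$.

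Finally, the claim about $\delta$ follows directly. The exponent in these estimates is $\pi/2$, so the bound \eqref{eq:IIF-rates} holds with $2\delta$ any number below $\pi/2$, that is, with $\delta$ any number below $\pi/4$.
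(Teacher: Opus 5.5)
Your proposal is correct and follows the paper's proof: you expand each slice in the eigenfunctions $e^{-\sqrt{-1}\lambda t}c$, solve $c_\lambda'=-\lambda c_\lambda$, and exploit the spectral gap $\lambda\ge\pi/2$ on the stable side. The only difference is the final regularity step. The paper gets a pointwise $C^1$ bound by Cauchy--Schwarz against $\sum_{\lambda>0}(1+\lambda)^2e^{-2\lambda(s_0-a)}$, which also yields the $C^1$ form of \eqref{eq:IIF-rates} directly. You instead bound slice $H^2$ norms spectrally and invoke $H^2\subset W^{1,q}$; to get the $C^1$ rates this way you would take $k=3$ and use $H^3\subset C^1$.
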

\begin{proof}
In the chart the equation is $\partial_sU+J_{\cP{y}}\partial_tU=0$ with
$J_{\cP{y}}$ constant and $T_{\cP{y}}\widehat L_1=J_{\cP{y}}T_{\cP{y}}\widehat L_0$,
so each slice $U(s,\cdot)$ expands in the eigenfunctions
$e^{-\sqrt{-1}\lambda t}c$, $c\in T_{\cP{y}}\widehat L_0$, of $A$, with
$\lambda\in\pi/2+\pi\Z$ because the boundary condition at $t=1$ forces
$\cos\lambda=0$, the coefficients solving
$\dot c_\lambda(s)=-\lambda c_\lambda(s)$; this is the expansion, and the
formulas for the two parts follow.  For the estimate, on the sub-strip
$|\partial_s^i\partial_t^k(c_\lambda e^{-\lambda s}e^{-\sqrt{-1}\lambda t})|
\le|\lambda|^{i+k}|c_\lambda|e^{-\lambda(s_0-a)}e^{-\lambda a}$, and
$c_\lambda e^{-\lambda a}$ are the Fourier coefficients of $U(a,\cdot)$;
Cauchy--Schwarz against $\sum_{\lambda>0}(1+\lambda)^2e^{-2\lambda(s_0-a)}$,
bounded for $s_0-a\ge1$ by $Ce^{-\pi(s_0-a)}$ since the smallest positive
eigenvalue is $\pi/2$, gives the $C^1$ bound on the sub-strip, hence the
$W^{1,q}$ bound.  The unstable part is the same with $b$ in place of $a$.
\end{proof}

Only these two facts are used below: the spectral parts of a neck
solution are solutions, and a spectral part is exponentially small, in
$W^{1,q}$ and uniformly in the neck length, at unit distance from the slice
where it is read.  The reconstructed pieces are never measured closer than
that to their own slice, because there the cutoff multiplying them
vanishes.

\paragraph{Component reconstruction.}
Let $u$ be an actual strip sufficiently close in the broken topology to
$(u_-,u_+)$.  A long middle neck lies in the adapted chart and
$U=\Phi_{\cP{y}}^{-1}u$ satisfies
$(\partial_s+A_{\cP{y}})U=0$, so Lemma~\ref{lem:IIF-neck-estimates}
applies.  Once the two translations have been chosen,
use the cutoffs to define on this neck
\begin{equation}\label{eq:IIF-reconstruction}
\begin{aligned}
 \Phi_{\cP{y}}^{-1}a_-(s+T,t)
   &=\Pi^{\mathrm s}U(s,t)+(1-\beta_+^T(s))\Pi^{\mathrm u}U(s,t),\\
 \Phi_{\cP{y}}^{-1}a_+(s-T,t)
   &=\Pi^{\mathrm u}U(s,t)+(1-\beta_-^T(s))\Pi^{\mathrm s}U(s,t).
\end{aligned}
\end{equation}
At least one cutoff equals one at every point.  Multiplying these formulas
by $\beta_-^T$ and $\beta_+^T$ and adding recovers $U$ exactly.
Since the spectral projections commute with the linear neck equation,
differentiation gives \eqref{eq:IIF-theta-minus} and
\eqref{eq:IIF-theta-plus} with zero right-hand sides.  For example the
incoming derivative is $-(\beta_+^T)'\Pi^{\mathrm u}U$, and on that support
the outgoing coordinate is exactly $\Pi^{\mathrm u}U$, so the cross term
cancels it.

The incoming map agrees with $u$ on the left and has only stable modes
on the right; extend the latter by its decaying positive-end solution.
The outgoing map extends by unstable modes to its negative end and agrees
with $u$ on the right.  Together with the unchanged outer pieces these
are whole-strip component maps.  The estimates of
Lemma~\ref{lem:IIF-neck-estimates} show that they are small $W^{1,q}$
deformations of $u_\pm$ as the broken limit is approached.  The decomposition is unique among component maps satisfying
the coupled equations, the reconstruction identity, and the prescribed
end decay: in the middle, their stable and unstable parts are forced to
be $\Pi^{\mathrm s}U$ and $\Pi^{\mathrm u}U$, respectively, and the
coupled linear equations determine the transition regions.

\paragraph{Translations and surjectivity.}
Choose the two translations by requiring
\[
 \langle\eta_-,\partial_su_-\rangle_{L^2}=0,
 \qquad\langle\eta_+,\partial_su_+\rangle_{L^2}=0,
 \qquad a_\pm=\exp_{u_\pm}\eta_\pm.
\]
At the broken limit the derivative of the phase equations is diagonal
with nonzero entries $\pm\|\partial_su_\pm\|_{L^2}^2$; the cross terms
and cutoff terms tend to zero by the tail estimates.  The implicit
function theorem fixes the translations uniquely near the chosen representatives.
Their relative shift is $2T$ and their common shift is the irrelevant
translation of the unparametrized strip.  Set
$\kappa_- =\operatorname{proj}_{\mathcal K}\eta_-$,
$\xi_- =\eta_- -\kappa_-$, and $\xi_+=\eta_+$.
The reconstructed corrections lie in the chosen complements and satisfy
the coupled equations, so Proposition~\ref{prop:IIF-contraction} identifies
them with its unique fixed point.  If $u$ obeys the incidence, its kernel
parameter is the unique solution of Proposition~\ref{prop:IIF-ift}.  Therefore every nearby incidence configuration
is in the image of $\mathcal G$.

\paragraph{Injectivity and the boundary.}
The same phase normalization recovers $T$ and the two component maps from
the glued strip, so different gluing data in the local neighborhood do
not have the same image.  The correction and incidence estimates show
that the glued family converges to $(u_-,u_+)$ as $T\to\infty$.
The reconstruction depends continuously on the strip in the broken
topology.  Thus adjoining $T=\infty$ gives a homeomorphism onto a
neighborhood of this boundary point.  The parameter $e^{-\delta T}$
is a boundary coordinate.  Allowing small component parameters in the
same construction gives the usual linear gluing identification of the
kernels and determinant lines; its orientation is calculated in
Subsection~\ref{subsec:IIF-orientations}.

\paragraph{Assembly over the stratum.}
The stratum $Z$ is finite by Proposition~\ref{prop:IIF-strata}.  Choose
the cutoff width large enough for all its pairs, and choose $T_1$ and the
small neighborhoods uniformly by taking finitely many maxima and minima
of the local constants.  Distinct broken points have disjoint sufficiently
small neighborhoods in the broken topology; increasing $T_1$ keeps the
collars disjoint.  Choose representatives, charts and complements at one
representative of each $G$-orbit and transport them by $G$.  Uniqueness of
the two contractions then makes the resulting maps equivariant.
This assembles the maps on $Z\times[T_1,\infty)$ and proves
Proposition~\ref{thm:IIF-outside-gluing}, including the collar assertion.

\subsection{Ordinary strip breaking and invariant intermediate points}
\label{subsec:IIF-invariant-gluing}
The preceding ambient construction does not depend on the outside nature
of the intermediate point, except to specify its adapted chart and the
source of regularity.  We state its applications to the other strip
boundary types, keeping the incidence on the appropriate component.

\subsubsection{Ordinary strip spaces and fixed outgoing strips}
\label{subsec:IIF-fixed-continuation}
\begin{proposition}[Ordinary strip gluing]\label{prop:IIF-ordinary-gluing}
For a pair of ordinary rigid strips and any intermediate intersection
$\cP{y}$ there is a collar map
\[
 \M^0(\cP{x},\cP{y})\times\M^0(\cP{y},\cP{b})\times[T_1,\infty)
          \longrightarrow\M^1(\cP{x},\cP{b}),
\]
with the boundary inclusion as its extension at infinity.  The same
statement holds when a factor is restricted to fixed strips.
\end{proposition}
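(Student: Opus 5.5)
The plan is to rerun the construction of Subsection~\ref{subsec:IIF-outside-gluing} with the incidence step deleted, since there is no sphere constraint. Let $(u_-,u_+)\in\M^0(\cP{x},\cP{y})\times\M^0(\cP{y},\cP{b})$. By ambient regularity (Proposition~\ref{thm:floer-free-regularity} for non-fixed factors, Proposition~\ref{thm:floer-model-regularity} for fixed ones), $D_\pm$ are surjective with $\ker D_\pm=\R\partial_su_\pm$. I take $\mathcal K=0$, $\kappa_-=0$, and use the right inverses $P_\pm$ on the $L^2$-orthogonal complements of the translation vectors. The intersection point $\cP{y}$ carries an adapted chart (Convention~\ref{conv:lagrangian-preparation}; Corollary~\ref{cor:lagrangian-preparation-hat} if $\cP{y}$ is invariant). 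In it $J$ is constant and the Lagrangians are linear, so the preglued map \eqref{eq:IIF-preglue}, the exact error formula \eqref{eq:IIF-theta}, the cross-term bounds of Lemma~\ref{lem:IIF-cross} and the type~1 estimate of Lemma~\ref{lem:IIF-type1} all hold verbatim, with $\kappa_-=0$.

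Proposition~\ref{prop:IIF-contraction} then gives, for a fixed large $\ell$ and $T\ge T_1$, a unique fixed point $(\xi_-,\xi_+)$ with $\|\xi_\pm\|\le Ce^{-\delta T}$, continuous in $T$. Set $\mathcal G(u_-,u_+,T)=u_T(0)$; it lies in the index-two component, which is one-dimensional modulo translation. For recovery, a strip close to the broken configuration has a long neck in the chart at $\cP{y}$. I decompose it by \eqref{eq:IIF-reconstruction} using Lemma~\ref{lem:IIF-neck-estimates}, fix the two translations by the phase conditions $\langle\eta_\pm,\partial_su_\pm\rangle=0$, and identify the resulting corrections with the unique fixed point. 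This gives surjectivity onto a neighbourhood of the boundary point and injectivity. Finally, compactness of the zero-dimensional factors (Proposition~\ref{prop:IIF-strata}) lets me take uniform constants and disjoint collars.

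For the fixed restriction, suppose $u_\pm\subset\widehat M^G$. The construction can be done entirely in $M^G$: there the tangential operators $D^G_\pm$ are surjective for generic $J_0$, and the normal operators are invertible (Proposition~\ref{thm:floer-model-regularity}). The fixed-locus contraction gives a fixed solution. By uniqueness of the ambient fixed point, which is $G$-invariant since the data are $G$-invariant, it coincides with the ambient glued strip. So the glued strips are fixed, and the collar lands in $\M^1_{M^G}$, consistent with Lemma~\ref{lem:floer-fixed-open-closed}.

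The main obstacle is making sure the pregluing is exact on the overlap, so that \eqref{eq:IIF-theta} has no extra remainder. This needs the intermediate chart to be genuinely linear with constant $J$. At outside points this is Convention~\ref{conv:lagrangian-preparation}; at invariant points it uses the product chart, whose spectrum $\pm\pi/2+\pi\Z$ gives the gap $\delta<\pi/4$.
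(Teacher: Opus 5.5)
Your proposal is correct and is essentially the paper's proof: you rerun the outside-break construction with no kernel parameter ($\kappa_-=0$) and no incidence equation, use the product chart at an invariant intermediate point and split-operator regularity for fixed factors, and note that when both factors are fixed the unique fixed point has zero normal correction, so the glued strips stay in $M^G$. Your justification of that last step, via uniqueness of the contraction and its $G$-equivariance (which forces the correction to be tangential since $W^G=0$), spells out what the paper states in one line.
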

\begin{proof}
Both kernels consist only of their translation directions.  In the
chosen slices there is no kernel parameter and no incidence equation.
The pregluing, the coupled contraction with $\kappa_-=0$, and the exact
reconstruction above therefore give the collar.  At an invariant
intersection use the chart of Corollary~\ref{cor:lagrangian-preparation-hat}; at an outside one
use the chart of Convention~\ref{conv:lagrangian-preparation} at an outside point.  A fixed factor is
regular because its tangential operator is regular and its normal
operator is invertible.  If both factors are fixed, the construction
remains in $M^G$, with zero normal correction.  Assembly and orientations
are as in the outside-break construction.
\end{proof}

In particular a non-fixed incoming strip followed by a fixed strip gives
\begin{equation}\label{eq:IIF-fixed-outgoing-map}
 \M^0(\cP{x},\cP{q})\times\M_{M^G}^0(\cP{q},\cP{b})
       \times[T_1,\infty)\longrightarrow\M^1(\cP{x},\cP{b}).
\end{equation}
This is the contribution $\Dinv\D{o}{s}$: the incoming strip lands on
$\Theta_{\cP{q}}$, and the fixed strip transports that cycle with sign
$+1$.  A non-fixed rigid outgoing continuation is instead partitioned by
its top cells (Lemma~\ref{lem:floer-top-cells}) and gives $\Dnon\D{o}{s}$.

\begin{corollary}[Fixed continuation with a source incidence]
\label{prop:IIF-fixed-continuation}\label{cor:IIF-fixed-continuation-incidence}
For invariant $\cP{p},\cP{q},\cP{b}$ there is a collar map
\[
 \mathcal N^0_{\cP{p},N}(\cP{v};\cP{q})
 \times\M_{M^G}^0(\cP{q},\cP{b})\times[T_1,\infty)
       \longrightarrow\mathcal N^1_{\cP{p},N}(\cP{v};\cP{b}).
\]
Its extension at infinity is the inclusion of the whole broken stratum.
\end{corollary}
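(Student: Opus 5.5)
The plan is to run the outside-break construction of Subsection~\ref{subsec:IIF-outside-gluing} verbatim. The incoming factor is the rigid incidence configuration $(\vartheta,u_-)\in\mathcal N^0_{\cP{p},N}(\cP{v};\cP{q})$ and the outgoing factor is a rigid fixed strip $u_+\in\M^0_{M^G}(\cP{q},\cP{b})$. The intermediate point is now invariant, so the common chart is the product chart of Corollary~\ref{cor:lagrangian-preparation-hat} at $\cP{q}$. There $J=J_{\cP{q}}$ is constant and both Lagrangians are linear, so the neck operator is $\partial_s+A_{\cP{q}}$. This gives exactly the structure used before: the pregluing \eqref{eq:IIF-preglue}, the exact coupled equations \eqref{eq:IIF-theta}, and the neck estimates of Lemma~\ref{lem:IIF-neck-estimates}.

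\textbf{Linear input.} I first check the two inputs (T1) and (T2).
\begin{itemize}
\item For (T1), $D_-$ is surjective by Proposition~\ref{thm:floer-free-regularity}. $D_+$ is surjective because it splits as $D^T\oplus D^\perp$ with $D^\perp$ invertible (Proposition~\ref{thm:floer-model-regularity}), and $\ker D_+=\R\partial_su_+$.
\item For (T2), Lemma~\ref{lem:IIF-linear-T1} and Lemma~\ref{lem:IIF-linear-T2} apply unchanged to $u_-$. The incidence linearization $\mathcal L$ on $\mathcal K=\ker D_-\cap(\partial_su_-)^\perp$ is an isomorphism onto the normal space of the descending cell at $z=\Ev^-_{\cP{p},N}(u_-)$.
\end{itemize}
With these in hand, the uniform contraction (Proposition~\ref{prop:IIF-contraction}) goes through with the same cross-term bounds (Lemma~\ref{lem:IIF-cross}). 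Those bounds use only the exponential decay of $u_\pm$ in the adapted chart, which holds with any $\delta<\pi/4$.

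\textbf{Evaluation and incidence.} The constraint sits on the incoming, non-fixed component. I read the coefficients in the incoming parametrization on a slice $s_0$ in the negative end of $u_-$, which is to the left of both cutoffs for large $T$. Proposition~\ref{prop:IIF-persistence} then applies: $c_N(u_T(\kappa_-))=c_N(u_-)+\varsigma(\kappa_-)+R_T$, with $R_T=O(e^{-\delta T}+|\kappa_-|^2)$ in $C^1$. The fixed outgoing strip affects the evaluation only through the cross term $X_-^T$, which is already accounted for in $\xi_-$. Proposition~\ref{prop:IIF-ift} then produces a unique $\kappa_-(T)=O(e^{-\delta T})$ solving the incidence equation, and the gluing map is $(\vartheta^T,u_T(\kappa_-(T)))$.

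\textbf{Non-fixed output and recovery.} The glued strip must be non-fixed, so that it lies in $\M^\circ(\cP{p},\cP{b})$ rather than in the fixed stratum. This holds because its selected coefficient vector is close to $c_N(u_-)\neq0$, hence nonzero, and Lemma~\ref{rem:lagrangian-exact-expansion}(2) applies. Recovery, injectivity and the collar property follow from the reconstruction \eqref{eq:IIF-reconstruction} and the phase normalization, exactly as in Subsection~\ref{subsec:IIF-proof}. Persistence of evaluation (Theorem~\ref{thm:IIF-evaluation}) guarantees that any nearby incidence configuration has its constraint on the incoming factor, so it is captured by the same kernel parameter. Finiteness of the stratum (Proposition~\ref{prop:IIF-strata}) and transport over $G$-orbits give the assembly over the whole stratum.

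\textbf{Main obstacle.} The hard step is the recovery: showing that every nearby one-dimensional incidence configuration has a reconstruction whose outgoing piece is a small deformation of the fixed strip $u_+$ lying in the fixed complement of $\ker D_+$. The outgoing reconstruction is allowed a normal correction, because the glued strip is non-fixed. So I must check that invertibility of $D^\perp$ keeps the contraction unique in the full complement, not just in the fixed part. I would address this by noting that $P_+$ is a right inverse on all of $(\ker D_+)^\perp$, so uniqueness of the fixed point holds there directly.
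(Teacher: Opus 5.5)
Your proposal is correct and follows the paper's own proof: the incoming kernel and incidence linearization are unchanged, the fixed outgoing strip has a surjective linearization whose kernel is only the translation direction, and the neck at $\cP{q}$ is exactly linear. The contraction, evaluation, incidence and recovery steps of the outside-break construction therefore apply verbatim. One small correction: surjectivity of $D_+$ also needs regularity of the tangential part $D^T$, not only invertibility of $D^\perp$; Proposition~\ref{thm:floer-model-regularity} supplies this for generic $J_0$.
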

\begin{proof}
The incoming kernel and its incidence linearization are unchanged from
Subsection~\ref{subsec:IIF-linear}.  The outgoing fixed strip has a
surjective linearization and only the translation kernel.  The neck is
linear, the evaluation is read near the incoming source, and every step
of the contraction, evaluation, incidence and recovery proof applies to
these operators.  This is type \textup{(S4a)} with an incidence source,
contributing $\Dinv\Dnon$.
\end{proof}

\subsubsection{Fixed incoming strip}\label{subsec:IIF-fixed-incoming}
Let $\cP{p},\cP{q}$ be invariant and $\cP{b}$ arbitrary.  Consider the
stratum
\[
 Z_{\mathrm{in}}=\M_{M^G}^0(\cP{p},\cP{q})
                    \times\mathcal N^0_{\cP{q},N}(\cP{v};\cP{b}).
\]
The normal coefficients at the original source tend to zero when the
incoming level is fixed.  The incidence is nevertheless controlled by
reading the weighted ray on the outgoing side of the fixed stretch.

\begin{proposition}[Gluing with a fixed incoming strip]
\label{thm:IIF-fixed-incoming}
There is a collar map
\[
 \mathcal G_{
 \mathrm{in}}:Z_{\mathrm{in}}\times[T_1,\infty)
       \longrightarrow\mathcal N^1_{\cP{p},N}(\cP{v};\cP{b}),
\]
with extension at infinity the inclusion of $Z_{\mathrm{in}}$, and with
the same injectivity, exhaustion and equivariance properties as
Proposition~\ref{thm:IIF-outside-gluing}.  Its boundary sign is the product
sign with $(-1)^{N-1-j(\cP{v})}$ on the fixed incoming factor.
\end{proposition}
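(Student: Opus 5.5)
The plan is to run the outside-break construction of Subsection~\ref{subsec:IIF-outside-gluing} again, with two changes: the neck is now at the invariant point $\cP{q}$, and the incidence is read on the outgoing factor. Fix $(u_-,u_+)\in Z_{\mathrm{in}}$, with $u_-$ a rigid fixed strip from $\cP{p}$ to $\cP{q}$ and $u_+$ a rigid incidence configuration from $\cP{q}$. Then $D_-=D^G_-\oplus D^\perp_-$ is surjective with kernel $\R\partial_su_-$, by Proposition~\ref{thm:floer-model-regularity}. By Lemma~\ref{lem:IIF-linear-T1}, $D_+$ is surjective with kernel of dimension $N-j(\cP{v})$. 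So the kernel parameter $\kappa_+$ now sits on the outgoing factor, in $\mathcal K_+=\ker D_+\cap(\partial_su_+)^\perp$.

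At $\cP{q}$ we use the chart of Corollary~\ref{cor:lagrangian-preparation-hat}, where the equation is exactly linear. Preglue with the separated cutoffs \eqref{eq:IIF-separated-cutoffs}. Solve the coupled equations by the uniform contraction of Proposition~\ref{prop:IIF-contraction}, with the roles of the factors exchanged, to get $u_T(\kappa_+)$ with corrections $O(e^{-\delta T}+|\kappa_+|^2)$.

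The key step is persistence of the evaluation. The whole incoming part of the glued strip lies in the adapted tube $\widehat U_V$: $u_-\subset Q^G$, and the neck is near $\cP{q}$. On that part the normal component solves the constant linear equation, so the weighted class of its coefficients is the same on every slice, by Lemma~\ref{rem:lagrangian-exact-expansion}(1). I therefore compute $\Ev^-_{\cP{p},N}(u_T(\kappa_+))$ on a fixed slice $s_0$ in the negative end of $u_+$, in the outgoing parametrization. The normal part at $s_0$ agrees there with $\exp_{u_+}(\kappa_++\xi_+)$, so Lemma~\ref{lem:IIF-readoff} applies to the outgoing factor and gives
\[
c_N=c_N(u_+)+\varsigma_+(\kappa_+)+R_T(\kappa_+),
\]
with the same bounds as in \eqref{eq:IIF-persistence}. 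Here the sphere at $\cP{p}$ is identified with the one at $\cP{q}$ by the identity, since the constant normal model is the same at both points. Writing the coefficients relative to $\cP{p}$ rather than $\cP{q}$ only rescales them by the weighted action $\star$, which $q_N$ ignores.

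The incidence condition in $\descending_{h_N}(\cP{v})$ is then solved for $\kappa_+(T)$ by Lemma~\ref{lem:IIF-transverse-ift} and Lemma~\ref{lem:IIF-linear-T2} applied to $u_+$, exactly as in Proposition~\ref{prop:IIF-ift}.

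Recovery, injectivity, assembly over the finite stratum and equivariance go verbatim through the neck reconstruction \eqref{eq:IIF-reconstruction} and Lemma~\ref{lem:IIF-neck-estimates}. The only new input is the exactness claim above. A nearby unbroken configuration has evaluation equal to its value at the $u_+$ slice, so its incidence parameter is the unique solution found above.

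The main obstacle is the orientation sign $(-1)^{N-1-j(\cP{v})}$. The plan is to compare determinant lines in the order of Convention~\ref{conv:floer-incidence-orientation}. The linear gluing gives
\[
\det D_u\cong\det D_-\otimes\det D_+,
\]
with the incoming factor first. The normal bundle $\Ev^*\nu(\cP{v})$, of rank $c(\cP{v})$, is attached to the second factor but must be moved to the front, past the single translation vector $\partial_su_-$ of the fixed factor. This should produce exactly $(-1)^{c(\cP{v})}$. Combined with the outward-normal convention, which gives the product sign as in Lemma~\ref{lem:floer-boundary-formulas} since the fixed factor is rigid, this yields the stated sign. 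I expect to verify it by an explicit ordered-basis computation and to check it against the consistency $\D{s}{i}\Dinv+\D{i}{i}\D{s}{i}=0$ of Lemma~\ref{lem:floer-fixed-identities}.
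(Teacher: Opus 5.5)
Your proposal is correct and follows the paper's proof essentially step for step. The fixed factor carries no kernel parameter, and the kernel slice $\mathcal K_+$ and incidence linearization $\mathcal L_+$ sit on the outgoing factor. The source evaluation is read by exact normal transport on a fixed slice in the negative end of $u_+$ and normalized by $q_N$, and the incidence is solved by Lemma~\ref{lem:IIF-transverse-ift}. The sign $(-1)^{c(\cP{v})}$ comes from commuting the $c(\cP{v})$ normal directions of the outgoing factor past the single direction contributed by the rigid fixed factor, which is the computation in the proof of Lemma~\ref{lem:floer-boundary-signs}.

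One small correction concerns your planned cross-check. The identity $\D{s}{i}\Dinv+\D{i}{i}\D{s}{i}=0$ involves only fixed strips and the augmentation, so it cannot detect this gluing sign. The relevant consistency is that the (S4b) boundary produces the $\Dinv$ terms of \eqref{eq:floer-square-incidence}, and of the $s\to o$ block, with the sign built into \eqref{eq:floer-dinv}.
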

\begin{proof}
Fix $(u_-,u_+)\in Z_{\mathrm{in}}$.  The fixed operator splits as
$D_-=D_-^T\oplus D_-^\perp$, with $D_-^\perp$ invertible and
$\ker D_-^T=\R\partial_su_-$.  There is therefore no kernel parameter
on that component.  On the outgoing component set
\[
 \mathcal K_+=\ker D_+\cap(\partial_su_+)^\perp,
 \qquad\dim\mathcal K_+=N-1-j(\cP{v}).
\]
Let $\varsigma_+$ be the selected normal coefficient map at its negative
end.  With $z=\Ev^-_{\cP{q},N}(u_+)$, incidence transversality makes
\[
 \mathcal L_+:\mathcal K_+\xrightarrow{\varsigma_+}E_N^-
       \xrightarrow{dq_N}T_zS_N\longrightarrow T_zS_N/T_z\descending_{h_N}(\cP{v})
\]
an isomorphism, exactly as in Lemma~\ref{lem:IIF-linear-T2}.

\emph{Coupled equations.}
Use the same separated cutoffs and replace the component maps by
$\exp_{u_-}\xi_-$ and $\exp_{u_+}(\kappa_++\xi_+)$.
The two equations now read, in their own coordinates,
\[
\begin{aligned}
 \mathcal E_-&=D_-\xi_-+\mathcal Q_-(\xi_-)+
    (\beta_+^T)'(\,\cdot-T)(V_++\kappa_++\xi_+)(\,\cdot-2T),\\
 \mathcal E_+&=D_+\xi_++\mathcal Q_+(\kappa_++\xi_+)+
    (\beta_-^T)'(\,\cdot+T)(V_-+\xi_-)(\,\cdot+2T).
\end{aligned}
\]
The $t$ coordinate is unchanged in these expressions.
The cross-term bounds use the same $C/\ell$ linear estimates and the
outgoing kernel's exponentially decaying negative tail.  Since
$D_+\kappa_+=0$, the contraction gives
\[
 \|\xi_\pm\|\le C(e^{-\delta T}+|\kappa_+|^2),\qquad
 \|d_{\kappa_+}\xi_\pm\|\le C(e^{-\delta T}+|\kappa_+|).
\]

\emph{Transport and evaluation.}
Choose a fixed slice $s_1$ on the adapted negative end of $u_+$.
For large $T$, the corresponding glued slice is beyond the incoming
cutoff and the strip before it lies in the adapted tube, shadowing the
fixed incoming level and the neck.  The exact normal equation implies
\[
 \Ev^-_{\cP{p},N}(u_T)
    =q_N\bigl(\Pi_{E_N^-}w_{u_T}(s_1,\cdot)\bigr),
\]
where the slice is written in outgoing coordinates.  After multiplication
by the fixed factors $e^{\lambda_as_1}$, these selected slice
values give a vector
\[
 \widetilde c_T(\kappa_+)
   =c_N(u_+)+\varsigma_+(\kappa_+)+R_T(\kappa_+),
\]
with the two estimates of \eqref{eq:IIF-persistence}.  Weighted
normalization is unchanged by that common time adjustment, so the original
source evaluation is $q_N(\widetilde c_T)$.

\emph{Incidence and collar.}
As in the proof of Proposition~\ref{prop:IIF-ift}, take coordinates near
$z=\Ev^-_{\cP{q},N}(u_+)$ in which the descending cell of $\cP{v}$ is a
coordinate subspace, and let $F(\kappa_+)$ be the transverse coordinates of
$q_N(\widetilde c_T(\kappa_+))$.  Then $|F(0)|=O(e^{-\delta T})$ and
$dF=\mathcal L_++O(e^{-\delta T}+|\kappa_+|)$, so
Lemma~\ref{lem:IIF-transverse-ift} gives a unique
$\kappa_+(T)=O(e^{-\delta T})$ with $q_N(\widetilde c_T(\kappa_+(T)))$ on
the cell; if the cell is a top cell the incidence is automatic and
$\kappa_+(T)=0$.
The reconstructed neck pieces, phase conditions and uniqueness of the
component equations are those of
\eqref{eq:IIF-reconstruction}, now with the kernel slice on the outgoing
factor.  They give the collar and exhaustion.  Assemble over the finite
stratum.  The orientation factor is computed in
Subsection~\ref{subsec:IIF-orientations}.
\end{proof}

Exact transport, not an additive comparison of very small coefficients,
is essential here.  Different normal modes at the original source decay
at different rates through the fixed stretch.  The same weighted ray can
instead be read on the outgoing slice, where its selected vector has a
nonzero limit.  This uses the exact normal equation along the whole
adapted tube, not just its linearization at an intersection.

\subsubsection{Both strips non-fixed}\label{subsec:IIF-both-nonfixed}
For invariant $\cP{p},\cP{q}$ and arbitrary $\cP{b}$ let
\[
 Z_{\mathrm{nf}}=\mathcal N^0_{\cP{p},N}(\cP{v};\cP{q})
                    \times\M^{\circ,0}(\cP{q},\cP{b}).
\]
The top-cell partition identifies the second factor with the disjoint
union of its incidences at $M_g$.  This is a partition of
rigid outgoing strips, not an extra positive-end condition on the
incoming strip.

\begin{proposition}[Invariant break with two non-fixed components]\label{thm:IIF-both-nonfixed}
There is an equivariant collar map
\[
 Z_{\mathrm{nf}}\times[T_1,\infty)
       \longrightarrow\mathcal N^1_{\cP{p},N}(\cP{v};\cP{b})
\]
with boundary extension the inclusion of the stratum and product boundary
orientations.  It has the exhaustion and injectivity properties of
Proposition~\ref{thm:IIF-outside-gluing}.
\end{proposition}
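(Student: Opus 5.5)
The plan is to run the outside-break construction of Subsection~\ref{subsec:IIF-outside-gluing} again, with the intermediate point $\cP{q}$ now invariant. The adapted chart is that of Corollary~\ref{cor:lagrangian-preparation-hat}, and the incidence stays on the incoming factor. Fix $(u_-,u_+)\in Z_{\mathrm{nf}}$ with $u_+\in\mathcal N^0_{\cP{q},N}(M_g;\cP{b})$ for the unique $g$ of Lemma~\ref{lem:floer-top-cells}.

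\textbf{Linear data.} The incoming kernel slice $\mathcal K=\ker D_-\cap(\partial_su_-)^\perp$ has dimension $c(\cP{v})$. By Lemma~\ref{lem:IIF-linear-T2} its incidence linearization $\mathcal L$ is an isomorphism. The outgoing strip $u_+$ is an ordinary rigid strip, so $\ker D_+=\R\partial_su_+$ and there is no outgoing kernel parameter. Its top-cell membership imposes no equation: $\Ev^-_{\cP{q},N}(u_+)$ lies in an open top cell, and nothing is evaluated at $\cP{q}$ for the glued strip anyway. I would record that the glued strip's negative end is still $\cP{p}$, so only the source evaluation $\Ev^-_{\cP{p},N}$ matters.

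\textbf{Gluing steps.} The neck near $\cP{q}$ lies in the product chart, where $J=J_{\cP{q}}$ is constant and both Lagrangians are linear. The spectrum of $A_{\cP{q}}$ is $\pm\pi/2+\pi\Z$ in both the tangential and normal factors, so it avoids $0$. Lemma~\ref{lem:IIF-neck-estimates} and the rates \eqref{eq:IIF-rates} therefore hold with the same $\delta$. The following steps then transfer verbatim:
\begin{enumerate}
\item the pregluing \eqref{eq:IIF-preglue} and the exact error formula \eqref{eq:IIF-theta};
\item the cross-term bounds of Lemma~\ref{lem:IIF-cross};
\item the uniform contraction of Proposition~\ref{prop:IIF-contraction} in $(\xi_-,\xi_+)$, with parameter $\kappa_-\in\mathcal K$;
\item the persistence estimate of Proposition~\ref{prop:IIF-persistence}, read on a fixed slice in the negative end of $u_-$ at $\cP{p}$;
\item the incidence solution $\kappa_-(T)$ of Proposition~\ref{prop:IIF-ift} via Lemma~\ref{lem:IIF-transverse-ift}.
\end{enumerate}
Recovery uses the reconstruction \eqref{eq:IIF-reconstruction} and the phase normalizations, giving injectivity and exhaustion as before. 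Exhaustion near $Z_{\mathrm{nf}}$ uses Proposition~\ref{prop:IIF-strata}: a nearby one-dimensional incidence configuration breaking at $\cP{q}$ with both pieces non-fixed has a rigid non-fixed outgoing piece. That piece lies in exactly one top-cell incidence space, so the strata $\mathcal N^0_{\cP{q},N}(M_g;\cP{b})$ are disjoint and each collar is attached to a single stratum. Equivariance follows by choosing data on orbit representatives and using uniqueness of the fixed points. For the orientation I would compute as in Subsection~\ref{subsec:IIF-orientations}: normal-first incidence orientation on the first factor, and the product gluing of determinant lines with the outward direction given by increasing $T$. The partition \eqref{eq:floer-top-cell-sum} holds as oriented spaces because $\nu(M_g)=0$. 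Together these give the product sign of Lemma~\ref{lem:floer-boundary-signs}.

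\textbf{Main obstacle.} The delicate point is that the glued strip passes close to the fixed locus near $\cP{q}$, in the neck, where the normal component of $u_-$'s positive tail and of $u_+$'s negative tail are both small. I need the estimates to stay uniform there, so that the normal part of the glued strip cannot degenerate in a way that invalidates the contraction. I expect exact linearity in the adapted tube to settle this. Over the whole neck the normal equation is the constant equation, so the normal parts of $a_\pm$ add exactly, and the spectral splitting into stable and unstable modes is exact. No lower bound on normal coefficients at $\cP{q}$ is required: the only nondegeneracy needed is $c_N(u_-)\neq0$ at $\cP{p}$, which is far from the neck for large $T$. Compared with Proposition~\ref{thm:IIF-fixed-incoming}, this case is easier, since no transport across a fixed stretch is needed.
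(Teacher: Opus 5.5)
Your proposal is correct and follows essentially the paper's own argument: you rerun the outside-break gluing in the product chart at the invariant point $\cP{q}$, keeping the incidence on the incoming factor. The outgoing rigid strip contributes only its translation kernel, and the top-cell label appears only at the broken boundary, with no extra parameter or constraint. Your additional remarks on the product sign (via $\nu(M_g)=0$), and on why no nondegeneracy at $\cP{q}$ is needed beyond $c_N(u_-)\neq0$, agree with how the paper handles the (S4c) case.
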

\begin{proof}
Both operators are regular.  The incoming incidence gives the kernel
slice $\mathcal K$ and its invertible normal derivative, while the
outgoing ordinary rigid strip has only the translation kernel.
The adapted chart at $\cP{q}$ has constant $J$ and linear boundary
conditions.  Its full positive spectrum describes the incoming tail and
its full negative spectrum the outgoing tail; the latter is not truncated
to $E_N^-$.  The separated-cutoff equations, the contraction, the evaluation at the
original source $\cP{p}$, and the incidence step of
Proposition~\ref{prop:IIF-ift}, solved by
Lemma~\ref{lem:IIF-transverse-ift}, therefore apply verbatim.  Exact reconstruction gives every nearby strip and
uniqueness, and the finite stratum supplies a uniform $T_1$.
The outgoing top-cell label occurs only at the broken boundary; the
glued strip has no evaluation at the intermediate point.  Thus the
partition introduces neither an extra gluing parameter nor an additional
constraint.  These are the terms $\D{s}{o}\Dnon$ and $\Dnon\Dnon$
according to the final target.
\end{proof}

\subsection{Sphere breaking}\label{subsec:IIF-sphere-breaking}
Here the ambient strip does not break.  Let $j(\cP{w})=j(\cP{v})-1$,
and fix a boundary configuration
\[
 (\vartheta_-,u_+)\in
 \M_{h_N}^0(\cP{v},\cP{w})
       \times\mathcal N^0_{\cP{p},N}(\cP{w};\cP{b}).
\]
The ambient component $u_+$ belongs to a rigid incidence configuration;
it is not asserted to be an ordinary rigid strip.  Put
$z=\Ev^-_{\cP{p},N}(u_+)\in\descending(\cP{w})$.

\begin{lemma}[The descending-cell sheets]\label{lem:IIF-sheets}
Near $z$, each rigid sphere trajectory
$\vartheta\in\M_{h_N}^0(\cP{v},\cP{w})$ determines a local
sheet $\Sigma_\vartheta$ of the compactified descending cell of $\cP{v}$.
It is an embedded manifold with boundary $\descending(\cP{w})$ near $z$,
and
\[
 T_z\Sigma_\vartheta=T_z\descending(\cP{w})\oplus\R a_\vartheta,
 \qquad a_\vartheta\notin T_z\descending(\cP{w}).
\]
Its interior consists of endpoints of half-trajectories close to the
broken sphere trajectory, and is contained in $\descending(\cP{v})$.
The interiors of the sheets belonging to different rigid trajectories
are disjoint.
\end{lemma}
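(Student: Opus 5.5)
The plan is to prove Lemma~\ref{lem:IIF-sheets} in finite-dimensional Morse theory on $S_N$, working in the standard quadratic coordinates with constant metric near $\cP{w}$ that were built into the choice of $(\bar h_N,\bar k_N)$. The transport preserves this structure, since $\widetilde\varphi$ carries the charts along. In such a chart $h_N=h_N(\cP{w})-|x|^2+|y|^2$ with $x\in\R^{j(\cP{w})}$ and $y\in\R^{N-1-j(\cP{w})}$, and the negative gradient flow is linear. Hence $\descending(\cP{w})=\{y=0\}$ locally and $\ascending(\cP{w})=\{x=0\}$.

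Fix a rigid trajectory $\vartheta\in\M_{h_N}^0(\cP{v},\cP{w})$. It enters the chart along $\ascending(\cP{w})$ at a point $(0,y_\vartheta)$ on a small level sphere. By the Morse--Smale condition, $\descending(\cP{v})$ meets $\ascending(\cP{w})$ transversally along $\vartheta$, in a one-dimensional intersection. Near $(0,y_\vartheta)$, the descending cell $\descending(\cP{v})$ is therefore a $j(\cP{v})$-dimensional disc $\Delta_\vartheta$ that is transverse to $\{x=0\}$. Its intersection with the level set $\{h_N=\text{const}\}$ is a $j(\cP{w})$-dimensional disc, which is a graph $y=g(x)$ over small $x$ with $g(0)=y_\vartheta$.

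Flow this entrance disc forward by the linear flow $(x,y)\mapsto(e^{2\tau}x,e^{-2\tau}y)$, $\tau\ge0$, and reparametrize by $\rho=e^{-2\tau}\in(0,\rho_0]$. The image consists of the points $(x/\rho,\rho g(x))$, or after substituting $\tilde x=x/\rho$, the points $(\tilde x,\rho\,g(\rho\tilde x))$. This family extends smoothly to $\rho=0$, where it becomes $\{y=0\}=\descending(\cP{w})$. The map $(\tilde x,\rho)\mapsto(\tilde x,\rho\,g(\rho\tilde x))$ has derivative in $\rho$ at $\rho=0$ equal to $(0,y_\vartheta)$, which is nonzero and transverse to $\{y=0\}$. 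It is therefore an injective immersion of a half-space, hence an embedded manifold with boundary $\descending(\cP{w})$ near the origin.

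Next, transport $z$ by the flow. The point $z\in\descending(\cP{w})$ corresponds to some $\tilde x_z$ after flowing backward into the chart, and the flow is a diffeomorphism onto a neighborhood of $z$. The sheet $\Sigma_\vartheta$ is then the flow-image of the half-space family above. Its tangent space at $z$ is $T_z\descending(\cP{w})\oplus\R a_\vartheta$, where $a_\vartheta$ is the pushforward of $(0,y_\vartheta)$, and $a_\vartheta\notin T_z\descending(\cP{w})$. The interior points with $\rho>0$ are flow-images of points of $\Delta_\vartheta\subset\descending(\cP{v})$, so they are endpoints of half-trajectories from $\cP{v}$ that pass near $(0,y_\vartheta)$, that is, half-trajectories close to the broken pair $(\vartheta,\text{half-trajectory in }\descending(\cP{w}))$.

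The step I expect to be the main obstacle is disjointness of the sheets, together with making the smooth extension to $\rho=0$ rigorous. For disjointness, suppose an interior point lies on both $\Sigma_\vartheta$ and $\Sigma_{\vartheta'}$. Flowing it backward until it reaches the entrance level sphere gives a common point of $\Delta_\vartheta$ and $\Delta_{\vartheta'}$ with small $x$. For small $x$ these points lie near $(0,y_\vartheta)$ and near $(0,y_{\vartheta'})$ respectively. The points $y_\vartheta\neq y_{\vartheta'}$ are distinct because distinct rigid trajectories cross the level sphere at distinct points, so after shrinking the neighborhood the two discs are disjoint. Since $\M_{h_N}^0(\cP{v},\cP{w})$ is finite by compactness and Morse--Smale, a single uniform shrinking of the neighborhood of $z$ suffices for all sheets at once.
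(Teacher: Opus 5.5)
Your proposal is correct and follows the paper's route. The Morse chart at $\cP{w}$ supplies coordinates $\tilde x$ on $\descending(\cP{w})$ and one inward parameter $\rho$, the flow carries the resulting half-space sheet to a neighborhood of $z$, and disjointness follows from uniqueness of the backward trajectory through the entrance level. Your explicit parametrization $(\tilde x,\rho)\mapsto(\tilde x,\rho\,g(\rho\tilde x))$ just makes concrete the gluing that the paper cites from \cite{wehrheim2012smooth}, and since this map is visibly smooth at $\rho=0$, the step you flagged as the main obstacle is not actually one.
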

\begin{proof}
Apply the ordinary gluing of a rigid sphere trajectory to a half-trajectory
from $\cP{w}$.  The local Morse coordinates at $\cP{w}$ provide one
inward parameter for the incoming stable direction, in addition to the
coordinates on its descending cell.  Flowing to the endpoint near $z$
gives the stated sheet and tangent splitting.  A point of the descending
cell has a unique backwards half-trajectory, so the sheet interiors are
disjoint after restricting the local neighborhoods.  This is the
codimension-one part of the smooth compactification of the auxiliary
Morse trajectory spaces \cite{wehrheim2012smooth}.
\end{proof}

\begin{lemma}[The evaluation is transverse to the cell]\label{lem:IIF-slice}
The evaluation restricts to an embedding of a neighborhood of $u_+$ in
its ambient component of $\M^\circ(\cP{p},\cP{b})$ onto a submanifold
$\Sigma\subset S_N$ satisfying
\[
 T_z\Sigma\oplus T_z\descending(\cP{w})=T_zS_N.
\]
\end{lemma}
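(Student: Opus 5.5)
The plan is to derive the lemma from the rigidity of the incidence configuration $(\vartheta_+,u_+)\in\mathcal N^0_{\cP{p},N}(\cP{w};\cP{b})$, in the same way Lemma~\ref{lem:IIF-linear-T2} was derived for a rigid incidence. Write $\M_0$ for the ambient component of $\M^\circ(\cP{p},\cP{b})$ containing $u_+$. Ambient regularity (Proposition~\ref{thm:floer-free-regularity}) makes $\M_0$ a smooth manifold near $u_+$. Its dimension is $\iota-1$, where $\iota=\ind D_{u_+}$. Formula \eqref{eq:floer-incidence-dimension} with dimension zero gives $\iota=N-j(\cP{w})$, so
\[
 \dim\M_0=N-1-j(\cP{w})=c(\cP{w}).
\]
This is exactly the codimension of $\descending(\cP{w})$ in $S_N$. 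The evaluation $\Ev^-_{\cP{p},N}$ is smooth on $\M_0$: it is the weighted normalization $q_N$ (smooth by Lemma~\ref{lem:floer-weighted-quotient}) composed with the coefficient map $c_N$. The coefficient map is a linear read-off on a fixed slice in the adapted tube, by Lemma~\ref{lem:IIF-readoff}, and $c_N(u_+)\neq0$ by Corollary~\ref{cor:lagrangian-dis-zero}.

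The key step is the differential at $u_+$. Incidence transversality (Proposition~\ref{prop:floer-transversality-outside}) says that
\[
 d\Ev^-_{\cP{p},N}(T_{u_+}\M_0)+T_z\descending(\cP{w})=T_zS_N .
\]
The two summands have complementary dimensions, $c(\cP{w})$ and $j(\cP{w})$, adding to $N-1$. So this sum is direct and $d\Ev$ is injective on $T_{u_+}\M_0$. Concretely, as in Lemma~\ref{lem:IIF-linear-T2}, the composite
\[
 T_{u_+}\M_0\cong\mathcal K_+\xrightarrow{\ dq_N\circ\varsigma_+\ }T_zS_N\longrightarrow T_zS_N/T_z\descending(\cP{w})
\]
is an isomorphism. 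Its injectivity gives injectivity of $d\Ev$, and its surjectivity gives the splitting.

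Then I conclude by the immersion theorem. An injective differential makes $\Ev$ an immersion on a neighborhood of $u_+$. Shrinking the neighborhood to a small ball, a local immersion restricts to an embedding onto an embedded submanifold $\Sigma$. The image has tangent space $T_z\Sigma=d\Ev(T_{u_+}\M_0)$, which is complementary to $T_z\descending(\cP{w})$ as displayed above.

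The only point needing care is the identification of $d\Ev$ with $dq_N\circ\varsigma_+$ on the kernel slice. This is where the translation direction must be discarded: the weighted Euler vector \eqref{eq:IIF-translation-coefficient} is killed by $dq_N$, which is why $\Ev$ descends to the unparametrized space. Everything else is dimension counting already recorded in the paper.
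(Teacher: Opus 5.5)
Your proposal is correct and follows the same route as the paper: rigidity of the incidence forces the ambient component to have dimension $c(\cP{w})=N-1-j(\cP{w})$, so transversality makes the evaluation derivative injective with image complementary to $T_z\descending(\cP{w})$, and the immersion (constant-rank) theorem then gives the local embedding. Your extra identification of $d\Ev$ with $dq_N\circ\varsigma_+$ on the kernel slice is not needed for this lemma, but it is consistent with Lemma~\ref{lem:IIF-linear-T2}.
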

\begin{proof}
Incidence transversality at $\cP{w}$ gives a surjective normal evaluation
derivative.  Since the incidence is zero-dimensional, the ambient
dimension equals the rank of that normal space.  The derivative is
therefore injective as a map into $T_zS_N$ as well.  The
constant-rank theorem gives the local embedding and the tangent splitting.
\end{proof}

\begin{proposition}[Sphere-breaking gluing]\label{prop:IIF-sphere-breaking}
For each pair $\cP{v},\cP{w}$ with $j(\cP{w})=j(\cP{v})-1$, there is a
collar map
\[
 \M_{h_N}^0(\cP{v},\cP{w})
 \times\mathcal N^0_{\cP{p},N}(\cP{w};\cP{b})\times[T_1,\infty)
 \longrightarrow\mathcal N^1_{\cP{p},N}(\cP{v};\cP{b}).
\]
It extends by inclusion of the broken stratum at infinity, is injective
onto a neighborhood of that stratum, and has the product boundary sign.
The maps are equivariant and $T_1$ can be chosen uniformly over the
finite stratum.
\end{proposition}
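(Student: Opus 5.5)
The plan is to reduce sphere-breaking gluing to a finite-dimensional transversality statement inside $S_N$. The ambient strip does not break here, so no analytic gluing of holomorphic strips is needed. Fix a boundary configuration $(\vartheta_-,u_+)$ and put $z=\Ev^-_{\cP{p},N}(u_+)$.

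By Lemma~\ref{lem:IIF-slice}, a neighborhood of $u_+$ in its ambient component of $\M^\circ(\cP{p},\cP{b})$ is identified, through the evaluation, with a submanifold $\Sigma\subset S_N$ that is complementary to $\descending(\cP{w})$ at $z$. By Lemma~\ref{lem:IIF-sheets}, the rigid trajectory $\vartheta_-$ determines a sheet $\Sigma_{\vartheta_-}$ of the compactified descending cell of $\cP{v}$. This sheet has boundary $\descending(\cP{w})$ near $z$, and its tangent space is $T_z\descending(\cP{w})\oplus\R a_{\vartheta_-}$. The dimensions are
\[
 \dim\Sigma=N-1-j(\cP{w}),\qquad \dim\Sigma_{\vartheta_-}=j(\cP{w})+1 .
\]
Hence $\Sigma\cap\Sigma_{\vartheta_-}$ has expected dimension one, and $z$ lies on its boundary.

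The key step is to show that $\Sigma$ is transverse to the sheet as a manifold with boundary at $z$. That is, $\Sigma$ meets the boundary $\descending(\cP{w})$ transversally, which is Lemma~\ref{lem:IIF-slice}, and $\Sigma$ meets the sheet transversally. The second condition follows because $T_z\Sigma+T_z\descending(\cP{w})$ is already all of $T_zS_N$. The standard transversality theorem for a submanifold meeting a manifold with boundary, whose boundary it meets transversally, then shows that $\Sigma\cap\Sigma_{\vartheta_-}$ is a one-manifold with boundary point $z$ near $z$. It is a half-open arc. Parametrize it by the Morse gluing parameter of Lemma~\ref{lem:IIF-sheets}, written as $e^{-T}$ or a neck length $T\in[T_1,\infty)$. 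The interior points lie in $\descending(\cP{v})$, so each determines a strip $u_T$ near $u_+$ together with the unique half-trajectory from $\cP{v}$ to $\Ev^-(u_T)$. This defines the map $\mathcal G(\vartheta_-,u_+,T)$, and its extension at $T=\infty$ is $(\vartheta_-,u_+)$.

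For exhaustion and injectivity, take an incidence configuration $(\vartheta,u)\in\mathcal N^1_{\cP{p},N}(\cP{v};\cP{b})$ close in the broken topology to $(\vartheta_-,u_+)$. The strip $u$ is close to $u_+$, so $\Ev^-(u)\in\Sigma$. Its half-trajectory is close to the broken trajectory $\vartheta_-\#(\text{half-trajectory of }u_+)$, so $\Ev^-(u)$ lies in the interior of $\Sigma_{\vartheta_-}$ by Lemma~\ref{lem:IIF-sheets}. Therefore $(\vartheta,u)$ lies on the arc. Injectivity holds for two reasons. First, the evaluation is an embedding on $\Sigma$. Second, a point of the descending cell has a unique backward half-trajectory, and the sheets of different rigid trajectories have disjoint interiors, so the collars of distinct boundary points do not overlap.

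Equivariance comes from choosing all local data at one representative of each $G$-orbit and transporting them. This is possible because the sphere pair is lifted, so the descending cells and sheets are permuted by $G$. Uniform $T_1$ follows from finiteness of the stratum, which is Proposition~\ref{prop:IIF-strata}. The product boundary sign is the rigid case of Lemma~\ref{lem:floer-sphere-boundary}, combined with Convention~\ref{conv:floer-incidence-orientation}. The outward normal of the sheet at its boundary is $-a_{\vartheta_-}$, and comparing the normal-first orientations of $\nu(\cP{v})$ and $\nu(\cP{w})\cong\nu(\cP{v})\oplus\R a_{\vartheta_-}$ gives the sign.

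I expect the main obstacle to be this sign bookkeeping. I also expect difficulty in checking that the arc parametrization is a homeomorphism up to $T=\infty$. For that I would rely on the smooth corner structure from \cite{wehrheim2012smooth} to make the sheet a $C^1$ manifold with boundary.
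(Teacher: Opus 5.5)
Your proposal is correct and follows the paper's proof essentially step for step. The paper also intersects the evaluation slice $\Sigma$ of Lemma~\ref{lem:IIF-slice} transversally with the sheet $\Sigma_\vartheta$ of Lemma~\ref{lem:IIF-sheets} and its boundary $\descending(\cP{w})$, producing a half-open arc ending at $z$. Exhaustion comes from nearby configurations having endpoints on the sheet and strips in the evaluation neighborhood, disjoint sheet interiors separate the collars, and the arc is parametrized by the sphere gluing length.

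The one point to add to your sign bookkeeping concerns the Koszul signs $(-1)^{j(\cP{w})}$ and $(-1)^{c(\cP{v})}$ in the paper's proof of Lemma~\ref{lem:floer-boundary-signs}. They cancel because $j(\cP{w})+c(\cP{v})=N-2$ is even, and this is exactly what makes the boundary sign the product sign.
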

\begin{proof}
The transverse slice $\Sigma$ meets $\Sigma_\vartheta$ and its boundary
transversally.  Their intersection has dimension
\[
 j(\cP{v})+(N-1-j(\cP{w}))-(N-1)=1,
\]
and has boundary $\Sigma\cap\descending(\cP{w})=\{z\}$ locally.
Pulling the punctured arc back through the local evaluation embedding
gives an arc in the desired incidence space.  Its sphere half-trajectory
approaches $\vartheta$ followed by the half-trajectory from $\cP{w}$.
Conversely, every nearby incidence configuration has endpoint on the appropriate sheet
and its ambient strip in the evaluation neighborhood, so it lies on
this arc.  Distinct sheets have disjoint interiors and give distinct
boundary collars.  Parametrize the arcs by the sphere gluing length,
then assemble over the finite set of boundary configurations.  Equivariant
choices and uniqueness give equivariance.  The product sign is verified
below from the sphere-cell boundary orientation.
\end{proof}

Ordinary one-dimensional sphere trajectory spaces compactify by two rigid
sphere trajectories and have the standard collars and product signs of
Lemma~\ref{lem:floer-sphere-boundary}.  There is no strip contraction in
this boundary type; its new incidence is obtained by the transverse
intersection just described.

\subsection{Boundary orientations and completion}\label{subsec:IIF-orientations}
\label{subsec:floer-gluing-conclusion}
The component gluing is compatible with the determinant-line isomorphisms
\eqref{eq:floer-gluing-isomorphism}.  Linearizing the parametrized ambient
gluing identifies the glued kernel with the kernels of its two components,
incoming factor first.  The common translation is the diagonal direction;
increasing the neck length is the relative translation
$\partial_su_- -\partial_su_+$, which is outward at the broken boundary.
We first derive the strip and sphere boundary signs from these facts and
then check the incidence signs.

\begin{proof}[Proof of Lemma~\ref{lem:floer-boundary-formulas}]
Let $[m_1]$, $[m_2]$ be oriented bases of $T\M(\cP{x},\cP{y})$ and
$T\M(\cP{y},\cP{z})$ at the broken strip $(u_1,u_2)$, of dimensions
$d_1,d_2$, and let $\partial^{(1)},\partial^{(2)}$ denote the images in the
glued kernel of the translation vectors $\partial_su_1,\partial_su_2$.  By
Convention~\ref{conv:floer-strip-orientation}, $\det D_{u_i}=[m_i,\partial^{(i)}]$,
and by the compatibility of \eqref{eq:floer-gluing-isomorphism} with
breaking, incoming factor first, the kernel of the linearized operator at
a glued strip with long neck is $[m_1,\partial^{(1)},m_2,\partial^{(2)}]$ as
an oriented space.  The preglued strip is $u_1(s+T)$ on the left and
$u_2(s-T)$ on the right, so its translation is
$\partial_su=\partial^{(1)}+\partial^{(2)}$ and increasing the neck length
is $n_{\mathrm{out}}=\partial^{(1)}-\partial^{(2)}$, up to terms vanishing
as $T\to\infty$; since the collar reaches the stratum at $T=\infty$,
$n_{\mathrm{out}}$ is outward.  In the basis $(\partial^{(1)},\partial^{(2)})$
the pair $(n_{\mathrm{out}},\partial_su)$ has matrix
$\bigl(\begin{smallmatrix}1&1\\-1&1\end{smallmatrix}\bigr)$ of determinant
$2>0$, so $[\partial^{(1)},\partial^{(2)}]=[n_{\mathrm{out}},\partial_su]$.
Moving $\partial^{(1)}$ past the $d_2$ vectors of $m_2$,
\[
 \ker D_u=(-1)^{d_2}[m_1,m_2,\partial^{(1)},\partial^{(2)}]
 =(-1)^{d_2}[m_1,m_2,n_{\mathrm{out}},\partial_su],
\]
so by Convention~\ref{conv:floer-strip-orientation}, moduli first and
translation second,
\[
 T\M(\cP{x},\cP{z})=(-1)^{d_2}[m_1,m_2,n_{\mathrm{out}}]
 =(-1)^{d_2}(-1)^{d_1+d_2}[n_{\mathrm{out}},m_1,m_2]
 =(-1)^{d_1}[n_{\mathrm{out}}]\oplus[m_1,m_2].
\]
With the outward normal first, the boundary orientation is $(-1)^{d_1}$
times the product orientation.  For $d_1=d_2=0$ a boundary point of a
one-dimensional strip space has sign $\epsilon(u_1)\epsilon(u_2)$; the
signed count of the boundary of a compact oriented one-manifold vanishes,
and summing over $\cP{y}$ gives $d_{\mathrm F}^2=0$.
\end{proof}

\begin{proof}[Proof of Lemma~\ref{lem:floer-sphere-boundary}]
Both statements are local near $\cP{w}$.  In the standard quadratic
coordinates $(x,y)\in\R^{j(\cP{w})}\times\R^{n-j(\cP{w})}$ at $\cP{w}$,
$n=N-1$, with $h_N=h_N(\cP{w})-|x|^2/2+|y|^2/2$ and the constant metric,
the negative gradient flow is $(x,y)\mapsto(e^tx,e^{-t}y)$, so
$\descending(\cP{w})=\{y=0\}$ and $\ascending(\cP{w})=\{x=0\}$.  Write
$d=\dim\M_{h_N}(\cP{v},\cP{w})$.  Near a point of
$\descending(\cP{v})\cap\ascending(\cP{w})$ the cell $\descending(\cP{v})$
is flow invariant and transverse to $\ascending(\cP{w})$, so in the chart
it is $\R^{j(\cP{w})}_x\times C$ for a flow-invariant cone
$C\subset\R^{n-j(\cP{w})}_y$ of dimension $d+1$ whose rays are the
trajectories from $\cP{v}$ to $\cP{w}$; write $y=rm$ with $r=|y|>0$ and
$m$ in the slice $C\cap S^{n-j(\cP{w})-1}\cong\M_{h_N}(\cP{v},\cP{w})$.
The compactified cell adds $r=0$, the stratum
$\M_{h_N}(\cP{v},\cP{w})\times\descending(\cP{w})$, so $-\partial_r$ is an
outward normal; the flow direction on $C$ is $\dot y=-y=-r\partial_r$, so
the outward normal is the flow direction $\dot\vartheta$ towards $\cP{w}$.
Let $[\mu]$ be an oriented basis of $T\M_{h_N}(\cP{v},\cP{w})$, realized by
the slice directions, and $[\xi]$ an oriented basis of
$T\descending(\cP{w})=\R^{j(\cP{w})}_x$, which represents the quotient
$TS_N/T\ascending(\cP{w})$ transported from $\cP{w}$.  The two
exact-sequence conventions give
$o(\descending(\cP{v}))=[\mu,\dot\vartheta,\xi]=(-1)^d[\dot\vartheta,\mu,\xi]$
on $\R^{j(\cP{w})}_x\times C$, and with the outward normal $\dot\vartheta$
first the boundary orientation is $(-1)^d[\mu,\xi]$, that is,
$(-1)^{\dim\M_{h_N}(\cP{v},\cP{w})}$ times the product orientation.

For breaking in a trajectory space $\M_{h_N}(\cP{v},\cP{w}')$ at $\cP{w}$,
let $d'=\dim\M_{h_N}(\cP{w},\cP{w}')$ and let $[\mu']$ be an oriented basis
of its tangent space.  Near $\cP{w}$ the stable manifold
$\ascending(\cP{w}')$ is $C'\times\R^{n-j(\cP{w})}_y$ for a flow-invariant
cone $C'\subset\R^{j(\cP{w})}_x$ of dimension $d'+1$ with slice
$\M_{h_N}(\cP{w},\cP{w}')$, so
$\descending(\cP{v})\cap\ascending(\cP{w}')=C'\times C$ near $\cP{w}$, with
coordinates $x=r'm'$, $y=rm$.  The function $\rho=rr'=|x|\,|y|$ is flow
invariant and vanishes exactly on the broken stratum, so it is a collar
parameter and decreasing $\rho$ is outward.  Applying the first
convention to $\ascending(\cP{w}')$ and to $\ascending(\cP{w})$, with $[q]$
an oriented lift of $TS_N/T\ascending(\cP{w}')$, gives
$[\xi]=[\mu',\dot\vartheta',q]$ and
$o(C'\times C)\oplus[q]=o(\descending(\cP{v}))
=[\mu,\dot\vartheta_C,\mu',\dot\vartheta',q]$, where
$\dot\vartheta_C=-r\partial_r$ and $\dot\vartheta'=r'\partial_{r'}$ are the
flow directions of the incoming and outgoing trajectories; hence
$o(C'\times C)=[\mu,\dot\vartheta_C,\mu',\dot\vartheta']$.  The flow
direction of the glued trajectory is
$\dot\vartheta=\dot\vartheta_C+\dot\vartheta'$, and
$n_{\mathrm{out}}=\dot\vartheta_C-\dot\vartheta'$ satisfies
$d\rho(n_{\mathrm{out}})=-2\rho<0$, so it is outward; the pair
$(n_{\mathrm{out}},\dot\vartheta)$ has determinant $2>0$ in the basis
$(\dot\vartheta_C,\dot\vartheta')$.  Therefore
\[
 o(C'\times C)=(-1)^{d'}[\mu,\mu',\dot\vartheta_C,\dot\vartheta']
 =(-1)^{d'}[\mu,\mu',n_{\mathrm{out}},\dot\vartheta],
\]
and the second convention, moduli first, gives
$o(\M_{h_N}(\cP{v},\cP{w}'))=(-1)^{d'}[\mu,\mu',n_{\mathrm{out}}]
=(-1)^{d}[n_{\mathrm{out}},\mu,\mu']$: the boundary orientation is
$(-1)^{\dim\M_{h_N}(\cP{v},\cP{w})}$ times the product orientation.  For
$d=0$ both signs are $+1$, which is the product sign of rigid breaking.
\end{proof}

We now check the incidence signs explicitly.

\begin{proof}[Proof of Lemma~\ref{lem:floer-boundary-signs}]
Write $c=N-1-j(\cP{v})$ for the incidence codimension.  The ambient strip
space near a one-dimensional incidence has dimension $c+1$.
Let $n_{\mathrm{out}}$ be an outward neck vector, and write $s\in\{\pm1\}$
for the boundary sign, so that the oriented tangent of the incidence
curve is $s[n_{\mathrm{out}}]$.  The parameter estimates and exact normal
transport identify the limiting evaluation derivatives along the factors.

\emph{Incidence on the incoming factor.}
This includes outside breaking and the types \textup{(S4a)} and
\textup{(S4c)}.  The ambient incoming factor has dimension $c$, and the
outgoing factor is zero-dimensional, of sign $\epsilon_2$.
Let $[m_1]$ be an oriented incoming basis and let $\tilde s_1$ be the sign
of its rigid incidence.  The ambient boundary formula gives
\[
 o(\M)=(-1)^c\epsilon_2[n_{\mathrm{out}},m_1]
                =\epsilon_2[m_1,n_{\mathrm{out}}].
\]
The pulled-back normal orientation is $\tilde s_1[m_1]$.  The normal-first
convention therefore gives
$o(\M)=\tilde s_1s[m_1,n_{\mathrm{out}}]$, whence
$s=\tilde s_1\epsilon_2$.  A top-cell label on the outgoing strip has no
normal directions and leaves its ordinary sign unchanged.

\emph{Fixed incoming factor.}
For \textup{(S4b)}, the fixed incoming factor is zero-dimensional with
ordinary sign $\epsilon_1$, and the ambient outgoing factor has dimension
$c$.  If $[m_2]$ is its oriented basis and $\tilde s_2$ its incidence sign,
then
\[
 o(\M)=\epsilon_1[n_{\mathrm{out}},m_2],\qquad
 \Ev^*o(\nu(\cP{v}))=\tilde s_2[m_2].
\]
The evaluation is the outgoing evaluation, transported without error.
Normal-first ordering yields
$o(\M)=\tilde s_2s[m_2,n_{\mathrm{out}}]$, so
\[
 s=(-1)^c\epsilon_1\tilde s_2.
\]
This is the claimed product with the transport sign on the fixed factor.

\emph{Sphere breaking.}
Let $\vartheta$ be the incoming rigid sphere trajectory from $\cP{v}$
to $\cP{w}$, and let $\tilde s$ be the sign of the rigid incidence at
$\cP{w}$.  By Lemma~\ref{lem:floer-sphere-boundary}, for an outward normal
$n$ to the corresponding descending-cell sheet,
\[
 o(\descending(\cP{v}))=
     \epsilon(\vartheta)[n]\oplus o(\descending(\cP{w})).
\]
Compare the sphere orientations
$o(S_N)=o(\descending(\cP{v}))\oplus o(\nu(\cP{v}))
=o(\descending(\cP{w}))\oplus o(\nu(\cP{w}))$.  Moving $n$ past the
$j(\cP{w})$ descending directions gives
\[
 o(\nu(\cP{w}))=
 \epsilon(\vartheta)(-1)^{j(\cP{w})}[n]\oplus o(\nu(\cP{v})).
\]
Lift $n$ through the transverse evaluation slice to the outward tangent
of the incidence arc.  Its ambient orientation from the lower rigid
incidence is then
$\tilde s\epsilon(\vartheta)(-1)^{j(\cP{w})}
[n_{\mathrm{out}}]\oplus\Ev^*o(\nu(\cP{v}))$.
Normal-first ordering for the one-dimensional incidence gives instead
$s(-1)^c[n_{\mathrm{out}}]\oplus\Ev^*o(\nu(\cP{v}))$.
Since $j(\cP{w})+c=N-2$ is even,
$s=\tilde s\epsilon(\vartheta)$.  This is the product sign.
These are all incidence boundary types.
\end{proof}

\begin{proof}[Proof of Theorem~\ref{thm:floer-gluing-package}]
Proposition~\ref{thm:floer-compactification-cited} and
Proposition~\ref{prop:IIF-strata} give compactness, finiteness, and the
exhaustive boundary list.  Ordinary strip and fixed-strip breaks have
collars by Proposition~\ref{prop:IIF-ordinary-gluing}.  The incidence
outside breaks, fixed outgoing, fixed incoming, and both-non-fixed breaks
are covered by Proposition~\ref{thm:IIF-outside-gluing},
Corollary~\ref{cor:IIF-fixed-continuation-incidence},
Proposition~\ref{thm:IIF-fixed-incoming}, and
Proposition~\ref{thm:IIF-both-nonfixed}, respectively.  Sphere breaking is
Proposition~\ref{prop:IIF-sphere-breaking}, together with the ordinary
sphere collars.  Their injectivity and exhaustion identify a neighborhood
of every broken point with a half-interval.  The compactification is
therefore a compact one-manifold with precisely the stated boundary.
The finite strata give uniform parameters, and the orbitwise choices
make all maps equivariant.  The ordinary strip and sphere signs and the
incidence calculation above give the stated orientations.
\end{proof}

\newpage
\section*{Notation guide}\label{sec:notation-guide}
\noindent The sphere Morse complex is auxiliary to the Floer construction.
The prime is $p$, the Sobolev exponent is $q>2$, and $N=rm$ is the selected
real spectral dimension.  Strip indices refer to Fredholm indices of
components, not to an absolute grading of intersections.
\medskip
\begin{center}
\small
\begin{tabular}{@{}>{\raggedright\arraybackslash}p{0.23\textwidth}>{\raggedright\arraybackslash}p{0.48\textwidth}>{\raggedright\arraybackslash}p{0.23\textwidth}@{}}
\hline
Symbol & Meaning & Definition\\
\hline
$\sigma$, $\zeta$ & Generator of $G=\Z_p$ and $\zeta=e^{2\pi i/p}$; in Section~\ref{sec:applications}, $\tau$ is a point of $\Sym^{2m}(\C)$ as in \cite{seidel2010localization} & \S\ref{subsec:floer-setup}\\
invariant / outside & Intersection point in $M^G$ (stabilizer $G$) / in a free orbit (trivial stabilizer) & \S\ref{subsec:floer-setup}\\
fixed / non-fixed & Strip or configuration contained in $M^G$ / not contained in $M^G$; the \emph{fixed pair} is $(L_0^G,L_1^G)$ & \S\ref{subsec:floer-setup}\\
$J_G$ & Group complex structure of a representation without invariant vectors, and its complex orientation & Definition~\ref{def:group-complex-structure}\\
$M,L_i$; $\widehat M,\widehat L_i$ & Original pair; suspension and then adapted pair & \S\ref{subsec:floer-suspension}\\
$\nu$, $\nu_{L_i}$; $\R^m$, $W=T^*\R^m$ & Normal bundles of $M^G$ in $M$ and of $L_i^G$ in $L_i$; model normal representation and its cotangent lift, with standard Lagrangians $\R^m$ and $\sqrt{-1}\,\R^m$ & Assumption~\ref{ass:floer-normal-triviality}\\
$A^\perp$, $E_j$, $E_N^-$ & Normal asymptotic operator, ordered negative eigenspaces, and their selected sum & \eqref{eq:floer-finite-spectral-space}\\
$S_N,L_N$ & Spectral sphere and its quotient & Lemma~\ref{lem:floer-sphere-data}\\
$q_N,\Ev^-_{\cP p,N}$ & Weighted normalization and negative evaluation & Definition~\ref{def:floer-finite-spectral-evaluation}\\
$C_N,\Theta_N,\epsilon_N$ & Auxiliary sphere complex, fundamental cycle, and augmentation & Lemma~\ref{lem:floer-sphere-data}\\
$\cP v,j(\cP v),\vartheta$ & Sphere critical point, its Morse index, and a sphere trajectory & \S\ref{subsec:floer-transversality}\\
$o_{\cP x}$, $\epsilon(u)$ & Orientation line of an intersection point; sign of a rigid strip & \S\ref{subsec:floer-orientations}\\
$c(\cP v)=N-1-j(\cP v)$ & Incidence codimension of a sphere critical point; the transport sign is $(-1)^{c(\cP v)}$ & Convention~\ref{conv:floer-incidence-orientation}\\
$Q_i$, $E'$ & Relative Pin structures induced on the fixed pair and their background bundle & Lemma~\ref{lem:floer-fixed-strips}\\
$\M^d,\M_{M^G}^d$ & Unparametrized strip spaces of dimension $d$; fixed-strip spaces & \S\ref{subsec:floer-setup}\\
$\mathcal N^d_{\cP p,N}(\cP v;\cP b)$ & One-sided incidence spaces of dimension $d$ & \eqref{eq:floer-incidence-dimension}\\
$CF,HF,CF_{\mathrm{st}}$ & Ordinary Floer module, its homology, and stabilization & \S\ref{subsec:floer-stabilized}\\
$C^o,C^s,C^i$ & Outside, sphere, and retained invariant summands & \S\ref{subsec:floer-stabilized}\\
$\D\alpha\beta$ & Block $C^\alpha\to C^\beta$; columns index sources & \eqref{eq:floer-block-matrix}\\
$\ell,T$ & Cutoff width, fixed first; gluing length & \eqref{eq:IIF-separated-cutoffs}\\
$u_\pm,\kappa_\pm,\xi_\pm$ & Component strips, kernel parameters, and complementary corrections & \S\ref{subsec:IIF-linear}\\
$\cY_\tau$, $S_m$, $\chi$ & Fiber of the adjoint quotient $\chi$ over $\tau$ in the slice $S_m$ & \S\ref{subsec:matching-lagrangians}\\
$L_\frm$, $\Kh(K)$ & Matching Lagrangian of a crossingless matching $\frm$; symplectic Khovanov homology of a link $K$ & \S\ref{subsec:matching-lagrangians}\\
$h^{resc}_\gamma$ & Rescaled parallel transport along a path $\gamma$ of configurations & \S\ref{subsec:matching-lagrangians}\\
\end{tabular}
\end{center}

\bibliographystyle{alpha}
\bibliography{mybib}

\end{document}